\newcommand{\overbar}[1]{\mkern 1.5mu\overline{\mkern-1.5mu#1\mkern-1.5mu}\mkern 0.5mu}
\author{Pinaki Mondal}
\address{University of the Bahamas}
\email{pinakio@gmail.com}
\title{Intersection multiplicity, Milnor number and Bernstein's theorem}
\newcommand{\Rmnum}[1]{\expandafter\@slowromancap\romannumeral #1@}
\DeclareMathOperator\conv{conv}
\DeclareMathOperator\ord{ord}
\DeclareMathOperator\supp{Supp}
\DeclareMathOperator\vol{Vol} 
\newcommand{\scrA}{\ensuremath{\mathcal{A}}}
\newcommand{\scrB}{\ensuremath{\mathcal{B}}}
\newcommand{\scrD}{\ensuremath{\mathcal{D}}}
\newcommand{\scrE}{\ensuremath{\mathcal{E}}}
\newcommand{\scrG}{\ensuremath{\mathcal{G}}}
\newcommand{\scrI}{\ensuremath{\mathcal{I}}}
\newcommand{\scrL}{\ensuremath{\mathcal{L}}}
\newcommand{\scrN}{\ensuremath{\mathcal{N}}}
\newcommand{\scrP}{\ensuremath{\mathcal{P}}}
\newcommand{\scrT}{\ensuremath{\mathcal{T}}}
\newcommand{\scrU}{\ensuremath{\mathcal{U}}}
\newcommand{\scrV}{\ensuremath{\mathcal{V}}}
\newcommand{\scrZ}{\ensuremath{\mathcal{Z}}}
\newcommand{\kk}{\ensuremath{\mathbb{K}}}
\newcommand{\pp}{\ensuremath{\mathbb{P}}}
\newcommand{\rr}{\ensuremath{\mathbb{R}}}
\newcommand{\zz}{\ensuremath{\mathbb{Z}}}
\newcommand{\mmm}{\ensuremath{\mathfrak{m}}}
\newcommand{\nnn}{\ensuremath{\mathfrak{n}}}
\newcommand{\jjj}{\ensuremath{\mathfrak{j}}}
\newcommand{\qqq}{\ensuremath{\mathfrak{q}}}
\newcommand{\nktorus}{(\kk^*)^n}
\newcommand{\ktorus}{\kk^*}
\newcommand{\im}{\ensuremath{\Rightarrow}}
\newtheorem{thm}{Theorem}[section]
\newtheorem*{thm*}{Theorem}
\newtheorem{lemma}[thm]{Lemma}
\newtheorem*{lemma*}{Lemma}
\newtheorem{prop}[thm]{Proposition}
\newtheorem*{prop*}{Proposition}
\newtheorem{cor}[thm]{Corollary}
\newtheorem{claim}[thm]{Claim}
\newtheorem*{claim*}{Claim}
\newtheorem{proclaim}{Claim}[thm]
\newtheorem*{conjecture*}{Conjecture}
\theoremstyle{definition}
\newtheorem*{constrinition*}{Construction-Definition}
\newtheorem{convention}[thm]{Convention}
\newtheorem*{convention*}{Convention}
\newtheorem{defn}[thm]{Definition}
\newtheorem*{defn*}{Definition}
\newtheorem*{definotation*}{Definition-Notation}
\newtheorem{example}[thm]{Example}
\newtheorem*{example*}{Example}
\newtheorem*{fact*}{Fact}
\newtheorem*{facts*}{Facts}
\newtheorem{notation}[thm]{Notation}
\newtheorem*{bold-note*}{Note}
\newtheorem{problem}[thm]{Problem}
\newtheorem*{problem*}{Problem}
\newtheorem{bold-question}[thm]{Question}
\newtheorem*{bold-question*}{Question}
\newtheorem{rem}[thm]{Remark}
\newtheorem*{reminition*}{Remark-Definition}
\newtheorem{remexample}[thm]{Remark-Example}
\newtheorem{remexample*}{Remark-Example}
\newtheorem*{remtation*}{Remark-Notation}
\newtheorem*{remuestion*}{Remark-Question}
\newtheorem*{remvention*}{Remark-Convention}
\theoremstyle{remark}
\newtheorem*{rem*}{Remark}
\newtheorem*{note*}{Note}
\newtheorem*{notation*}{Notation}
\newtheorem*{question*}{Question}
\newtheorem*{questions*}{Questions}
\newcounter{UnorderedProofTempCtr}
\newcommand{\tempcommand}{}
\renewcommand{\kk}{\mathbbm{k}}
\newcommand{\nocontentsline}[3]{}
\newcommand{\tocless}[3]{\bgroup\let\addcontentsline=\nocontentsline#1{#2}\label{#3}\egroup}
\newcommand{\mscrG}{\mathscr{G}}
\newcommand{\mscrP}{\mathscr{P}}
\newcommand{\mscrS}{\mathscr{S}}
\newcommand{\ISP}{\scrI_{\mscrS,\mscrP}}
\newcommand{\ISPone}{\scrI_{\mscrS,\mscrP,1}}
\newcommand{\nG}{\scrN_{\mscrG}}
\newcommand{\Ni}{\scrN^{I}}
\newcommand{\NiG}{\scrN^{I}_{\mscrG}}
\newcommand{\NiP}{\scrN^{I}_{\mscrP}}
\newcommand{\NiiP}[1]{\scrN^{#1}_{\mscrP}}
\newcommand{\NSP}{\scrN_{\mscrS,\mscrP}}
\newcommand{\TSP}{\scrT_{\mscrS,\mscrP}}
\newcommand{\TSPrime}{\scrT'_{\mscrS,\mscrP}}
\newcommand{\TSPrimeI}{{\scrT'}^I_{\mscrS,\mscrP}}
\newcommand{\TSPstar}{\scrT^*_{\mscrS,\mscrP}}
\newcommand{\TI}{\scrT^{I}}
\newcommand{\KnS}{\kk^n_\mscrS}
\newcommand{\KnbarS}{\kk^n_{\overbar \mscrS}}
\newcommand{\KnSprime}{\kk^n_{\mscrS'}}
\newcommand{\KnSS}[1]{\kk^n_{#1}}
\newcommand{\SP}{\scrE_\mscrP}
\newcommand{\tildeTP}{{\tilde \scrT}_\mscrP}
\newcommand{\tildeTprimeP}{{\tilde \scrT'}_\mscrP}
\newcommand{\dfd}[1]{\partial f/\partial x_{#1}}
\newcommand{\dgd}[1]{\partial g/\partial x_{#1}}
\newcommand{\dGammad}[1]{\partial \Gamma/\partial x_{#1}}
\newcommand{\dx}[2]{\partial {#1}/\partial {#2}}
\newcommand{\mult}[2]{[#1, \ldots, #2]}
\newcommand{\multf}{\mult{f_1}{f_n}}
\newcommand{\multP}{\mult{\scrP_1}{\scrP_n}}
\newcommand{\multiso}[2]{[#1, \ldots, #2]^{iso}}
\newcommand{\multisof}{\multiso{f_1}{f_n}}
\newcommand{\multisoP}{\multiso{\scrP_1}{\scrP_n}}
\newcommand{\multzero}[2]{[#1, \ldots, #2]_0}
\newcommand{\multzerof}{\multzero{f_1}{f_n}}
\newcommand{\multzeroGamma}{\multzero{\Gamma_1}{\Gamma_n}}
\newcommand{\multstar}[2]{[#1, \ldots, #2]^*_{0}}
\newcommand{\multstarinftySS}[3]{[#1, \ldots, #2]^*_{\infty,#3}}
\newcommand{\multstarinftyS}{\multstarinftySS{\scrP_1}{\scrP_n}{\mscrS}}
\newcommand{\multstarinftyy}[2]{[#1, \ldots, #2]^*_{\infty}}
\newcommand{\multstarSS}[2]{[#1, \ldots, #2]^*_{\mscrS}}
\newcommand{\multstarS}{\multstarSS{\scrP_1}{\scrP_n}}
\newcommand{\multstarinfty}{\multstarinftyy{\scrP_1}{\scrP_n}}
\newcommand{\Xnu}{X_\nu}
\newcommand{\Xprimenuprime}{X'_{\nu'}}
\newcommand{\Enu}{E_\nu}
\newcommand{\Enuprime}{E_{\nu'}}
\newcommand{\Enustar}{E^*_\nu}
\newcommand{\nuB}{\nu_B}
\newcommand{\nuBi}{\nu^I_B}
\newcommand{\nuBii}[1]{\nu^{#1}_B}
\newcommand{\Bzeroi}{\scrB^I_0}
\newcommand{\Bzeroinu}{\scrB^I_{0,\nu}}
\newcommand{\Bi}{\scrB^I}
\newcommand{\Binu}{\scrB^I_{\nu}}
\newcommand{\Biomega}{\scrB^I_{\omega}}
\newcommand{\Zzeroi}{\scrZ_0^I}
\newcommand{\Zi}{\scrZ^I}
\newcommand{\Ai}{A^I}
\newcommand{\Aii}[1]{A^{#1}}
\newcommand{\Kii}[1]{\kk^{#1}}
\newcommand{\ki}{k^I}
\newcommand{\Ki}{\kk^I}
\newcommand{\Kstari}{(\kk^*)^I}
\newcommand{\Kstarii}[1]{(\kk^*)^{#1}}
\newcommand{\ri}{\rr^I}
\newcommand{\V}{\scrV}
\newcommand{\Vi}{\scrV^I}
\newcommand{\Vii}[1]{\scrV^{#1}}
\newcommand{\Vzero}{\scrV_0}
\newcommand{\Vzeroi}{\scrV^I_0}
\newcommand{\Vzeroii}[1]{\scrV^{#1}_0}
\newcommand{\VS}{\scrV_\mscrS}
\newcommand{\VSi}{\scrV^I_\mscrS}
\newcommand{\VSprimei}{\scrV^I_{\mscrS'}}
\newcommand{\VSii}[1]{\scrV^{#1}_\mscrS}
\newcommand{\VSiii}[2]{\scrV^{#1}_{#2}}
\newcommand{\wts}{\Omega_\infty}
\newcommand{\wtsi}{\Omega^{I}_\infty}
\newcommand{\wtsii}[1]{\Omega^{#1}_\infty}
\DeclareMathOperator\charac{char}
\DeclareMathOperator\In{In}
\DeclareMathOperator\lcm{lcm}
\DeclareMathOperator\ld{\scrL}
\DeclareMathOperator\mv{MV}
\DeclareMathOperator{\nd}{\scrN\scrD}
\DeclareMathOperator\np{\scrN\scrP}
\newlist{observations}{enumerate}{2}
\setlist[observations,1]{label=(O\arabic*), ref=O\arabic*}
\setlist[observations,2]{label=(\emph{\alph*}), ref=\theobservationsi(\emph{\alph*})}
\newlist{assumptions}{enumerate}{2}
\setlist[assumptions,1]{label=(A\arabic*), ref=A\arabic*}
\setlist[assumptions,2]{label=(\emph{\alph*}), ref=\theassumptionsi(\emph{\alph*})}
\newlist{defnlist}{enumerate}{3}
\setlist[defnlist,1]{label=(\alph*)}
\setlist[defnlist,2]{label=(\arabic*), ref=(\alph{defnlisti}.\arabic*)}
\setlist[defnlist,3]{label=(\roman*), ref=(\alph{defnlisti}.\arabic{defnlistii}.\roman*)}
\newlist{prooflist}{enumerate}{3}
\setlist[prooflist,1]{label=(\roman*)}
\setlist[prooflist,2]{label=(\arabic*)}
\setlist[prooflist,3]{label=(\alph*)}
\newcounter{parcounter}
\crefname{cor}{corollary}{corollaries}
\crefname{defn}{definition}{definitions}
\crefname{observation}{observation}{observations}
\crefname{problem}{problem}{problems}
\crefname{reminition}{remark-definition}{remark-definition}
\crefname{thm}{theorem}{theorems}
\def\*#1{\bm{#1}}
\begin{document}

\begin{abstract} 
 
We explicitly characterize when the Milnor number at the origin of a polynomial or power series (over an algebraically closed field $\kk$ of arbitrary characteristic) is the minimum of all polynomials with the same Newton diagram, which completes works of Kushnirenko \cite{kush-poly-milnor} and Wall \cite{wall}. Given a fixed collection of $n$ convex integral polytopes in $\rr^n$, we also give an explicit characterization of systems of $n$ polynomials supported at these polytopes which have the maximum number (counted with multiplicity) of isolated zeroes on $\kk^n$, or more generally, on the complement of a union of coordinate subspaces of $\kk^n$; this completes the program (undertaken by many authors including Khovanskii \cite{khovanus}, Huber and Sturmfels \cite{hurmfels-bern}, Rojas \cite{rojas-toric}) of the extension to $\kk^n$ of Bernstein's theorem \cite{bern} on number of solutions of $n$ polynomials on $\nktorus$. Our solutions to these two problems are connected by the computation of the intersection multiplicity at the origin of $n$ hypersurfaces determined by $n$ generic polynomials.  
\end{abstract}

\maketitle


%
%
%

\maketitle 

\tocless\section{Introduction}{}
\addtocontents{toc}{\protect\setcounter{tocdepth}{1}} 
There is a vast literature on the theory of {\em Newton polyhedra}, initiated by V.\ I.\ Arnold's hypothesis that `reasonable' invariants of objects (e.g.\ singularities, varieties, etc) associated to a `typical' (system of) analytic function(s) or polynomial(s) should be computable in terms of their {\em Newton diagrams} or {\em Newton polytopes} (\cref{Newton-definition}). In this article we revisit two of the original questions that shaped this theory, namely the question of computing the Milnor number\footnote{\label{milnor-footnote}Let $f \in \kk[x_1, \ldots, x_n]$, where $\kk$ is an algebraically closed field. Then the {\em Milnor number} $\mu(f)$ of $f$ at the origin is the dimension (as a vector space over $\kk$) of the quotient ring of $\kk[[x_1, \ldots, x_n]]$ modulo the ideal generated by partial derivatives of $f$ with respect to $x_j$'s.} of the singularity at the origin of the hypersurface determined by a generic polynomial or power series, and the question of computing the number (counted with multiplicity) of isolated zeroes of $n$ generic polynomials in $n$ variables. The first question was partially solved in a classical work of Kushnirenko \cite{kush-poly-milnor} and a subsequent work of Wall \cite{wall}; Bernstein \cite{bern}, following the work of Kushnirenko \cite{kush-poly-milnor}, solved the second question for the `torus' $\nktorus$ (where $\kk$ is an algebraically closed field and $\kk^* := \kk \setminus \{0\}$), and many other authors (including Khovanskii \cite{khovanus}, Huber and Sturmfels \cite{hurmfels-bern}, Rojas \cite{rojas-toric}) gave partial solutions for the case of the affine space $\kk^n$. Extending the approach from Bernstein's proof in \cite{bern} of his theorem, we give a complete solution to the first problem, and complete the program of extending Bernstein's theorem to $\kk^n$ (or more generally, to the complement of a union of coordinate subspaces of $\kk^n$). \\


In \cite{kush-poly-milnor} Kushnirenko gave a beautiful expression for a lower bound on the generic Milnor number and showed that a polynomial (or power series) attains this bound in the case that its Newton diagram is {\em convenient}\footnote{Kushnirenko used the term {\em commode} in French; `convenient' is also widely used, see e.g.\ \cite{boubakri-greuel-markwig}.} (which means that the Newton diagram contains a point on each coordinate axis), and it is {\em Newton non-degenerate}, i.e.\ the following is true (see \cref{inndefinition} for a precise formulation):
\begin{savenotes}
\begin{align}
\parbox{.84\textwidth}{%
for each {\em weighted order}\footnote{A {\em weighted order} corresponding to weights $(\nu_1, \ldots, \nu_n) \in \zz^n$ is the map $\nu:\kk[x_1, \ldots, x_n] \to \zz$ given by $\nu(\sum a_\alpha x^\alpha ) := \min\{\sum_{k=1}^n \alpha_k\nu_k: a_\alpha \neq 0\}$.} $\nu$ on $\kk[x_1, \ldots, x_n]$ with positive weights, the partial derivatives of the corresponding {\em initial form}\footnote{Given a weighted order $\nu$, the {\em initial form} of $f = \sum_\alpha a_\alpha x^\alpha \in \kk[x_1, \ldots, x_n]$ is the sum of all $a_\alpha x^\alpha$ such that $\nu(x^\alpha) = \nu(f)$.} of $f$ do not have any common zero on $\nktorus$.}  \tag{NND}
\end{align}
\end{savenotes}
However, it is straightforward to construct examples which show that 
\begin{prooflist}
\item \label{isolated-defect} if the Newton diagram is not convenient, Newton non-degeneracy of $f$ does not imply `finite determinacy' of $f$, i.e.\ it does not  imply that the origin is an isolated singular point of $f = 0$ (take e.g.\ $f := x_1^q \cdots x_n^q$, where $q$ is an integer $\geq 2$ not divisible by $\charac(\kk)$), 
\item \label{necessary-defect} even if the Newton diagram is convenient, Newton non-degeneracy is not necessary for the Milnor number to be generic (see \cref{Milnor-not-Newton}),
\item \label{existential-defect} if $\charac(\kk) > 0$, then there are (convenient) diagrams $\Gamma$ such that no polynomial with Newton diagram $\Gamma$ is Newton non-degenerate (see \cref{no-(I)NND}).
\end{prooflist}
Wall \cite{wall} found another sufficient criterion, called {\em inner Newton non-degeneracy} by Boubakri, Greuel and Markwig \cite{boubakri-greuel-markwig}, for the Milnor number to be generic. He showed that inner Newton non-degeneracy implies finite determinacy, correcting defect \ref{isolated-defect}, and that Kushnirenko's formula computes the Milnor number for inner Newton non-degenerate polynomials. However, inner Newton non-degeneracy also suffers from defects \ref{necessary-defect} and \ref{existential-defect} (see \cref{Milnor-not-inner,no-(I)NND}), and it remained a topic of ongoing investigations to completely understand what makes the Milnor number generic (see e.g.\ the works of Boubakri, Greuel and Markwig \cite{boubakri-greuel-markwig}, Greuel and Nguyen \cite{greuel-nguyen}). \\

In \cite{kush-poly-milnor} Kushnirenko proved another beautiful result that the number (counted with multiplicity) of isolated solutions on $\nktorus$ of $n$ generic (Laurent) polynomials $f_1, \ldots, f_n$ with identical Newton polytope $\scrP$ is bounded by $n! \vol(\scrP)$, and that the bound is attained if and only if the following non-degeneracy condition is satisfied (in which case we say that $f_1, \ldots, f_n$ are {\em BKK non-degenerate})
\begin{align}
\parbox{.72\textwidth}{%
for each non-trivial weighted order $\nu$ on $\kk[x_1, \ldots, x_n]$, the corresponding initial forms of $f_1, \ldots, f_n$ do not have any common zero on $\nktorus$.}  \tag{BKK} \label{bkk}
\end{align}
Bernstein \cite{bern} removed the restriction of $f_i$'s having identical Newton polytopes. He showed that the general upper bound for the number (counted with multiplicity) of isolated solutions on $\nktorus$ is the {\em mixed volume} (\cref{mixed-defn}) of Newton polytopes of $f_1, \ldots, f_n$, and that the bound is attained if and only if $f_1, \ldots, f_n$ are BKK non-degenerate. A number of authors have worked on extending Bernstein's result, also known as the Bernstein-Kushnirenko-Khovanskii bound (in short {\em BKK bound}), to the case of $\kk^n$; in particular, Khovanskii \cite{khovanus} gave a formula for the extended BKK bound in the special case that the Newton polytopes are such that generic systems have no zeroes in any (proper) coordinate subspace of $\kk^n$; Huber and Sturmfels \cite{hurmfels-bern} introduced the notion of {\em stable mixed volumes} and used it to give a formula in the case that  the Newton polytopes are such that generic systems have no non-isolated zeroes in $\kk^n$; Rojas \cite{rojas-toric} extended the method of Huber and Sturmfels to give a formula in terms of stable mixed volumes that works without any restriction on Newton polytopes\footnote{The formulae that appear in \cite[Theorem I]{rojas-wang} and \cite[Affine Point Theorem II]{rojas-toric} for the number of roots and intersection multiplicity are incorrect - see \cref{counter-section}. However the formula from \cite[Corollary 1]{rojas-toric} in terms of stable mixed volumes is correct - this can be seen either by directly adapting the proof of the result of Huber and Sturmfels \cite{hurmfels-bern} to arbitrary Newton polytopes and arbitrary characteristics, or by observing that our formula (\cref{bkk-thm}), which works for all Newton polytopes and characteristics, is in the case of zero characteristic equivalent to the formula in \cite[Corollary 1]{rojas-toric}.}. However, the conditions for the attainment of the bound had not been characterized - only some sufficient conditions were known, see e.g.\ \cite[Section 4]{khovanus}, \cite[Main Theorem 2]{rojas-toric}. 
%

\subsection{Main results}{}
All the results of this article are valid over an algebraically closed field $\kk$ of arbitrary characteristic. 

\subsubsection{Intersection multiplicity} Since the Milnor number of the singularity at the origin of $f = 0$ is simply the {\em intersection multiplicity} (\cref{intersection-multiplicity}) at the origin of the partial derivatives of $f$, the problem of understanding genericness of the Milnor number naturally leads to the problem of understanding genericness of intersection multiplicity at the origin of hypersurfaces determined by $n$ polynomials $f_1, \ldots, f_n$ with fixed Newton diagrams. We give a complete solution to this problem:

\begin{thm}[\Cref{generic-thm}] \label{generic-thm-0}
The intersection multiplicity is the minimum if and only if the following non-degeneracy condition holds:
\begin{align}
\parbox{.72\textwidth}{%
for each weighted order $\nu$ on $\kk[x_1, \ldots, x_n]$ with positive weights and for each coordinate subspace $K$ of $\kk^n$,  the corresponding initial forms of $f_1|_K, \ldots, f_n|_K$ do not have any common zero on $\nktorus$.}  \tag{$\overline{\text{BKK}}_0$} \label{bkk-bar-0}
\end{align}
\end{thm}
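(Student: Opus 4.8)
The plan is to prove both implications by comparing the given system with a generic one of the same Newton diagrams, and by tracking, through a one‑parameter deformation in the spirit of Bernstein's proof in \cite{bern}, how the intersection multiplicity at the origin is distributed among the nearby zeroes. Write $\multzerof$ for the intersection multiplicity at the origin (\cref{intersection-multiplicity}), and let $m := \multzeroGamma$ denote its minimum over all $g_1,\dots,g_n$ with Newton diagrams $\Gamma_1,\dots,\Gamma_n$. Since intersection multiplicity at a point is upper semicontinuous under specialization of coefficients, $\multzerof \ge m$ holds unconditionally, so the theorem amounts to the equivalence ``$\multzerof = m \iff \eqref{bkk-bar-0}$''. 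To set up the comparison, fix a generic $g_1,\dots,g_n$ with the same diagrams and consider the family $h_i^t := f_i + t g_i$: for all but finitely many $t \in \kk$ this is again a generic system with Newton diagrams $\Gamma_i$, so $[h_1^t,\dots,h_n^t]_0 = m$, while conservation of number forces the total intersection multiplicity of the $h^t_i$ inside a fixed small neighbourhood of the origin to equal $\multzerof$ as $t \to 0$ (once $\multzerof$ is known to be finite). Everything then reduces to deciding whether exactly $m$, or strictly more than $m$, zeroes of $h^t$ (counted with multiplicity) coalesce at the origin; throughout I assume $m < \infty$, the case $m = \infty$ being governed directly by the conventions of \cref{intersection-multiplicity} and handled by an easier argument.

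\textbf{Sufficiency.} Assuming \eqref{bkk-bar-0}, I would organize the computation through a toric modification resolving the origin — equivalently, through the Newton--Puiseux solutions of the family $h^t$. Choose a smooth fan $\Sigma$ supported on $\rr^n_{\ge 0}$ refining the inner normal fans of the Newton polyhedra $\conv(\Gamma_i) + \rr^n_{\ge 0}$, with associated proper birational $\pi \colon X_\Sigma \to \kk^n$, an isomorphism over $\nktorus$; its orbits lying over the origin are stratified by the smallest coordinate subspace $K$ of $\kk^n$ containing their image, and along the orbit attached to a cone $\sigma$ the strict transform of $\{f_i = 0\}$ is cut out by the initial forms of $f_i$ with respect to the generators of $\sigma$, read on the $K$-stratum. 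Condition \eqref{bkk-bar-0} says exactly that on the dense torus of each such orbit the strict transforms $\widetilde V(f_1),\dots,\widetilde V(f_n)$ have no common point; descending the orbit stratification, one gets that $\bigcap_i \widetilde V(f_i)$ has no component over the origin — so $f_1,\dots,f_n$ have the origin as an isolated zero — and that $\multzerof$ is a sum, over the orbits $O$ over the origin, of local contributions which, the intersection being proper along each $O$, are computed by Bernstein's theorem \cite{bern} on the torus of $O$ (valid over any algebraically closed field) and equal mixed volumes (\cref{mixed-defn}) of faces of the $\Gamma_i$ prescribed by $\sigma$, with no excess. Running the same argument for the generic system $g$ both computes $m$ as the sum of these very mixed volumes and furnishes the ``expected'' picture, so $\multzerof = m$.

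\textbf{Necessity.} Suppose \eqref{bkk-bar-0} fails: there are a weighted order $\nu$ with positive weights, a coordinate subspace $K$, and a point $a$ on the torus of $K$ annihilating every nonzero $\mathrm{in}_\nu(f_i|_K)$. Then the monomial arc $s \mapsto (a_j s^{\nu_j})_{j \in \operatorname{supp} K}$ (with remaining coordinates $0$) makes every $f_i$ vanish to order strictly larger than its $\nu$-order along it; choosing $\Sigma$ above so that $\zz_{\ge 0}\nu$ is a ray, $a$ pins down a point of the corresponding orbit $O$ over the origin lying on all of $\widetilde V(f_1),\dots,\widetilde V(f_n)$, so their intersection fails to be proper along $O$. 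Comparing with the generic system $g$ — whose strict transforms meet $O$ properly, as \eqref{bkk-bar-0} holds for $g$ — the resulting excess intersection forces the contribution of $O$, hence $\multzerof$, to strictly exceed the corresponding contribution for $g$; thus $\multzerof > m$ (the contribution being $+\infty$ exactly when the excess component already lies over the origin, i.e.\ when $f_1,\dots,f_n$ fail to be finitely determined). In the deformation language, $a$ and $\nu$ produce a Puiseux branch of $\bigcap_i V(h^t_i)$ limiting to the origin in addition to the $m$ expected ones.

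\textbf{Main obstacle.} The delicate point is the sufficiency direction in positive characteristic: showing that properness of the strict-transform intersection along each orbit of $X_\Sigma$ really does make its local contribution equal to the predicted mixed volume, with nothing hidden. This is where Bernstein's theorem over an arbitrary algebraically closed field must be applied orbit by orbit, and where the combinatorial bookkeeping is heaviest — the induction over the stratification by coordinate subspaces $K$, the identification of the relevant faces of each $\Gamma_i$, and the verification that the resulting sum of mixed volumes is exactly the generic value $m$ — with the parallel analysis of the generic system $g$ (needed both to define $m$ and to supply the comparison used in the necessity argument) developed alongside.
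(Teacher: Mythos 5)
Your plan has genuine gaps in both directions, precisely at the points where the paper has to work hardest. In the sufficiency direction, the assertion that \eqref{bkk-bar-0} ``says exactly that on the dense torus of each orbit over the origin the strict transforms $\widetilde V(f_1),\dots,\widetilde V(f_n)$ have no common point'' is not correct. The strict transform of $V(f_i)$ is the closure of the preimage of $V(f_i)\cap\nktorus$, and the orbits of $X_\Sigma$ over the origin correspond to cones meeting the interior of the positive orthant; the ``no common point'' statement for these is governed by the $K=\kk^n$ part of \eqref{bkk-bar-0} alone. The conditions for proper coordinate subspaces $K$ concern the restrictions $f_i|_K$, i.e.\ components of $V(f_1,\dots,f_n)$ contained in coordinate hyperplanes --- and such components are invisible to the strict transforms of the $V(f_i)$, yet they are exactly what makes $\multzerof$ non-minimal or infinite. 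Making your ``descending the orbit stratification'' step precise requires an induction over coordinate subspaces together with a dictionary between orbits and pairs $(K,\nu)$ in the spirit of \cref{blow-up-coordinates}, which you do not supply. Moreover, even granting the geometry, your route requires proving that $\multzerof$ equals an explicit sum of mixed volumes and that the same sum computes $\multzeroGamma$ --- i.e.\ it requires the formula of \cref{multiplicity-thm}, which the paper establishes separately and only under the \emph{strong} non-degeneracy of \cref{strongly-generic-defn}: condition \eqref{bkk-bar-0} alone does not make each local contribution a clean mixed volume, since one also needs the auxiliary intersections of components to be proper. The paper's proof of \cref{generic-thm} deliberately avoids any computation: in \cref{sufficient-lemma} it shows that $\multzero{g_1}{g_n}<\multzerof$ forces an extra branch of $V(h_1,\dots,h_n)$ through $(0,0)$, which by \cref{branch-lemma} yields a common zero of initial forms, i.e.\ degeneracy.

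In the necessity direction, ``the resulting excess intersection forces the contribution of $O$, hence $\multzerof$, to strictly exceed the corresponding contribution for $g$'' is asserted rather than proved: your monomial arc is not contained in $V(f_1,\dots,f_n)$, and converting a common zero of initial forms into actual extra multiplicity requires exhibiting an extra component of the zero scheme of a deformation, which is the explicit construction \eqref{h_j} in the paper. More seriously, you do not address the scenario flagged in \cref{pathologies}: if \eqref{bkk-bar-0} fails only on a subspace $K$ whose dimension is strictly less than the number of $f_j$ not vanishing identically on $K$, then a generic system has \emph{no} zeros on the torus of $K$, there is no ``expected contribution'' to exceed, and no deformation produces a root there. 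The paper needs \cref{reduction-lemma} (via \cref{weak-lemma}) to replace such a $K$ by a strictly larger one on which degeneracy persists and the deformation argument can run; without an analogue of that reduction your necessity argument does not close.
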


We also give a new formula for the minimum intersection multiplicity (\cref{multiplicity-thm}). Given a fixed collection of $n$ diagrams in $\rr^n$, it can be combinatorially characterized (\cref{mult-support-solution}) if the intersection multiplicity at the origin is infinite for every collection of polynomials with these Newton diagrams; if this is not the case, then the the set of polynomials satisfying \eqref{bkk-bar-0} is Zariski dense in the space of polynomials with these Newton diagrams (\cref{bkk-existence}). Finally, we give a criterion which is equivalent to \eqref{bkk-bar-0}, but is computationally less expensive (\cref{weak-lemma}).

\subsubsection{Milnor number} Let $f$ be a polynomial or a power series in $n$ variables over $\kk$, and $\Gamma_1, \ldots, \Gamma_n$ be the Newton diagrams of partial derivatives of $f$. Assume that the minimum intersection multiplicity at the origin of all polynomials with these Newton diagrams is finite (this can be determined combinatorially due to \cref{mult-support-solution}). Then applying \cref{generic-thm-0} we show that 

\begin{thm}[\Cref{milnor-thm}]
The Milnor number of $f$ at the origin is the minimum if and only if $\dfd{1}, \ldots, \dfd{n}$ satisfy \eqref{bkk-bar-0}, in which case we say that $f$ is {\em partially Milnor non-degenerate}. 
\end{thm}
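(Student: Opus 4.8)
The plan is to obtain this statement as a corollary of \cref{generic-thm-0}, once one records that the Milnor number is, by its very definition, an intersection multiplicity at the origin. The identity that carries all the weight is
\[
\mu(f) \;=\; \multzero{\dfd{1}}{\dfd{n}},
\]
each side being $\dim_{\kk} \kk[[x_1,\ldots,x_n]]/(\dfd{1},\ldots,\dfd{n})$ by the definitions of the Milnor number and of intersection multiplicity (\cref{intersection-multiplicity}); it requires no hypothesis on $f$ and holds in every characteristic. Since $\Gamma_i$ is by definition the Newton diagram of $\dfd{i}$, the polar system $(\dfd{1},\ldots,\dfd{n})$ is one particular member of the family of all $n$-tuples of power series supported on $\Gamma_1,\ldots,\Gamma_n$; hence $\mu(f)$ is at least the minimum intersection multiplicity $m$ of that family, which is finite by hypothesis. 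The role of \cref{mult-support-solution} is precisely that ``$m < \infty$'' is a combinatorial condition on $\Gamma_1,\ldots,\Gamma_n$, so this hypothesis is effectively decidable; and when it holds \cref{multiplicity-thm} moreover furnishes a closed formula for $m$, generalizing Kushnirenko's.

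The main step is then a single invocation of \cref{generic-thm-0} with $f_i := \dfd{i}$: it asserts that $\multzero{\dfd{1}}{\dfd{n}} = m$ precisely when $\dfd{1},\ldots,\dfd{n}$ satisfy \eqref{bkk-bar-0}. Substituting the displayed identity into this equivalence gives exactly the claim: $\mu(f) = m$ if and only if $\dfd{1},\ldots,\dfd{n}$ satisfy \eqref{bkk-bar-0}, that is, if and only if $f$ is partially Milnor non-degenerate. For the invocation to be legitimate one uses that \cref{generic-thm-0} --- like \cref{multiplicity-thm} --- is established for $n$-tuples of polynomials \emph{or} power series, and that the minimum $m$ occurring there coincides with the minimum taken over the polynomial family alone; both facts are part of the machinery already built for the intersection-multiplicity results.

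Because the theorem is a transcription of \cref{generic-thm-0} along $\mu(f) = \multzero{\dfd{1}}{\dfd{n}}$, there is no residual obstacle: necessity and sufficiency of \eqref{bkk-bar-0}, the finiteness dichotomy, and the value of $m$ have all been settled there. The only point that deserves an explicit line is the case $\mu(f) = \infty$: then $\mu(f) \neq m$ because $m < \infty$, and \eqref{bkk-bar-0} must fail as well --- were it to hold, \cref{generic-thm-0} would force $\multzero{\dfd{1}}{\dfd{n}} = m < \infty$, contradicting $\mu(f) = \infty$ --- so the equivalence holds verbatim there too. This also shows the hypothesis $m < \infty$ is essential: when $m = \infty$ (for instance when some $\dfd{i}$ vanishes identically) one has $\mu(f) = m = \infty$ while \eqref{bkk-bar-0} fails, so the equivalence breaks.
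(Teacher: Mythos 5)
Your reduction of the equivalence to \cref{generic-thm} via the tautological identity $\mu(f)=\multzero{\dfd{1}}{\dfd{n}}$ is exactly the paper's route for this part of \cref{milnor-thm}: assertion \eqref{milnor-partial-genericness} there is disposed of in one line by citing \cref{mult-support-solution}, \cref{multiplicity-thm}, \cref{generic-thm} and \cref{mult-monotone}, and your handling of the case $\mu(f)=\infty$ and of why the hypothesis $\multzeroGamma<\infty$ is needed is also fine.

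There is, however, a gap in what your argument establishes versus what the statement asserts. You prove: $\mu(f)=\multzeroGamma$ if and only if $\dfd{1},\ldots,\dfd{n}$ satisfy \eqref{bkk-bar-0}, where $\multzeroGamma$ is the minimum intersection multiplicity over \emph{all} $\mscrG$-admissible $n$-tuples. The theorem asserts that $\mu(f)$ equals \emph{the minimum Milnor number} over all functions with the same diagram (this is the reading fixed by the abstract and by the bullet immediately following the theorem in the introduction). These two minima coincide only if some $g$ with the given support actually attains $\mu(g)=\multzeroGamma$, and this is not automatic: the tuples $(\dgd{1},\ldots,\dgd{n})$ arising as gradients sweep out only a proper closed subvariety of the space of $\mscrG$-admissible tuples, so the Zariski density of non-degenerate tuples does not transfer. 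Were that subvariety contained in the degenerate locus, every $f$ would satisfy $\mu(f)>\multzeroGamma$, the minimum Milnor number would be strictly larger than $\multzeroGamma$, and the ``only if'' direction of the statement would fail for the $f$ attaining it. The paper flags precisely this danger and devotes essentially the entire proof of \cref{milnor-thm} to assertion \eqref{milnor-existence}: a direct verification --- via a monomial change of coordinates adapted to $\In_\nu(g_0)$ and a Bertini-type theorem valid in arbitrary characteristic --- that generic $g$ supported in $\supp(f)$ have non-degenerate gradients. Your proposal needs this existence statement (or an equivalent) to close the argument; the remark that ``the minimum occurring there coincides with the minimum taken over the polynomial family alone'' addresses only the polynomial-versus-power-series issue, not this one.
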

In particular, while (inner) Newton non-degeneracy is determined by {\em partial derivatives of initial forms} of $f$, partial Milnor non-degeneracy is determined by {\em initial forms of partial derivatives} of $f$. Note that
\begin{itemize}[wide]
\item A priori it is not clear if partially Milnor non-degenerate polynomials exist. Indeed, the collection of polynomials $f_1, \ldots, f_n$ which arise as derivatives of some polynomial is (contained in) a proper closed subvariety of the space of all polynomials with Newton diagrams $\Gamma_1, \ldots, \Gamma_n$, and it is a priori conceivable that this subvariety is contained in the complement of the Zariski dense set of polynomials which satisfy \eqref{bkk-bar-0}. Assertion \eqref{milnor-existence} of \cref{milnor-thm} ensures that this is not the case.\\


\item In particular, partial Milnor non-degeneracy does not suffer from any of the defects \ref{isolated-defect}--\ref{existential-defect}. \\

\item It follows from Kushnirenko's \cite{kush-poly-milnor} and Wall's \cite{wall} works that if a polynomial is Newton non-degenerate and has convenient Newton polytope, or if it is inner non-degenerate, then it is partially Milnor non-degenerate; we also give direct proofs of these results (\cref{newton-wall-lemma}). In \cref{newton-wall-lemma} we also show that if the singularity at the origin of $f = 0$ is isolated, then Newton non-degeneracy implies partial Milnor non-degeneracy, i.e.\ Newton non-degeneracy plus finite determinacy guarantee the minimality of the Milnor number at the origin; this seems to be a new observation. \\

\item In particular our results give an effective solution (\cref{kush-solution}), valid for fields of arbitrary characteristic, to the following problem considered and solved in the characteristic zero case by Kushnirenko \cite{kush-poly-milnor}: 

\begin{problem} \label{kush-problem}
Given a subset $\Sigma$ of $(\zz_{\geq 0})^n$, determine if there exists a power series $f$ in $n$ variables supported at $\Sigma$ such that the Milnor number at the origin is finite. If there exists such a function, then also determine the minimum possible Milnor number of such $f$.
\end{problem}

\end{itemize}

\subsubsection{Extension of the BKK bound to $\kk^n$} 


Given  a fixed collection $\mscrP := (\scrP_1, \ldots, \scrP_n)$ of $n$ convex integral polytopes in $\rr^n$, we characterize those systems of polynomials supported at these polytopes for which the number (counted with multiplicity) of isolated zeroes on $\kk^n$ (or more generally, on the complement of the union of a given set of coordinate subspaces of $\kk^n$) is the maximum (\cref{generic-bkk-thm}).
%
%
More precisely, we (combinatorially) divide the coordinate subspaces of $\kk^n$ into two (disjoint) groups $\tildeTP$ and $\tildeTprimeP$ such that for a given system $f = (f_1, \ldots, f_n)$ of polynomials supported at $\mscrP$, 
\begin{itemize}
\item for every $K \in \tildeTprimeP$, if $f$ has non-isolated roots on the `torus'\footnote{If $K = \bigcap_{j \in J} \{x_j= 0\}$, then the `torus' of $K$ is $K \cap \{\prod_{i \not\in J} x_i \neq 0\}$. \label{torus-footnote}} of $K$, then the number of its isolated roots on $\kk^n$ is less than that of a generic system.
\item if $K \in \tildeTP$,  then the existence of non-isolated roots of $f$ on the torus of $K$ does not necessarily imply that the number of isolated roots of $f$ on $\kk^n$ is less than that of a generic system. 
\end{itemize}
%

\begin{thm}[A special case of \cref{generic-bkk-thm}] \label{generic-bkk-thm-0} 
The number (counted with multiplicity) of isolated zeroes on $\kk^n$ of a  system $f = (f_1, \ldots, f_n)$ of polynomials supported at $\mscrP$ is the maximum if and only if the following two conditions hold:
\begin{defnlist}
\item \label{TPrime-property} for all $K' \in \tildeTprimeP$, and for all weighted order $\nu$ on $\kk[x_1, \ldots, x_n]$ which is either `centered at infinity' on $K'$ or `centered at the torus' of some $K \in \tildeTP$, the corresponding initial forms of $f_1|_{K'}, \ldots, f_n|_{K'}$ have no common zero on $\nktorus$, and
\item \label{TP-property} for all $K \in \tildeTP$ and for all weighed orders $\nu$ on $\kk[x_1, \ldots, x_n]$ centered at the torus of some $K' \in \tildeTprimeP$, the corresponding initial forms of $f_1|_K, \ldots, f_n|_K$ have no common zero on $\nktorus$. 
\end{defnlist}
\end{thm}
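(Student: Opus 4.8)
The plan is to deduce this from the more general intersection-multiplicity result, \Cref{generic-thm-0} (equivalently \Cref{generic-thm}), together with the new formula for the extended BKK bound (\Cref{bkk-thm}) and its toric/intersection-theoretic interpretation. The underlying philosophy, inherited from Bernstein's original argument, is that the number of isolated zeroes of $f = (f_1,\ldots,f_n)$ on $\kk^n$ can be computed on a suitable toric compactification $X$ of $\kk^n$ adapted to the polytopes $\mscrP$: one takes a toric variety whose fan refines the normal fans of the $\scrP_i$, pulls back the line bundles/Cartier divisors $D_i$ determined by the $\scrP_i$, and writes the total intersection number $D_1\cdots D_n$ on $X$ as a sum of local contributions. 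The maximal possible number of \emph{isolated} roots on $\kk^n$ is obtained by subtracting from this total intersection number the contributions forced to live on the boundary toric strata and on positive-dimensional components; the content of the theorem is to identify exactly which non-degeneracy failures cause a strictly smaller count.

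The key steps, in order, are as follows. \emph{(1)} Fix the toric compactification $X$ of $\kk^n$ and decompose its torus-orbit stratification; each coordinate subspace $K$ of $\kk^n$ is the closure of one torus $T_K$, and its own boundary in $X$ consists of further toric strata. Introduce the combinatorial partition of coordinate subspaces into $\tildeTP$ and $\tildeTprimeP$ — this is exactly the partition defined before the statement (via whether the polytope data forces a deficiency when $f$ has non-isolated roots on $T_K$), so I may take its defining property as given. \emph{(2)} For the ``only if'' direction: suppose one of the two non-degeneracy conditions \ref{TPrime-property} or \ref{TP-property} fails, i.e.\ some system of initial forms has a common zero on $\nktorus$ for a weighted order $\nu$ of the indicated type. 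Translate ``common zero of initial forms'' into ``the closures of the hypersurfaces $f_i = 0$ meet the toric stratum corresponding to $\nu$'' (this is the standard initial-form/toric dictionary, and is already the language used in \Cref{generic-thm-0}). Argue that such an intersection point either lies on a positive-dimensional component of the zero set on $\kk^n$, or else ``escapes'' into a boundary stratum, so that the count of isolated roots on $\kk^n$ drops strictly below the generic value given by \Cref{bkk-thm}; the role of the $\tildeTP$/$\tildeTprimeP$ dichotomy is precisely to say which $(\nu,K)$-pairs force this drop. \emph{(3)} For the ``if'' direction: assuming \ref{TPrime-property} and \ref{TP-property}, show that every boundary toric stratum and every $K \in \tildeTprimeP$ either carries no intersection point of the $\overline{\{f_i=0\}}$ at all, or carries only points that do not contribute to lowering the affine count. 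Here the essential input is \Cref{generic-thm-0}: applied on each $K' \in \tildeTprimeP$ (and on each relevant $K \in \tildeTP$), conditions \ref{TPrime-property}/\ref{TP-property} are exactly $(\overline{\text{BKK}}_0)$-type conditions that guarantee the intersection multiplicities along those strata are as small as possible, hence that the bound from \Cref{bkk-thm} is attained. Combine the stratum-by-stratum bounds by the additivity of intersection numbers on $X$.

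The main obstacle I expect is step \emph{(2)}, specifically the bookkeeping that distinguishes $\tildeTP$ from $\tildeTprimeP$: one must show that for $K \in \tildeTP$ a common zero of initial forms on the torus of $K$ does \emph{not} by itself cause a deficiency (so that only the ``centered at the torus of some $K' \in \tildeTprimeP$'' orders matter there), whereas for $K' \in \tildeTprimeP$ non-isolated roots on the torus of $K'$ genuinely cost us isolated affine roots. This requires understanding how the intersection number redistributes among strata under the degeneration encoded by $\nu$ — essentially a careful analysis of which boundary contributions are ``recoverable'' into $\kk^n$ and which are not — and it is where the precise combinatorial definition of the partition does its work. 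The remaining steps are, modulo this, a matter of assembling \Cref{generic-thm-0}, \Cref{bkk-thm}, and the density statement \Cref{bkk-existence} (to know the generic value is actually attained) via the standard toric-intersection formalism.
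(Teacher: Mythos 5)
Your outline is not the paper's argument, and as it stands it has a genuine gap precisely at the point you yourself flag as ``the main obstacle.'' The paper does not prove \cref{generic-bkk-thm} by decomposing an intersection number on a toric compactification stratum by stratum; it runs Bernstein's polynomial-homotopy argument in both directions. For sufficiency, it connects $f$ to a completely BKK non-degenerate system $g$ by $h_i = tg_i+(1-t)f_i$ and shows (\cref{sufficiently-bkk}) that any deficit in the isolated count of $f$ produces a branch of the one-dimensional family whose initial data violates one of the two conditions. For necessity, given a violation it \emph{constructs} an explicit deformation (the curve $c(t)$ of \eqref{c_j} and the polynomials $h_j$ of \eqref{h_j}) producing, for generic $t$, an extra isolated root that escapes to infinity, to a forbidden stratum, or to a non-isolated point. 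Your step (2) replaces this construction with the assertion that a common zero of initial forms ``causes the count to drop,'' which is exactly what must be proved: a common zero of initial forms on a boundary stratum does not by itself lower the affine count (the paper's \cref{pathologies2} and \cref{b-example} show that systems with non-isolated roots, hence with such common zeros, can still attain the maximum). Two specific ingredients are missing from your plan and cannot be supplied by ``additivity of intersection numbers'': (i) \cref{reduction-lemma}, needed because for coordinate subspaces $K$ with $\dim(K)$ smaller than the number of $f_j$ not vanishing on $K$ a generic system has \emph{no} roots on the torus of $K$, so the deformation cannot be performed there and one must first pass to a larger subspace; and (ii) \cref{trivial-lemma}/\cref{trivial-cor}, the toric-geometric argument showing that for $K\in\tildeTP$ a common zero of initial forms centered at the torus of some $K'\in\tildeTprimeP$ forces an actual non-isolated point of $V(f_1,\ldots,f_n)$ --- this is the entire content of condition \ref{TP-property}, and your sketch gives no mechanism for it.

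A secondary problem: your ``if'' direction invokes \cref{generic-thm-0} on each stratum, but that theorem concerns intersection multiplicity at the origin for weighted orders with positive weights, whereas condition \ref{TPrime-property} involves orders centered at infinity; there is no direct reduction of the boundary-at-infinity analysis to \cref{generic-thm-0}. The formula \cref{bkk-thm} gives the value of the maximum but is proved separately (and after) the attainment criterion, so it cannot be used as an input to establish attainment without circularity. To repair the proposal you would need to import the homotopy machinery (\cref{isolated-family}, \cref{branch-lemma}, \cref{bkk-existence}) and the two lemmas above; at that point you would essentially be reproducing the paper's proof rather than giving an alternative one.
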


 We also give an equivalent version of the non-degeneracy criterion from \cref{generic-bkk-thm-0} which is computationally less expensive (\cref{weak-bkkriterion}), and a new formula for the maximum possible number of isolated zeroes on $\kk^n$ (\cref{bkk-thm}). 

\subsection{Idea of proof of non-degeneracy criteria, and differences from Bernstein's theorem}
\label{idea-section}
\subsubsection{} \label{sufficient-sketch} Our proof of the correctness of non-degeneracy criteria for the intersection multiplicity at the origin and for the extension of BKK bound follows Bernstein's \cite{bern} polynomial homotopy approach for the proof of correctness of BKK non-degeneracy. In particular, the basic idea of the proof of {\bf sufficiency} of the non-degeneracy criteria is as follows: given a sytem $f = (f_1, \ldots, f_n)$ of polynomials in $x := (x_1, \ldots, x_n)$ we consider a one parameter family of systems $f(x,t)= (f_1(x,t), \ldots, f_n(x,t))$ of polynomials that $f(x,0) = f$. If for generic $t$ the intersection multiplicity at the origin of $f_1(x,t), \ldots, f_n(x,t)$ is smaller than that of $f_1, \ldots, f_n$, then there is a curve $C$ consisting of non-zero roots of $f_1(x,t), \ldots, f_n(x,t)$ for $t \neq 0$ which `approaches the origin' as `$t$ approaches zero'. Similarly, if for generic $t$ the number of isolated roots of $f_1(x,t), \ldots, f_n(x,t)$ on $\kk^n$ is greater than that of $f_1, \ldots, f_n$, then there is a curve $C$ consisting of isolated roots of $f_1(x,t), \ldots, f_n(x,t)$ for $t \neq 0$ such that as `$t$ approaches zero', $C$ either `goes to infinity', or `approaches' a non-isolated root of $f_1, \ldots, f_n$. If $K$ is the smallest coordinate subspace of $\kk^n$ containing $C$ and $\nu$ is an appropriate weighted order such that $\nu(x_i)$ is proportional to the order of vanishing of $x_i|_C$ at the limiting point on $C$ as $t$ approaches $0$, then the non-degeneracy criteria are violated for $K$ and $\nu$.

\subsubsection{} \label{necessary-sketch} Similarly, the basic idea of our proof of the {\bf necessity} of the non-degeneracy criteria is as follows: given a degenerate system $f = (f_1, \ldots, f_n)$ of polynomials and a coordinate subspace $K$ of $\kk^n$ such that \eqref{bkk-bar-0} fails for $f_1|_K, \ldots, f_n|_K$, we try to construct following Bernstein \cite{bern} an explicit deformation of $f(x,t)$ of $f$ such that for generic $t$ there is a non-zero root of $f_1(x,t), \ldots, f_n(x,t)$ on $K$ which approaches the origin as $t$ approaches zero; if this is possible, then it follows that the intersection multiplicity at the origin of a degenerate system is higher than the minimum possible value. Similarly, if the non-degeneracy criterion for the extended BKK bound fails for $f_1|_K, \ldots, f_n|_K$, we try to construct a deformation $f(x,t)$ of $f$ such that for generic $t$ there is an isolated root of $f_1(x,t), \ldots, f_n(x,t)$ on $K$ which approaches either infinity or a non-isolated root of $f$ as $t$ approaches zero. 

\subsubsection{} \label{pathologies} The proof of sufficiency of \eqref{bkk-bar-0} follows easily from the arguments outlined in \cref{sufficient-sketch}. However, the proof of necessity of \eqref{bkk-bar-0} runs into a problem in the case that it fails for $f_1|_K, \ldots, f_n|_K$ for some coordinate subspace $K$ such that the dimension of $K$ is {\em less than} the number $N_K$ of $f_j$'s which do not identically vanish on $K$. In that case generic systems have {\em no} solution on the `torus' (see \cref{torus-footnote}) of $K$, so that Bernstein's deformation trick outlined in \cref{necessary-sketch} can not be executed. However, we show in \cref{reduction-lemma} that this scenario can be ignored: if $f_1|_K, \ldots, f_n|_K$ is degenerate for some $K$ such that $\dim (K) < N_K$, then there is a coordinate subspace $K' \supset K$ such that $f_1|_{K'}, \ldots, f_n|_{K'}$ is also degenerate and $\dim (K') = N_{K'}$. It follows that the deformation trick can be performed on $K'$. The necessity of \eqref{bkk-bar-0} follows then in a straightforward way via the arguments sketched in \cref{necessary-sketch}. Regarding the non-degeneracy criterion for the extended BKK bound, however, one has to be more careful and show that for generic $t$ the extra root of $f(x,t)$ is in fact isolated. 

\subsubsection{} \label{pathologies2} While the non-degeneracy criterion \eqref{bkk-bar-0} for intersection multiplicity is simple and very close to the non-degeneracy criterion \eqref{bkk} for the BKK bound, the non-degeneracy criterion in \cref{generic-bkk-thm-0} for the extended BKK bound is evidently more complicated. One of the reasons for this is the presence of non-isolated zeroes: indeed, if a system of $n$ polynomials has a non-isolated root on $\nktorus$, then it follows from Bernstein's theorem that the number of isolated roots on $\nktorus$ is {\em less} than that of a generic system\footnote{provided a generic system has at least one isolated root on $\nktorus$.}. However, for the case of $\kk^n$, it is possible to have two systems of polynomials with identical Newton polytopes such that both have the maximum possible number of isolated roots, but one has non-isolated roots on $\kk^n$ while the other has only isolated roots (\cref{b-example}). More precisely, one has the following:   
\begin{prop}[A special case of \cref{non-isolated-cor}] \label{non-isolated-lemma-0}
Let $\mscrP:= (\scrP_1, \ldots, \scrP_n)$ be a collection of $n$ convex integral polytopes in $(\rr_{\geq 0})^n$. Then each coordinate subspace $K$ of $\kk^n$ has one (and only one) of the following properties: 
\begin{enumerate}
\item \label{non-isolated-<-0} a generic system supported at $\mscrP$ has finitely many (possibly zero) roots on the `torus' (see \cref{torus-footnote} for the definition) $K^*$ of $K$; existence of non-isolated roots on $K^*$ implies that the number of isolated roots on $\kk^n$ is less than that of a generic system.
\item \label{non-isolated-not-<-0} no system supported at $\mscrP$ has any isolated root on $K^*$; existence of non-isolated roots on $K^*$ does not necessarily imply that the number of isolated roots on $\kk^n$ is less than that of a generic system.
\end{enumerate}
\end{prop}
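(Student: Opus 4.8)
The plan is to reduce the dichotomy to the combinatorial partition $\{\tildeTP,\tildeTprimeP\}$ of the coordinate subspaces that underlies \cref{generic-bkk-thm}, and then to verify the two bulleted properties for the two families by applying Bernstein's theorem on the torus of $K$ together with the non-degeneracy criterion of \cref{generic-bkk-thm-0}. Fix a coordinate subspace $K = \bigcap_{j \in J}\{x_j = 0\}$, put $I := \{1, \ldots, n\} \setminus J$ (so that $K^* \cong (\kk^*)^I$ and $\dim K = |I|$), let $\scrI_K$ be the set of indices $i$ for which $\scrP_i$ meets the coordinate subspace $\{u_j = 0 : j \in J\}$ of $\rr^n$ --- equivalently, for which a generic $f_i$ supported at $\scrP_i$ does not vanish identically on $K$ --- and for $i \in \scrI_K$ let $\scrP_i^K$ be the corresponding face of $\scrP_i$, regarded as a polytope in $\rr^I$. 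For every system $f$ supported at $\mscrP$, the zero set of $f$ on $K^*$ is cut out by the Laurent polynomials $f_i|_K$, $i \in \scrI_K$ (the remaining $f_i$ vanish on $K$), whose supports lie in the $\scrP_i^K$. I would then identify property (1) with $K \in \tildeTprimeP$ and property (2) with $K \in \tildeTP$; since these two families partition the coordinate subspaces, the `one and only one' assertion is immediate. The combinatorial input to extract from the construction of $\tildeTP$ and $\tildeTprimeP$ is that $K \in \tildeTprimeP$ exactly when $K$ contributes to the count of a generic system on $\kk^n$ through a full-dimensional subsystem on $K^*$ --- i.e.\ when some (equivalently, a generic) choice of $|I|$ of the polytopes $\scrP_i^K$ has positive mixed volume and the resulting isolated zeros on $K^*$ persist as isolated zeros of a generic $f$ on $\kk^n$; by the classical positivity criterion ($\mv(Q_1, \ldots, Q_d) > 0$ iff $\dim \sum_{i\in S}Q_i \ge |S|$ for all $S$) this is a purely combinatorial condition on $\mscrP$.

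For $K \in \tildeTprimeP$, I would argue property (1) as follows. Finiteness of the root set of a generic system on $K^*$ is Bernstein's theorem applied to the torus $K^*$, the restricted system being generic among systems of $|\scrI_K| \ge \dim K$ Laurent polynomials with Newton polytopes $\scrP_i^K$. For the second clause, suppose $f$ is supported at $\mscrP$ and has a positive-dimensional component contained in $K^*$; the necessity half of Bernstein's theorem then yields a nonzero $\omega \in \rr^I$ whose $\omega$-initial forms of the $f_i|_K$ ($i \in \scrI_K$) have a common zero on $(\kk^*)^I$. Extending $\omega$ to a weighted order on $\kk[x_1,\dots,x_n]$ that is `centered at infinity on $K$' --- a direction the conditions of \cref{generic-bkk-thm-0} are required to control precisely because $K \in \tildeTprimeP$ --- exhibits a failure of condition \ref{TPrime-property}. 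Hence $f$ is degenerate in the sense of \cref{generic-bkk-thm-0}, and so the number of its isolated zeros on $\kk^n$ is strictly less than the maximum, i.e.\ than that of a generic system.

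For $K \in \tildeTP$, I would argue property (2) as follows. The combinatorial condition defining $\tildeTP$ says that no $|I|$-element subfamily of $\{\scrP_i^K\}_{i\in\scrI_K}$ is large enough to force isolated solutions on $K^*$, and by the mixed-volume criterion this forces, for every $f$ supported at $\mscrP$, that the zero set of the $f_i|_K$ on $K^*$ has no point isolated in $\kk^n$ --- already inside $K^*$ every component through such a point would have to be positive-dimensional. For the second clause one exhibits a single system $f$ supported at $\mscrP$ that attains the maximal number of isolated zeros on $\kk^n$ yet has a non-isolated zero on $K^*$: start from a maximum-attaining system $g$ (such $g$ exist by \cref{generic-bkk-thm-0} and a genericity argument whenever the maximum count is positive, the claim being trivial otherwise), and modify $g$ to $f$ by altering only the coefficients of monomials whose exponents lie in $\rr^I$, so that the restricted system $\{f_i|_K\}_{i\in\scrI_K}$ acquires a positive-dimensional zero set on $K^*$ --- this is possible precisely because no $|I|$-subfamily of $\{\scrP_i^K\}$ is full-dimensional --- and check that this perturbation preserves the conditions of \cref{generic-bkk-thm-0}, since for $K \in \tildeTP$ those conditions only involve weighted orders that do not detect the interior of the torus of $K$.

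The hard part will be this last perturbation argument: producing a maximum-attaining system with a prescribed positive-dimensional degeneracy on $K^*$ hinges on knowing exactly that the conditions of \cref{generic-bkk-thm-0} impose no constraint on the behaviour of $f$ in the interior of the torus of a subspace in $\tildeTP$ --- which is precisely what distinguishes $\tildeTP$ from $\tildeTprimeP$ --- and on matching the Bernstein weights that appear above with the notions `centered at infinity on $K$' and `centered at the torus of $K$'. A secondary point is to handle the degenerate situations in which several clauses become vacuous (for instance when a generic system has no isolated zero on $\kk^n$ at all, so that every $K$ falls under case (2)), where one argues directly; alternatively, all of this can be packaged by deducing the statement from \cref{non-isolated-cor} once the latter is in place.
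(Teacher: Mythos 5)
Your overall architecture (identify the dichotomy with the exotrivial/non-exotrivial partition underlying $\tildeTP$ and $\tildeTprimeP$, then use Bernstein on the torus of $K$ plus the non-degeneracy theorem) matches the paper's, and your treatment of case (1) is essentially the paper's \cref{curve-claim}/\cref{branch-lemma} argument. But two steps in case (2) have genuine gaps. First, the clause ``no system supported at $\mscrP$ has any isolated root on $K^*$'' is exactly \cref{absolute-prop}, and your justification for it is wrong: you argue that ``already inside $K^*$ every component through such a point would have to be positive-dimensional,'' but triviality of $\mscrP$ on $\Kstari$ does not prevent a particular (non-generic) system from having a zero-dimensional component of $V(f|_K)\cap K^*$ --- the paper's own example $(x+y-1,\,2x-y-2)$ on $\{y=0\}$ is precisely such a system. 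What \cref{absolute-prop} asserts is that for \emph{exotrivial} $K$ such a root is forced to be non-isolated in $V(f)\subseteq\kk^n$ because of components that leave $K$; establishing this is the content of the deformation-and-toric argument in \cref{absolute-cor} (and \cref{trivial-lemma}), and it cannot be read off from the mixed-volume positivity criterion alone.

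Second, for the clause ``existence of non-isolated roots on $K^*$ does not necessarily imply the count drops,'' you attempt a general perturbation construction (modify a maximum-attaining system so that it acquires a positive-dimensional zero set on $K^*$ while remaining non-degenerate), and you yourself flag its key step as unproven. This is both more than the statement requires and the part most likely to fail: it is not clear that such a perturbation exists for every $\mscrP$ and every exotrivial $K$ while keeping the supports inside the $\scrP_i$ and preserving all the non-degeneracy conditions. The paper instead reads ``does not necessarily imply'' as the failure of an implication, which a single explicit witness settles; this is what \cref{b-example}\ref{b-example-generic-1} provides in the proof of \cref{non-isolated-cor}. A smaller issue in your case (1): you assume the positive-dimensional component through the non-isolated root is contained in $K^*$, whereas the hypothesis only gives a root on $K^*$ that is non-isolated in $\kk^n$; the witnessing curve may leave $K$, which is why the paper's \cref{curve-claim} passes to the smallest coordinate subspace containing that curve before applying the branch-at-infinity argument.
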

It follows that while trying to detect degeneracy of $f_1, \ldots, f_n$ for the extended BKK bound, for every coordinate subspace $K$ of $\kk^n$,
\begin{itemize}
\item non-isolated roots of $f_1|_K, \ldots, f_n|_K$ should be detected only if $K$ is as in assertion \eqref{non-isolated-<-0} of \cref{non-isolated-lemma-0} (this is partly why condition \ref{TPrime-property} of \cref{generic-bkk-thm-0} is applied only to coordinate subspaces from $\tildeTprimeP$), and
\item a criterion should be formulated to detect if any isolated root of $f_1|_K, \ldots, f_n|_K$ is {\em non-isolated} in the set of zeroes of $f_1, \ldots, f_n$ in $\kk^n$ (\cref{c-example}); this is partly condition \ref{TP-property} of \cref{generic-bkk-thm-0}. 
\end{itemize}

\subsection{Idea of proof of the formulae for intersection multiplicity and the extended BKK bound}
\label{formula-idea-section}

Our formulae for intersection multiplicity (\cref{multiplicity-thm}) and extended BKK bound (\cref{bkk-thm}) are quite different from the formulae arrived in \cite{hurmfels-bern} and \cite[Corollary 1]{rojas-toric} via Huber and Sturmfels' polynomial homotopy arguments, and we do not know of any direct proof of their equivalence. %
Huber and Sturmfels' formulae originate from taking points in a generic fiber of the map determined by a non-degenerate system of polynomials, and tracking how many of these approach the origin (for the intersection multiplicity) or the pre-image of the origin (for the extended BKK bound). On the other hand, our formulae originate from the same approach as of Bernstein's proof of the BKK bound: given a non-degenerate system of polynomials $f_1, \ldots, f_n$, we consider the curves in the zero set of $f_2, \ldots, f_n$, and sum up the order at the origin (for the intersection multiplicity) or at points at infinity (for the extended BKK bound) of the restriction of $f_1$ along these curves. On the level of convex geometry, our formulae generalize the following (well-known) property of the mixed volume of $n$ convex bodies $\scrP_1, \ldots, \scrP_n$: 
\begin{align}
\mv(\scrP_1, \ldots, \scrP_n)
	&= \sum_\omega \omega(\scrP_1)\mv(\ld_\omega(\scrP_2), \ldots, \ld_\omega(\scrP_n)) \tag{$*$}
	\label{mixed-expression}
\end{align}
where $\mv$ denotes the mixed volume, the sum on the right hand side is over all `directions' $\omega$, $\omega(\scrP_1)$ is the maximum of $\langle \omega, v \rangle$ over all $v \in \scrP_1$, and $\ld_\omega(\scrP_j)$ is the `leading face' of $\scrP_j$ in the direction of $\omega$. In particular, as in the case of \eqref{mixed-expression}, the expressions on the right hand sides of our formulae from \cref{multiplicity-thm,bkk-thm} are {\em not} symmetric in their arguments. It will be nice to have a convex geometric argument for their symmetric dependence on the input polytopes. 

\begin{rem} \label{convenient-remark}
Combining Kushnirenko's \cite{kush-poly-milnor} results with ours we see that if the Newton diagram of a polynomial $f$ is convenient, then the Milnor number of the singularity at the origin of $f = 0$ is given by Kushnirenko's formula from \cite{kush-poly-milnor} iff $f$ is Milnor non-degenerate\footnote{provided there exist Newton non-degenerate or inner non-degenerate polynomials with the same diagram - which depends on the characteristic of the field.}. In particular,  in this case Kushnirenko's and our formulae compute the same number. However, the latter expresses the number as a sum of positive numbers, whereas the former is an alternating sum - it should be interesting to find a direct proof for the equivalence of these formulae.
\end{rem}

\subsection{Organization}
In \cref{example-section} we give some examples of new phenomena that one encounters when extending BKK bound from the torus to the affine space. \Cref{notection} introduces some notations and conventions to be used throughout this article. In \cref{mult-section} we state the solution to the intersection multiplicity problem, which we apply in \cref{milnor-section} to the problem of generic Milnor number. \Cref{milnor-section} also contains some open problems (\cref{open-problem}) on Milnor non-degeneracy. In \cref{affine-section} we state the results on extended BKK bound. \Crefrange{generic-proof-section}{bkk-proof-section} are devoted to the proofs of results from \cref{mult-section,affine-section}. \Cref{bkk-section} contains some technical results on existence and deformations of systems of polynomials which satisfy certain stronger non-degeneracy conditions; we suspect these are well known to experts, but we could not find any reference in the literature. \Cref{technical-section} contains a technical lemma on the algebraic definition of intersection multiplicity which we use in the proofs of computations of intersection multiplicity (\cref{multiplicity-thm}) and the extended BKK bound (\cref{bkk-thm}). In \cref{counter-section} we give some counter examples to \cite[Theorem I]{rojas-wang} and \cite[Affine Point Theorem II]{rojas-toric}.

\subsection{Acknowledgments}
I thank Pierre Milman and Dmitry Kerner for reading some of the earlier drafts of this article and providing inputs which made it substantially richer and more readable.\\

In a sequel we apply the results of this article to the study of the general `affine B\'ezout problem' of counting number of isolated solutions of an arbitrary (i.e.\ possibly degenerate) system of polynomials: we start from the extended BKK bound as the first approximation, and correct it in a finite number of steps to find the exact number of solutions. Explicit non-degeneracy criteria (such as the ones from this article) are required at each step to detect if the correct bound has been reached. 

{\small
\tableofcontents}

\addtocontents{toc}{\protect\setcounter{tocdepth}{2}}

\section{Examples of oddities regarding extension of BKK bound to $\kk^n$}\label{example-section}

For polynomials $f_1, \ldots f_n \in \kk[x_1, \ldots, x_n]$ and a subset $W$ of $\kk^n$, below we write $\multisof_W$ for the number (counted with multiplicity) of isolated zeroes of $f_1, \ldots, f_n$ which appear on $W$. 


\begin{example}\label{b-example}
Let $g_1, g_2, g_3 \in \kk[x,y,z]$ be polynomials supported at the tetrahedron $\scrT$ with vertices $(0,0,0), (1,1,0), (0, 1, 1), (1,0,1)$, i.e.\ 
$g_j = 	a_j +b_jxy + c_jyz + d_jzx$, $a_j, b_j, c_j, d_j \in \kk$, $j = 1, 2, 3$. Let
\begin{align*}
f_j &:= 
			\begin{cases}
			 g_j	 & \text{if}\ j=1,2,\\
			 zg_j & \text{if}\ j=3.
			\end{cases}
\end{align*}
Then
\begin{defnlist}
\item \label{b-example-generic-0} If all the coefficients are generic, then the set $V(f_1, f_2, f_3)$ of zeroes of $f_1, f_2, f_3$ on $\kk^3$ is isolated. Moreover, $V(f_1, f_2, f_3) \subset (\kk^*)^3$ and Bernstein's theorem implies that
\begin{align*}
[f_1,f_2,f_3]^{iso}_{\kk^3}  = [g_1,g_2,g_3]^{iso}_{(\kk^*)^3} = 6\vol(\scrT) = 2.
\end{align*}
\item \label{b-example-generic-1} On the other hand, if $a_1 = a_2$, $b_1 = b_2$, and the rest of the coefficients are generic, then $\{z = a_1 + b_1xy = 0\}$ is a positive dimensional component of $V(f_1, f_2, f_3)$. However, all the zeroes of $f_1, \ldots, f_3$ on $(\kk^*)^3$ are still isolated, and Bernstein's theorem guarantees that $[f_1,f_2,f_3]^{iso}_{\kk^3} = 2$. 
\item \label{b-example-non-generic} Finally, if $a_1 = a_2 = a_3$, $b_1 = b_2 = b_3$, and the rest of the coefficients are generic, then again $\{z = a_1 + b_1xy = 0\}$ is the positive dimensional component of $V(f_1, f_2, f_3)$. However, \eqref{bkk} fails for the weighted order $\nu$ with weights $(-1,1,2)$ for $(x,y,z)$, and Bernstein's theorem implies that $[f_1,f_2,f_3]^{iso}_{\kk^3} < 2$. 
\end{defnlist}
\end{example}

\begin{example}\label{c-example}
Take $f_1, f_2 \in \kk[x_1, x_2]$ such that the {\em Newton polytope} (see \cref{Newton-definition}) $\scrP_i$ of each $f_i$ contains the origin, the mixed volume $\mv(\scrP_1, \scrP_2)$ of $\scrP_1$ and $\scrP_2$ is non-zero, and $f_1, f_2$ are BKK non-degenerate (i.e.\ they satisfy \eqref{bkk}). Let $f_{3,1}, f_{3,2}, f_{4,1}, f_{4,2}$ be  arbitrary polynomials in $\kk[x_1, x_2]$, and set 
\begin{align*}
f_j &:= x_3f_{j,1}(x_1, x_2) + x_4f_{j,2}(x_1,x_2) \in \kk[x_1, x_2, x_3, x_4],\ j = 3, 4.
\end{align*} 
Then 
\begin{defnlist}
\item If the coefficients of $f_{3,1}, f_{3,2}, f_{4,1}, f_{4,2}$ are generic, then Bernstein's theorem implies that 
\begin{align*}
[f_1,f_2,f_3, f_4]^{iso}_{\kk^4} = [f_1,f_2,x_3, x_4]^{iso}_{\kk^4} = \mv(\scrP_1, \scrP_2). 
\end{align*}
\item \label{c-example-non-generic} Now fix a common zero $z$ of $f_1,f_2$ in $(\kk^*)^2$. If $f_{3,1}(z) = f_{4,1}(z) = 0$, then $\{(z_1,z_2, t, 0): t \in \kk\} \subseteq  V(f_1, f_2, f_3, f_4)$, so that $(z_1, z_2, 0,0)$ is no longer an isolated zero of $f_1, f_2, f_3, f_4$. It follows that $[f_1,f_2,f_3, f_4]^{iso}_{\kk^4} <  \mv(\scrP_1, \scrP_2)$. 
\end{defnlist}
\end{example}

\section{Notations, conventions and remarks} \label{notection}
Throughout this article, $\kk$ is an algebraically closed field of arbitrary characteristic and $n$ is a {\em positive} integer.

\begin{defn} \label{Newton-definition}
 Let $h := \sum_\alpha c_\alpha x^{\alpha} \in \kk[x_1, \ldots, x_n]$.
\begin{itemize}
\item The {\em support} of $h$ is $\supp(h) := \{\alpha: c_\alpha \neq 0\} \subseteq \zz^n$. 
\item The {\em Newton polytope} $\np(h)$ of $h$ is the convex hull $\conv(\supp(h))$ of $\supp(h)$ in $\rr^n$.
\item  The {\em Newton diagram} $\nd(h)$ of $h$ is the union of the compact faces of $(\rr_{\geq 0})^n + \np(h)$. 
\item A {\em diagram} in $\rr^n$ is the Newton diagram of some polynomial in $\kk[x_1, \ldots, x_n]$. 
\end{itemize}
\end{defn}

\begin{notation}
We write $[n] := \{1, \ldots, n\}$. For $I \subseteq [n]$ and $k = \kk$ or $\rr$, we write
\begin{align*}
\ki & := \{(x_1, \ldots, x_n) \in k^n: x_i =  0\ \text{if}\ i \not\in I\}  \cong k^{|I|}
\end{align*}
and denote by $\pi_I: k^n \to \ki$ the projection in the coordinates indexed by $I$, i.e.\
\begin{align}
\text{the $j$-th coordinate of $\pi_I(x)$}&:= 
			\begin{cases}
			x_j &\text{if}\ j \in I\\
			0	 &\text{if}\ j \not\in I.
			\end{cases} \label{pi_I}
\end{align}
For $\Gamma \subseteq \rr^n$, we write 
$$\Gamma^I := \Gamma \cap \ri$$
We denote by $\kk^*$ the {\em algebraic torus} $\kk\setminus \{0\}$, and write
\begin{align*}
\Kstari &:=  \{\prod_{i\in I} x_i \neq 0\} \cap \Ki \cong (\kk^*)^{|I|}
\end{align*}
Note that $\Kii{\emptyset} = \Kstarii{\emptyset} = \{0\}$. We write $\Ai$ for the coordinate ring $\kk[\Ki] = \kk[x_i: i \in I]$ of $\Ki$. 
\end{notation}

\begin{defn} \label{val-definition}
Let $I :=  \{i_1, \ldots, i_m\} \subseteq [n]$, where $m := |I|$. A {\em monomial valuation} or a  {\em weighted order} on $\Ai$ is a map $\nu: \Ai\setminus \{0\} \to \zz$ defined by 
\begin{align*}
\nu(\sum_\alpha c_\alpha x^\alpha) := \min\{v \cdot \alpha: c_\alpha \neq 0\}
\end{align*}
where  $v:= (v_{i_1}, \ldots, v_{i_m}) \in \zz^m$ is such that 
\begin{itemize}
\item either $v = (0, \ldots, 0)$ (the trivial valuation), or
\item $\gcd(v_{i_1}, \ldots, v_{i_m}) = 1$ (see \cref{gcd-convention} below).
\end{itemize}
We will often abuse the notation and identify $\nu$ with $v$. Given $I^* \subseteq I$, we say that the {\em center} of $\nu$ is $\Kstarii{I^*}$ if 
\begin{itemize}
\item $\nu_{i_j} \geq 0$ for all $j$, $1 \leq j \leq m$, and
\item $I^*  = \{i_j: \nu_{i_j} = 0\}$. 
\end{itemize}
In particular, we say that $\nu$ is {\em centered at the origin} if $\nu_{i_j} > 0$ for each $j$, $1 \leq j \leq m$. If there exists $j$ such that $\nu_{i_j} < 0$, then we say that the $\nu$ is {\em centered at infinity}. We write $\Vi$ (resp.\ $\Vzeroi$, $\Vi_\infty$) for the space of all monomial valuations (resp.\ monomial valuations centered at the origin, monomial valuations centered at infinity) on $\Ai$. We also write $\V := \Vii{[n]}$, $\Vzero := \Vzeroii{[n]}$, $\V_\infty := \Vii{[n]}_\infty$. 
\end{defn}

\begin{defn}
A {\em weighted degree} $\omega$ on $\Ai$ is the negative of a monomial valuation on $\Ai$. We say that $\omega$ is {\em centered at infinity} on $\Ai$ if $\omega(x_i) > 0$ for at least one $i \in I$, or equivalently, if $-\omega \in \Vi_\infty$. We denote by $\wtsi$ the space of all weighted degrees centered at infinity on $\Ai$, and write $\wts := \wtsii{[n]}$.  
\end{defn}

\begin{rem}
We want to emphasize that by our definition the greatest common divisor of weights of each non-trivial monomial valuation or weighted degree is $1$.
\end{rem}

\begin{defn} \label{val-diagram-defn}
Let $I \subseteq [n]$, $\nu \in \Vi$, $\omega \in \wtsi$, $\Gamma \subseteq \ri$, $f = \sum_\alpha c_\alpha x^\alpha \in \Ai$. Define
\begin{align*}
\nu(\Gamma) &:= \inf\{\nu \cdot \alpha: \alpha \in \Gamma\} \\
\omega(\Gamma) &:= \sup\{\omega \cdot \alpha: \alpha \in \Gamma\}
\end{align*}
The {\em initial face} (resp.\ {\em leading face}) of $\Gamma$ and the {\em initial form} (resp.\ {\em leading form}) of $f$ corresponding to $\nu$ (resp.\ $\omega$) are defined as
\begin{align*}
\In_\nu(\Gamma)
	&:= \{\alpha \in \Gamma: \nu \cdot \alpha = \nu(\Gamma)\} 
	&
\ld_\omega(\Gamma)
	&:= \{\alpha \in \Gamma: \omega \cdot \alpha = \omega(\Gamma)\} \\
\In_\nu(f)
	&:= \sum_{\alpha\in \In_\nu(\supp(f))} c_\alpha x^\alpha 
	&
\ld_\omega(f)
	&:= \sum_{\alpha\in \ld_\omega(\supp(f))} c_\alpha x^\alpha
\end{align*}
\end{defn}

\begin{rem}
Note that if $\Gamma = \emptyset$ (resp.\ if $f = 0$) then $\nu(\Gamma) = \infty$, $\omega(\Gamma) = -\infty$ (resp.\ $\nu(f) = \infty$, $\omega(f) = -\infty$). Also, if $\nu$ is the trivial valuation, then $\In_\nu(\Gamma) = \Gamma$ and $\In_\nu(f) = f$ for all $\Gamma$ and $f$. 
\end{rem}

\begin{convention} \label{gcd-convention}
Given $k_1, \ldots, k_l \in \zz$, $d := \gcd(k_1, \ldots, k_l)$ is defined as follows:
\begin{itemize}
\item if each $k_j = 0$, then $d = 0$.
\item otherwise $d$ is the {\em positive} greatest common divisor of the non-zero elements in $\{k_1, \ldots, k_l\}$.
\end{itemize}
\end{convention}

\begin{convention} \label{coconvention}
An assertion of the form ``codimension of $X$ in $Y$ is $m$'', where $X$ is a subvariety of $Y$ and $\dim(Y) < m$, means that $X = \emptyset$. 
\end{convention}

\begin{defn} \label{mixed-defn}
Let $\scrP_1, \ldots, \scrP_n$ be convex polytopes in $\rr^n$. The $n$-dimensional {\em mixed volume} $\mv(\scrP_1, \ldots, \scrP_n)$ of $\scrP_1, \ldots, \scrP_n$, we denote the coefficient of $\lambda_1 \cdots \lambda_n$ in the homogeneous polynomial $\vol(\lambda_1\scrP_1 + \cdots + \lambda_n\scrP_n)$. If $\scrP_1 + \cdots + \scrP_k$ has dimension $\leq k$, $1 \leq k \leq n$, then we write $MV(\scrP_1, \ldots, \scrP_k)$ to denote the $k$-dimensional mixed volume of $\scrP_1, \ldots, \scrP_k$. 
\end{defn}

\begin{rem}
Note that according to our definition $\mv(\scrP, \ldots, \scrP) = n!\vol(\scrP)$. There is another convention present in the literature in which mixed volume is defined to be $1/n!$ times the mixed volume from our definition. We chose to follow the convention that makes our formulae simpler. 
\end{rem}

\section{Intersection multiplicity} \label{mult-section}
In this section we state our results on intersection multiplicity at the origin of generic systems of $n$ polynomials or power series in $n$ variables. \Cref{mult-support-solution} characterizes collections of diagrams which admit polynomials with finite intersection multiplicity at the origin. \Cref{multiplicity-thm} gives a formula for the minimum intersection multiplicity and \cref{multiplicity-explanation} describes the meaning of the terms that appear in the formula. The necessary and sufficient conditions for the attainment of the minimum intersection multiplicity are described in \cref{generic-thm}. \Cref{weak-lemma} gives an equivalent criterion with fewer conditions. 

\begin{defn} \label{Isolated-defn}
Let  $\mscrG := (\Gamma_1, \ldots, \Gamma_m)$ be an ordered collection of diagrams in $\rr^n$ and $I$ be a non-empty subset of $[n]$. Define
\begin{align}
\NiG &:= \{j \in [m]: \Gamma^I_j \neq \emptyset\} \label{NiG}
\end{align}
We say that
\begin{itemize}
\item $f_1, \ldots, f_m \in \kk[x_1, \ldots, x_n]$ are {\em $\mscrG$-admissible} iff $\nd(f_j) = \Gamma_j$, $1 \leq j \leq m$.
\item  $\mscrG$ is {\em $I$-isolated at the origin} iff $\left|\NiG\right| \geq |I|$.
\end{itemize}
\end{defn}

The choice of the term {\em isolated} in \cref{Isolated-defn} is motivated by the following lemma:

\begin{lemma} 
\label{isolated-lemma}
Let $\mscrG := (\Gamma_1, \ldots, \Gamma_m)$ be a collection of diagrams in $\rr^n$ and $I \subseteq [n]$. Assume $0 \not\in \Gamma_i$ for each $j$, $1 \leq j \leq m$.  
\begin{enumerate}
\item If $\mscrG$ is not $I$-isolated at the origin, then for all $\mscrG$-admissible $f_1, \ldots, f_m \in \kk[x_1, \ldots, x_n]$, $V(f_1, \ldots, f_m) \cap \Ki$ is non-empty with positive dimensional components.  
\item If $\mscrG$ is $I$-isolated at the origin, then $V(f_1, \ldots, f_m) \cap \Kstari$ is isolated for generic $\mscrG$-admissible $f_1, \ldots, f_m \in \kk[x_1, \ldots, x_n]$. 
\end{enumerate}
\end{lemma}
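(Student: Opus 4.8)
The plan is to translate the combinatorial condition "$\mscrG$ is $I$-isolated at the origin" into a dimension count for the intersection $V(f_1,\ldots,f_m)\cap\Ki$, and then for part (2) upgrade genericity on the torus via Bernstein-type considerations. Throughout I work inside $\Ki\cong\kk^{|I|}$ with coordinate ring $\Ai$; the key point is that only those $f_j$ with $\Gamma_j^I\neq\emptyset$ restrict to a nonzero function on $\Ki$, i.e.\ $\NiG$ indexes precisely the $f_j|_{\Ki}$ that are not identically zero. Indeed, since $0\notin\Gamma_j$, every monomial appearing in $f_j$ has positive total degree; $f_j|_{\Ki}$ is obtained by discarding all monomials involving an $x_k$ with $k\notin I$, and what survives is exactly the monomials whose exponent lies in $\ri$, whose Newton polytope contribution is $\Gamma_j^I$.

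For part (1): if $\mscrG$ is not $I$-isolated, then $|\NiG| < |I|$, so among $f_1|_{\Ki},\ldots,f_m|_{\Ki}$ at most $|I|-1$ are not identically zero (the rest vanish identically on $\Ki$ and impose no condition). Hence $V(f_1,\ldots,f_m)\cap\Ki$ is cut out inside the $|I|$-dimensional affine space $\Ki$ by at most $|I|-1$ equations, so by Krull's principal ideal theorem every irreducible component has dimension $\geq |I|-(|I|-1)=1$; in particular it is nonempty (the origin is always a solution since $0\notin\Gamma_j$ forces $f_j(0)=0$) and positive-dimensional. That is the whole of (1).

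For part (2): now $|\NiG|\geq|I|$. Write $m':=|\NiG|\geq|I|$ and, after relabelling, assume $\NiG=\{1,\ldots,m'\}$, so $f_1|_{\Ki},\ldots,f_{m'}|_{\Ki}$ are the (generically nonzero) restrictions while $f_{m'+1},\ldots,f_m$ vanish on $\Ki$. I want: for generic $\mscrG$-admissible choices, $V(f_1|_{\Ki},\ldots,f_{m'}|_{\Ki})\cap\Kstari$ is zero-dimensional. On the torus $\Kstari\cong(\kk^*)^{|I|}$ this is a Bernstein-theoretic statement: a generic system of $m'\geq|I|$ Laurent polynomials with prescribed Newton polytopes has finitely many common zeroes in $(\kk^*)^{|I|}$ (one can take $|I|$ generic $\kk$-linear combinations of the $m'$ polynomials, whose Newton polytopes are the convex hulls of the $\Gamma_j^I$, and invoke Bernstein's theorem/Kushnirenko's bound to see the common zero set in the torus is finite, hence the original system's torus zero set, being contained in it, is also finite). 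The only subtlety is that "generic $\mscrG$-admissible $f_j$" means $\nd(f_j)=\Gamma_j$ \emph{exactly}, not just $\supp(f_j)\subseteq$ something; but the set of admissible tuples is a nonempty Zariski-open subset of the affine space of all polynomials supported on $\nd^{-1}$-data, and the Bernstein-genericity condition (nonvanishing of appropriate initial-form resultants on the torus) is again Zariski-open, so their intersection is Zariski-dense in the admissible locus. Finally, $V(f_1,\ldots,f_m)\cap\Kstari = V(f_1|_{\Ki},\ldots,f_{m'}|_{\Ki})\cap\Kstari$ because the remaining $f_j$ restrict to $0$ on $\Ki\supseteq\Kstari$; this gives (2).

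The main obstacle is the bookkeeping in part (2): one must be careful that "generic with the Newton diagram exactly equal to $\Gamma_j$" is compatible with Bernstein-genericity, i.e.\ that imposing $\nd(f_j)=\Gamma_j$ does not force degeneracy. This is fine because prescribing the diagram only fixes which extreme monomials have nonzero coefficients and leaves all coefficients (those of extreme and interior monomials alike) free to vary in a Zariski-dense set, on which the finitely many non-degeneracy inequalities defining the Bernstein-generic locus are simultaneously satisfiable — but it requires stating the relevant genericity as an open condition on coefficients and checking the two open sets meet. Everything else is a direct application of Krull dimension (for (1)) and Bernstein's/Kushnirenko's theorem on the torus (for (2)), both available at this point in the paper.
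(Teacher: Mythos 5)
Your proposal is correct and follows essentially the same route as the paper's (very terse) proof: identify $\NiG$ with the indices $j$ for which $f_j|_{\Ki}\not\equiv 0$, deduce part (1) from the fact that $V(f_1,\ldots,f_m)\cap\Ki$ is then cut out by fewer than $|I|$ equations (with $0\notin\Gamma_j$ supplying non-emptiness), and deduce part (2) from Bernstein's theorem on the torus. Your extra care about the compatibility of ``Newton diagram exactly $\Gamma_j$'' with Bernstein-genericity is a correct filling-in of a detail the paper leaves implicit.
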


\begin{proof}
Note that $ \{j : \Gamma^I_J \neq \emptyset\} = \{j: f_j|_{\Ki} \not \equiv 0\}$. If $\mscrG$ is not $I$-isolated at the origin, then it follows that $V(f_1, \ldots, f_m) \cap \Ki$ is defined by less than $|I|$ elements, which implies the first assertion (note that the assumption $0 \not\in \bigcup_j \Gamma_j$ is necessary). The second assertion follows from Bernstein's theorem.
\end{proof}

\begin{defn} \label{intersection-multiplicity}
The {\em intersection multiplicity} $\multzerof$ at the origin of $f_1, \ldots, f_n \in \kk[x_1, \ldots, x_n]$ is the dimension (as a vector space over $\kk$) of the quotient ring of $\kk[[x_1, \ldots, x_n]]$ modulo the ideal generated by $f_1, \ldots, f_n$. Let $\mscrG := (\Gamma_1, \ldots, \Gamma_n)$ be a collection of $n$ diagrams in $\rr^n$. Define
\begin{align*}
\multzeroGamma := \min\{\multzerof: f_1, \ldots, f_n\ 
			\text{are $\mscrG$-admissible polynomials in $\kk[x_1, \ldots, x_n]$}\} 
\end{align*}
\end{defn}

Note that $0 \leq \multzeroGamma \leq \infty$. \Cref{isolated-lemma} immediately gives a combinatorial way to determine if $\multzeroGamma$ is zero or infinity: 

\begin{cor}[cf.\ {\cite[Lemma 2]{rojas-toric}, \cite[Proposition 5]{herrero}}] 
\label{mult-support-solution}
Let $\Gamma_1, \ldots, \Gamma_n$ be diagrams in $\rr^n$.
\begin{enumerate}
\item $\multzeroGamma = 0$ iff $0 \in \Gamma_i$ for some $i$, $1 \leq i \leq n$.
\item Assume $\multzeroGamma \neq 0$. Then $\multzeroGamma< \infty$ iff $(\Gamma_1, \ldots, \Gamma_n)$ is $I$-isolated at the origin for all $I \subseteq [n]$. \qed
\end{enumerate}
\end{cor}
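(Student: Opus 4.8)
The plan is to deduce both parts directly from \cref{isolated-lemma}, treating the ``zero'' case and the ``infinity'' case separately. For part (1), the minimum intersection multiplicity $\multzeroGamma$ is zero precisely when some $\mscrG$-admissible system $f_1, \ldots, f_n$ has $\multzerof = 0$, i.e.\ when the ideal generated by $f_1, \ldots, f_n$ in $\kk[[x_1, \ldots, x_n]]$ is the whole ring, which happens iff some $f_i$ is a unit in the power series ring, i.e.\ iff $f_i(0) \neq 0$. Since $\nd(f_i) = \Gamma_i$, the condition $f_i(0) \neq 0$ is equivalent to $0 \in \supp(f_i)$, which (as $0$ is always a vertex of $(\rr_{\geq 0})^n + \np(f_i)$ when it lies in $\np(f_i)$, hence a $0$-dimensional compact face) is equivalent to $0 \in \Gamma_i$. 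Conversely, if $0 \notin \Gamma_i$ for all $i$, then every $\mscrG$-admissible system vanishes at the origin, so $\multzerof \geq 1$ for all such systems and $\multzeroGamma \geq 1 > 0$. Note that whether $0 \in \Gamma_i$ depends only on $\Gamma_i$, not on the choice of admissible $f_i$; this is what makes the criterion combinatorial.

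For part (2), assume $\multzeroGamma \neq 0$, so by part (1) we have $0 \notin \Gamma_i$ for every $i$, and \cref{isolated-lemma} applies. First suppose $(\Gamma_1, \ldots, \Gamma_n)$ is $I$-isolated at the origin for all $I \subseteq [n]$. Taking $I = [n]$ and applying the second assertion of \cref{isolated-lemma}, a generic $\mscrG$-admissible system $f_1, \ldots, f_n$ has $V(f_1, \ldots, f_n) \cap \Kstarii{[n]}$ isolated. One must also rule out positive-dimensional components meeting the coordinate hyperplanes near the origin; here I would argue that for each proper $I \subsetneq [n]$, the system restricted to $\Ki$ consists of $\NiG$ polynomials $f_j|_{\Ki}$ with $|\NiG| \geq |I| = \dim \Ki$, and the second assertion of \cref{isolated-lemma} (applied with the ambient space $\Ki$, or directly via its proof) gives that the zero set on the torus $\Kstari$ is isolated for generic admissible systems; intersecting over all the finitely many coordinate subspaces and taking a generic system that works for all of them simultaneously, $V(f_1, \ldots, f_n)$ has no positive-dimensional component through the origin, so $\multzerof < \infty$, whence $\multzeroGamma < \infty$. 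Conversely, suppose $(\Gamma_1, \ldots, \Gamma_n)$ fails to be $I$-isolated at the origin for some $I \subseteq [n]$; then $|\NiG| < |I|$, and by the first assertion of \cref{isolated-lemma} (applied inside $\Ki$), \emph{every} $\mscrG$-admissible system $f_1, \ldots, f_n$ has $V(f_1, \ldots, f_n) \cap \Ki$ nonempty with a positive-dimensional component. Since $0 \notin \Gamma_j$ forces every $f_j$ to vanish at the origin, the origin lies in the closure of this positive-dimensional component, so the origin is a non-isolated point of $V(f_1, \ldots, f_n)$, giving $\multzerof = \infty$ for every admissible system and hence $\multzeroGamma = \infty$.

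The main obstacle is the one subtlety in the ``only if'' direction of part (2): \cref{isolated-lemma} as stated produces, for a generic admissible system, isolated zeroes on each torus $\Kstari$ separately, but to conclude $\multzerof < \infty$ one needs the origin to be isolated in $V(f_1, \ldots, f_n)$ as a whole. I would handle this by noting $V(f_1, \ldots, f_n) = \bigsqcup_{I \subseteq [n]} \bigl(V(f_1,\ldots,f_n) \cap \Kstari\bigr)$, that there are only finitely many $I$, that genericity is an open dense condition stable under finite intersection, and that a subvariety of $\kk^n$ is $0$-dimensional near the origin iff its intersection with each of these locally closed strata is. This is routine but should be spelled out; everything else is a direct citation of \cref{isolated-lemma} together with the elementary observation relating $0 \in \Gamma_i$ to $f_i(0) \neq 0$.
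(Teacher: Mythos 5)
Your proof is correct and takes essentially the same route as the paper, which offers no written argument at all beyond declaring the corollary an immediate consequence of \cref{isolated-lemma}; you supply exactly the details (the unit criterion in $\kk[[x_1,\ldots,x_n]]$ for part (1), the stratification $\kk^n=\bigsqcup_I \Kstari$ and simultaneous genericity for the ``if'' of part (2)) that this deduction needs. The one phrase to tighten is ``the origin lies in the closure of this positive-dimensional component'' in the converse of part (2): the correct justification, implicit in the proof of \cref{isolated-lemma}, is that $V(f_1,\ldots,f_n)\cap\Ki$ is cut out by fewer than $|I|$ equations in $\Ki$, so by Krull's height theorem \emph{every} irreducible component of it --- in particular the one containing the origin, which lies in this set because each $f_j(0)=0$ --- has positive dimension.
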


%

%

\Cref{multiplicity-thm} below describes the formula for the minimum intersection multiplicity; it uses \cref{mult-star-defn} which in turn uses notions from \cref{val-definition,val-diagram-defn}.

\begin{defn} \label{mult-star-defn}
Given polyhedra $\Gamma_1, \ldots, \Gamma_n$ in $\rr^n$, define
\begin{align}
\multstar{\Gamma_1}{\Gamma_n}
	&:=  \sum_{\nu \in \Vzero} \nu(\Gamma_1) ~ 
	\mv(\In_\nu(\Gamma_2), \ldots, \In_\nu(\Gamma_n))
	\label{mult-star}
\end{align}
\end{defn}

\begin{thm}[Formula for generic intersection multiplicity]\label{multiplicity-thm}
Let $\Gamma_1, \ldots, \Gamma_n$ be diagrams in $\rr^n$. Let 
\begin{align}
\scrI_{\mscrG,1} := \{I \subseteq [n]: I \neq \emptyset,\ |\NiG| = |I|,\ 1 \in \NiG\} \label{I1-list}
\end{align}
where $\NiG$ is as in \eqref{NiG}. Assume $0 < \multzeroGamma < \infty$. Then with notations as in \cref{mult-star-defn}, 
\begin{align}
\multzeroGamma 
	&= 	\sum_{I \in \scrI_{\mscrG,1}}
			\multzero{\pi_{I'}(\Gamma_{j_1})}{\pi_{I'}(\Gamma_{j_{n-k}})} \times
			\multstar{\Gamma^{I}_1, \Gamma^{I}_{j'_1}}{\Gamma^{I}_{j'_{k-1}}}  
	\label{multiplicity-formula}
\end{align}
where for each $I \in \scrI_{\mscrG,1}$,
\begin{itemize}
\item $I' := [n]\setminus I$, $k := |I|$,
\item $j_1, \ldots, j_{n-k}$ are elements of $[n]\setminus \NiG$,
\item $j'_1, \ldots, j'_{k-1}$ are elements of $\NiG\setminus \{1\}$.
\end{itemize}
\end{thm}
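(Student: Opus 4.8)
The plan is to prove \cref{multiplicity-thm} by the curve-counting (Bernstein-style) method sketched in \cref{formula-idea-section}: express $\multzeroGamma$ as a sum, over the generic branches $C$ of the zero set $V(f_2,\ldots,f_n)$ of a generic $\mscrG$-admissible system, of the order of vanishing $\ord_C(f_1)$ of $f_1$ along $C$ at the origin. First I would fix a generic $\mscrG$-admissible system $f_1,\ldots,f_n$; by \cref{generic-thm-0} (whose sufficiency direction I may invoke, since \eqref{bkk-bar-0} holds generically under the hypothesis $\multzeroGamma<\infty$ via \cref{bkk-existence}) the intersection multiplicity equals $\multzeroGamma$. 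Then, using the technical lemma on the algebraic definition of intersection multiplicity promised in \cref{technical-section}, I would rewrite $\multzerof$ as $\sum_C \ord_C(f_1|_C)$, where $C$ ranges over the one-dimensional components (branches, counted with their scheme-theoretic multiplicity) of $V(f_2,\ldots,f_n)$ passing through $0$, with the understanding (forced by $\multzeroGamma<\infty$) that $f_1$ does not vanish identically on any such $C$.

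Next I would stratify these branches by the smallest coordinate subspace $\Ki$ containing them. A branch $C$ contributing a finite nonzero order at the origin lies in some $\Ki$ with $1\in\NiG$ (else $f_1|_{\Ki}\equiv 0$ and $\ord_C(f_1)=\infty$, contradicting finiteness), and by \cref{isolated-lemma} together with the genericity of the $f_j$, the branch must in fact live on the torus $\Kstari$ of $\Ki$; moreover the count of the $f_j$ that survive restriction to $\Ki$ must be exactly $|I|$ for a one-dimensional component to appear on $\Kstari$ (if it were $<|I|$ the whole $\Kstari$-intersection is positive-dimensional and we reduce to a larger subspace via an argument parallel to \cref{reduction-lemma}; if it were $>|I|$ genericity forces no branch at all). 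This is precisely the index set $\scrI_{\mscrG,1}$. Thus $\multzeroGamma=\sum_{I\in\scrI_{\mscrG,1}}\big(\text{contribution of branches with smallest coordinate subspace }\Ki\big)$.

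For a fixed $I\in\scrI_{\mscrG,1}$ with $I'=[n]\setminus I$, $k=|I|$, the $f_j$ with $j\notin\NiG$ factor through the projection $\pi_{I'}$, so their common zero locus off $\Ki$ is a product $Z\times\Ki$ with $Z\subseteq\kk^{I'}$ a finite scheme of length $\multzero{\pi_{I'}(\Gamma_{j_1})}{\pi_{I'}(\Gamma_{j_{n-k}})}$ near the origin (again by genericity and the definition of $\multzero{\cdot}{\cdot}$). Over each point of $Z$ one is left, on $\Kstari$, with the $k$ polynomials $f_1|_{\Kstari},f_{j'_1}|_{\Kstari},\ldots,f_{j'_{k-1}}|_{\Kstari}$ (after the harmless translation by the $\pi_{I'}$-coordinates, which does not change Newton diagrams restricted to $\ri$), and I would identify $\sum_C\ord_C(f_1|_C)$ over the branches on $\Kstari$ with $\multstar{\Gamma^I_1,\Gamma^I_{j'_1}}{\Gamma^I_{j'_{k-1}}}$. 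This last identification is the heart of the matter and reduces to the combinatorial identity \eqref{mixed-expression}: the branches of $V(f_{j'_1},\ldots,f_{j'_{k-1}})$ on the torus $\Kstari$ are indexed, via their tropicalizations/Newton–Puiseux leading terms, by the valuations $\nu\in\Vzero$ (restricted to $\ri$) for which $\mv(\In_\nu(\Gamma^I_2),\ldots,\In_\nu(\Gamma^I_n))\neq 0$, each such $\nu$ carrying exactly $\mv(\In_\nu(\Gamma^I_{j'_1}),\ldots,\In_\nu(\Gamma^I_{j'_{k-1}}))$ branches (Bernstein's theorem applied face-by-face), and along such a branch $\ord_C(f_1)=\nu(\Gamma^I_1)$; summing gives exactly \eqref{mult-star}. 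Multiplying the $Z$-length by this torus contribution and summing over $I\in\scrI_{\mscrG,1}$ yields \eqref{multiplicity-formula}.

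The main obstacle I anticipate is making the branch-counting rigorous in arbitrary characteristic: one must show that for generic coefficients the branches of $V(f_2,\ldots,f_n)$ through $0$ are in bijection with the pairs $(\nu,\text{root of the initial system})$ with the expected multiplicities, that no unexpected branch escapes to a smaller or larger coordinate subspace, and that $\ord_C(f_1)$ is constant along each generic branch and equals $\nu(\Gamma_1^I)$. In characteristic zero this is classical Newton–Puiseux; in positive characteristic I would instead work with the toric/tropical compactification and a valuation-theoretic argument (Hensel lifting of the initial-form solutions over the valuation ring $\kk[[t]]$, or its algebraic closure), invoking \cref{technical-section} to transfer the count to lengths of Artinian rings. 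A secondary technical point is the passage from "generic $\mscrG$-admissible system" to the stated formula for \emph{all} such minimal-multiplicity systems; this is handled by upper-semicontinuity of intersection multiplicity together with \cref{generic-thm-0}, which guarantees the minimum is attained exactly on the \eqref{bkk-bar-0} locus, so the formula — computed on the generic point — gives the minimum.
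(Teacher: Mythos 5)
Your overall strategy coincides with the paper's: both decompose $\multzerof$, for a suitably generic system, as a sum over branches $B$ of $V(f_2,\ldots,f_n)$ through the origin of $\ord_B(f_2,\ldots,f_n)\,\nu_B(f_1)$, stratify the branches by the smallest coordinate torus $\Kstari$ containing them and by the valuation $\nuBi$, identify the admissible $I$ with $\scrI_{\mscrG,1}$, and evaluate the torus contribution by Bernstein's theorem applied to initial forms. (The paper does this last step on the exceptional divisor of a weighted blow-up, which is already characteristic-free, so the Newton--Puiseux issue you flag is resolved exactly by the toric route you propose as a fallback.)

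There is, however, one step whose justification as written is false. The polynomials $f_j$ with $j\notin\NiG$ do \emph{not} factor through $\pi_{I'}$ (e.g.\ $f_j=x_3(1+x_1)$ vanishes identically on $\Kii{\{1,2\}}$ but is not a polynomial in $x_3$ alone), so $V(f_{j_1},\ldots,f_{j_{n-k}})$ is not a product $Z\times\Ki$; it merely contains $\Ki$ as one irreducible component among possibly several. What is true, and what the paper proves, is that the multiplicity of $\Ki$ in the intersection product of the divisors of $f_{j_1},\ldots,f_{j_{n-k}}$ equals $\multzero{\pi_{I'}(\Gamma_{j_1})}{\pi_{I'}(\Gamma_{j_{n-k}})}$; establishing this requires specializing the variables $(x_i: i\in I)$ to a generic point of $\Kstari$, invoking the standard-basis specialization lemma (\cref{generic-length}), and then applying induction on $n$ — the projected diagrams live in $\rr^{I'}$, so the value of the right-hand side for them is the inductive hypothesis, not the definition. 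Moreover, to split $\ord_B(f_2,\ldots,f_n)$ as this transverse multiplicity times $\ord_B(f_{j'_1}|_{\Ki},\ldots,f_{j'_{k-1}}|_{\Ki})$ you must know that the branch $B$ does not lie on any component of $V(f_{j_1},\ldots,f_{j_{n-k}})$ other than $\Ki$; this is not a consequence of \eqref{bkk-bar-0}, and it is precisely why the paper introduces the extra genericity condition \ref{intersectionally-non-degenerate} of strong $\mscrG$-non-degeneracy, together with \cref{strong-existence} to guarantee such systems exist (your appeal to semicontinuity then transfers the value to all minimal-multiplicity systems, as in the paper). With these two repairs your sketch matches the paper's proof.
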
 

\begin{proof}
See \cref{multiplicity-proof-section}. 
\end{proof}

\begin{convention} \label{formula-convention}
In this article, in particular in \eqref{mult-star} and \eqref{multiplicity-formula}, we use the following conventions:
\begin{defnlist}
\item $\infty$ times $0$ is interpreted as $0$. 
\item Empty intersection products and mixed volumes are defined as $1$. In particular, the term $\mv(\In_\nu(\Gamma_2), \ldots, \In_\nu(\Gamma_n))$ from \eqref{mult-star} in $n = 1$ case, and the term $ \multzero{\pi_{I'}(\Gamma_{j_1})}{\pi_{I'}(\Gamma_{j_{n-k}})}$ from \eqref{multiplicity-formula} in $I = [n]$ case are both defined as $1$. 
\end{defnlist}
\end{convention}

\begin{example} \label{ex0}
Let $f_1 = a_1x + b_1y + c_1z$, $f_2 = a_2x^3 + b_2xz^2 + c_2y^3 + d_2yz^2$, $f_3 = a_3x^2 + b_3xz^2 + c_3y^2 + d_3yz^2$, where $a_i, b_i, c_i, d_i$'s are generic elements of $\kk$. We use \eqref{multiplicity-formula} to compute $[f_1,f_2,f_3]_0$.
It is straightforward to check that $\scrI_{\mscrG,1} = \{\{1,2,3\},\{3\}\}$. From \eqref{multiplicity-formula} we see that
\begin{align*}
[f_1,f_2,f_3]_0 
	&= [\Gamma_1, \Gamma_2, \Gamma_3]^*_0 +
		 [\pi_{\{1,2\}}(\Gamma_2), \pi_{\{1,2\}}(\Gamma_3)]_0 ~ [\Gamma^{\{3\}}_1]^*_0 
\end{align*}
Note that $\pi_{\{1,2\}}(\Gamma_j)$, for each $j \in \{2,3\}$, is the diagram of a linear polynomial with no constant terms. Therefore $[\pi_{\{1,2\}}(\Gamma_2), \pi_{\{1,2\}}(\Gamma_3)]_0 = 1$. It is straightforward to see that $\Gamma_2 + \Gamma_3$ has two facets with inner normals in $(\rr_{> 0})^3$, and these inner normals are $\nu_1 := (1,1,1)$ and $\nu_2 := (2,2,1)$. Then 
\begin{align*}
[f_1,f_2,f_3]_0 
	&= \nu_1(\Gamma_1)\mv(\In_{\nu_1}(\Gamma_2), \In_{\nu_1}(\Gamma_3)) 
				+ \nu_2(\Gamma_1)\mv(\In_{\nu_2}(\Gamma_2), \In_{\nu_2}(\Gamma_3)) 
				+ 1 \cdot \ord_z(f_1|_{x=y=0}) \\
	&= 1 \cdot 4 + 1 \cdot 1 + 1 = 6
\end{align*}
\end{example}

\begin{rem} [`Explanation' of the right hand side of \eqref{multiplicity-formula}] \label{multiplicity-explanation}
Use the notations from \cref{multiplicity-thm}. Let $f_1, \ldots, f_n$ be generic $(\Gamma_1, \ldots, \Gamma_n)$-admissible polynomials.
\begin{defnlist}
\item For each $I \subseteq [n]$, note that $j \in \NiG$ iff $\Ki \not\subseteq V(f_j)$. Since $0 < \multzeroGamma < \infty$, it follows that 
\begin{itemize}
\item  $|\NiG| \geq |I|$ for each $I \subseteq [n]$. 
\item  If $|\NiG| =  |I|$ then there are $n-k$ polynomials (where $k = |I|$) $f_{j_1}, \ldots, f_{j_{n-k}}$ which identically vanish on $\Ki$. 
\end{itemize}
\item 
$ \multzero{\pi_{I'}(\Gamma_{j_1})}{\pi_{I'}(\Gamma_{j_{n-k}})}$ is precisely the intersection multiplicity of the hypersurfaces defined by $f_{j_1}, \ldots, f_{j_{n-k}}$ along $\Ki$.
\item $\multstar{\Gamma^{I}_1, \Gamma^{I}_{j'_1}}{\Gamma^{I}_{j'_{k-1}}}  $ is the sum (counted with multiplicity) of order of vanishing at the origin of restrictions of $f_1$ at the branches of the curves on $\Kstari$ defined by $f_{j'_1}, \ldots, f_{j'_{k-1}}$. 
\end{defnlist}
\end{rem}

\begin{rem} \label{mult-monotone}
It is not too hard to show that under \cref{formula-convention} identity \eqref{multiplicity-formula} is equivalent to the following identity: 
\begin{align}
\multzeroGamma 
	&= 	\sum_{\scriptsize \begin{array}{c} I \in [n] \\ I \neq \emptyset \end{array}}
			\sum_{\scriptsize \begin{array}{c} 1 \not\in J \subseteq [n]\\ |J| = n - |I| \end{array}}
			\multzero{\pi_{I'}(\Gamma_{j_1})}{\pi_{I'}(\Gamma_{j_{n-k}})} \times
			\multstar{\Gamma^{I}_1, \Gamma^{I}_{j'_1}}{\Gamma^{I}_{j'_{k-1}}} \label{multiplicity-formula'}
\end{align}
Given a diagram $\Gamma$ and a point $\alpha$ in $(\rr_{\geq 0})^n$, we say that $\alpha$ is {\em below} $\Gamma$ iff there exists $\nu \in \Vzero$ such that $\nu(\alpha) < \nu(\Gamma)$. Identity \eqref{multiplicity-formula'} provides an easy way to see the monotonicity of intersection multiplicity, i.e.\ the following fact (for which we do not know of any simple algebraic proof):
\begin{align*}
\parbox{.9\textwidth}{\em If no point of $\Gamma'_j$ lies below $\Gamma_j$ for all $j$, $1 \leq j \leq n$, then $\multzero{\Gamma'_1}{\Gamma'_n} \geq \multzeroGamma$.}
\end{align*}
Indeed, it suffices to prove the assertion for the case that $\Gamma'_j = \Gamma_j$ for $j = 2, \ldots, n$. Expanding the second term in the sum on the right hand side of \eqref{multiplicity-formula'} gives:
\begin{align*}
\multzeroGamma 
	&= 	\sum_{\scriptsize \begin{array}{c} I \in [n] \\ I \neq \emptyset \end{array}}
			\sum_{\scriptsize \begin{array}{c} 1 \not\in J \subseteq [n]\\ |J| = n - |I| \end{array}}
			\multzero{\pi_{I'}(\Gamma_{j_1})}{\pi_{I'}(\Gamma_{j_{n-k}})} 
		 \sum_{\nu \in \Vzeroi}  \nu(\Gamma^I_1) ~ 
		\mv(\In_\nu(\Gamma^I_{j'_1}), \ldots, \In_\nu(\Gamma^I_{j'_{k-1}})) 			
\end{align*}
Since $\nu(\Gamma'^I_1) \geq \nu(\Gamma^I_1)$ for all $\nu \in \Vzeroi$ and all $I$, the assertion follows.  
\end{rem}

We now describe the non-degeneracy condition for the intersection multiplicity at the origin. 

\begin{defn} \label{generic-defn}
\mbox{}
\begin{itemize}
\item We say that polynomials $f_1, \ldots, f_m$, $m \geq n$, in $(x_1, \ldots, x_n)$ are {\em BKK non-degenerate at the origin} iff $\In_\nu(f_1), \ldots, \In_\nu(f_m)$ have no common zero in $\nktorus$ for all $\nu \in \Vzero$.
\item 
Let $\mscrG:= (\Gamma_1, \ldots, \Gamma_n)$ be a collection of $n$ diagrams in $\rr^n$ such that $\multzeroGamma < \infty$. We say that polynomials $f_1, \ldots, f_n$ are {\em $\mscrG$-non-degenerate} iff they are $\mscrG$-admissible and $f_1|_{\Ki}, \ldots, f_n|_{\Ki}$ are BKK non-degenerate at the origin for every non-empty subset $I$ of $[n]$.
\end{itemize}
\end{defn}

\begin{thm} [Non-degeneracy for intersection multiplicity] \label{generic-thm}
Let $\mscrG:=(\Gamma_1, \ldots, \Gamma_n)$ be a collection of $n$ diagrams in $\rr^n$ such that $\multzeroGamma < \infty$. Let $f_1, \ldots, f_n$ be $\mscrG$-admissible polynomials. Then the following are equivalent:
\begin{enumerate}
\item $\multzerof = \multzeroGamma$.
\item $f_1, \ldots, f_n$ are $\mscrG$-non-degenerate.
\end{enumerate}
\end{thm}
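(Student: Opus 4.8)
\textbf{Proof plan for \Cref{generic-thm}.}

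The plan is to prove the equivalence by the polynomial homotopy method of Bernstein, treating the two implications as the ``sufficiency'' and ``necessity'' of the non-degeneracy condition, exactly along the lines sketched in \cref{sufficient-sketch,necessary-sketch,pathologies}. For the direction (2)$\Rightarrow$(1), i.e.\ that $\mscrG$-non-degeneracy forces $\multzerof = \multzeroGamma$: fix $\mscrG$-non-degenerate $f = (f_1,\ldots,f_n)$ and choose generic $\mscrG$-admissible $\hat f = (\hat f_1,\ldots,\hat f_n)$ achieving the minimum $\multzeroGamma$. Form the one-parameter family $f(x,t) := (1-t)f + t\hat f$ (or a more careful coefficientwise interpolation keeping all Newton diagrams equal to $\Gamma_j$ for generic $t$). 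Semicontinuity of intersection multiplicity in families gives $\multzero{f_1(x,t)}{f_n(x,t)} \le \multzerof$ for generic $t$; I must rule out strict inequality. If it were strict, the ``extra'' intersection multiplicity at the origin for $t=0$ must, by a standard argument on families of zero-dimensional schemes (passing to the integral closure / resolving the total space over the $t$-line, or invoking \cref{technical-section}'s lemma on the algebraic definition of intersection multiplicity), be accounted for by a curve $C$ in the zero set of $f_1(x,t),\ldots,f_n(x,t)$ whose closure meets $\{t=0\}$ at the origin but which consists of points with some nonzero coordinates for $t\neq 0$. Let $K = \Ki$ be the smallest coordinate subspace containing $C$, parametrize $C$ by a Puiseux-type branch $x_i = c_i t^{\nu_i}(1+\cdots)$ for $i \in I$ (with $\nu_i > 0$ since $C \to$ origin), and normalize so that $\gcd\{\nu_i\} = 1$; this defines $\nu \in \Vzeroi$. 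Substituting the branch into $f_j|_{\Ki}(x,t)$ and extracting the lowest-order-in-$t$ term shows that the leading coefficient vector $(c_i)_{i\in I} \in \nktorus$ is a common zero of $\In_\nu(f_1|_{\Ki}(x,0)),\ldots,\In_\nu(f_n|_{\Ki}(x,0))$ — here I need that the homotopy was chosen so that $f_j(x,0) = f_j$ and the initial forms are unaffected by the perturbation, which holds for generic interpolation — contradicting BKK non-degeneracy at the origin of $f_1|_{\Ki},\ldots,f_n|_{\Ki}$. Hence equality holds, and since generic systems are $\mscrG$-non-degenerate (this needs the observation, provable from \cref{isolated-lemma} plus the torus case, that $\mscrG$-non-degeneracy is a Zariski-open nonempty condition when $\multzeroGamma < \infty$ — cf.\ \cref{bkk-existence}), we get $\multzerof = \multzeroGamma$.

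For the direction (1)$\Rightarrow$(2), argue the contrapositive: suppose $f_1,\ldots,f_n$ are $\mscrG$-admissible but not $\mscrG$-non-degenerate, so there is a nonempty $I \subseteq [n]$ and $\nu \in \Vzeroi$ such that $\In_\nu(f_1|_{\Ki}),\ldots,\In_\nu(f_n|_{\Ki})$ have a common zero $\xi \in \nktorus$. The goal is to build an explicit deformation $f(x,t)$ of $f$, $\mscrG$-admissible for generic $t$, with a nonzero root on $K = \Ki$ approaching the origin as $t \to 0$, forcing $\multzerof > \multzeroGamma$ by upper-semicontinuity applied in reverse (the limit root contributes extra multiplicity at the origin at $t=0$). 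The candidate curve is $x_i(t) = \xi_i t^{\nu_i}$ for $i \in I$ (and $0$ for $i \notin I$); one checks that $f_j(x(t),t) = t^{\nu(f_j|_{\Ki})}(\In_\nu(f_j|_{\Ki})(\xi) + O(t)) = O(t^{\nu(f_j|_{\Ki})+1})$, so after correcting $x(t)$ by higher-order terms (Hensel/implicit function theorem in the $t$-adically complete setting, using that the relevant Jacobian is generically nonsingular, or failing that by adding a generic perturbation to a subset of the $f_j$ to force genericity) we obtain an honest root branch of $f(x,t)$ on $K$ tending to the origin. The serious obstruction, flagged in \cref{pathologies}, is that this deformation trick requires $\dim(K) = N_K$ where $N_K = |\NiG|$ is the number of $f_j$ not vanishing on $K$: if $\dim(K) < N_K$, generic systems have no root at all on the torus of $K$ and there is no branch to deform. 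I resolve this exactly as promised there: invoke \cref{reduction-lemma} to replace $K$ by a larger coordinate subspace $K' \supseteq K$ on which $f_1|_{K'},\ldots,f_n|_{K'}$ is still degenerate and $\dim(K') = N_{K'}$, and run the deformation on $K'$ instead. The remaining bookkeeping — that the constructed root is genuinely nonzero, genuinely limits to the origin, and genuinely increases intersection multiplicity above $\multzeroGamma$ rather than merely above $\multzerof$ — is handled by comparing with a generic (hence $\mscrG$-non-degenerate, hence minimal by the first direction) member of the same family, so that strict inequality $\multzero{f_1(x,t)}{f_n(x,t)} = \multzeroGamma < \multzerof$ propagates.

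The main obstacle, then, is twofold and both halves are anticipated in the introduction: first, in the sufficiency direction, extracting from an alleged strictly-smaller generic multiplicity an actual limiting curve with controlled coordinate orders — this is where the technical intersection-multiplicity lemma of \cref{technical-section} does the real work, converting ``extra length at the origin in the special fiber'' into ``a component of the relative zero scheme passing through $(0,0)$''; second, in the necessity direction, the dimension mismatch $\dim(K) < N_K$, which would derail Bernstein's construction were it not for the reduction \cref{reduction-lemma}. Once those two points are in hand the rest is the by-now-standard Newton-polytope homotopy argument, and the formula of \cref{multiplicity-thm} is not needed for this theorem (only the finiteness hypothesis $\multzeroGamma < \infty$, which via \cref{mult-support-solution} guarantees $I$-isolatedness at the origin for all $I$ and hence that the relevant generic systems have the expected zero-dimensional behavior on every torus).
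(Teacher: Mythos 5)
Your overall strategy coincides with the paper's: both implications are proved by Bernstein's homotopy method, and the dimension-mismatch obstruction ($\dim(K)<N_K$) is disposed of by \cref{reduction-lemma} (packaged as \cref{weak-lemma2}), exactly as you propose. Your sufficiency argument is essentially the paper's proof of \cref{sufficient-lemma}: interpolate between $f$ and an admissible system attaining the minimum, note that the relative zero scheme near $(0,0)$ is a curve, and observe that an excess of multiplicity in the fiber $t=0$ forces a component through $(0,0)$ other than the $t$-axis, a branch of which yields the violating pair $(I,\nu)$ and common zero $\In(B)$. One misattribution: the device converting ``extra multiplicity in the special fiber'' into ``an extra component of the relative zero scheme'' is \cref{isolated-family} together with the cycle decomposition $\tilde Z=\sum m_i\tilde Z_i$ and the comparison $H_0\cdot\tilde Z=\multzerof>m_1=H_1\cdot m_1\tilde Z_1$; the standard-basis lemma of \cref{technical-section} (\cref{generic-length}) is used only for the formula \eqref{multiplicity-formula}, not for this theorem.

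The genuine gap is in the necessity direction: the deformation $f(x,t)$ is never constructed, and your primary mechanism --- Hensel-lifting the approximate root $x_i(t)=\xi_it^{\nu_i}$ using a ``generically nonsingular Jacobian'' --- is unjustified. The system $f$ is fixed and possibly very degenerate (indeed the origin may be its only zero near $0$, so no curve of exact roots of $f$ itself exists), and there is no control on the $t$-adic order of the Jacobian along the candidate branch relative to $\ord_t f_j(x(t))$. The whole content of this step is to deform the \emph{system} so that the candidate curve becomes an exact root while the deformation specializes to $f$ at $t=0$: after choosing auxiliary admissible $g_1,\ldots,g_k$ with $\In_\nu(g_j|_{\Ki})(z)\neq0$ and the curve $c(t)$ of \eqref{c_j}, the paper sets $h_j:=(t^{-\mu_j}f_j(c(t)))g_j-(t^{-\mu_j}g_j(c(t)))f_j$ as in \eqref{h_j}, so that $h_j(c(t),t)\equiv0$ and $h_j(x,0)$ is a nonzero constant times $f_j$. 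Your fallback clause (``adding a generic perturbation to a subset of the $f_j$'') gestures at this, but it is the crux and must be written down. A smaller slip: you assert the generic member of your pencil attains $\multzeroGamma$ exactly because it is ``generic, hence non-degenerate''; a generic member of a particular one-parameter family need not be generic among all admissible systems, but you only need $\multzero{h_1|_{t=\epsilon}}{h_n|_{t=\epsilon}}\geq\multzeroGamma$, which holds simply because $h|_{t=\epsilon}$ is $\mscrG$-admissible for generic $\epsilon$.
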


\begin{proof}
See \cref{generic-proof}. 
\end{proof}
\begin{rem}
Note that we define $\mscrG$-non-degeneracy only in the case that $\multzeroGamma < \infty$. In fact if $\multzeroGamma = \infty$, then every system of $\mscrG$-admissible polynomials violates the condition from \cref{generic-defn}. In view of the non-degeneracy conditions for the extended BKK bound in \cref{affine-section}, it seems that for the correct notion of non-degeneracy which would extend to $\multzeroGamma = \infty$ case, one has to 
\begin{itemize}
\item replace ``$\In_\nu(f_1), \ldots, \In_\nu(f_m)$ have no common zero in $\nktorus$'' by ``the subvariety of $\nktorus$ defined by $\In_\nu(f_i)$'s has the {\em smallest possible} dimension,''
\item and add a set of conditions analogous to property \ref{NTSP-property} of $\mscrP$-non-degeneracy (\cref{generic-bkk-defn}) to ensure `correct infinitesimal intersections' of the subvarieties in the torus determined by restrictions of the polynomials to different coordinate subspaces. 
\end{itemize}
\end{rem}

In the proof of \cref{generic-thm} in \cref{generic-proof-section} we use the following result which shows that while testing for $\mscrG$-non-degeneracy one can omit certain coordinate subspaces of $\kk^n$. 

\begin{lemma} \label{weak-lemma}
Let $\mscrG:= (\Gamma_1, \ldots, \Gamma_n)$ be a collection of $n$ diagrams in $\rr^n$. Define 
\begin{align}
\nG  &:= \{I \subseteq [n]:\  I \neq \emptyset,\ |\NiG| = |I|\}, \label{NG}
\end{align}
where $\NiG$ is as in \eqref{NiG}. Let $f_1, \ldots, f_n$ be $\mscrG$-admissible polynomials in $(x_1, \ldots, x_n)$. Assume $\multzeroGamma < \infty$. Then the following are equivalent: 
\begin{enumerate}
\item $f_i$'s are $\mscrG$-non-degenerate. 
\item $f_1|_{\Ki}, \ldots, f_n|_{\Ki}$ are BKK non-degenerate at the origin for every $I \in \nG$.
\end{enumerate} 
\end{lemma}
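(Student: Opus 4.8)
The plan is to prove the two implications separately; essentially all the content is in (2) $\Rightarrow$ (1), which I would prove by contraposition. The implication (1) $\Rightarrow$ (2) needs nothing, since $\nG$ is a collection of non-empty subsets of $[n]$ while $\mscrG$-non-degeneracy demands BKK non-degeneracy at the origin of $f_1|_{\Ki}, \ldots, f_n|_{\Ki}$ for \emph{every} non-empty $I \subseteq [n]$. For the converse, call a non-empty $I \subseteq [n]$ a \emph{witness of degeneracy} if there is some $\nu \in \Vzeroi$ for which the initial forms $\In_\nu(f_j|_{\Ki})$, $j \in \NiG$, have a common zero in $\Kstari$; the assumption that (1) fails means a witness exists, and the goal is to produce a witness lying in $\nG$. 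The first observation is that the hypothesis $\multzeroGamma < \infty$ yields, through \cref{mult-support-solution}, the inequality $|\NiG| \geq |I|$ for every $I \subseteq [n]$; consequently $I \notin \nG$ is equivalent to $|\NiG| > |I|$. (The degenerate sub-case $\multzeroGamma = 0$, i.e.\ $0 \in \Gamma_i$ for some $i$, is trivial: then $\In_\nu(f_i|_{\Ki})$ is a non-zero constant for every $I$ and every $\nu \in \Vzeroi$, so neither condition can fail.)

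Everything then hinges on a single \textbf{reduction step}: \emph{if $f_1, \ldots, f_n$ is a witness of degeneracy on $\Ki$ and $|\NiG| > |I|$, then it is a witness of degeneracy on $\Kii{I'}$ for some $I' \supsetneq I$ with $|\NiiG{I'}| = |I'|$.} Granting this, the lemma is immediate: take any witness $I$; if $I \in \nG$ it already contradicts (2); otherwise $|\NiG| > |I|$, and the reduction step hands us a witness $I'$ with $|\NiiG{I'}| = |I'|$, i.e.\ $I' \in \nG$, again contradicting (2). Hence (2) forces (1). (The existence of an $I'$ as in the reduction step is not an issue combinatorially: $J = [n]$ satisfies $|\NiiG{[n]}| = |[n]| = n$ while $I$ does not satisfy $|\NiG| = |I|$, so a minimal $J$ with $I \subseteq J$ and $|\NiiG{J}| = |J|$ is strictly larger than $I$.)

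The reduction step is the real content, and I expect it to be the main obstacle. The approach I would take is weight extension. Given $\nu \in \Vzeroi$ and a common zero $p \in \Kstari$ of $\{\In_\nu(f_j|_{\Ki})\}_{j \in \NiG}$, fix an $I'$ as above and extend $\nu$ to $\nu' \in \Vzeroii{I'}$ by assigning sufficiently large positive weights to the coordinates in $I' \setminus I$; the aim is a common zero of $\{\In_{\nu'}(f_j|_{\Kii{I'}})\}_{j \in \NiiG{I'}}$ in $\Kstarii{I'}$ lying over $p$. For the polynomials with $j \in \NiG$ this is automatic: with the extra weights large enough, $\In_{\nu'}(f_j|_{\Kii{I'}})$ equals $\In_\nu(f_j|_{\Ki})$, a polynomial in the $I$-coordinates already vanishing at $p$. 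The genuinely new constraints come from the polynomials $f_j$ with $j \in \NiiG{I'} \setminus \NiG$ --- those vanishing on $\Ki$ but not on $\Kii{I'}$: for such a $j$, $\In_{\nu'}(f_j|_{\Kii{I'}})$ is (a monomial in the $I' \setminus I$ coordinates times) a polynomial $g_j$ in the $I$-coordinates, and one needs an extension $p'$ of $p$ to $\Kstarii{I'}$ with $g_j(p') = 0$ for all such $j$. The count is favourable: since $\NiG \subseteq \NiiG{I'}$ and $|\NiiG{I'}| = |I'|$, the number of new polynomials is $|\NiiG{I'}| - |\NiG| = |I'| - |\NiG| < |I'| - |I|$, strictly fewer than the number of adjoined coordinates.

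Making this surplus of free coordinates over constraints yield an honest common zero \emph{inside} the open torus $\Kstarii{I'}$ (rather than only on its boundary) is the technical heart. I would realize it via the Puiseux-arc / polynomial-homotopy machinery used throughout the paper for the necessity direction (see \cref{idea-section}): the common zero $p$ is the leading datum of an arc in $\Kstari$ approaching the origin along which the $f_j$ with $j \in \NiG$ vanish to more than the expected order; the extra coordinates, being more numerous than the new constraints, give enough room to bend this arc into $\Kstarii{I'}$ so that all $f_j$ with $j \in \NiiG{I'}$ now vanish along it to the expected order, and then reading off the leading exponents and coefficients of the bent arc --- and dividing the exponent vector by the gcd of its entries, which keeps it a positive weight since it extends $\nu$ --- produces $\nu'$ and $p'$. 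I would isolate and prove this reduction step as a standalone lemma; granting it, the argument above is pure bookkeeping.
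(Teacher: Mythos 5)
Your high-level skeleton matches the paper's: (1)$\Rightarrow$(2) is immediate, and (2)$\Rightarrow$(1) reduces to showing that a ``witness of degeneracy'' on some $\Ki$ with $|\NiG|>|I|$ propagates to a strictly larger coordinate subspace (the paper iterates one enlargement at a time by induction on $n-|I|$ rather than jumping straight into $\nG$, but that difference is cosmetic; your treatment of the case $\multzeroGamma=0$ and the use of \cref{mult-support-solution} to get $|\NiG|\ge|I|$ are also as in the paper). The problem is your proof of the reduction step, which is where essentially all the content lives and which the paper isolates as \cref{reduction-lemma}.

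There are two concrete gaps. First, you commit to a \emph{specific} target $I'$ (a minimal element of $\nG$ containing $I$) and try to push the witness there. Nothing forces the degeneracy to propagate to that particular $I'$: when several minimal elements of $\nG$ contain $I$, it may survive on one and not another, and the paper makes no such choice --- the enlarged set $\tilde I$ is whatever falls out of the geometric construction, and one then iterates. Second, and more seriously, your argument that the new constraints $\{f_j\}_{j\in\NiiG{I'}\setminus\NiG}$ admit a common zero in the torus over $p$ rests on there being fewer of them than adjoined coordinates; but $k$ Laurent polynomials in $m>k$ torus variables can perfectly well have empty zero locus in $\Kstarii{I'}$ (for instance when an initial form degenerates to a monomial; note also that your description of $\In_{\nu'}(f_j|_{\Kii{I'}})$ as a single monomial in the new coordinates times a polynomial in the old ones is wrong in general once $|I'\setminus I|\ge 2$). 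You acknowledge this as ``the technical heart'' and propose to ``bend the arc,'' but no argument is given for why the bending exists. The paper's mechanism is different and sidesteps solving the new constraints altogether: since fewer than $n-|I|$ of the $f_j$ vanish identically on $\Kstari$, their common zero locus has an irreducible component $V$ \emph{properly} containing $\Ki$; one chooses a curve inside $V$ through a suitable point of the exceptional divisor of a weighted blow-up (so the curve is not contained in $\Ki$), and \cref{branch-lemma} then forces the initial forms of \emph{all} identically-vanishing $f_j$ to vanish at the branch's initial-coefficient vector for free, while the remaining $f_j$ are handled by \cref{non-empty-lemma} exactly as in your ``automatic'' case. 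Without an argument of this kind the reduction step, and hence the lemma, is not proved.
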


\begin{proof}
This is precisely \cref{weak-lemma2}.
\end{proof}


\section{Milnor number} \label{milnor-section}
In this section we apply the results from \cref{mult-section} to study Milnor number at the origin of generic polynomials or power series. We define the notion of (partial) Milnor non-degeneracy (\cref{partial-defn}) and show in \cref{milnor-thm} that it is the correct notion of non-degeneracy in this context. As an immediate corollary we give an effective solution to \cref{kush-problem} (\cref{kush-solution}). We also recall (\cref{inndefinition}) the classical notions of non-degeneracy, give examples to illustrate that they are not necessary for the minimality of Milnor number (\cref{Milnor-not-Newton,Milnor-not-inner}), and give direct arguments (\cref{newton-wall-lemma}) to show that they are special cases of partial Milnor non-degeneracy (at least when the origin is an isolated singularity). We end this section with statements of some open problems. 

\subsection{(Partial) Milnor non-degeneracy}

\begin{defn} \label{partial-defn}
For a diagram $\Gamma \subseteq \rr^n$, let $\dGammad{i}$ be the Newton diagram of $\dgd{i}$ for a generic polynomial $g \in \kk[x_1, \ldots, x_n]$ with Newton diagram $\Gamma$. Let $f \in\kk[x_1, \ldots, x_n]$ be a polynomial with Newton diagram $\Gamma$ and $\Gamma_j$ be the Newton diagram of $\dfd{j}$, $1 \leq j \leq n$. We say that 
\begin{itemize}
\item  $f$ is {\em partially Milnor non-degenerate} iff $\multzeroGamma < \infty$ and $\dfd{1}, \ldots, \dfd{n}$ are $(\Gamma_1, \ldots, \Gamma_n)$-non-degenerate (see \cref{generic-defn}). 
\item  $f$ is {\em Milnor non-degenerate} iff $f$ is partially Milnor non-degenerate and $\multzeroGamma = \multzero{\dGammad{1}}{\dGammad{n}}$.
\end{itemize}
\end{defn}

\begin{rem} \label{milnor-remark}
\mbox{}
\begin{defnlist} [ ref= \therem(\alph{defnlisti})]
\item $\dGammad{i}$ depends on the characteristic of $\kk$.
\item In positive characteristic, it is possible for polynomials with the same Newton diagram to have distinct Newton diagrams for a partial derivative. In particular, it is possible that $\nd(f) = \Gamma$, but $\nd(\dfd{i}) \neq \dGammad{i}$ for some $i$, $1 \leq i \leq n$.
\item In zero characteristic partial Milnor non-degeneracy is equivalent to Milnor non-degeneracy. 
\end{defnlist}
\end{rem}
Let $f, \Gamma_1, \ldots, \Gamma_n$ be as in \cref{partial-defn}. Let $\scrA$ be the space of polynomials $g$ such that $\supp(g) \subseteq \supp(f)$. Recall the definition of $\mu(f)$ from \cref{milnor-footnote}. 

\begin{thm}[Non-degeneracy for Milnor number at the origin] \label {milnor-thm}
\mbox{}
\begin{enumerate}
\item \label{milnor-bound} $\mu(f) \geq \multzeroGamma \geq \multzero{\dGammad{1}}{\dGammad{n}}$.
\item \label{milnor-existence} The set of polynomials $g \in \scrA$ which satisfy $\mu(g) = \multzeroGamma$ contains a non-empty open subset of $\scrA$. 
\item \label{milnor-partial-genericness} The following are equivalent:
\begin{enumerate}
\item $f$ is partially Milnor non-degenerate. 
\item $\mu(f) = \multzeroGamma < \infty$. 
\end{enumerate}
\item \label{milnor-genericness} The following are equivalent:
\begin{enumerate}
\item $f$ is Milnor non-degenerate. 
\item $\mu(f) = \multzero{\dGammad{1}}{\dGammad{n}} < \infty$.
\end{enumerate}
\end{enumerate}
\end{thm}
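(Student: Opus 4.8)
The plan is to deduce \cref{milnor-thm} from the results on intersection multiplicity in \cref{mult-section}, together with the basic fact (footnote~\ref{milnor-footnote}) that $\mu(f) = [\dfd{1}, \ldots, \dfd{n}]_0$, the intersection multiplicity at the origin of the partial derivatives. So all four assertions should reduce to statements about the system $(\dfd{1}, \ldots, \dfd{n})$ of polynomials, with $\scrG := (\Gamma_1, \ldots, \Gamma_n)$ the collection of Newton diagrams of generic partials of polynomials supported at (the diagram of) $f$. The subtlety throughout is that the map $g \mapsto (\dgd{1}, \ldots, \dgd{n})$ sends $\scrA$ into a proper subvariety of the space of $\scrG$-admissible systems, so one cannot blindly quote genericity statements; the key point is that this subvariety still meets the relevant Zariski-dense sets.

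First I would establish \eqref{milnor-bound}. The inequality $\mu(f) = [\dfd{1},\ldots,\dfd{n}]_0 \geq \multzero{\Gamma_1}{\Gamma_n}$ is the definition of $\multzero{\Gamma_1}{\Gamma_n}$ as a minimum over $\scrG$-admissible systems, once one notes $\nd(\dfd{j})$ need not equal $\dGammad{j}$ in positive characteristic but always $\nd(\dfd{j})$ has no point below $\dGammad{j}$ — actually more carefully, $\supp(\dfd{j}) \subseteq \supp(\dgd{j})$ for generic $g$, so $\nd(\dfd{j})$ lies on or above $\dGammad{j}$; hence by the monotonicity of intersection multiplicity (\cref{mult-monotone}) we get $\multzero{\Gamma_1}{\Gamma_n} \geq \multzero{\dGammad{1}}{\dGammad{n}}$, giving the second inequality. (If $\multzero{\Gamma_1}{\Gamma_n} = \infty$ the chain is still valid with the convention $\mu(f) \le \infty$.) Next, for \eqref{milnor-existence}: the condition $\mu(g) = \multzero{\Gamma_1}{\Gamma_n}$ for $g \in \scrA$ is, by \cref{generic-thm}, equivalent to $(\dgd{1},\ldots,\dgd{n})$ being $\scrG$-admissible and $\scrG$-non-degenerate, which is a non-empty Zariski-open condition on the coefficient vector of $g$ — here I must check that it is \emph{non-empty} on $\scrA$, i.e.\ that some $g \in \scrA$ has $\scrG$-non-degenerate partials. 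This is exactly the content referenced in the introduction that \cref{milnor-thm}\eqref{milnor-existence} guarantees, and I expect it to be proved by the existence/deformation machinery of \cref{bkk-section}: one perturbs an arbitrary $g$ within $\scrA$ (adding generic coefficients on $\supp(f)$) and shows the partials of the perturbation land in the $\scrG$-non-degenerate locus, using that $\scrA \to \prod_j (\text{space supported at }\dGammad{j})$ is a dominant linear map onto a subspace that is not contained in any of the finitely many "bad" hypersurfaces coming from the non-degeneracy conditions, because each bad condition is cut out by initial-form equations that do not vanish identically on the image.

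Then \eqref{milnor-partial-genericness} is almost immediate: by definition $f$ is partially Milnor non-degenerate iff $\multzero{\Gamma_1}{\Gamma_n} < \infty$ and $(\dfd{1},\ldots,\dfd{n})$ are $\scrG$-non-degenerate; the latter forces $\nd(\dfd{j}) = \Gamma_j$ (being $\scrG$-admissible is part of $\scrG$-non-degeneracy), and then \cref{generic-thm} gives $\mu(f) = [\dfd{1},\ldots,\dfd{n}]_0 = \multzero{\Gamma_1}{\Gamma_n} < \infty$; conversely if $\mu(f) = \multzero{\Gamma_1}{\Gamma_n} < \infty$ then in particular the $\dfd{j}$ realize the minimum, so again by \cref{generic-thm} applied carefully they must be $\scrG$-admissible (if some $\nd(\dfd{j}) \ne \Gamma_j$ then, by monotonicity plus the finiteness hypothesis, the multiplicity would be strictly larger — this is the one spot needing a short argument) and $\scrG$-non-degenerate, i.e.\ $f$ is partially Milnor non-degenerate. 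Finally \eqref{milnor-genericness}: by definition $f$ is Milnor non-degenerate iff it is partially Milnor non-degenerate and $\multzero{\Gamma_1}{\Gamma_n} = \multzero{\dGammad{1}}{\dGammad{n}}$; combining with \eqref{milnor-partial-genericness} this is equivalent to $\mu(f) = \multzero{\Gamma_1}{\Gamma_n} = \multzero{\dGammad{1}}{\dGammad{n}} < \infty$, which is exactly $\mu(f) = \multzero{\dGammad{1}}{\dGammad{n}} < \infty$ in view of \eqref{milnor-bound} (the chain of inequalities collapses).

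The main obstacle I anticipate is \eqref{milnor-existence} — the non-emptiness on the constrained family $\scrA$. Everything else is bookkeeping on top of \cref{generic-thm} and the monotonicity remark, but showing that the derivative map does not push $\scrA$ entirely into the degenerate locus requires genuinely understanding that the equations defining that locus (vanishing of resultants/initial forms over all $\nu \in \Vzeroi$ and all $I$) are not among the linear relations that $\{(\dgd{1},\ldots,\dgd{n}) : g \in \scrA\}$ satisfies; I would handle this by a dimension-count / general-position argument, or by directly exhibiting, for each coordinate subspace $\Ki$ and each relevant $\nu$, a monomial in $\supp(f)$ whose coefficient can be varied to make the corresponding initial forms of the partials non-degenerate on $\nktorus$, then invoking that a finite intersection of non-empty Zariski-open subsets of the irreducible variety $\scrA$ is non-empty.
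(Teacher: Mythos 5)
Your treatment of assertions (1), (3) and (4) matches the paper's: these are indeed bookkeeping on top of \cref{generic-thm}, \cref{mult-support-solution} and \cref{mult-monotone}. (One small simplification: the $\mscrG$-admissibility of $(\dfd{1},\ldots,\dfd{n})$ is automatic, since each $\Gamma_j$ is \emph{defined} as $\nd(\dfd{j})$ in \cref{partial-defn}, so the extra argument you flag in (3) is not needed; also, since whether $\partial(x^\alpha)/\partial x_j$ vanishes depends only on $\alpha$ and $\charac(\kk)$, every $g\in\scrA$ with full support satisfies $\nd(\dgd{j})=\Gamma_j$.)

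The genuine gap is in assertion (2), exactly where you anticipated trouble. Your proposed resolutions --- a dimension count, or varying one coefficient at a time until the initial forms of the partials have no common zero on $\nktorus$ --- are not worked out, and they cannot be made to work by soft general-position reasoning alone: in positive characteristic the analogous genericity statement for Newton non-degeneracy is actually \emph{false} (\cref{no-(I)NND}), i.e.\ there are diagrams for which \emph{no} choice of coefficients makes the partial derivatives of the initial forms zero-free on the torus. So ``the bad conditions do not vanish identically on the image of $\scrA$'' is precisely the statement requiring proof. The paper's argument supplies the missing idea. Fix $I=\{1,\ldots,k\}$ and $\nu\in\Vzeroi$, and write $g = g_0(x_1,\ldots,x_k) + \sum_{i>k} x_i g_i(x_1,\ldots,x_k) + (\text{quadratic and higher in } x_{k+1},\ldots,x_n)$. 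If $g_0\equiv 0$, the surviving initial forms are the $\In_\nu(g_{j_l})$, which are mutually independent and supported on a polytope of dimension $<k$, so \cref{herrero} finishes. If $g_0\not\equiv 0$, a common zero $z\in\nktorus$ of the $\In_\nu(\dgd{i}|_{\Ki})$ satisfies $(\partial \In_\nu(g_0)/\partial x_i)(z)=0$ for all $i\le k$; after a monomial change of coordinates adapted to $\nu^\perp$ and to the affine span of $\supp(\In_\nu(g_0))$, one writes $\In_\nu(g_0)=y_k^d\, g_0^*(y_1,\ldots,y_l)$ and concludes that $z$ is a singular point of $V(g_0^*)$ lying in the torus. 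Non-emptiness then follows from a Bertini-type theorem valid in arbitrary characteristic (\cite[Theorem 3.1]{jelonek-bertini}). This quasi-homogeneous reduction to $g_0^*$ is what removes the characteristic-$p$ (Euler-relation type) obstruction that defeats the naive perturbation argument, and it is absent from your proposal.
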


\begin{proof}
Assertions \eqref{milnor-bound}, \eqref{milnor-partial-genericness} and \eqref{milnor-genericness} follow from \cref{mult-support-solution,multiplicity-thm,generic-thm,mult-monotone}. We now prove assertion \eqref{milnor-existence}. Without any loss of generality we may assume that $\multzeroGamma < \infty$. Let $I \subseteq [n]$ and $\nu \in \Vzeroi$. It suffices to show that there is an open subset $\scrU$ of $\scrA$ such that $V(\In_\nu(\dgd{1}|_{\Ki}), \ldots, \In_\nu(\dgd{n}|_{\Ki})) \cap \nktorus= \emptyset$ for all $g \in \scrU$.\\

Assume without loss of generality that \ $I = \{1, \ldots, k\}$. Take a generic $g \in \scrA$. Due to \cref{weak-lemma} we may also assume that $\dgd{j}|_{\Ki} \not\equiv 0$ for precisely $k$ values of $j$; denote them by $1 \leq j_1 < \cdots < j_k \leq n$. Write 
\begin{align*}
g &= g_0(x_1, \ldots, x_k) + \sum_{i = k+1}^n x_ig_i(x_1, \ldots, x_k)  \\
	&\qquad\quad + ~ \text{terms with quadratic or higher order in $(x_{k+1}, \ldots, x_n)$}
\end{align*}
If $g_0 \equiv 0$, then each $j_i > k$, and 
\[
V(\In_\nu(\dgd{1}|_{\Ki}),\ldots, \In_\nu(\dgd{n}|_{\Ki}))= V(\In_\nu(g_{j_1}), \ldots, \In_\nu(g_{j_k}))
\]
Since $g_{j_l}$'s are independent of each other, and since dimension of the (convex hull of the) sum of supports of $\In_\nu(g_{j_l})$'s is smaller than $k$, it follows that for generic $g \in \scrA$, $V(\In_\nu(g_{j_1}), \ldots, \In_\nu(g_{j_k})) \cap \nktorus  = \emptyset$ (\cref{herrero}), as required. So assume $g_0 \not\equiv 0$. Then for each $i$, $1 \leq i \leq k$, if $\dx{\In_\nu(g_0)}{x_i} \not\equiv 0$, then $\In_\nu(\dgd{i}) = \dx{\In_\nu(g_0)}{x_i} $. If $z \in V(\In_\nu(\dgd{1}|_{\Ki}),\ldots, \In_\nu(\dgd{n}|_{\Ki})) \cap \nktorus$, then it follows that
\begin{prooflist}
\item \label{zero-partial-x} $(\dx{\In_\nu(g_0)}{x_i})(z) = 0$, $1 \leq i \leq k$. 
\end{prooflist}
Let $\Lambda$ be the `smallest' linear subspace of $\rr^n$ such that the support of $\In_\nu(g_0)$ is contained in a translation of $\Lambda$; let $l := \dim(\Lambda)$. Choose a basis $\beta_1, \ldots, \beta_k$ of $\zz^k$ such that 
\begin{prooflist}[resume]
\item $\beta_1, \ldots, \beta_l$ is a basis of $\Lambda$. 
\item $\beta_1, \ldots, \beta_{k-1}$ is a basis of $\nu^\perp := \{\beta \in \zz^k: \nu \cdot \beta = 0\}$.
\end{prooflist}
Let $y_i := x^{\beta_i}$, $1 \leq i \leq k$. Then $(y_1, \ldots, y_k)$ is a system of coordinates on $\Kstari$. Observation \ref{zero-partial-x} implies that
\begin{prooflist}[resume]
\item \label{zero-partial-y} $(\dx{\In_\nu(g_0)}{y_i})(z) = 0$, $1 \leq i \leq k$. 
\end{prooflist}
In $(y_1, \ldots, y_k)$-coordinates, $\In_\nu(g_0)$ is of the form $y_k^dg^*_0(y_1, \ldots, y_l)$, where $d := \nu(g_0) > 0$. Observation \ref{zero-partial-y} implies that
\begin{prooflist}[resume]
\item $g^*_0(z) = (\dx{g^*_0}{y_1})(z) = \cdots = (\dx{g^*_0}{y_l})(z) = 0$. 
\end{prooflist}
An appropriate Bertini-type theorem in arbitrary characteristics, e.g.\ \cite[Theorem 3.1]{jelonek-bertini} implies that for generic $g \in \scrA$, $V(g^*_0, \dx{g^*_0}{y_1},\ldots, \dx{g^*_0}{y_l}) \cap \nktorus = \emptyset $. This completes the proof of assertion \eqref{milnor-existence}. 
\end{proof}

Let $\Sigma \subseteq (\zz_{\geq 0})^n$. For each $i = 1, \ldots, n$, define $\Gamma_i$ to be the Newton diagram of $\dfd{i}$ for any $f$ with $\supp(f) = \Sigma$. The following result, which is immediate from \cref{milnor-thm}, combined with \cref{mult-support-solution,multiplicity-thm}, gives an effective solution to \cref{kush-problem}. 


\begin{cor}[Solution to \cref{kush-problem}] \label{kush-solution}
The following are equivalent:
\begin{enumerate}
\item there exists a power series $f$ supported at $\Sigma$ such that $\mu(f) < \infty$,
\item $\multzeroGamma < \infty$.
\end{enumerate}
In the case that either of these conditions holds, the minimum value of all $\mu(f)$ such that $\supp(f) = \Sigma$ is precisely $\multzeroGamma$. \qed
\end{cor}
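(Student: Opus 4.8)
The plan is to deduce this directly from \cref{milnor-thm}, with \cref{mult-support-solution} and \cref{multiplicity-thm} supplying effectiveness (the former gives a combinatorial test for whether $\multzeroGamma < \infty$, the latter an explicit formula for $\multzeroGamma$). The first thing to verify is that $\mscrG := (\Gamma_1, \ldots, \Gamma_n)$ is genuinely determined by $\Sigma$ alone, so that the setup of \cref{milnor-thm}/\cref{partial-defn} applies: if $\supp(f) = \Sigma$, then the monomial $c_\alpha x^\alpha$ of $f$ contributes the term $c_\alpha \alpha_i x^{\alpha - e_i}$ to $\dfd{i}$, which is nonzero exactly when $\alpha_i > 0$ and $\charac(\kk) \nmid \alpha_i$ (as $c_\alpha \neq 0$); since $\alpha \mapsto \alpha - e_i$ is injective there is no cancellation, so $\supp(\dfd{i})$, and hence $\Gamma_i = \nd(\dfd{i})$, depends only on $\Sigma$ and $\charac(\kk)$, not on the chosen $f$. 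With $\scrA$ the space of polynomials supported in $\Sigma$ (as in \cref{milnor-thm}), the pair $(\scrA, \mscrG)$ is then exactly the data of \cref{milnor-thm} attached to any such $f$.

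For $(1)\Rightarrow(2)$: given a power series $f$ supported at $\Sigma$ with $\mu(f) < \infty$, I would first replace $f$ by a sufficiently high-order truncation $\bar f$ --- legitimate because $\mu(f) < \infty$ means the origin is an isolated singularity, so $\mu$ is invariant under high-order perturbation, and truncating keeps every monomial whose image under $\partial/\partial x_i$ lies on $\Gamma_i$, so that $\nd(\partial \bar f/\partial x_i) = \Gamma_i$. Then \cref{milnor-thm}\eqref{milnor-bound} gives $\multzeroGamma \leq \mu(\bar f) = \mu(f) < \infty$. For $(2)\Rightarrow(1)$: if $\multzeroGamma < \infty$, \cref{milnor-thm}\eqref{milnor-existence} produces a non-empty Zariski-open $\scrU \subseteq \scrA$ with $\mu(g) = \multzeroGamma$ for all $g \in \scrU$; since $\{g \in \scrA : \supp(g) = \Sigma\}$ is a non-empty Zariski-open, hence dense (as $\scrA$ is an affine space), subset of $\scrA$, it meets $\scrU$, and any $g$ in the intersection is supported at $\Sigma$ and satisfies $\mu(g) = \multzeroGamma < \infty$. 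Finally, once these equivalent conditions hold, \cref{milnor-thm}\eqref{milnor-bound} gives $\mu(f) \geq \multzeroGamma$ for every $f$ with $\supp(f) = \Sigma$, while the $g$ just constructed attains $\mu(g) = \multzeroGamma$; hence $\min\{\mu(f) : \supp(f) = \Sigma\} = \multzeroGamma$. (If $\multzeroGamma = \infty$, \cref{milnor-thm}\eqref{milnor-bound} forces $\mu(f) = \infty$ for all such $f$, so both (1) and (2) fail and the minimum is $\infty$, again matching the claim.)

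There is no deep obstacle, as the statement is a repackaging of \cref{milnor-thm}; the only two points needing attention are (i) the reduction from power series to polynomials, done above via finite determinacy, and (ii) strengthening the genericity conclusion of \cref{milnor-thm}\eqref{milnor-existence}, which a priori only yields $g$ with $\supp(g) \subseteq \Sigma$, to a $g$ with $\supp(g) = \Sigma$, done by intersecting two dense Zariski-open subsets of $\scrA$. Should one wish to allow $\Sigma$ infinite, the finiteness of $\mu(f)$ again permits reduction to a finite truncation, so the same argument goes through.
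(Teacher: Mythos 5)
Your argument is correct and is exactly the deduction the paper intends: the corollary is stated with no proof beyond the remark that it is ``immediate from \cref{milnor-thm}, combined with \cref{mult-support-solution,multiplicity-thm}'', and your write-up simply makes that deduction explicit. The two points you single out --- reducing a power series to a high-order polynomial truncation via $\mmm^N \subseteq \jjj(f)$ without changing $\mu$ or the diagrams $\Gamma_i$, and upgrading ``$\supp(g) \subseteq \Sigma$'' to ``$\supp(g) = \Sigma$'' by intersecting two non-empty Zariski-open subsets of the affine space $\scrA$ --- are precisely the details left implicit in the paper, and you handle them correctly.
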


\subsection{Relation between Milnor non-degeneracy and the classical non-degeneracy conditions}

\begin{defn}[(Inner) Newton non-degeneracy {\cite[Section 3]{boubakri-greuel-markwig}}] \label{inndefinition}
Let $f  = \sum_\alpha c_\alpha x^\alpha \in \kk[x_1, \ldots, x_n]$. We write $\jjj(f)$ for the ideal generated by the partial derivatives of $f$. A {\em $C$-polytope} is a diagram containing a point with positive coordinates on every coordinate axis. If no point of $\supp(f)$ lies `below'\footnote{See the sentence immediately following identity \eqref{multiplicity-formula'}} $\scrP$, then for a face $\Delta$ of $\scrP$, we write $\In_\Delta(f) := \sum_{\alpha \in \Delta} c_\alpha x^\alpha$. An {\em inner face} of a polytope $\scrP$ is a face not contained in any coordinate subspace. We say that 
\begin{itemize}
\item $f$ is {\em Newton non-degenerate} iff $\jjj(\In_\nu(f))$ has no zero in $\nktorus$ for each $\nu \in \Vzero$. 
\item $\nd(f)$ is {\em convenient} if it is a $C$-polytope.
\item $f$ is {\em inner Newton non-degenerate} with respect to a $C$-polytope $\scrP$ iff 
\begin{itemize}
\item no point in $\supp(f)$ lies `below' $\scrP$.
\item for every inner face $\Delta$ of $\scrP$ and for every non-empty subset $I$ of $[n]$, 
\begin{align*}
\Delta^I \neq \emptyset \im V(\jjj(\In_\Delta(f))) \cap \Kstari = \emptyset
\end{align*}
\end{itemize}
\item $f$ is {\em inner Newton non-degenerate} iff it is inner Newton non-degenerate with respect to a $C$-polytope $\scrP$.
\end{itemize}
\end{defn}

\begin{example}[Newton non-degeneracy is not necessary for Milnor non-degeneracy]  \label{Milnor-not-Newton}
Let $f := x_1 + (x_2 + x_3)^q$, where $q \geq 2$ is not divisible by $\charac(\kk)$. Then $\np(f)$ is convenient and $f$ is not Newton non-degenerate. However, $f$ is Milnor non-degenerate with $\mu(f) = 1$. 
\end{example}

\begin{example}[Inner Newton non-degeneracy is not necessary for Milnor non-degeneracy] \label{Milnor-not-inner}
Let $f := x_3^3 + x_1(x_1 + x_2)^2 + x_2^6 + x_2^4x_3$. Note that $\nd(f)$ is convenient. Let $\Delta$ be the facet of $\nd(f)$ with inner normal $(1,1,1)$. Then $\Delta$ is an inner face of $\nd(f)$ and $\In_\Delta(f) = x_3^3 + x_1(x_1 + x_2)^2$. it is straightforward to check that for all but finitely many characteristics (including zero),
\begin{itemize}
\item The origin is an isolated singular point of $V(f)$.
\item If $I = \{1,2\}$, then $\Delta^I \neq \emptyset$ and $ V(\jjj(\In_\Delta(f))) \cap \Kstari \neq \emptyset$. It follows that $f$ is not inner Newton non-degenerate.
\item $f$ is Milnor non-degenerate.
\end{itemize}
\end{example}

\begin{example} [Diagram which admits no (inner) Newton non-degenerate polynomial] \label{no-(I)NND}
If $\charac(\kk) > 0$ and $\Gamma$ is any diagram which has a vertex all of whose coordinates are divisible by $\charac(\kk)$, then every polynomial with Newton diagram $\Gamma$ is (inner) Newton degenerate. 
\end{example}

\begin{defn} \label{compatible-defn}
Let $I \subseteq [n]$, $\nu \in \Vzeroi$, $\nu' \in \Vzero$. We say that $\nu'$ is {\em compatible with} $\nu$ iff $\nu'|_{\Ai}$ is proportional to $\nu$. 
\end{defn}

We now give direct proofs that the classical non-degeneracy conditions are special cases of partial Milnor non-degeneracy, at least when $\mu(f) < \infty$. We use the following lemma, which is straightforward to verify.

\begin{lemma} \label{non-empty-lemma}
Let $\Gamma_1, \ldots, \Gamma_m$ be diagrams in $\rr^n$, $I \subseteq [n]$ and $\nu \in \Vzeroi$. Let $\nu' \in \Vzero$ be such that
\begin{enumerate}
\item $\nu'$ is compatible with $\nu$ (\cref{compatible-defn}),
\item $\nu'(x_{i'})/\max\{\nu(x_i): i \in I\} \gg 1$ for all $i' \not \in I$.
\end{enumerate}
Then $\In_{\nu'}(\Gamma_j) = \In_{\nu}(\Gamma^I_j)$ for all $j \in [m]$ such that $\Gamma^I_j \neq \emptyset$ .\qed
\end{lemma}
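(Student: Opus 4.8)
The statement is purely combinatorial, so the plan is to unwind the definitions of $\In_\nu$ and $\In_{\nu'}$ and compare the two selected subsets of lattice points directly. Fix $j \in [m]$ with $\Gamma^I_j \neq \emptyset$, write $\Gamma := \Gamma_j$ for brevity, and set $M := \max\{\nu(x_i) : i \in I\}$. First I would observe that, because $\Gamma$ is a diagram and $\nu' \in \Vzero$ has strictly positive weights, the infimum defining $\nu'(\Gamma)$ is attained, and it is attained at a vertex of $\Gamma$; likewise $\nu(\Gamma^I)$ is attained at a vertex of $\Gamma^I$. The claim $\In_{\nu'}(\Gamma) = \In_\nu(\Gamma^I)$ then amounts to showing that the $\nu'$-minimizing face of $\Gamma$ consists exactly of the points of $\Gamma$ lying in $\ri$ that minimize $\nu$ among such points.

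The key step is the following: for $\alpha = (\alpha_1, \dots, \alpha_n) \in \Gamma$, decompose $\nu' \cdot \alpha = \sum_{i \in I} \nu'_i \alpha_i + \sum_{i' \notin I} \nu'_{i'} \alpha_{i'}$. Since $\nu'$ is compatible with $\nu$, the first sum equals (a fixed positive multiple of) $\nu|_{\ri} \cdot \pi_I(\alpha) = \nu \cdot \pi_I(\alpha)$, which ranges over a bounded set of values as $\alpha$ runs over the finitely many vertices of $\Gamma$; all of these values are $\le$ some bound comparable to $M$ times the degree of $\Gamma$. The second sum is $0$ if $\alpha \in \ri$ and otherwise is $\ge \min_{i' \notin I}\nu'_{i'}$, which by hypothesis (2) is made larger than the total variation of the first sum. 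Hence any $\alpha$ not in $\ri$ has strictly larger $\nu'$-value than any $\alpha \in \Gamma^I$, so the $\nu'$-minimum over $\Gamma$ is attained only on $\Gamma^I$ (here I use $\Gamma^I \neq \emptyset$); and restricted to $\Gamma^I$ the second sum vanishes identically, so $\nu' \cdot \alpha$ is proportional to $\nu \cdot \alpha$ there, whence the $\nu'$-minimizing points of $\Gamma^I$ coincide with the $\nu$-minimizing points of $\Gamma^I$, i.e.\ with $\In_\nu(\Gamma^I)$. Translating from faces back to forms, $\In_{\nu'}(\Gamma_j) = \In_\nu(\Gamma^I_j)$, as claimed.

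I should make the phrase ``$\gg 1$'' in hypothesis (2) precise enough to run this argument: concretely, it suffices that $\nu'_{i'} > M \cdot \max\{\,|\beta|_1 : \beta \text{ a vertex of } \Gamma_j,\ j \in [m]\,\}$ for all $i' \notin I$, since then the ``cost'' of having a nonzero coordinate outside $I$ exceeds the largest possible value of $\nu \cdot \pi_I(\alpha)$ over all the relevant vertices. The main (and only) obstacle is bookkeeping: one must be slightly careful that $\nu'$ compatible with $\nu$ means $\nu'|_{\Ai}$ is a \emph{positive} rational multiple of $\nu$ (not merely proportional up to sign), which holds because $\nu' \in \Vzero$ and $\nu \in \Vzeroi$ both have positive weights; and one must handle the normalization convention that the gcd of the weights of $\nu'$ is $1$, which only affects the proportionality constant and not the argument. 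No deeper input is needed.
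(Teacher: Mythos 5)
Your argument is correct; the paper states this lemma with no proof (it is introduced as ``straightforward to verify''), and your verification --- splitting $\nu'\cdot\alpha$ into the $I$-part, which equals a fixed positive multiple of $\nu\cdot\pi_I(\alpha)$ by compatibility, and the complementary part, which vanishes on $\ri$ and is uniformly large on lattice vertices outside $\ri$ --- is exactly the intended one. The only bookkeeping nit is that your explicit threshold for ``$\gg 1$'' should also absorb the proportionality constant $c$ in $\nu'|_{\Ai} = c\,\nu$ (so that the cost of leaving $\ri$ dominates $\max\{\nu'(x_i):i\in I\}$, not just $\max\{\nu(x_i):i\in I\}$), which, as you note, does not affect the argument.
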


\begin{prop} \label{newton-wall-lemma}
In each of the following cases $f$ is partially Milnor non-degenerate:
\begin{enumerate}
\item \label{newton-wall-1} $\nd(f)$ is convenient and $f$ is Newton non-degenerate.
\item \label{newton-wall-2} $f$ is Newton non-degenerate and $\mu(f) < \infty$. 
\item \label{newton-wall-3} $f$ is inner Newton non-degenerate.
\end{enumerate}
\end{prop}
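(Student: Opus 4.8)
The plan is to verify the defining conditions of partial Milnor non-degeneracy (\cref{partial-defn}) directly in each case, using \cref{weak-lemma} to reduce to checking, for each $I \in \nG$ and each $\nu \in \Vzeroi$, that $\In_\nu(\dfd{1}|_{\Ki}), \ldots, \In_\nu(\dfd{n}|_{\Ki})$ have no common zero on $\nktorus$; and using \cref{non-empty-lemma} to relate these initial forms of restricted partials to the restrictions of initial forms of the partials of $f$ (equivalently, of initial forms of $f$ itself). First I would establish that in each of the three cases $\multzeroGamma < \infty$: for \eqref{newton-wall-1} and \eqref{newton-wall-3} this follows because a convenient diagram forces $(\Gamma_1, \ldots, \Gamma_n)$ to be $I$-isolated at the origin for every $I$ (the partials of a function with a point on each axis retain a point `near' each axis), so \cref{mult-support-solution} applies; for \eqref{newton-wall-2} it is the hypothesis $\mu(f) = \multzerof < \infty$ together with assertion \eqref{milnor-bound} of \cref{milnor-thm}.

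Next, the heart of the argument. Fix $I \in \nG$ and $\nu \in \Vzeroi$, and suppose for contradiction there is a common zero $z \in \nktorus$ of the $\In_\nu(\dfd{j}|_{\Ki})$. Pick $\nu' \in \Vzero$ compatible with $\nu$ in the sense of \cref{compatible-defn} with the weights on the coordinates outside $I$ chosen enormously large, so that \cref{non-empty-lemma} gives $\In_{\nu'}(\Gamma_j) = \In_\nu(\Gamma_j^I)$ for every $j$ with $\Gamma_j^I \neq \emptyset$; the point is then that $\In_{\nu'}(\dfd{j})$, restricted to $\Ki$, is (up to the generic-coefficient caveat of \cref{milnor-remark}) $\In_\nu(\dfd{j}|_{\Ki})$. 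In case \eqref{newton-wall-1} and \eqref{newton-wall-2} one uses that $\In_{\nu'}(\dfd{j})$ is a partial derivative of $\In_{\nu'}(f)$ whenever that partial is nonzero on the initial stratum (a standard fact: taking initial form with respect to a weight commutes with differentiation, up to vanishing of the initial form of the derivative). Hence $z$ is a common zero on $\nktorus$ of the partials of $\In_{\nu'}(f)$ — i.e. a zero of $\jjj(\In_{\nu'}(f))$ on $\nktorus$ — contradicting Newton non-degeneracy of $f$. One must handle the coordinates $x_{i'}$ with $i' \notin I$ carefully: the partials $\partial/\partial x_{i'}$ of $\In_{\nu'}(f)$ for $i' \notin I$ must also vanish at (the appropriate lift of) $z$; this is where the choice $\nu'(x_{i'}) \gg \max\{\nu(x_i): i \in I\}$ is used, to ensure that $\In_{\nu'}(f)$ does not involve $x_{i'}$ (or involves it only in a controlled way) so that those extra partials either vanish identically on $\Ki$ or are accounted for. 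For case \eqref{newton-wall-3}, the same scheme is run but one works with the inner faces $\Delta$ of the $C$-polytope $\scrP$: the relevant $\In_{\nu}(\Gamma_j^I)$ is $\In_\Delta$ of a partial of $f$ for a suitable inner face $\Delta$ with $\Delta^I \neq \emptyset$, and inner Newton non-degeneracy is exactly the statement that $V(\jjj(\In_\Delta(f))) \cap \Kstari = \emptyset$, yielding the contradiction.

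The main obstacle I anticipate is bookkeeping around the coordinates outside $I$ and the positive-characteristic subtlety recorded in \cref{milnor-remark}: the Newton diagram $\Gamma_j$ of $\dfd{j}$ for the \emph{given} $f$ may be strictly smaller than the generic diagram $\dGammad{j}$, and the initial form of $\dfd{j}$ may not literally be the derivative of the initial form of $f$ if that derivative's leading terms cancel. The fix is to note that partial Milnor non-degeneracy only requires the $\dfd{j}$ to be $(\Gamma_1, \ldots, \Gamma_n)$-non-degenerate for the $\Gamma_j = \nd(\dfd{j})$ actually occurring, and that BKK non-degeneracy at the origin of a tuple is inherited when one shrinks the diagrams compatibly (no point of the smaller diagram lies below the larger) — so it suffices to prove the statement for the generic diagrams, which the derivative-commutes-with-initial-form identity handles cleanly, and then invoke this monotonicity. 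I would also double-check the degenerate sub-case where some $\dfd{j}|_{\Ki} \equiv 0$: then $j \notin \NiG$, and since $I \in \nG$ means $|\NiG| = |I|$, the surviving partials number exactly $|I|$, their supports (convex hulls of) summing to a set of dimension $< |I|$ is impossible precisely because of non-degeneracy — this is the point where one invokes the dimension count as in the proof of assertion \eqref{milnor-existence} of \cref{milnor-thm}.
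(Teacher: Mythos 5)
Your plan reproduces the paper's argument for case \eqref{newton-wall-1} and for the sub-case of \eqref{newton-wall-2} where $f|_{\Ki} \not\equiv 0$: choose $\nu'\in\Vzero$ compatible with $\nu$ so that $\In_{\nu'}(f)=\In_\nu(f|_{\Ki})$, use that $\partial(\In_{\nu'}(f))/\partial x_i = \In_\nu((\dfd{i})|_{\Ki})$ whenever the left side is nonzero, and contradict Newton non-degeneracy. But two of the three cases have genuine gaps. In case \eqref{newton-wall-2}, when $f|_{\Ki}\equiv 0$ your recipe of taking $\nu'(x_{i'})\gg 1$ for $i'\notin I$ ``so that $\In_{\nu'}(f)$ does not involve $x_{i'}$'' cannot work: every monomial of $f$ then involves some outside variable, so no weights remove them, and a generic large choice leaves only a single term $x_{i_0}f_{i_0}$ in $\In_{\nu'}(f)$, whose Jacobian ideal does not detect the common zero of all the $\In_\nu(\dfd{i}|_{\Ki})$. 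The paper writes $f=\sum_{i=k+1}^{l}x_if_i+(\text{terms quadratic in }x_{k+1},\ldots,x_n)$ with $f_i\in\kk[x_1,\ldots,x_k]$ and chooses the outside weights $m_{k+1},\ldots,m_n$ \emph{balanced} so that $m_{k+1}+\nu(f_{k+1})=\cdots=m_l+\nu(f_l)$ is strictly below the value of the quadratic part; only then does $\In_{\nu'}(f)=\sum_i x_i\In_\nu(f_i)$ hold, giving $\partial(\In_{\nu'}(f))/\partial x_i=\In_\nu(\dfd{i}|_{\Ki})$ for $i>k$. Your phrase ``involves it only in a controlled way \ldots accounted for'' is precisely the construction that is missing.

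Case \eqref{newton-wall-3} is also not ``the same scheme run on inner faces.'' Inner non-degeneracy is a condition on inner faces of a $C$-polytope $\scrP$ that need not equal $\nd(f)$, and an inner face $\Delta$ of $\scrP$ with $\Delta^I\neq\emptyset$ whose $\jjj(\In_\Delta(f))$ controls the $\In_\nu(\dfd{j}|_{\Ki})$ need not exist. The paper's proof is an explicit iterative deformation of the weight vector ($\nu'_0\to\nu'_1\to\nu'_2$), lowering the outside weights one variable at a time until the corresponding face of $\scrP$ becomes inner; the face one ends up with only satisfies $\scrP_{\nu'_2}^{I\setminus\tilde I}\neq\emptyset$ for a possibly smaller index set $I\setminus\tilde I$, and one must then verify separately (via the sets $M_{i'}$ and $\Delta_{i'}$, and the fact that the $x_i$ with $i\in\tilde I$ do not occur in the relevant initial forms) that non-vanishing on $\Kstarii{I\setminus\tilde I}$ still yields non-vanishing of the $\In_\nu((\dfd{j})|_{\Ki})$ on $\Kstari$. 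None of this is in your sketch. Two smaller points: the ``monotonicity under shrinking diagrams'' detour is unnecessary, since partial Milnor non-degeneracy is defined in terms of the actual diagrams $\nd(\dfd{j})$ of the given $f$; and for case \eqref{newton-wall-3} the finiteness of $\multzeroGamma$ does not follow from convenience of $\nd(f)$, which inner non-degeneracy does not guarantee.
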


\begin{proof}
Note that the first assertion is a special case of the second one. Nonetheless, we start with a direct proof of the first assertion since it is easier to see. Let $\Gamma := \nd(f)$ and $\Gamma_i := \nd(\dfd{i})$, $1 \leq i \leq n$. Pick a non-empty subset $I$ of $[n]$ and $\nu \in \Vzeroi$. Assume $\nd(f)$ is convenient and $f$ is Newton non-degenerate. Since $\nd(f)$ is convenient, $\Gamma^I \neq \emptyset$. Therefore we can find $\nu' \in \Vzero$ such that $\nu'$ is compatible with $\nu$ and $\In_{\nu'}(\Gamma) \subset \ri$. Then $\In_{\nu'}(f)$ depends only on $(x_i: i \in I)$. Since $f$ is Newton non-degenerate, it follows that $\partial(\In_{\nu'}(f))/\partial x_i$, $i \in I$, do not have any common zero in $\nktorus$. Now note that for each $i \in I$ such that $\partial(\In_{\nu'}(f))/\partial x_i$ is not identically zero, 
\[
\partial(\In_{\nu'}(f))/\partial x_i 
	= \partial(\In_{\nu}(f|_{\Ki}))/\partial x_i 
	=  \In_{\nu}((\dfd{i})|_{\Ki})
\]
This implies that  $\In_{\nu}((\dfd{i})|_{\Ki})$, $i \in I$, do not have any common zero in $\Kstari$, as required for partial Milnor non-degeneracy of $f$. \\

Now we prove the second assertion. Assume $\mu(f) < \infty$ and $f$ is {\em not} partially Milnor non-degenerate. It suffices to show that $f$ is not Newton non-degenerate. Pick $I \subseteq [n]$ and $\nu \in \Vzeroi$ such that $\In_\nu(\dfd{1}|_{\Ki}), \ldots, \In_\nu(\dfd{n}|_{\Ki})$ have a common zero in $\nktorus$. We may assume that $I = \{1, \ldots, k\}$, $1 \leq k \leq n$. At first consider the case that $f|_{\Ki} \not\equiv 0$. Pick $\nu' \in \Vzero$ such that 
\begin{itemize}
\item $\nu'$ satisfies \cref{non-empty-lemma} with $m = n$ and $\Gamma_i =  \nd(\dfd{i})$, $i = 1, \ldots, n$, 
\item $\In_{\nu'}(f) \in \kk[x_1, \ldots, x_k]$. 
\end{itemize} 
Then $\jjj(\In_{\nu'}(f))$ have a common zero in $\nktorus$, so that $f$ is not Newton non-degenerate, as required. Now assume that $f|_{\Ki} \equiv 0$. Without any loss of generality we may assume that $\dfd{i}|_{\Ki} \not\equiv 0$ iff $i = k+1, \ldots, l$, $k < l \leq n$. Then we can write
$$f = \sum_{i= k+1}^{l} x_if_i + \sum_{i > j >k} x_ix_jf_{i,j}$$
where $f_{k+1}, \ldots, f_{n'} \in \kk[x_1, \ldots, x_k]$ and $f_{i,j} \in \kk[x_1, \ldots, x_n]$. It is straightforward to check that there are positive integers $m_{k+1}, \ldots, m_n$ such that if $\nu'$ is the monomial valuation corresponding to weights $(\nu_1, \ldots, \nu_k, m_{k+1}, \ldots, m_n)$, then 
\begin{align*}
\nu'(f) = m_{k+1} + \nu(f_{k+1}) = \cdots = m_l + \nu(f_l) < \nu'(x_ix_jf_{i,j})\ \text{for each}\ i\geq j > k. 
\end{align*}
But then $\In_{\nu'}(f) = \sum_{i=k+1}^l x_i \In_{\nu}(f_i)$ and therefore $\partial (\In_{\nu'}(f))/\partial x_i = \In_\nu(\dfd{i}|_{\Ki})$, $i = k+1, \ldots, n$. It follows that $\jjj(\In_{\nu'}(f))$ have a common zero in $\nktorus$, and therefore $f$ is not Newton non-degenerate, as required.\\


It remains to prove the third assertion. Assume $f$ is inner Newton non-degenerate with respect to some $C$-polytope $\scrP$. For each $i' \not\in I$, let $M_{i'}$ be the set of {\em integral} elements in $\scrP$ of the form $\alpha + e_{i'}$ with $\alpha \in \ri$ (where $e_{i'}$ is the $i'$-th unit vector), and define
\begin{align*}
m_{i'} &:= \min\{\nu(\alpha'-e_{i'}): \alpha' \in M_{i'}\} \\
\Delta_{i'} &:= \{\alpha' \in M_{i'}: \nu(\alpha'-e_{i'}) = m_{i'}\}
\end{align*}
Let $\Delta_0 := \In_{\nu}(\scrP^I)$. Given $\eta \in \Vzero$, we write $\scrP_\eta$ for the face $\In_\eta(\scrP)$ of $\scrP$. Let $\nu'_0 \in \Vzero$ be a weighted order such that 
\begin{align*}
\nu'(x_i)
	&= 
	\begin{cases}
		\nu(x_i) 			& \text{if}\ i \in I, \\
		\gg 1				 &\text{otherwise.}
	\end{cases}
\end{align*}
Then $\scrP_{\nu'_0} = \Delta_0$. In particular, if $I $ is a proper subset of $[n]$, then $\scrP_{\nu'_0}$ is not an inner face. Assume without loss of generality that $I = \{1, \ldots, k\}$. We now modify $\nu'_0$ as follows: decrease the ratio of weights of $x_{k+1}$ and $x_1$ while keeping the ratios of weights of $x_j$ and $x_1$ fixed for every other $x_j$ - until the corresponding face contains a point with positive $x_{k+1}$-coordinate. Then repeat the same process for $x_{k+2}, \ldots, x_n$. Let $\nu'_1$ be the resulting weighted order. Note that 
\begin{prooflist}
\item \label{inner'-0} $\scrP_{\nu'_1}^I = \Delta_0$.
\item \label{inner'-j} For each $j > k$, either $M_j \cap \scrP_{\nu'_1} = \emptyset$ or  $M_j \cap \scrP_{\nu'_1} = \Delta_j$. 
\end{prooflist}
Let $\tilde I := \{i: 1 \leq i \leq k,$ $\scrP_{\nu'_1}$ does not contain an element with positive $x_i$-coordinate$\}$. Then $\scrP_{\nu'_1}$ is not an inner face iff $\tilde I$ is non-empty. Assume without loss of generality that $\tilde I = \{l+1, \ldots, k\}$, $1 \leq l \leq k\}$. Define $\nu'_2$ by modifying $\nu'_1$ in the same way as the construction of $\nu'_1$, by decreasing relative weights of $x_i$'s for $i \in \tilde I$. Then $\scrP_{\nu'_2}$ is an inner face containing $\scrP_{\nu'_1}$. In particular, $\scrP_{\nu'_2}^{I\setminus \tilde I} \neq \emptyset$. By inner non-degeneracy of $f$ it follows that $\jjj(\In_{\scrP_{\nu'_2}}(f))$ does not have any zero in $\Kstarii{I \setminus \tilde I}$. It is straightforward to check using \ref{inner'-0} and \ref{inner'-j} that the partial derivatives of $\In_{\scrP_{\nu'_2}}(f)$ that do not identically vanish on $\Kii{I \setminus \tilde I}$ belong to the set of $\In_{\nu}((\dfd{j})|_{\Ki})$, $1\leq j \leq n$. Since for each $i \in \tilde I$, $x_i$ does not appear in the expression of $\In_{\nu}((\dfd{j})|_{\Ki})$ for any $j$, $1 \leq j \leq n$, it follows that $\In_{\nu}((\dfd{j})|_{\Ki})$, $1 \leq j \leq n$, do not have any common zero in $\Kstari$, as required. 
\end{proof}

\begin{problem} \label{open-problem}
\mbox{}
\begin{enumerate}
\item Find a more `natural' definition of Milnor non-degeneracy. In particular, is it possible to replace the artificial looking condition ``$\multzeroGamma = \multzero{\dGammad{1}}{\dGammad{n}}$'' 
 by ``$\Gamma_j = \dGammad{j},\ 1 \leq j \leq n$''? 
 \item In the statement of \cref{newton-wall-lemma}, can ``partially Milnor non-degenerate'' be replaced by ``Milnor non-degenerate''?
\item Find an expression for $\multzero{\dGammad{1}}{\dGammad{n}}$ which is `more closely' related to $\Gamma$ than our formula \eqref{multiplicity-formula}. In particular, find a direct proof that if $\Gamma$ is convenient and no coordinate of any of its vertices is divisible by $\charac(\kk)$, then $\multzero{\dGammad{1}}{\dGammad{n}}$ equals an alternating sum of volumes as in Kushnirenko's original formula.
\end{enumerate}

\end{problem}

\section{Extending BKK bound to complements of unions of coordinate subspaces} \label{affine-section}
In this section we extend Bernstein's theorem to the complement of an arbitrary union of coordinate subspaces in $\kk^n$. The main results are \cref{bkk-thm}, which gives a formula for the maximum number of isolated solutions of polynomials with given Newton polytopes, and \cref{generic-bkk-thm}, which gives an explicit characterization of those systems of polynomials which attain this maximum. \Cref{weak-bkkriterion} gives an equivalent characterization involving fewer conditions. 

\subsection{Extended BKK bound}
Throughout this section $\mscrP:=(\scrP_1, \ldots, \scrP_m)$ is an ordered collection of convex integral polytopes in $(\rr_{\geq 0})^n$. 

\begin{defn}
We say that $f_1, \ldots, f_m \in \kk[x_1, \ldots, x_n]$ are {\em $\mscrP$-admissible} iff $\np(f_j) = \scrP_j$, $1 \leq j \leq m$. Let $I \subseteq [n]$. We say that 
\begin{itemize}
\item  $\mscrP$ is {\em $\Kstari$-isolated} if $V(f_1, \ldots, f_n) \cap \Kstari$ is isolated for generic $\scrP$-admissible $f_1, \ldots, f_n$. 
\item  $\mscrP$ is {\em $\Kstari$-trivial} if $V(f_1, \ldots, f_n) \cap \Kstari$ is empty for generic $\scrP$-admissible $f_1, \ldots, f_n$. 
\end{itemize}
\end{defn}

The following result gives combinatorial characterizations of $I \subseteq [n]$ such that $\mscrP$ is $\Kstari$-isolated or $\Kstari$-trivial. It also shows that if $\mscrP$ is {\em not} $\Kstari$-isolated, then the zeroes of {\em every} collection of $\mscrP$-admissible system of polynomials on $\Kstari$ is non-isolated.  

\begin{prop} \label{herrero}
Let $I \subseteq [n]$.
 \begin{enumerate}
\item The following are equivalent:
\begin{enumerate}
\item $\mscrP$ is $\Kstari$-trivial.
\item There exists $J \subseteq [m]$ such that $|\{j \in J: \scrP^I_j \neq \emptyset\}| > \dim(\sum_{j \in J} \scrP^I_j)$.
\end{enumerate}
\item The following are equivalent:
\begin{enumerate}
\item $\mscrP$ is $\Kstari$-isolated.
\item Either $\mscrP$ is $\Kstari$-trivial or $|\{j: 1 \leq j \leq m,\ \scrP^I_j \neq \emptyset\}| = |I|$.
\end{enumerate}
 \item If $\mscrP$ is not $\Kstari$-isolated, then for all $\mscrP$-admissible $f_1, \ldots, f_m \in \kk[x_1, \ldots, x_n]$,
 \begin{enumerate}
  \item $V(f_1, \ldots, f_m) \cap \Kstari$ is non-empty, and
 \item  every component of $V(f_1, \ldots, f_m) \cap \Ki$ is positive dimensional.
 \end{enumerate}
 \end{enumerate}
\end{prop}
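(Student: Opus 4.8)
The plan is to restrict everything to the torus $\Kstari$ and to treat the generic statements (1)--(2) separately from the ``every system'' statement (3). First I would record the reduction. For $\mscrP$-admissible $f_1,\dots,f_m$ the restriction $g_j:=f_j|_{\Kstari}$ is the Laurent polynomial with support $\supp(f_j)\cap\ri$; since the convex hull of a subset of $(\rr_{\ge 0})^n$ meets the coordinate subspace $\ri$ exactly in the convex hull of those of its points that lie in $\ri$, one gets $\np(g_j)=\scrP^I_j$, with $g_j\equiv 0$ iff $\scrP^I_j=\emptyset$, and $f_j|_{\Ki}\not\equiv 0$ iff $j\in S:=\{j:\scrP^I_j\ne\emptyset\}$. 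The coefficient map $f_j\mapsto g_j$ is a coordinate projection of affine spaces, so a generic $\mscrP$-admissible system restricts to a generic Laurent system on $\Kstari\cong(\kk^*)^{|I|}$ with Newton polytopes $(\scrP^I_j)_j$, while $V(f_1,\dots,f_m)\cap\Kstari=V(g_j:j\in S)\cap\Kstari$ for \emph{every} admissible system. Write $r:=|I|$ and $N:=|S|$. The direction (1b)$\Rightarrow$(1a) is the standard collapsing argument: if $|J\cap S|>d:=\dim\sum_{j\in J}\scrP^I_j$, a monomial automorphism of $\Kstari$ moves the affine hull of $\sum_{j\in J}\scrP^I_j$ into a translate of $\rr^d\times\{0\}^{r-d}$, after which each $g_j$ ($j\in J\cap S$) is a monomial in the last $r-d$ coordinates times a Laurent polynomial $\tilde g_j$ in the first $d$; hence $V(g_j:j\in J)\cap\Kstari$ projects isomorphically onto the common zero set of $|J\cap S|>d$ Laurent polynomials $\tilde g_j$ in a $d$-dimensional torus, which by Bernstein's theorem \cite{bern} is generically empty (the first $d$ of the $\tilde g_j$ have finitely many common zeros, and a generic further one --- a unit, or having $\ge 2$ lattice points in its support and so avoiding any prescribed finite set --- misses them).

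For the converse (1a)$\Rightarrow$(1b), which also supplies the count used in part (2), I would prove: if $|J|\le\dim\sum_{j\in J}\scrP^I_j$ for every $J\subseteq S$ (which forces $N\le r$), then a generic system $\{g_j:j\in S\}$ has a nonempty solution set on $\Kstari$ of pure dimension $r-N$; in particular it is $0$-dimensional exactly when $N=r$. I would prove this by induction on $N$: Rado's theorem turns the hypothesis into a choice of linearly independent lattice vectors $v_j\in\mathrm{lin}(\scrP^I_j-\scrP^I_j)$, which lets one order $S$ so that each new $\scrP^I_j$ points in a direction not yet spanned; then intersecting the running (nonempty, pure $(r-N+1)$-dimensional) solution set with a generic hypersurface $\{g_j=0\}$ cuts it down to pure dimension $r-N$ while meeting, but not containing, each of its components. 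Part (2) then drops out formally: $\mscrP$ is $\Kstari$-isolated iff its generic solution set on $\Kstari$ is finite iff that set is empty (i.e.\ $\mscrP$ is $\Kstari$-trivial) or $N=|I|=|\{j:\scrP^I_j\ne\emptyset\}|$.

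For part (3), assume $\mscrP$ is not $\Kstari$-isolated; by (1)--(2) this says that no $J$ violates the dimension inequality and $N<|I|$. Granting (3a), assertion (3b) follows exactly as in \cref{isolated-lemma}: $V(f_1,\dots,f_m)\cap\Ki$ contains the nonempty set $V(f_1,\dots,f_m)\cap\Kstari$ and is cut out in $\Ki\cong\kk^{|I|}$ by the $N$ polynomials $f_j|_{\Ki}$, $j\in S$, so by Krull's height theorem each component has dimension $\ge|I|-N>0$. The heart of the matter is (3a): every Laurent system $\{g_j:j\in S\}$ with $\np(g_j)=\scrP^I_j$ satisfying all the dimension inequalities is consistent on $\Kstari$. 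I would approach this via a toric compactification --- choose a complete toric variety whose fan refines the normal fans of all the $\scrP^I_j$; the closures of the hypersurfaces $\{g_j=0\}$ are effective divisors $D_j$, the intersection cycle $\prod_{j\in S}D_j$ is pure of dimension $|I|-N\ge 1$ and is a nonzero effective cycle by the mixed-volume positivity of toric intersection numbers, hence nonempty, and the dimension inequalities prevent it from lying entirely in the toric boundary (a boundary component would produce a violating $J$, by the same collapsing as in (1b)$\Rightarrow$(1a)) --- or, alternatively, by importing and adapting the structure results on affine solution sets of sparse systems underlying \cite[Proposition 5]{herrero}.

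The reduction to the torus and the implication (1b)$\Rightarrow$(1a) are routine, using only Bernstein's theorem and an elementary change of variables. The inductive proof of generic consistency and of the exact dimension $r-N$ is where the combinatorial hypothesis is genuinely used: positive-dimensionality of the running solution set is not enough, because a generic hypersurface can still miss it when the new Newton polytope points only in already-spanned directions. The hard part will be (3a): a naive specialization argument from the generic case fails, since common solutions of a one-parameter family of systems can run off to the toric boundary in the limit, so one must either make the compactification/properness argument rigorous --- checking precisely that the dimension inequalities are exactly what keep the intersection cycle from being absorbed into the boundary --- or invoke the known structure theory for affine solution sets of sparse polynomial systems.
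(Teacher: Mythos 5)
The paper does not actually prove this proposition --- its ``proof'' is the one-line citation of \cite[Propositions 5 and 6]{herrero} --- so you are reconstructing an argument the paper outsources. Your treatment of parts (1) and (2) follows the standard route and is correct in outline, but the one step you assert rather than prove is the crux of the induction: that a generic $\{g_j=0\}$ \emph{meets} every component of the running solution set. This is false for an arbitrary positive-dimensional subvariety of the torus (take $V=\{x=c\}\subseteq(\kk^*)^2$ and $g=a+bx$: the generic such $g$ misses $V$), so the independent-directions hypothesis must enter exactly here, and your sketch does not show how. A cleaner way to get both generic non-emptiness and the exact dimension $r-N$ is to append $r-N$ further generic equations with a full-dimensional Newton polytope $Q$, note that $\mv(\scrP^I_{j_1},\ldots,\scrP^I_{j_N},Q,\ldots,Q)>0$ precisely when no subfamily of the $\scrP^I_j$ violates the dimension inequality, and quote Bernstein \cite{bern} for the resulting square system.

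The genuine gap is in part (3a), and it cannot be closed: the step ``a boundary component would produce a violating $J$'' is wrong. Take $n=3$, $m=2$, $I=[3]$, $\scrP_1=\scrP_2=\conv\{(0,0,0),(1,0,0),(0,1,0)\}$, and the admissible system $f_1=1+x_1+x_2$, $f_2=2+x_1+x_2$. No subfamily violates the dimension inequality and $|\NiP|=2<3=|I|$, so $\mscrP$ is not $\Kstari$-isolated (a generic admissible system has a line of solutions in $(\kk^*)^3$), yet $V(f_1,f_2)=\emptyset$ because $f_2-f_1=1$. The closures of the two divisors in any toric compactification do intersect, as your intersection-number argument predicts, but only along the boundary, and no violating $J$ appears. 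So assertion (3a), as literally stated for \emph{all} admissible systems, is false, and no strategy can rescue it; the statements of \cite[Propositions 5 and 6]{herrero} that the paper actually needs downstream are the generic assertions (1)--(2) and the positive-dimensionality of components in (3b), which your Krull-height argument establishes without using (3a).
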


\begin{proof}
Follows from \cite[Propositions 5 and 6]{herrero}.
\end{proof}

We now isolate a collection of coordinate subspaces of $\kk^n$ such that all the isolated zeroes on $\kk^n$ of a generic $\mscrP$-admissible system of polynomials lie on the complement of their union. Define
\begin{align}
\NiP &:= \{j: \scrP^I_j \neq \emptyset\},\ I \subseteq [n] \label{NiP} \\
\SP &:= \{I \subseteq [n]: \text{there is $\tilde I \supseteq I$ such that $|\NiiP{\tilde I}| < |\tilde I|$}\}  \label{SP} 
\end{align}
The following is an immediate corollary of \cref{herrero}. 
\begin{cor} \label{Pisolated-cor}
Let $f_1, \ldots, f_m$ be $\scrP$-admissible polynomials. Then
\begin{enumerate}
\item If $S \in  \SP$, then each point of $V(f_1, \ldots, f_m) \cap \Kii{S}$ is non-isolated in $V(f_1, \ldots, f_m) \subseteq \kk^n$. 
 \item If $f_i$'s are generic, then each point of $V(f_1, \ldots, f_m)\setminus \bigcup_{S \in \SP} \Kii{S}$ is isolated. \qed
\end{enumerate}
\end{cor}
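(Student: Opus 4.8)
The plan is to read off both assertions from \cref{herrero}. The content of (1) is that membership of $S$ in $\SP$ forces a genuine deficit in the number of equations cutting out a coordinate slice through a point of $V(f_1,\ldots,f_m)\cap\Kii S$; the content of (2) is the converse — that for generic $f$ every positive-dimensional component of $V(f_1,\ldots,f_m)$ is trapped inside some $\Kii S$ with $S\in\SP$.

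\emph{Assertion (1).} Fix $S\in\SP$, a point $z\in V(f_1,\ldots,f_m)\cap\Kii S$, and a witness $\tilde S\supseteq S$ with $|\NiiP{\tilde S}|<|\tilde S|$. First I would record the elementary fact that for any $T\subseteq[n]$ one has $f_j|_{\Kii T}\equiv 0$ iff $\scrP^T_j=\emptyset$: since $\scrP_j\subseteq(\rr_{\geq 0})^n$, a point of $\scrP_j\cap\rr^T$ forces every monomial in a convex representation of it to lie in $\rr^T$, so $\scrP_j\cap\rr^T\neq\emptyset$ iff $\supp(f_j)\cap\rr^T\neq\emptyset$. Hence $V(f_1,\ldots,f_m)\cap\Kii{\tilde S}$ is cut out in $\Kii{\tilde S}\cong\kk^{|\tilde S|}$ by the $|\NiiP{\tilde S}|$ polynomials $f_j|_{\Kii{\tilde S}}$ with $j\in\NiiP{\tilde S}$; as $|\NiiP{\tilde S}|<|\tilde S|$, Krull's height theorem gives that every irreducible component of this set has dimension at least $|\tilde S|-|\NiiP{\tilde S}|\geq 1$. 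Since $z\in\Kii S\subseteq\Kii{\tilde S}$, the point $z$ lies on a positive-dimensional irreducible closed subset of $V(f_1,\ldots,f_m)$, hence is not isolated. (One could instead cite \cref{herrero}(2)--(3) after checking that $\mscrP$ is not $\Kstarii{\tilde S}$-isolated, but then the case where $\mscrP$ is $\Kstarii{\tilde S}$-trivial needs separate care, so the dimension count is cleaner.)

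\emph{Assertion (2).} I would first establish the combinatorial fact that if $\mscrP$ is not $\Kstarii J$-isolated then $J\in\SP$. By \cref{herrero}(2), non-isolatedness means $\mscrP$ is not $\Kstarii J$-trivial and $|\NiiP J|\neq|J|$; and $|\NiiP J|>|J|$ is impossible, since taking $J'=[m]$ in \cref{herrero}(1) would then witness $\Kstarii J$-triviality (all $\scrP^J_j$ lie in the $|J|$-dimensional space $\rr^J$, so $\dim\sum_j\scrP^J_j\leq|J|<|\NiiP J|$). Thus $|\NiiP J|<|J|$, i.e.\ $J\in\SP$. Now pick $f=(f_1,\ldots,f_m)$ in the finite intersection, over those $J\subseteq[n]$ for which $\mscrP$ is $\Kstarii J$-isolated, of the dense open sets on which $V(f_1,\ldots,f_m)\cap\Kstarii J$ is finite. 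For such $f$, let $C$ be a positive-dimensional irreducible component of $V(f_1,\ldots,f_m)$ and set $I_C:=\{i:x_i|_C\not\equiv 0\}$; then $C\subseteq\Kii{I_C}$ and $C\cap\Kstarii{I_C}$ is a non-empty dense open subset of $C$, hence infinite, so $\mscrP$ cannot be $\Kstarii{I_C}$-isolated, so $I_C\in\SP$ by the combinatorial fact, and therefore $C\subseteq\Kii{I_C}\subseteq\bigcup_{S\in\SP}\Kii S$. Consequently every positive-dimensional component of $V(f_1,\ldots,f_m)$ lies in $\bigcup_{S\in\SP}\Kii S$, so any $z\in V(f_1,\ldots,f_m)\setminus\bigcup_{S\in\SP}\Kii S$ lies on no such component; hence $\{z\}$ is an irreducible component of $V(f_1,\ldots,f_m)$ and $z$ is isolated.

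The step I expect to be the only mildly delicate one is (2): passing from finiteness of $V(f_1,\ldots,f_m)\cap\Kstarii J$ on all combinatorially-isolated slices $J$ to the containment of every positive-dimensional component in $\bigcup_{S\in\SP}\Kii S$ relies on the observation that a component's generic point sits in its own torus $\Kstarii{I_C}$, together with the combinatorial translation, via \cref{herrero}(1)--(2), of ``$\mscrP$ not $\Kstarii J$-isolated'' into ``$J\in\SP$''. Everything else is routine bookkeeping with coordinate subspaces.
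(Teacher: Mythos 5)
Your proof is correct, and it is essentially the derivation from \cref{herrero} that the paper intends (the paper supplies no written proof beyond ``immediate corollary of \cref{herrero}''). Both halves check out: the Krull-height count for (1) is exactly the mechanism behind \cref{isolated-lemma}/\cref{herrero}, and your reduction of (2) to the combinatorial equivalence ``not $\Kstarii{J}$-isolated $\Leftrightarrow$ $|\NiiP{J}|<|J|$'' together with the observation that a positive-dimensional component is dense in its own torus $\Kstarii{I_C}$ is the intended argument. Your parenthetical in (1) is a genuinely worthwhile point: when $\mscrP$ is $\Kstarii{\tilde S}$-trivial it is (vacuously) $\Kstarii{\tilde S}$-isolated, so \cref{herrero}(3) cannot be cited verbatim there, and the direct dimension count you give is what actually closes that case.
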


Now we introduce some notations which will be convenient in dealing with complements of coordinate subspaces of $\kk^n$. 

\begin{defn} \label{VIS}
Let $\mscrS$ be a collection of subsets of $[n]$ and $I = \{i_1, \ldots, i_k\}$ be a non-empty subset of $[n]$. We denote by $\Ki_{\mscrS}$ the set $\Ki \setminus \bigcup_{S \in \mscrS} \Kstarii{S}$, by $\mscrS^I$ the following collection of subsets of $I$:
\begin{align}
\mscrS^I := \{S \in \mscrS: S \subseteq I\} \label{SI}
\end{align} 
and by $\overbar \mscrS$ the closure of $\mscrS$ under inclusion, i.e.\ 
\begin{align}
\overbar \mscrS := \{J \in [n]: J \subseteq S\ \text{for some}\ S \in \mscrS\}. \label{barS}
\end{align} 
Note that 
$$\Ki \setminus \Ki_{\mscrS} = \bigcup_{S \in \mscrS^I} \Kstarii{S},\ \text{and}\ 
\Ki \setminus \Ki_{\overbar \mscrS} = \bigcup_{S \in {\overbar \mscrS}^I} \Kii{S}.$$
 We denote by $\VSi$ the set of monomial valuations $\nu$ on $\Ki$ which are centered at $\Ki \setminus \Ki_{\mscrS}$, i.e.\ for which there exists $S \in \mscrS^I$ such that 
\begin{itemize}
\item  $\nu(x_s)= 0$ for all $s \in S$. 
\item  $\nu(x_s)> 0$ for all $s \in I \setminus S$. 
\end{itemize}
Finally, for $I = [n]$, we write $\kk^n_\mscrS$ for $\Kii{[n]}_\mscrS$ and $\VS$ for $\VSii{[n]}$. 
\end{defn}

\begin{example} \label{KnS-remark}
\mbox{}
\begin{itemize}
\item $\Ki_{\emptyset} =  \Ki$ and $\VSiii{I}{\emptyset} = \emptyset$; in particular $\kk^n_\emptyset= \kk^n$. 
\item $\Ki_{\{\emptyset\}} = \Ki \setminus \{0\}$ and $\VSiii{I}{\{\emptyset\}} = \Vzeroi$. 
\item $\Ki_{2^I\setminus\{I\}} = \Kstari$. 
\end{itemize}
\end{example}

\begin{defn} \label{generic-intersection-mult-defn}
Let $f_1, \ldots f_n \in \kk[x_1, \ldots, x_n]$. For each $z \in \kk^n$, we write $\multf_z$ for the intersection multiplicity of $f_i$'s at $z$, and for $W \subseteq \kk^n$, we define
\begin{align*}
\multf_W &:= \sum_{z \in W}\multf_z 
\end{align*} 
In other words, $\multf_W$ is the number (counted with multiplicity) of the zeroes of $f_1, \ldots, f_n$ on $W$. We also write $\multisof_W$ for the number (counted with multiplicity) of the zeroes of $f_1, \ldots, f_n$ on $W$ which are isolated in the (possibly larger) set of zeroes of $f_1, \ldots, f_n$ on $\kk^n$. Note that $\multisof_W$ is always finite, whereas $\multf_W$ is either infinite or equal to $\multisof_W$. Let $\mscrP:= (\scrP_1, \ldots, \scrP_n)$ be a collection of $n$ convex integral polytopes in $(\rr_{\geq 0})^n$. Define $\multP_W$ (resp.\ $\multisoP_W$) to be $\multf_W$ (resp.\ $\multisof_W$) for generic $\mscrP$-admissible $f_1, \ldots, f_n$. 
\end{defn}

%

The following corollary is simply a reformulation of \cref{Pisolated-cor} in terms of the notations introduced in \cref{VIS,generic-intersection-mult-defn}. 

\begin{cor} \label{Pisolated-cor2}
Let $\mscrP:= (\scrP_1, \ldots, \scrP_n)$ be a collection of $n$ convex integral polytopes in $(\rr_{\geq 0})^n$. Let $\SP$ be as in \eqref{SP}. Then 
\begin{enumerate}
 \item For each collection $\mscrS$ of subsets of $[n]$, $\multisoP_{\KnbarS} = \multP_{\kk^n_{\overbar \mscrS \cup \SP}} < \infty$.
 \item In particular, $\multisoP_{\kk^n} =  \multP_{\kk^n_{\SP}} < \infty$.  \qed
\end{enumerate}
\end{cor}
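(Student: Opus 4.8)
The statement is a pure reformulation, so the plan is to unwind the notation of \cref{VIS,generic-intersection-mult-defn} and then read the claim off \cref{Pisolated-cor}. I would fix a collection $\mscrS$ of subsets of $[n]$ and choose $\mscrP$-admissible $f_1, \ldots, f_n$ generic enough that $\multisof_{\KnbarS} = \multisoP_{\KnbarS}$, $\multf_{\kk^n_{\overbar\mscrS\cup\SP}} = \multP_{\kk^n_{\overbar\mscrS\cup\SP}}$, and the second conclusion of \cref{Pisolated-cor} holds. The one combinatorial point to record first is that both $\SP$ and $\overbar\mscrS$ are closed under inclusion: for $\SP$ this is immediate from \eqref{SP}, since any $\tilde I\supseteq I$ witnessing $I\in\SP$ also witnesses $I'\in\SP$ for every $I'\subseteq I$, and for $\overbar\mscrS$ it is the definition \eqref{barS}. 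Consequently $\bigcup_{S\in\SP}\Kstarii{S} = \bigcup_{S\in\SP}\Kii{S}$ and
\[
\kk^n_{\overbar\mscrS\cup\SP} \;=\; \kk^n\setminus\Bigl(\bigcup_{S\in\overbar\mscrS}\Kstarii{S}\;\cup\;\bigcup_{S\in\SP}\Kii{S}\Bigr) \;=\; \KnbarS\setminus\bigcup_{S\in\SP}\Kii{S}.
\]

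Next I would invoke the first conclusion of \cref{Pisolated-cor}: every zero of $f_1, \ldots, f_n$ lying on $\bigcup_{S\in\SP}\Kii{S}$ is non-isolated in $V(f_1, \ldots, f_n)\subseteq\kk^n$, so deleting this locus from $\KnbarS$ discards no zero that is isolated in $\kk^n$; hence $\multisof_{\KnbarS} = \multisof_{\kk^n_{\overbar\mscrS\cup\SP}}$. Since $\kk^n_{\overbar\mscrS\cup\SP}\subseteq\kk^n\setminus\bigcup_{S\in\SP}\Kii{S}$, the second conclusion of \cref{Pisolated-cor} shows that every zero of $f_1, \ldots, f_n$ on $\kk^n_{\overbar\mscrS\cup\SP}$ is isolated in $V(f_1, \ldots, f_n)$, so $\multf_{\kk^n_{\overbar\mscrS\cup\SP}} = \multisof_{\kk^n_{\overbar\mscrS\cup\SP}}$; this is finite because an algebraic subset of $\kk^n$ has only finitely many isolated points. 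Chaining the equalities gives $\multisoP_{\KnbarS} = \multisof_{\KnbarS} = \multf_{\kk^n_{\overbar\mscrS\cup\SP}} = \multP_{\kk^n_{\overbar\mscrS\cup\SP}} < \infty$, which is assertion (1). Assertion (2) is then the special case $\mscrS = \emptyset$: here $\overbar\mscrS = \emptyset$ and $\KnbarS = \kk^n_\emptyset = \kk^n$ by \cref{KnS-remark}, so $\overbar\mscrS\cup\SP = \SP$ and the formula reads $\multisoP_{\kk^n} = \multP_{\kk^n_\SP} < \infty$.

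I do not expect any genuine obstacle: the substantive content is already contained in \cref{herrero} and \cref{Pisolated-cor}, and the only care required is in the two closure-under-inclusion observations and the displayed set identity, together with the remark (recorded in \cref{generic-intersection-mult-defn}) that $\multisof_W$ is always finite. The mildest subtlety is ensuring the chosen system $f$ is simultaneously generic for the definitional identities and for the second conclusion of \cref{Pisolated-cor}, which is harmless since each is a generic condition on $\mscrP$-admissible systems.
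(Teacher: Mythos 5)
Your proof is correct and takes exactly the route the paper intends: the paper gives no argument for \cref{Pisolated-cor2} beyond declaring it ``simply a reformulation of \cref{Pisolated-cor}'', and your unwinding --- the closure of $\SP$ and $\overbar\mscrS$ under inclusion, the resulting set identity for $\kk^n_{\overbar\mscrS\cup\SP}$, and the two applications of \cref{Pisolated-cor} together with the finiteness of $\multisof_W$ --- is precisely the routine verification the paper omits.
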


Given a collection $\mscrS$ of subsets of $[n]$, \cref{bkk-thm} below gives the formula for the number of isolated zeroes of a generic system of polynomials on $\KnS$. The formula uses \cref{multstarinfty-defn}; it is instructive to compare formula \eqref{bkk-formula} with formula \eqref{multiplicity-formula} from \cref{multiplicity-thm}. 

\begin{defn} \label{multstarinfty-defn}
Given a collection $\mscrS$ of subsets of $[n]$ and a collection $\mscrP := (\scrP_1, \ldots, \scrP_n)$ of $n$ convex integral polytopes in $(\rr_{\geq 0})^n$, define 
\begin{align*}
\multstarinfty 
	&:=  \sum_{\omega \in \wts} \omega(\scrP_1)
	\mv(\ld_\omega(\scrP_2), \ldots, \ld_\omega(\scrP_n))\\
\multstarS
	&:=  \sum_{\nu \in \VSi}  \nu(\scrP_1) 
	\mv(\In_\nu(\scrP_2), \ldots, \In_\nu(\scrP_n)) \\
\multstarinftyS &:= \multstarinfty - \multstarS
\end{align*}
\end{defn}

\begin{thm}[Extended BKK bound]
\label{bkk-thm}
With the notation from \cref{multstarinfty-defn}, define 
\begin{align}
\begin{split}
\ISPone &:= \{I \subseteq [n]:  I \not\in \overbar \mscrS \cup \SP \cup \{\emptyset\},\ |\NiP| = |I|,\ \text{$\mscrP$ is $\Kstari$-non-trivial},\ 1 \in \NiP \} 
\end{split} \label{I1-list-P}
\end{align}
where $\NiP$ is as in \eqref{NiP}. Let $\Gamma_j$ be the Newton diagram of generic polynomials supported at $\scrP_j$, $1 \leq j \leq n$. Then
\begin{align}
\multisoP_{\KnbarS} 
	&= 	\sum_{I \in \ISPone}
			\multzero{\pi_{I'}(\Gamma_{j_1})}{\pi_{I'}(\Gamma_{j_{n-k}})} \times
			\multstarinftySS{\scrP^{I}_1, \scrP^{I}_{j'_1}}{\scrP^{I}_{j'_{k-1}}}{(\overbar \mscrS \cup \SP)^I}  
	\label{bkk-formula}
\end{align}
where for each $I \in \ISPone$,
\begin{itemize}
\item $I' := [n]\setminus I$, $k := |I|$,
\item $j_1, \ldots, j_{n-k}$ are elements of $[n]\setminus \NiP$,
\item $j'_1, \ldots, j'_{k-1}$ are elements of $\NiP \setminus \{1\}$,
\item $(\overbar\mscrS \cup \SP)^I  = \{S \in \overbar \mscrS \cup \SP: S \subseteq I\} $ (as in \eqref{SI}).
\end{itemize}
\end{thm}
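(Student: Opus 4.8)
# Proof Plan for \Cref{bkk-thm}

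\textbf{Overall strategy.} The plan is to follow Bernstein's polynomial-homotopy philosophy, exactly as sketched in \cref{formula-idea-section}, and reduce the global count $\multisoP_{\KnbarS}$ to a sum of local contributions indexed by coordinate subspaces, each of which is computed via a curve-counting argument. The starting point is \cref{Pisolated-cor2}, which already tells us that $\multisoP_{\KnbarS} = \multP_{\kk^n_{\overbar\mscrS\cup\SP}} < \infty$; so replacing $\mscrS$ by $\overbar\mscrS\cup\SP$ we may assume from the outset that $\mscrS$ is closed under inclusion and contains $\SP$. First I would fix generic $\mscrP$-admissible $f_1,\dots,f_n$ satisfying whatever non-degeneracy hypotheses are needed (these come from \cref{generic-bkk-thm} / \cref{bkk-existence}, stated later but available to us), so that $V(f_2,\dots,f_n)$ is a reduced curve $C$ inside $\kk^n$ whose branches behave predictably along each coordinate stratum, and all the intersection points with $V(f_1)$ are transverse.

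\textbf{Step 1: stratify and localize.} Decompose $\kk^n_\mscrS = \bigsqcup_{I} \Kstari_{\mscrS^I}$ over $I\subseteq[n]$ (this is the standard torus decomposition, compatible with the notation of \cref{VIS}). For each stratum, the zeroes of $f_1,\dots,f_n$ lying on $\Kstari$ are governed by the restricted system $f_1|_{\Ki},\dots,f_n|_{\Ki}$. Using \cref{herrero} and the definition of $\SP$, the only strata that can carry isolated zeroes counted in $\multisoP_{\KnbarS}$ are those with $I\notin\overbar\mscrS\cup\SP\cup\{\emptyset\}$, $\mscrP$ is $\Kstari$-non-trivial, and $|\NiP| = |I|$; when $|\NiP| = |I|$ there are exactly $n-|I|$ of the $f_j$ that vanish identically on $\Ki$, say $f_{j_1},\dots,f_{j_{n-k}}$, and the remaining $k=|I|$ of them restrict to a square system on $\Ki$. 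I would then split the count on each such stratum into (i) the multiplicity with which the $n-k$ identically-vanishing hypersurfaces meet $\Ki$ transversally to it — this is an intersection-multiplicity-along-$\Ki$ computation that by the projection formula and \cref{multiplicity-thm} equals $\multzero{\pi_{I'}(\Gamma_{j_1})}{\pi_{I'}(\Gamma_{j_{n-k}})}$ — times (ii) the number of isolated zeroes of the restricted $k\times k$ system on $\Kstari_{\mscrS^I}$ itself.

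\textbf{Step 2: compute the torus count via curves and faces.} For factor (ii) I would single out $f_1|_{\Ki}$ and the $k-1$ polynomials $f_{j'_1}|_{\Ki},\dots,f_{j'_{k-1}}|_{\Ki}$ (with $j'_\ell\in\NiP\setminus\{1\}$), look at the curve $C^I\subseteq\Kstari$ they cut out, and count isolated zeroes of $f_1|_{\Ki}$ on $C^I$ that survive in $\kk^n_\mscrS$. This is where the subtraction $\multstarinfty - \multstarS$ enters: the total count of $f_1|_{\Ki}$-zeroes on the full curve in $(\kk^*)^{|I|}$ (compactified torically) is $\multstarinftyy{\scrP^I_1,\scrP^I_{j'_1}}{\scrP^I_{j'_{k-1}}}$ by the face-summation identity \eqref{mixed-expression} applied to the torus of $\Ki$, while the zeroes that leak into the forbidden locus $\Ki\setminus\Ki_{\mscrS^I} = \bigcup_{S\in\mscrS^I}\Kstarii{S}$ — equivalently, branches of $C^I$ running to the "torus" of those smaller strata — are counted by $\multstarSS{\scrP^I_1,\dots}{\mscrS^I}$ using the valuations in $\VSi$. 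Subtracting gives exactly $\multstarinftySS{\scrP^I_1,\scrP^I_{j'_1}}{\scrP^I_{j'_{k-1}}}{\mscrS^I}$, and since we reduced to $\mscrS = \overbar\mscrS\cup\SP$ this matches the superscript $(\overbar\mscrS\cup\SP)^I$ in \eqref{bkk-formula}. Assembling over all admissible $I$ — which is precisely the index set $\ISPone$ of \eqref{I1-list-P} — yields the formula. I would lean on \cref{technical-section}'s lemma on the algebraic definition of intersection multiplicity to make the "order of vanishing of $f_1$ along branches" bookkeeping rigorous, exactly as it is used for \cref{multiplicity-thm}.

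\textbf{Main obstacle.} The genuinely delicate point is the independence of the final count from the choices of generic $f_i$ and, relatedly, ensuring that every zero counted on a stratum $\Kstari_{\mscrS^I}$ is actually \emph{isolated in $V(f_1,\dots,f_n)\subseteq\kk^n$} rather than merely isolated within that stratum — this is the subtlety flagged in \cref{pathologies} and illustrated by \cref{c-example}, and it is the reason one cannot naively sum torus counts. I expect to handle it by invoking the non-degeneracy conditions of \cref{generic-bkk-thm} (specifically property \ref{TP-property}, which is exactly designed to rule out an isolated zero on a stratum becoming non-isolated in $\kk^n$) together with a semicontinuity/specialization argument: a generic member of the family attains the maximum, the maximum is finite by \cref{Pisolated-cor2}, and the face-wise contributions are constant on a Zariski-open set. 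The second-order (quadratic-and-higher) terms in the $x_{i'}$, $i'\notin I$, directions must be shown not to contribute, which is a finite-determinacy argument local to each branch; this parallels the corresponding step in the proof of \cref{milnor-thm}\eqref{milnor-existence} already carried out in the excerpt.
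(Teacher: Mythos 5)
Your proposal is correct and follows essentially the same route as the paper: reduce to (strongly) non-degenerate systems via the existence results of \cref{bkk-section}, stratify by the coordinate tori indexed by $\ISPone$, factor each stratum's contribution as the transverse multiplicity $\multzero{\pi_{I'}(\Gamma_{j_1})}{\pi_{I'}(\Gamma_{j_{n-k}})}$ along $\Ki$ times a curve count obtained by singling out $f_1$ and summing its orders along branches of $V(f_{j'_1},\ldots,f_{j'_{k-1}})$ at infinity and at the forbidden strata, yielding $\multstarinfty-\multstarS$. The paper organizes this identically (its \cref{bkk-proof-section} reduces verbatim to the branch-summation argument of \cref{multiplicity-proof-section}), and your identified obstacle (isolatedness in $\kk^n$ versus in the stratum) is exactly what the paper handles via \cref{Pisolated-cor}, \cref{absolute-cor} and \cref{bkk-lemma}.
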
 

\begin{proof}
See \cref{bkk-proof-section}.
\end{proof}

\begin{example} \label{ex0-bkk}
Consider $f_1,f_2, f_3$ from \cref{ex0}. We compute $[f_1,f_2,f_3]_{\kk^3}$ by applying \cref{bkk-thm} with $\mscrS = \emptyset$ (recall from \cref{KnS-remark} that $\kk^3 = \kk^3_\emptyset$). Let $\scrP_i$ be the Newton polytope of $f_i$ for each $i$, and set $\mscrP := (\scrP_1, \scrP_2, \scrP_3)$. It is straightforward to see that $\SP = \emptyset$, $\ISPone = \{\{1,2,3\},\{3\}\}$. Moreover, $\scrP_2 + \scrP_3$ has precisely two facets with outer normals in $\rr^3\setminus (\rr_{\leq 0})^3$, and the outer normals are $\omega_1 := (1,1,1)$ and $\omega_2 := (2,2,1)$. It follows that
\begin{align*}
[f_1,f_2,f_3]_{\kk^3}
	&= [\scrP_1, \scrP_2, \scrP_3]^*_\infty +
		 [\pi_{\{1,2\}}(\Gamma_2), \pi_{\{1,2\}}(\Gamma_3)]_0 ~ [\scrP^{\{3\}}_1]^*_\infty \\
	&= \omega_1(\scrP_1)\mv(\ld_{\omega_1}(\scrP_2), \ld_{\omega_1}(\scrP_3)) 
				+ \omega_2(\scrP_1)\mv(\ld_{\omega_2}(\scrP_2), \ld_{\omega_2}(\scrP_3)) 
				+ 1 \cdot \deg_z(f_1|_{x=y=0}) \\
	&= 1 \cdot 2 + 2 \cdot 3 + 1 = 9
\end{align*}
\end{example}

\subsection{Non-degeneracy condition for the extended BKK bound}
In general it is possible that a given polynomial system has an isolated zero on a coordinate subspace $K$ of  $\kk^n$, even though generic systems with the same Newton polytopes (as those of the given system) have no zeroes on $K$; for example, the system $(x + y - 1, 2x - y - 2)$ (over a field of characteristic not equal to two) has an isolated zero on the coordinate subspace $y = 0$. \Cref{absolute-defn} introduces a class of coordinate subspaces for which this is not possible. 


\begin{defn} \label{absolute-defn}
Let $\mscrP:= (\scrP_1, \ldots, \scrP_n)$ be a collection of $n$ convex integral polytopes in $(\rr_{\geq 0})^n$. For $I \subseteq [n]$, define $\NiP$ as in \eqref{NiP}. We say that $\mscrP$ is {\em $\Kstari$-exotrivial} if
\begin{defnlist}
\item  \label{absolute--less} there is $\tilde I \supseteq I$ such that $|\NiiP{\tilde I}| < |\tilde I|$, i.e.\ $I \in \SP$, or
\item \label{absolute-equal}  $\mscrP$ is $\Kstari$-trivial, and there is $\tilde I \supseteq I$ such that 
\begin{defnlist}
\item $\mscrP$ is $\Kstarii{\tilde I}$-trivial,
\item $|\NiiP{\tilde I}| = |\tilde I|$, 
\item $|\NiiP{I^*}| > |I^*|$ for each $I^*$ such that $I \subseteq I^* \subsetneqq \tilde I$. 
\end{defnlist}
\end{defnlist}
\end{defn}

\begin{prop} \label{absolute-prop}
Let $\mscrP:= (\scrP_1, \ldots, \scrP_n)$ be a collection of $n$ convex integral polytopes in $(\rr_{\geq 0})^n$ and $I \subseteq [n]$. If $\mscrP$ is $\Kstari$-exotrivial, then 
\begin{align*}
\multisof_{\Kstari} = 0
\end{align*}
for every $\mscrP$-admissible system of polynomials $f_1, \ldots, f_n$.
\end{prop}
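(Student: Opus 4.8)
The plan is to show that if $\mscrP$ is $\Kstari$-exotrivial, then every $\mscrP$-admissible system $f_1, \ldots, f_n$ has the property that no point of $\Kstari$ is an isolated zero of $(f_1, \ldots, f_n)$ in $\kk^n$; equivalently, every zero of the system on $\Kstari$ lies on a positive-dimensional component of $V(f_1, \ldots, f_n)$. I would split into the two cases of \cref{absolute-defn}. If \ref{absolute--less} holds, i.e.\ $I \in \SP$, then the conclusion is immediate from \cref{Pisolated-cor}\eqref{Pisolated-cor} (every point of $V(f_1, \ldots, f_n) \cap \Kii{I}$, and in particular of $V(f_1, \ldots, f_n) \cap \Kstari$, is non-isolated in $\kk^n$), so $\multisof_{\Kstari} = 0$ and there is nothing more to do.

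The substantive case is \ref{absolute-equal}. Here we are given $\tilde I \supseteq I$ with $\mscrP$ being $\Kstarii{\tilde I}$-trivial, $|\NiiP{\tilde I}| = |\tilde I|$, and $|\NiiP{I^*}| > |I^*|$ for every $I^*$ strictly between $I$ and $\tilde I$. First I would dispose of a degenerate sub-case: if $I \in \SP$ we are back in case \ref{absolute--less}, so assume $I \notin \SP$; combined with the exotriviality data this forces $|\NiiP{I^*}| \ge |I^*|$ for all $I^* \supseteq I$, with equality exactly at $\tilde I$ among the $I^*$ with $I \subseteq I^* \subseteq \tilde I$. Now fix a zero $z$ of the system on $\Kstari$; I want to produce a positive-dimensional subvariety of $V(f_1, \ldots, f_n)$ through $z$ inside $\Kii{\tilde I}$. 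The $|\tilde I|$ polynomials $f_j$ with $j \in \NiiP{\tilde I}$ are the only ones not identically vanishing on $\Kii{\tilde I}$; restricting them to $\Kii{\tilde I}$ gives a system of $|\tilde I|$ polynomials in $|\tilde I|$ variables whose Newton polytopes are the $\scrP_j^{\tilde I}$ for $j \in \NiiP{\tilde I}$. By the $\Kstarii{\tilde I}$-triviality of $\mscrP$ and \cref{herrero}, this restricted system has $V \cap \Kstarii{\tilde I}$ empty but — since $z \in \Kstari \subseteq \Kii{\tilde I}$ is a common zero — its zero set on $\Kii{\tilde I}$ is nonempty, hence by \cref{herrero}(3) every component of $V(f_j|_{\Kii{\tilde I}} : j \in \NiiP{\tilde I}) \cap \Kii{\tilde I}$ is positive-dimensional. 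Since the remaining $f_j$ vanish identically on $\Kii{\tilde I}$, this set equals $V(f_1, \ldots, f_n) \cap \Kii{\tilde I}$, so $z$ lies on a positive-dimensional component of $V(f_1, \ldots, f_n)$ and is therefore not isolated in $\kk^n$.

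The main obstacle I anticipate is making sure the application of \cref{herrero}(3) is legitimate: that statement requires $\mscrP$ (restricted appropriately) to be \emph{not} $\Kstarii{\tilde I}$-isolated, whereas we are only given $\Kstarii{\tilde I}$-triviality, and by \cref{herrero}(2) triviality implies isolation. So the clean argument is instead: $\Kstarii{\tilde I}$-triviality gives, via \cref{herrero}(1b), a subset $J \subseteq \NiiP{\tilde I}$ (using polytopes indexed within $\tilde I$) with $|\{j \in J : \scrP_j^{\tilde I} \neq \emptyset\}| > \dim(\sum_{j \in J} \scrP_j^{\tilde I})$; this is exactly the hypothesis ``not $\Kstarii{\tilde I}$-isolated'' applied to the \emph{sub}collection indexed by $J$, which \emph{does} trigger \cref{herrero}(3) for that subcollection and yields that $V(f_j : j \in J) \cap \Kii{\tilde I}$ has all components positive-dimensional and nonempty through any of its points. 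It then remains to check $z$ is such a point: $z$ is a common zero of all $f_j$, in particular of the $f_j$ for $j \in J$, and $z \in \Kii{\tilde I}$, so $z \in V(f_j : j \in J) \cap \Kii{\tilde I}$; the positive-dimensional component through $z$ is contained in $V(f_j : j \in J)$ but to conclude it lies in $V(f_1, \ldots, f_n)$ I must additionally intersect with the zero locus of the other $f_i$'s — here I use that all $f_i$ with $i \notin \NiiP{\tilde I}$ vanish on all of $\Kii{\tilde I}$, and for $i \in \NiiP{\tilde I} \setminus J$ I argue by a dimension count (the conditions $|\NiiP{I^*}| > |I^*|$ for intermediate $I^*$ are designed precisely to keep the intersection positive-dimensional, cf.\ the proof of \cref{herrero} in \cite{herrero}) that imposing these further equations still leaves a positive-dimensional variety through $z$. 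Getting this last dimension bookkeeping exactly right, by induction on $|\tilde I| - |I|$ and mirroring the combinatorics in \cite[Propositions 5 and 6]{herrero}, is where the real work lies; everything else is bookkeeping with the definitions in \cref{VIS,absolute-defn}.
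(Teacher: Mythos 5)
There is a genuine gap at the central step of your case \ref{absolute-equal} argument, and your own attempted repair makes it worse rather than better. The condition you extract from $\Kstarii{\tilde I}$-triviality via \cref{herrero}(1), namely $|\{j \in J : \scrP_j^{\tilde I} \neq \emptyset\}| > \dim(\sum_{j \in J} \scrP_j^{\tilde I})$, is precisely the criterion for the subcollection $(\scrP_j)_{j\in J}$ to be $\Kstarii{\tilde I}$-\emph{trivial}, and by \cref{herrero}(2) triviality \emph{implies} $\Kstarii{\tilde I}$-isolatedness. So \cref{herrero}(3), whose hypothesis is ``\emph{not} isolated,'' does not apply to that subcollection; you have the implication backwards. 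Moreover the conclusion you want from it is simply false: take $n=2$, $f_1 = x(1+xy)$, $f_2 = y(1+xy)$. Here $\scrP_1+\scrP_2$ is one-dimensional, so $J=\{1,2\}$ witnesses $(\kk^*)^2$-triviality, yet $V(f_1,f_2) = \{(0,0)\} \cup \{xy=-1\}$ has the isolated component $\{(0,0)\}$. The coset-of-subtorus structure that triviality gives you lives only on the open torus $\Kstarii{\tilde I}$; it says nothing about components through a boundary point $z \in \Kstari$ with $I \subsetneq \tilde I$, which is exactly where your $z$ sits. A related warning sign: your argument never uses the hypothesis that $\mscrP$ is $\Kstari$-trivial (only $\Kstarii{\tilde I}$-triviality and the chain condition), and in the example above, where $\Kstarii{\emptyset}$-triviality fails, your reasoning would ``prove'' that $(0,0)$ is non-isolated — which it is not.

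The machinery needed to control what happens at such boundary points is nontrivial: in the paper it appears as \cref{trivial-lemma} (a careful toric argument producing a positive-dimensional torus-invariant subvariety through $\pi_I(a)$), and even that lemma takes as input a common zero of \emph{initial forms} on the torus, not merely a zero of the system on a boundary stratum. The paper's actual proof of \cref{absolute-prop} (\cref{absolute-cor}) goes a different way entirely: reduce to $\tilde I = [n]$, deform $f$ to a completely BKK non-degenerate system $g$ via $h_i = tg_i + (1-t)f_i$, follow the one-dimensional component through $(z,0)$ to $t=1$, extract a branch and its monomial valuation, apply \cref{branch-lemma} to get a common zero of initial forms of the $g_j$'s on some $\Kstarii{I^*}$, deduce from \cref{bkk-lemma} that $\mscrP$ is not $\Kstarii{I^*}$-trivial, and use the combinatorial hypotheses of \cref{absolute-defn}\ref{absolute-equal} to force $I^* \subsetneqq I$, so that $z \notin \Kstari$. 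Your static dimension-count would need to be replaced either by that deformation argument or by an argument in the spirit of \cref{trivial-lemma}; as written, the key claim fails.
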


\begin{proof}
It is proven in \cref{absolute-cor}.
\end{proof}

\begin{rem}
The prefix `exo' in {\em exotrivial} is to convey that $I$ satisfies the conclusion of \cref{absolute-prop} due to an `external' reason, namely the presence of some $\tilde I \supseteq I$ as in \cref{absolute-defn}. The other reason for $I$ satisfying the conclusion of \cref{absolute-prop} is `internal': the presence of some $\tilde I \subseteq I$ satisfying certain properties; we omit the discussion here since it is not relevant for the purpose of this article.
\end{rem}

\Cref{generic-bkk-defn} introduces $\mscrP$-non-degeneracy, and \cref{generic-bkk-thm}, which is the main result of this section, shows that this is the correct non-degeneracy criterion for the extended BKK bound. 

\begin{defn} \label{generic-bkk-defn}
Let $f_1, \ldots, f_m$, $m \geq n$, be polynomials in $(x_1, \ldots, x_n)$ and $\mscrS$ be a collection of subsets of $[n]$.  
\begin{itemize}
\item We say that 
\begin{itemize}
\item $f_i$'s are {\em BKK non-degenerate at infinity} if $\ld_\omega(f_1), \ldots, \ld_\omega(f_m)$ have no common zero in $\nktorus$ for all $\omega \in \wts$. 
\item $f_i$'s are {\em BKK non-degenerate at $\mscrS$} if $\In_\nu(f_1), \ldots, \In_\nu(f_m)$ have no common zero in $\nktorus$ for all $\nu \in \VS$. 
\end{itemize}
\item Let $\mscrP:= (\scrP_1, \ldots, \scrP_n)$ be a collection of $n$ convex integral polytopes in $(\rr_{\geq 0})^n$. Let $\mscrS$ be a collection of subsets of $[n]$. Define
\begin{align}
\TSP &:= \{I \subseteq [n]: I \not \in \overbar \mscrS \cup \SP,\ \mscrP\ \text{is $\Kstari$-exotrivial}\}  \label{TSP}\\
\TSPrime &:= \{I \subseteq [n]: I \not\in \overbar \mscrS \cup \SP,\ \mscrP\ \text{is {\em not} $\Kstari$-exotrivial}\}  \label{TSPrime}
\end{align}
\item 
We say that polynomials $f_1, \ldots, f_n$ are {\em $\mscrP$-non-degenerate on $\KnbarS$} iff 
\begin{defnlist}
\item \label{Padmissible} They are $\mscrP$-admissible.
\item \label{TSPrime-property} For all $I \in \TSPrime$, $f_1|_{\Ki}, \ldots, f_n|_{\Ki}$ are BKK non-degenerate both at infinity and at $(\overbar \mscrS  \cup \TSP)^I = \{S \in \overbar \mscrS  \cup \TSP: S \subseteq I\}$. 
\item \label{NTSP-property}  For all $I \in \TSP$, $f_1|_{\Ki}, \ldots, f_n|_{\Ki}$ are BKK non-degenerate at $\TSPrimeI = \{S \in \TSPrime: S \subseteq I\}$. 
\end{defnlist}
\end{itemize}
\end{defn}

%

\begin{thm}[Non-degeneracy condition for the extended BKK bound] \label{generic-bkk-thm}
Let $\mscrP:= (\scrP_1, \ldots, \scrP_n)$ be a collection of $n$ convex integral polytopes in $(\rr_{\geq 0})^n$ and $f_1, \ldots, f_n$ be $\mscrP$-admissible polynomials in $(x_1, \ldots, x_n)$.
Let $\TSP$ be as in \eqref{TSP}. Then 
\begin{enumerate}
\item \label{generic-assertion-0} $\multisof_{\KnbarS} = \multisof_{\KnSS{\overbar \mscrS \cup \TSP}} \leq \multP_{\KnSS{\overbar \mscrS \cup \TSP}} < \infty$.
\item \label{generic-assertion} The following are equivalent:
\begin{enumerate}
\item $f_i$'s are $\mscrP$-non-degenerate on $\KnbarS$
\item $\multisof_{\KnbarS} = \multP_{\KnSS{\overbar\mscrS \cup \TSP}} $
\end{enumerate} 
\item \label{isolated-assertion}  If $f_i$'s are $\mscrP$-non-degenerate on $\KnbarS$, then 
\begin{align*}
\multisof_{\KnbarS} = \multf_{\KnSS{\overbar\mscrS \cup \TSP}}
\end{align*}
In particular, in this case all zeroes of $f_1, \ldots, f_n$ on $\KnSS{\overbar\mscrS \cup \TSP}$ are isolated. 
\end{enumerate}
\end{thm}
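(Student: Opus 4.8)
The plan is to follow Bernstein's polynomial-homotopy strategy as outlined in \cref{idea-section}, reducing everything to the torus case via \cref{herrero,Pisolated-cor2} and the combinatorial dictionary of \cref{VIS}. First I would dispose of assertion \eqref{generic-assertion-0}: since every $I\in\overbar\mscrS\cup\SP$ gives only non-isolated zeroes of $f$ by \cref{Pisolated-cor,absolute-prop}, and every $I\in\TSP$ is $\Kstari$-exotrivial, so contributes no isolated zeroes on $\Kstari$ again by \cref{absolute-prop}, all isolated zeroes on $\KnbarS$ already lie on $\KnSS{\overbar\mscrS\cup\TSP}$; the finiteness and the inequality $\multisof_{\KnbarS}\le\multP_{\KnSS{\overbar\mscrS\cup\TSP}}$ then follow from \cref{Pisolated-cor2} and upper-semicontinuity of the number of isolated zeroes counted with multiplicity. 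For the remaining assertions I would treat the union $\KnSS{\overbar\mscrS\cup\TSP}$ as the ``effective'' torus-like locus on which $f$ is supposed to have the generic count.

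For the \textbf{sufficiency} direction of \eqref{generic-assertion} together with \eqref{isolated-assertion} (``non-degenerate $\Rightarrow$ the count equals the generic one, and all such zeroes are isolated''), I would argue by a deformation/upper-semicontinuity argument. Embed $f=(f_1,\ldots,f_n)$ in a generic one-parameter family $f(x,t)$ with $f(x,0)=f$ and $f(x,t)$ generic $\mscrP$-admissible for $t\ne 0$; then $\multP_{\KnSS{\overbar\mscrS\cup\TSP}}=\multisof(x,t)_{\KnSS{\overbar\mscrS\cup\TSP}}$ for generic $t$. If the count drops at $t=0$, or if some isolated zero of $f$ on $\KnSS{\overbar\mscrS\cup\TSP}$ becomes non-isolated in $V(f)\subseteq\kk^n$, then there is an algebraic curve $C$ of zeroes of $f(x,t)$, parametrized by $t$, whose limit as $t\to 0$ either leaves the torus of its coordinate subspace (runs to ``infinity'' within some $\Kstari$), or lands on a non-isolated zero of $f$ on a larger coordinate subspace. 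Taking $K$ to be the smallest coordinate subspace containing (the limit of) $C$ and $\nu$ the monomial valuation whose weights record the orders of vanishing of the $x_i|_C$, the leading/initial forms of the $f_j|_{\Ki}$ acquire a common zero on $\nktorus$ for a $\nu$ belonging to $\wtsi$ or to $\VSSi{(\overbar\mscrS\cup\TSP)^I}$; distinguishing the cases $K\in\TSPrime$ versus $K\in\TSP$ precisely matches conditions \ref{TSPrime-property} and \ref{NTSP-property} of \cref{generic-bkk-defn}, contradicting $\mscrP$-non-degeneracy. This simultaneously yields that no isolated zero can be ``absorbed'' into a positive-dimensional component, giving \eqref{isolated-assertion}.

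For the \textbf{necessity} direction (``degenerate $\Rightarrow$ the count is strictly smaller''), I would follow \cref{necessary-sketch,pathologies,pathologies2}: given $K$ and $\nu$ witnessing the failure of non-degeneracy, use Bernstein's explicit deformation $f(x,t)$ to produce, for generic $t$, an extra zero of $f(x,t)$ on $\Kstari$ that as $t\to 0$ either runs to infinity or converges to a zero of $f$ lying on a coordinate subspace in $\overbar\mscrS\cup\TSP$ (hence not counted in $\multisof_{\KnbarS}$). The subtlety flagged in \cref{pathologies} is that if $\dim(K)<N_K$ the torus of $K$ carries no generic zeroes, so the deformation cannot be carried out directly; here I would invoke \cref{reduction-lemma} (the replacement of $K$ by a suitable $K'\supseteq K$ with $\dim(K')=N_{K'}$ on which degeneracy persists). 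A second subtlety, flagged in \cref{pathologies2}, is that for the extended (as opposed to torus) bound one must verify that the extra root produced for generic $t$ is genuinely isolated in $V(f(x,t))\subseteq\kk^n$; this uses the combinatorics of $\ISPone$ and \cref{Pisolated-cor2} to guarantee that the curve stays off $\bigcup_{S\in\SP}\Kii{S}$. Combining the strict drop at $t=0$ with upper-semicontinuity gives $\multisof_{\KnbarS}<\multP_{\KnSS{\overbar\mscrS\cup\TSP}}$, proving the contrapositive of (a)$\Rightarrow$(b) and closing assertion \eqref{generic-assertion}.

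The main obstacle I anticipate is the bookkeeping in the necessity direction: correctly choosing the coordinate subspace $K'$ and the valuation on it (via \cref{reduction-lemma}) so that Bernstein's deformation lands the extra root exactly in the ``uncounted'' locus $\overbar\mscrS\cup\SP\cup\TSP$ rather than back in $\KnSS{\overbar\mscrS\cup\TSP}$, and — for the extended bound — proving that this extra root is isolated for generic $t$. This is precisely the place where the asymmetry between $\TSP$ and $\TSPrime$, and the two separate non-degeneracy conditions \ref{TSPrime-property} and \ref{NTSP-property}, are forced on us, so the argument must track which group $K$ and $K'$ fall into at every step.
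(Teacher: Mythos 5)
Your plan matches the paper's proof essentially step for step: assertion (1) via \cref{absolute-prop}, sufficiency of non-degeneracy together with assertion (3) via the homotopy $h_i = tg_i + (1-t)f_i$ and branch valuations (\cref{sufficiently-bkk,curve-claim}), and necessity via Bernstein's explicit deformation after upgrading the witness $(K,\nu)$ using \cref{reduction-lemma} and \cref{trivial-cor}. The one step you flag as an obstacle but do not carry out — that the extra root $c(\epsilon)$ is genuinely isolated for generic $\epsilon$ — is exactly where the paper inserts \cref{isolated-claim}, proved by showing the rational map $(g_1/f_1,\ldots,g_k/f_k)$ is dominant.
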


\begin{proof}
See \cref{generic-bkk-proof}.
\end{proof}

\begin{example}[\Cref{b-example} continued]\label{b-example-contd}
Let $\mscrP := (\scrP_1, \scrP_2, \scrP_3)$, where $\scrP_i$ are the Newton polytopes of $f_i$ from \cref{b-example}. We computed in \cref{b-example} that $[\scrP_1, \scrP_2, \scrP_3]_{\kk^3} = 2$. Now we examine when $f_1,f_2,f_3$ are $\mscrP$-non-degenerate on $\kk^3$, so take $\mscrS = \emptyset$. It is straightforward to check that $\SP = \emptyset$, $\TSP=\{ \{1,2\}\}$ and $\TSPrime$ is the collection of all non-empty subsets of $\{1,2,3\}$ excluding $\{1,2\}$. In cases \ref{b-example-generic-0} and \ref{b-example-generic-1} of \cref{b-example}, it is straightforward to check that $f_1, f_2, f_3$ are $\mscrP$-non-degenerate, so that \cref{generic-bkk-thm} implies that the extended BKK bound is attained in these cases, as we saw in \cref{b-example}. On the other hand, in case \ref{b-example-non-generic} the discussion from \cref{b-example} shows that for $I = \{1, 2, 3\}$ the BKK non-degeneracy at infinity fails with $\omega = (1,-1,-2)$; this violates condition \ref{TSPrime-property} of $\mscrP$-non-degeneracy and therefore \cref{generic-bkk-thm} implies that the number of isolated zeroes of $f_1, f_2, f_3$ is less than $[\scrP_1, \scrP_2, \scrP_3]_{\kk^3} = 2$, as observed in \cref{b-example}. 
\end{example}

\begin{example}[\Cref{c-example} continued]\label{c-example-contd}
Let $\mscrP := (\scrP_1, \scrP_2, \scrP_3, \scrP_4)$, where $\scrP_i$ are the Newton polytopes of $f_i$ from \cref{c-example}, and let $\mscrS := \emptyset$. It is straightforward to check that $\SP = \emptyset$, $\TSP=\{ \{1,2,3,4\}\}$ and $\TSPrime$ is the collection of all proper non-empty subsets of $\{1,2,3,4\}$.
It follows that in case \ref{c-example-non-generic} of \cref{c-example}, condition \ref{NTSP-property} of $\mscrP$-non-degeneracy fails with $I = \{1,2,3,4\}$ and the monomial valuation $\nu$ corresponding to weights $(0,0,1,2)$ for $(x_1, \ldots, x_4)$. \Cref{generic-bkk-thm} therefore implies that in this case the number of isolated zeroes of $f_1, f_2, f_3, f_4$ is less than $[\scrP_1, \scrP_2, \scrP_3, \scrP_4]_{\kk^4}$, as we found out in \cref{c-example}. 
\end{example}

%
\Cref{generic-bkk-thm} implies a dichotomy among coordinate subspaces of $\kk^n$: 

\begin{cor} \label{non-isolated-cor}
Let $\mscrP = (\scrP_1, \ldots, \scrP_n)$ be a collection of $n$ convex integral polytopes in $(\rr_{\geq 0})^n$ and $\mscrS$ be a collection of subsets of $[n]$. Assume $\multP_{\KnbarS} > 0$. Then for every $\mscrP$-admissible system $f = (f_1, \ldots, f_n)$ of polynomials and every subset $I$ of $[n]$ such that $I \not \in \overbar{\mscrS}$ (so that $\Kstari \subseteq \KnbarS$),
\begin{enumerate}
\item \label{non-isolated-not-<} If $I \in \TSP$, then
the existence of non-isolated roots of $f$ on $\Kstari$ does not necessarily imply that 
$\multisof_{\KnbarS} < \multisoP_{\KnbarS}$. 
\item \label{non-isolated-<} If $I \in \TSPrime$, then the
existence of non-isolated roots of $f$ on $\Kstari$ implies that 
$\multisof_{\KnbarS} < \multisoP_{\KnbarS}$. 
\end{enumerate}
\end{cor}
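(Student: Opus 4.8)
The plan is to read both parts off \cref{generic-bkk-thm}. The single extra ingredient I would need is that a \emph{generic} $\mscrP$-admissible system is $\mscrP$-non-degenerate on $\KnbarS$; this follows by the standard argument that for every non-trivial weighted order $\nu$ (resp.\ weighted degree $\omega$) the faces $\In_\nu(\scrP_1),\ldots,\In_\nu(\scrP_n)$ (resp.\ $\ld_\omega(\scrP_1),\ldots,\ld_\omega(\scrP_n)$) lie in parallel affine hyperplanes, so their mixed volume is $0$ and ``no common zero on $\nktorus$'' is Zariski-dense in the coefficients, together with the observation that none of the valuation sets appearing in \cref{generic-bkk-defn} ever contains the trivial valuation (for $I\in\TSPrime$ because then $I\notin\overbar \mscrS\cup\TSP$, for $I\in\TSP$ because then $I\notin\TSPrime$), so that only non-trivial $\nu,\omega$ occur and $\mscrP$-non-degeneracy becomes a finite intersection of dense open conditions. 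Consequently $\multisoP_{\KnbarS}=\multP_{\KnSS{\overbar \mscrS \cup \TSP}}$ is the maximal number of isolated zeros of a $\mscrP$-admissible system on $\KnbarS$; the hypothesis $\multP_{\KnbarS}>0$ excludes the case in which generic $\mscrP$-admissible systems have no zeros on $\KnbarS$ at all, so that the maximum, and the strict inequality in part \ref{non-isolated-<}, are meaningful.

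For part \ref{non-isolated-<} (the case $I\in\TSPrime$): if the $\mscrP$-admissible $f=(f_1,\ldots,f_n)$ has a non-isolated zero on $\Kstari$, then, since $I\notin\overbar \mscrS\cup\TSP$, this zero lies on $\Kstari\subseteq\KnSS{\overbar \mscrS \cup \TSP}$, so $V(f_1,\ldots,f_n)$ has a non-isolated point on $\KnSS{\overbar \mscrS \cup \TSP}$. By the last sentence of assertion \ref{isolated-assertion} of \cref{generic-bkk-thm} this cannot happen for a system that is $\mscrP$-non-degenerate on $\KnbarS$; hence $f$ is not $\mscrP$-non-degenerate, and the equivalence \ref{generic-assertion} of \cref{generic-bkk-thm} gives $\multisof_{\KnbarS}\ne\multP_{\KnSS{\overbar \mscrS \cup \TSP}}$. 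Combined with the inequality $\multisof_{\KnbarS}\le\multP_{\KnSS{\overbar \mscrS \cup \TSP}}$ of assertion \ref{generic-assertion-0} and the first paragraph, this yields $\multisof_{\KnbarS}<\multisoP_{\KnbarS}$.

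For part \ref{non-isolated-not-<} (the case $I\in\TSP$): here one must show the implication just proved can fail, i.e.\ exhibit a $\mscrP$-admissible $f$ with a non-isolated zero on $\Kstari$ and $\multisof_{\KnbarS}=\multisoP_{\KnbarS}$ — equivalently, by the equivalence \ref{generic-assertion} of \cref{generic-bkk-thm}, a $\mscrP$-non-degenerate $f$ with a non-isolated zero on $\Kstari$. What makes this possible is that $\mscrP$-non-degeneracy on $\KnbarS$ does not force zeros on $\Kstari$ to be isolated: assertion \ref{isolated-assertion} of \cref{generic-bkk-thm} controls only $\KnSS{\overbar \mscrS \cup \TSP}$, from which $\Kstari$ has been deleted because $I\in\TSP$; and concretely the only constraint \cref{generic-bkk-defn} imposes on the restrictions $f_1|_{\Ki},\ldots,f_n|_{\Ki}$ is BKK non-degeneracy at $\TSPrimeI$ (condition \ref{NTSP-property}), whose valuation set omits the trivial valuation on $\Ki$, so the common zero set of $f_1|_{\Ki},\ldots,f_n|_{\Ki}$ on $\Kstari$ is not required to be empty. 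Hence, whenever this common zero set can be arranged to be positive-dimensional — which by \cref{herrero} is possible as soon as enough of the faces $\scrP_j^I$, $j\in\NiP$, coincide, and which in particular occurs in \cref{b-example}, continued in \cref{b-example-contd} with $I=\{1,2\}\in\TSP$, where $f_1|_{\Ki}=f_2|_{\Ki}$ vanishes along a curve in $\Kstarii{\{1,2\}}$ — one can, by choosing the remaining coefficients of $f$ generically, keep all the other (finitely many) BKK non-degeneracy requirements of \cref{generic-bkk-defn} satisfied; the resulting $f$ is the desired counterexample. (If no $\mscrP$-admissible system has a zero on $\Kstari$ at all, part \ref{non-isolated-not-<} holds vacuously.)

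The step I expect to be the main obstacle is this last construction: one must verify that forcing a positive-dimensional component of $V(f_1,\ldots,f_n)$ onto $\Kstari$ is compatible, simultaneously, with every BKK non-degeneracy condition of \cref{generic-bkk-defn} attached to the coordinate subspaces $\Kii{J}$ with $J\supseteq I$. The reason it goes through is structural: for $\nu$ centred at $\Kstari$ one has $\In_\nu(f_j)=f_j|_{\Ki}$ only for $j\in\NiP$, whereas the $n-|\NiP|$ identically-vanishing restrictions contribute genuine extra initial forms $\In_\nu(f_j)$ ($j\notin\NiP$) which, for generic coefficients, cut out the would-be positive-dimensional common zero set on $\nktorus$ — this is exactly why BKK non-degeneracy at the subsets in $\TSPrime$, whose valuation sets never contain the trivial valuation, can coexist with a non-isolated zero of $f$ on $\Kstari$. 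For completeness one also records, via \cref{Pisolated-cor} and \cref{absolute-prop}, that the subsets $I\notin\overbar \mscrS$ split into $\SP\setminus\overbar \mscrS$ (whose coordinate subspaces carry non-isolated zeros of \emph{every} $\mscrP$-admissible system, so they play no role in \cref{non-isolated-cor}) and $\TSP\sqcup\TSPrime$, handled by parts \ref{non-isolated-not-<} and \ref{non-isolated-<} respectively.
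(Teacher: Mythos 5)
Your proposal is correct and follows essentially the same route as the paper: part (2) is the contrapositive of assertion \eqref{isolated-assertion} of \cref{generic-bkk-thm} (whose content is \cref{curve-claim}) combined with assertions \eqref{generic-assertion-0}--\eqref{generic-assertion}, and part (1) is settled by the explicit system of \cref{b-example}\ref{b-example-generic-1}/\cref{b-example-contd}, exactly as in the paper. Your additional "general construction" for part (1) is not needed (a single example suffices for a "does not necessarily imply" claim), and the genericness of $\mscrP$-non-degeneracy that you sketch is the paper's \cref{bkk-existence}.
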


\begin{proof}
This is proven in \cref{non-isolated-section}.
\end{proof}

The following result describes a criterion which is equivalent to $\mscrP$-non-degeneracy, but requires checking fewer conditions.

\begin{lemma} \label{weak-bkkriterion}
Let $\mscrP := (\scrP_1, \ldots, \scrP_n)$ be a collection of $n$ convex integral polytopes in $(\rr_{\geq 0})^n$ and $\mscrS$ be a collection of subsets of $[n]$. Define 
\begin{align}
\TSPstar  &:= \{I\in \TSP:\  |\NiP| = |I|\}, \label{TSPstar}\\
\NSP &:= \{I \in \TSPrime: \mscrP\ \text{is $\Kstari$-non-trivial}\},  \label{NSP} 
\end{align}
where $\NiP, \TSP, \TSPrime$ are respectively as in \eqref{NiP}, \eqref{TSP} and \eqref{TSPrime}. Let $f_1, \ldots, f_n \in \kk[x_1, \ldots, x_n]$. Then the following are equivalent: 
\begin{enumerate}
\item $f_i$'s are $\mscrP$-non-degenerate on $\KnbarS$.
\item
\begin{enumerate}[label=(\roman{enumii})]
\item property \ref{Padmissible} of $\mscrP$-non-degeneracy holds,
\item property \ref{TSPrime-property} of $\mscrP$-non-degeneracy holds with $\TSPrime$ replaced by $\NSP$,
\item property \ref{NTSP-property} of $\mscrP$-non-degeneracy holds with $\TSP$ replaced by $\TSPstar$.
\end{enumerate} 
\end{enumerate}
\end{lemma}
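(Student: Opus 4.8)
The implication (1)$\Rightarrow$(2) is immediate, since $\NSP\subseteq\TSPrime$ and $\TSPstar\subseteq\TSP$ by \eqref{NSP} and \eqref{TSPstar}, so (2) demands only a sub-collection of the conditions in the definition of $\mscrP$-non-degeneracy. For the converse, suppose $f_1,\dots,f_n$ satisfy (2) but violate (1): some instance of property \ref{TSPrime-property} or \ref{NTSP-property} fails at an index $I\notin\overbar\mscrS\cup\SP$. If $I\in\NSP$ (when $I\in\TSPrime$) or $I\in\TSPstar$ (when $I\in\TSP$) this already contradicts (2), so assume $I\in\TSPrime\setminus\NSP$ or $I\in\TSP\setminus\TSPstar$. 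For such an $I$ I would first record that $\mscrP$ is $\Kstari$-trivial: this is the definition of $\NSP$ in the first case, while in the second it follows from $\Kstari$-exotriviality, since $I\notin\SP$ rules out clause \ref{absolute--less} of \cref{absolute-defn} and forces clause \ref{absolute-equal}. I would also record that $|\NiP|>|I|$: indeed $I\notin\SP$ gives $|\NiiP{\tilde I}|\geq|\tilde I|$ for every $\tilde I\supseteq I$, while $|\NiP|=|I|$ would, via clause \ref{absolute-equal} with $\tilde I=I$, put $I$ in $\TSPstar$ (if $I\in\TSP$) or make $I$ exotrivial and hence not in $\TSPrime$ (if $I\in\TSPrime$) --- either way a contradiction.

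The next step is to choose a strictly larger $I'$ landing in $\NSP\cup\TSPstar$. If $I\in\TSP\setminus\TSPstar$, then clause \ref{absolute-equal} of \cref{absolute-defn} furnishes $\tilde I\supseteq I$ with $|\NiiP{\tilde I}|=|\tilde I|$, $\mscrP$ $\Kstarii{\tilde I}$-trivial, and $|\NiiP{I^*}|>|I^*|$ for $I\subseteq I^*\subsetneq\tilde I$; since $|\NiP|>|I|$ we get $\tilde I\supsetneq I$, and $\tilde I\in\TSPstar$ (it is not in $\overbar\mscrS\cup\SP$ because that set is closed upward, and it is $\Kstarii{\tilde I}$-exotrivial by clause \ref{absolute-equal} with inner index $\tilde I$ itself). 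Put $I':=\tilde I$. If instead $I\in\TSPrime\setminus\NSP$, then since $I\notin\SP$ the quantity $|\NiiP{J}|-|J|$ is $\geq 0$ for all $J\supseteq I$, drops by at most $1$ when one index is adjoined, and vanishes at $J=[n]$; choose $I'\supsetneq I$ minimal with $|\NiiP{I'}|=|I'|$. By minimality $|\NiiP{I^*}|>|I^*|$ for $I\subseteq I^*\subsetneq I'$, and $\mscrP$ is not $\Kstarii{I'}$-trivial (else clause \ref{absolute-equal} would make $I$ exotrivial, contradicting $I\in\TSPrime$); hence $I'$ is not exotrivial and $I'\in\NSP$. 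Note that $\NiP\subseteq\NiiP{I'}$ and $|\NiiP{I'}|=|I'|>|I|$, so the number of $j$ with $f_j|_{\kk^I}\equiv 0$ but $f_j|_{\Kii{I'}}\not\equiv 0$ is $|I'|-|\NiP|<|I'\setminus I|$.

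Now I would transport the failure. On $\kk^I$ the failure gives a weighted degree $\omega\in\wtsi$ (the ``at infinity'' case, only when $I\in\TSPrime$) or a monomial valuation $\nu$ centered at some $\Kstarii{S}$ with $S$ in the collection relevant to $I$ (namely $(\overbar\mscrS\cup\TSP)^I$ when $I\in\TSPrime$, or $(\TSPrime)^I$ when $I\in\TSP$), together with $z\in\Kstari$ at which all the corresponding $\ld_\omega(f_j|_{\kk^I})$, resp.\ $\In_\nu(f_j|_{\kk^I})$, vanish. I would extend $\omega$ (resp.\ $\nu$) to $\Aii{I'}$ by giving the coordinates $x_{i'}$, $i'\in I'\setminus I$, very negative (resp.\ very large positive) weights; the result is again a weighted degree centered at infinity, resp.\ a monomial valuation centered at the same $\Kstarii{S}$, and $S$ still lies in the collection relevant to $I'$ because $S\subseteq I\subsetneq I'$ and $\overbar\mscrS\cup\TSP$, $\TSPrime$ are downward closed under restriction. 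For $j$ with $f_j|_{\kk^I}\not\equiv 0$ the leading/initial form on $\Kii{I'}$ equals that on $\kk^I$, hence vanishes at any lift $z'=(z,w)$ of $z$; for the remaining $j$ it is a polynomial in the new coordinates $w$, and since these $j$ are fewer than the new coordinates one can choose $z'$ killing all of them. The resulting violation of BKK non-degeneracy of $f_1|_{\Kii{I'}},\dots,f_n|_{\Kii{I'}}$ is of precisely the type --- ``at infinity'' or ``at a center in $(\overbar\mscrS\cup\TSP)^{I'}$'' for $I'\in\NSP$, ``at a center in $(\TSPrime)^{I'}$'' for $I'\in\TSPstar$ --- demanded by property \ref{TSPrime-property} or \ref{NTSP-property} at $I'$, contradicting (2).

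The step I expect to be the main obstacle is the last one: ensuring that the lift $z'$ can be taken in the \emph{open} torus $\Kstarii{I'}$ and not merely on its boundary. The counting above only shows that the subvariety of the affine space on the coordinates $w$ cut out by the ``new'' leading/initial forms has dimension $\geq|\NiP|-|I|\geq 1$; one must argue, exploiting the weighted-homogeneity of these forms together with a genericity argument (or a Bertini-type statement valid in arbitrary characteristic, as in the proof of \cref{milnor-thm}), that it actually meets $(\kk^*)^{I'\setminus I}$. Isolating this, together with the routine verification of how extensions of weighted orders act on initial forms, as a self-contained reduction lemma --- the extended-BKK counterpart of the reduction underlying \cref{weak-lemma} --- is the natural way to organize the argument.
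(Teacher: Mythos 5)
Your combinatorial reductions are correct and coincide with the paper's: the identification of $|\NiP|>|I|$ for the problematic indices, and the construction of the enlarged index $I'$ landing in $\NSP$ (resp.\ $\TSPstar$) via minimality and clause \ref{absolute-equal} of \cref{absolute-defn}, are exactly the first half of the paper's argument. The genuine gap is the transport step, and the fix you sketch for it does not work. The polynomials $f_1,\dots,f_n$ are \emph{fixed}, not generic, so neither a genericity argument nor a Bertini-type statement is available; and weighted homogeneity alone does not force the fiber over $z$ of the zero set of the ``new'' initial forms to meet $(\kk^*)^{I'\setminus I}$. Concretely, if some $f_j$ with $f_j|_{\Ki}\equiv 0$ has the form $x_{i'}g(x_i:i\in I)$ for a single $i'\in I'\setminus I$ with $\In_\nu(g)(z)\neq 0$, then the corresponding initial form restricted to the fiber over $z$ is a nonzero constant times $w_{i'}$, whose zero set is disjoint from the open torus. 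Your dimension count only shows the fiber is nonempty of positive dimension; it may lie entirely in the coordinate hyperplanes, i.e.\ correspond to lifts living in intermediate tori $\Kstarii{I''}$ with $I\subsetneq I''\subsetneq I'$ — and for those the correct weights on $I''\setminus I$ need not be ``very large,'' so even restarting the argument at $I''$ is not justified by your construction.

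The paper closes this gap with \cref{reduction-lemma}, whose proof is geometric rather than a count on initial forms: one performs a weighted blow-up at (a translate of) the common zero $u$, picks a point $a'$ on the exceptional divisor over the strict transform of $\Kstari$, and — using $|\TI|<n-|I|$ to find a component of $V(f_j:j\in\TI)$ properly containing $\Kstari$ — chooses a curve through $a'$ inside that component but not inside $\Kstari$. A branch $B$ of this curve supplies both the enlarged index $\tilde I=I_B$ and the valuation $\tilde\nu=\nuBii{\tilde I}$, and \cref{branch-lemma} gives the common zero $\In(B)\in\Kstarii{\tilde I}$ automatically (the curve lies in $V(f_j:j\in\TI)$, so the vanishing of the extra initial forms is free, and $\In(B)$ lies in the open torus by construction). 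Note that this lemma only produces \emph{some} $\tilde I\supsetneqq I$, not a prescribed one, which is why the paper then iterates: by property (iii) of the minimal $I'$ the iteration can only terminate at $I'$ itself. So the missing ingredient in your proposal is precisely this curve-and-branch reduction lemma (and the accompanying iteration), not a Bertini statement.
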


\begin{proof}
This is \cref{weak-bkk-lemma}. 
\end{proof}

\begin{remexample}[Warning]
In property \ref{NTSP-property} of $\mscrP$-non-degeneracy $\TSPrime$ can {\em not} be replaced by $\NSP$. Indeed, let 
\begin{align*}
f_1 &= 1 + x+y + z \\
f_2 &= 1 + x + 2y + 3z \\
f_3 &= y(ax + by + cz)+ z(a'x+b'y+c'z) \\
f_4 &= w(1+x) 
\end{align*}
where $a,b,c,a',b',c'$ are generic elements in $\kk^*$. Consider the ordering $(w,x,y,z)$ of the variables. If $\mscrP$ is the collection of newton polytopes of $f_1, \ldots, f_4$, then $\TSP$ (resp.\ $\TSPrime$) is the collection of all subsets of $\{1,2,3,4\}$ containing (resp.\ not containing) $1$. It is straightforward to check that \ref{NTSP-property} of of $\mscrP$-non-degeneracy on $\kk^4$ fails with $I = \{1, 2,3,4\}$ and $\nu = (1,0,1,1)$ and therefore the the number of isolated solutions of $f_1, \ldots, f_4$ on $\kk^4$ is less than $[\scrP_1, \scrP_2, \scrP_3]^{iso}_{\kk^4}$. However, if $\TSPrime$ is replaced by $\NSP$ in \ref{NTSP-property}, then this system would be $\mscrP$-non-degenerate on $\kk^4$. 
\end{remexample}

\section{Proof of the non-degeneracy condition for intersection multiplicity} \label{generic-proof-section}
In this section we prove \cref{generic-thm} following the outline in \cref{idea-section}. We start with clarifying what we mean by a `branch' of a curve and the corresponding `initial coefficients' of the (restrictions of) coordinate functions. The initial coefficients of a branch corresponds in an obvious way to a common zero of the corresponding initial forms of polynomials defining the curve (\cref{branch-lemma}), and this is in a sense the basic observation behind Bernstein's non-degeneracy conditions. In \cref{blow-up-intersection,blow-up-coordinates} we compile some (straightforward) facts about {\em weighted blow-ups} at the origin. The main technical result of \cref{pre-multiplicity-section} is \cref{reduction-lemma} which is our key to circumvent the problem outlined in \cref{pathologies}. In \cref{generic-proof} we use these results to prove \cref{generic-thm}.  

\subsection{Preparatory results} \label{pre-generic-section}

\begin{defn} \label{branchnition}
 Let $C$ be a (possibly reducible) curve on a variety $X$. Let $\pi_{C'}: C' \to C$ be a fixed desingularization of $C$ and $\bar C'$ be a fixed non-singular compactification of $C'$. A {\em branch} of $C$ is the germ of a point $z \in C'$, i.e.\ it is an equivalence class of the equivalence relation $\sim$ on pairs $B := (Z,z)$ such that
\begin{itemize}
\item $Z$ is an open subset of an irreducible component of $\bar C'$ and $z \in Z$. 
\item $(Z, z) \sim (Z',z')$ iff $z = z'$ and $Z \cap Z'$ is open in both $Z$ and $Z'$. 
\end{itemize}
In the case that $z \in \pi_{C'}^{-1}(O)$ for some $O \in C$, we say that $B$ is a {\em branch at $O$}. If $z \in \bar C' \setminus C$, we say that $B$ is a {\em branch at infinity} (with respect to $X$). 
\end{defn}

\begin{defn} \label{branch-remark}
Assume $X \subseteq \kk^n$. Let $B := (Z,z)$ be a branch of a curve $C \subseteq X$. Identify $Z^* := Z \setminus z$ with its image on $C$ and let $I_B :=  \{i: x_i|_{Z^*} \not\equiv 0\}$. Note that $\Kii{I_B}$ is the smallest coordinate subspace of $\kk^n$ which contains $Z^*$. For $f \in \kk[x_1, \ldots, x_n]$, we write $f|_B$ for $f|_{Z^*}$ and $\nu_B(f)$ for $\ord_z(f|_B)$. Let $I := I_B$ and $d :=  \gcd(\nuB(x_i): i \in I_B)$. We denote by $\nuBi$ the monomial valuation in $\Vi$ corresponding to weights $\nu_B(x_i)/d$ for $x_i$ for each $i \in I$. Fix an arbitrary element $\phi_B$ in $\kk(X)$ such that $\phi_B|_B$ is well defined and $\ord_z(\phi|_B) = 1$. Define
\begin{align*}
\In_B(x_j) &:= 
	\begin{cases}
		0	&\text{if}\ j \not\in I \\
		\left. \frac{x_j}{(\phi_B)^{\nuB(x_i)}} \right|_z
			&\text{if}\ j \in I.
	\end{cases}\\
\In(B) &:= (\In_B(x_1), \ldots, \In_B(x_n)) \in \Kstari
\end{align*} 
Note that $\In(B)$ depends on the choice of $\phi_B$. In all cases below, whenever a branch $B$ is considered, the corresponding $\phi_B$ is assumed to be fixed in the beginning; in other words, branches are to be understood as {\em triplets} $(Z,z, \phi)$. 
\end{defn}

\begin{rem}
In general $\nuBi$ is {\em not} the restriction of $\nu_B$ to the coordinate ring $\Ai$ of $\Ki$. Indeed, $\nuBi$ is a monomial valuation, but unless $B$ is a coordinate axis, $\nu_B|_{\Ai}$ is not even a discrete valuation, e.g.\ if $f$ is a non-zero polynomial in $\Ai$ that vanishes on $B$, then $\nu_B(f) = \infty$.
\end{rem}

If $\charac(\kk) = 0$, then the weights of $\nuBi$ are proportional to the exponents of initial terms of Puiseux series expansions of $x_j$ determined by $B$ and $\In(B)$ is (up to multiplication by a non-zero constant) the vector of coefficients of these initial terms. The following lemma states the fact, which is straightforward to verify, that $\In(B)$ is a common zero of the initial forms (corresponding to $\nuBi$) of all polynomials which vanish on $B$. 

\begin{lemma} \label{branch-lemma}
Let $f_1, \ldots, f_m \in \kk[x_1, \ldots, x_n]$ and $B$ be a branch of a curve contained in $V(f_1, \ldots, f_m)$. Let $I :=I_B$ and $\nu := \nuBi$. Then $\In(B) \in V(\In_\nu(f_1|_{\Ki}), \ldots, \In_\nu(f_m|_{\Ki})) \cap \Kstari$. \qed 
\end{lemma}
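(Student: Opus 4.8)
The plan is to reduce the statement to a single, essentially formal, computation of the lowest-order term of $f_k|_B$ with respect to a local parameter at $z$. Write the branch as the triple $B = (Z,z,\phi_B)$ and put $t := \phi_B|_B$; since $Z$ is a smooth curve and $\ord_z(t) = 1$, $t$ is a uniformizing parameter of $\scrO_{Z,z}$, so every element of $\kk(Z)$ that is regular on a punctured neighbourhood of $z$ has a Laurent expansion in $t$ (a power series if it is regular at $z$). For $i \in I := I_B$ set $e_i := \nu_B(x_i) = \ord_z(x_i|_B)$; by the very definition of $\In_B(x_i)$ as the value of $x_i/\phi_B^{e_i}$ at $z$, we have $x_i|_B = \In_B(x_i)\,t^{e_i}\,(1 + O(t))$ with $\In_B(x_i) \in \kk^*$. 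In particular $\xi := \In(B)$ has all coordinates indexed by $I$ nonzero and all coordinates indexed by $[n]\setminus I$ equal to $0$, so $\xi \in \Kstari$; recall also that $d = \gcd(e_i : i \in I)$ and $\nu = \nuBi$ has weights $\nu_i = e_i/d$.

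Next I would fix $k$ and set $g := f_k|_{\Ki}$. Since $x_j|_{Z^*} \equiv 0$ for every $j \notin I$, restricting a polynomial to the branch is unaffected by first setting these variables to zero, so $g|_B = f_k|_B$; and $f_k|_B$ vanishes identically because $B$ is a branch of a curve contained in $V(f_k)$. If $g \equiv 0$ there is nothing to prove, so assume $g = \sum_\alpha a_\alpha x^\alpha \not\equiv 0$ with $\supp(g) \subseteq (\zz_{\geq 0})^I$. For each $\alpha$ in the support, multiplying the expansions above gives, with multi-index notation $\xi^\alpha := \prod_{i \in I}\xi_i^{\alpha_i}$,
\[
x^\alpha|_B \;=\; t^{\sum_{i\in I}\alpha_i e_i}\bigl(\xi^\alpha + O(t)\bigr)\;=\;t^{\,d\,(\nu\cdot\alpha)}\bigl(\xi^\alpha + O(t)\bigr).
\]
Summing over $\alpha \in \supp(g)$, the smallest power of $t$ occurring in $g|_B$ is $t^{d\,\nu(g)}$ (here $\nu(g) = \min\{\nu\cdot\alpha : a_\alpha \neq 0\}$), and its coefficient is $\sum_{\alpha : \nu\cdot\alpha = \nu(g)} a_\alpha \xi^\alpha = \bigl(\In_\nu(g)\bigr)(\xi)$: terms with $\nu\cdot\alpha > \nu(g)$ contribute only to higher powers of $t$, and the $O(t)$ corrections of the minimal terms do likewise. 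Since $g|_B = 0$, all coefficients of its $t$-expansion vanish, so in particular $\bigl(\In_\nu(f_k|_{\Ki})\bigr)(\xi) = 0$. As $k \in \{1,\dots,m\}$ was arbitrary and $\xi = \In(B) \in \Kstari$, this is precisely the assertion.

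I do not expect a genuine obstacle here — the lemma is the statement that restricting to a branch and extracting the leading coefficient commutes with passing to the $\nu$-initial form. The two points deserving a line of care are: that when $B$ is a branch at infinity some of the $e_i$ may be negative, so one works with Laurent rather than power series in $t$, which affects nothing in the computation of the minimal-order coefficient; and that the coordinates of $\In(B)$ outside $I$ are irrelevant because $g$ involves only the variables $x_i$ with $i \in I$, so that $\bigl(\In_\nu(g)\bigr)(\In(B))$ is unambiguous.
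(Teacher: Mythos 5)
Your proof is correct; the paper gives no argument for this lemma (it is stated with \qed as ``straightforward to verify''), and your computation — expanding each $x_i|_B$ in the uniformizer $t=\phi_B|_B$ and reading off that the coefficient of $t^{d\,\nu(g)}$ in $g|_B$ is $(\In_\nu(g))(\In(B))$ — is exactly the intended verification. The two caveats you flag (Laurent expansions for branches at infinity, and the irrelevance of the coordinates outside $I$) are the right ones.
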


Now we interpret common zeroes of initial forms of $f_j$'s in terms of {\em weighted blow-ups} of $\kk^n$ at the origin. 

\begin{defn} \label{blow-up-defn}
Let $\nu  \in \Vzero$. The {\em $\nu$-weighted blow up $\Xnu$ at the origin} with respect to $(x_1, \ldots, x_n)$-coordinates is the blow up of $\kk^n$ at the ideal $\qqq_N$ generated by all monomials $x^\alpha$ in $(x_1, \ldots, x_n)$ such that $\nu (x^\alpha) = Np$, where $p := \lcm(\nu_1, \ldots, \nu_n)$ and $N$ is a sufficiently large integer. The exceptional divisor $\Enu$ has a natural structure of the weighted projective space $\pp(\nu_1, \ldots, \nu_n)$ with (weighted) homogeneous coordinates $[x_1: \cdots : x_n]$. We call $\Enustar := \Enu \setminus V(x_1\cdots x_n)$ the set of {\em interior points} of $\Enu$. Note that $\Enustar \cong (\kk^*)^{n-1}$ and $\Xnu$ is non-singular at every point of $\Enustar$. 
\end{defn}

The next two results follow from standard facts about toric varieties: 

\begin{lemma} \label{blow-up-intersection}
Let $\nu, \Xnu, \Enustar$ be as in \cref{blow-up-defn}. Let $z \in \nktorus$ and let $[z] \in \Enustar$ be the point whose homogeneous coordinates are the same as the coordinates of $z$. Let $f_1, \ldots, f_m \in \kk[x_1, \ldots, x_n]$. The following are equivalent:
\begin{enumerate}
\item $z$ is a common zero of $\In_\nu(f_j)$, $1 \leq j \leq m$.
\item $[z]$ is in the intersection on $\Xnu$ of the strict transforms of $\{f_j = 0\}$. \qed
\end{enumerate}
\end{lemma}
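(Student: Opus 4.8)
The plan is to reduce to a single hypersurface and argue on the charts of the weighted blow-up (or, equivalently, along a suitable arc). Since a point lies in an intersection iff it lies in each member, and since for a constant $f_j$ both conditions are trivially false while for $f_j\equiv 0$ both are trivially true, it suffices to prove: for a nonconstant $f\in\kk[x_1,\ldots,x_n]$ and $z\in\nktorus$, the point $[z]\in\Enustar$ lies on the strict transform $\widetilde{V(f)}$ of $V(f)$ in $\Xnu$ if and only if $\In_\nu(f)(z)=0$. I would then apply this with $f=f_1,\ldots,f_m$ and intersect.

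First I would record the order of vanishing along $\Enu$. Let $\pi:\Xnu\to\kk^n$ be the blow-up morphism. By construction of the $\nu$-weighted blow up, and the weighted-projective-space structure of $\Enu$ from \cref{blow-up-defn}, $\ord_{\Enu}(\pi^*x_i)=\nu_i$, so $\ord_{\Enu}(\pi^*f)=\min\{\nu\cdot\alpha:\alpha\in\supp(f)\}=\nu(f)$; the minimum is attained exactly along $\supp(\In_\nu(f))$, and since $\In_\nu(f)$ is a nonzero $\nu$-weighted homogeneous polynomial its restriction to $\Enu\cong\pp(\nu_1,\ldots,\nu_n)$ is not identically zero, so the value $\nu(f)$ is not exceeded. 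Fix an affine open $U\subseteq\Xnu$ containing $[z]$ on which $\Xnu$ is smooth (possible by \cref{blow-up-defn}), together with a local equation $u$ for $\Enu$ on $U$. Then $\pi^*f=u^{\nu(f)}\tilde f$ with $\tilde f$ not divisible by $u$; since $\Enu$ is irreducible of dimension $n-1$ and every component of $\{\tilde f=0\}\cap U$ is a hypersurface not contained in $\Enu$, we get $\{\tilde f=0\}\cap U=\widetilde{V(f)}\cap U$, hence $[z]\in\widetilde{V(f)}$ iff $\tilde f([z])=0$.

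It then remains to compute $\tilde f([z])$, which I would do by pulling back along the arc $\gamma(t):=(z_1t^{\nu_1},\ldots,z_nt^{\nu_n})$. Expanding, $f(\gamma(t))=\sum_\alpha c_\alpha z^\alpha t^{\nu\cdot\alpha}=t^{\nu(f)}\bigl(\In_\nu(f)(z)+t\,(\cdots)\bigr)$. By the valuative criterion of properness $\gamma$ lifts to an arc $\tilde\gamma$ on $\Xnu$; by the standard description of toric morphisms its special point $\tilde\gamma(0)$ lies in the orbit $\Enustar$, and comparing homogeneous coordinates $[x_1:\cdots:x_n]$ (whose ratios $x_i^{\nu_j}/x_j^{\nu_i}$ are regular on $\Enustar$) gives $\tilde\gamma(0)=[z_1:\cdots:z_n]=[z]$. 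Moreover, since the order datum of $\gamma$ is the primitive vector $\nu$, the lift $\tilde\gamma$ meets $\Enu$ transversally, so $u\circ\tilde\gamma=t\,w(t)$ with $w(0)\neq 0$. Dividing, $\tilde f(\tilde\gamma(t))=f(\gamma(t))\big/u(\tilde\gamma(t))^{\nu(f)}=w(t)^{-\nu(f)}\bigl(\In_\nu(f)(z)+t\,(\cdots)\bigr)$, and evaluating at $t=0$ yields $\tilde f([z])=w(0)^{-\nu(f)}\,\In_\nu(f)(z)$, a nonzero multiple of $\In_\nu(f)(z)$. Hence $[z]\in\widetilde{V(f)}$ iff $\In_\nu(f)(z)=0$, as required; intersecting over $j$ finishes the lemma.

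The step I expect to be the main obstacle is making the toric bookkeeping fully precise --- that the lift $\tilde\gamma$ has special point exactly $[z]\in\Enustar$, that it meets $\Enu$ transversally, and that $\tilde f$ restricted to $\Enustar$ is genuinely the "initial-form function" --- because the torus-invariant affine charts $\spec\kk[\sigma_k^\vee\cap\zz^n]$ of $\Xnu$ attached to the cones $\langle e_1,\ldots,\widehat{e_k},\ldots,e_n,\nu\rangle$ of the star subdivision are singular when $\nu_k>1$, even though $\Xnu$ is smooth along $\Enustar$. One may instead work directly in these charts, where $\pi$ becomes an explicit monomial map and the computation above is a routine substitution; either way the needed statements are standard consequences of the combinatorics of the star subdivision, and everything else is elementary.
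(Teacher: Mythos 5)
Your proposal is correct, and it takes the route the paper intends: the paper gives no argument beyond ``follows from standard facts about toric varieties,'' and your proof is exactly that standard verification (the order of vanishing $\ord_{\Enu}(\pi^*f)=\nu(f)$, the identification of the strict transform with $\{\tilde f=0\}$ on a smooth chart near $\Enustar$, and the arc computation showing $\tilde f([z])$ is a unit times $\In_\nu(f)(z)$). The details you flag as the ``main obstacle'' are indeed routine in the coordinates the paper itself supplies in \cref{blow-up-coordinates}\eqref{coordinate-assertion}, where each $x_i$ becomes $z_1^{\nu_i}$ times an invertible monomial, so nothing further is needed.
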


\begin{lemma} \label{blow-up-coordinates}
Let $I \subseteq [n]$, $\nu \in \Vzeroi$, $\nu' \in \Vzero$ be compatible with $\nu$. Let $\Xnu$ (resp.\ $\Xprimenuprime$) be the $\nu$-weighted (resp.\ $\nu'$-weighted) blow up of $\Ki$ (resp.\ $\kk^n$). 
\begin{enumerate}
\item $\Xnu$ can be identified with the strict transform of $\Ki$ in $\Xprimenuprime$.
\item If $f \in \kk[x_1, \ldots, x_n]$ is such that $\In_{\nu'}(f) = \In_\nu(f|_{\Ki})$, then under the above identification the strict transform of $\{f|_{\Ki} = 0\}$ in $\Xnu$ corresponds to the intersection of the strict transforms of  $\Ki$ and $\{f = 0\}$ on $\Xprimenuprime$.
\item \label{coordinate-assertion} Assume $\gcd(\nu'(x_i): i \in I) = 1$. Let $k := |I|$. Then under the above identification $\Enustar$ is contained in an non-singular open subset $W$ of $\Xprimenuprime$ isomorphic to $ (\ktorus)^{k-1} \times \kk^{n-k+1}$ with respect to coordinates $(z_1, \ldots, z_n)$ such that
\begin{enumerate}
\item $z_1, \ldots, z_k$ are monomials in $(x_i: i \in I)$.
\item $\nu'(z_1) = \nu(z_1) = 1$, $\nu'(z_i) = 0$, $2 \leq i \leq n$.
\item for all $i' \not\in I$, $z_{i'} = x_{i'}/z_1^{m_{i'}}$ for some positive integer $m_{i'}$.
\item $\Enuprime \cap W = V(z_1)$.
\item $\Xnu \cap W  = V(z_{k+1}, \ldots, z_n)$. 
\item $\Enu \cap W = \Enustar \cap W = V(z_1, z_{k+1}, \ldots, z_n)$. \qed
\end{enumerate}
\end{enumerate} 
\end{lemma}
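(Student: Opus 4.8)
The plan is to realize all four spaces torically and to read off the three assertions from the combinatorics of star subdivisions; the author's remark that these ``follow from standard facts about toric varieties'' is precisely this. Assume $I=\{1,\ldots,k\}$. I would view $\kk^n=\spec\kk[x_1,\ldots,x_n]$ as the toric variety of the cone $\sigma:=\operatorname{cone}(e_1,\ldots,e_n)$ in $N_\rr\cong\rr^n$ (with $\rr_{\ge0}e_i$ the ray of the boundary divisor $\{x_i=0\}$), and $\Ki$ as the orbit closure of the face $\operatorname{cone}(e_{k+1},\ldots,e_n)$, which is itself the toric variety of $\operatorname{cone}(e_1,\ldots,e_k)\subseteq\rr^k$. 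By the standard presentation of a weighted blow-up as a toric morphism, $\Xprimenuprime$ is the toric variety of the star subdivision $\Sigma'$ of $\sigma$ along the ray $\rr_{\ge0}\nu'$, with maximal cones $\sigma'_i:=\operatorname{cone}(e_1,\ldots,\widehat{e_i},\ldots,e_n,\nu')$ and with $\Enuprime$ the divisor of the new ray; likewise $\Xnu$ is the toric variety of the star subdivision of $\operatorname{cone}(e_1,\ldots,e_k)$ along $\rr_{\ge0}\nu$ in $\rr^k$.

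For assertion (1): since $\nu'\in\Vzero$ lies in the interior of $\sigma$ and $k\ge1$, the face $\operatorname{cone}(e_{k+1},\ldots,e_n)$ survives in $\Sigma'$ (it is a face of each $\sigma'_i$ with $i\le k$), and its orbit closure in $\Xprimenuprime$ is the closure of the torus of $\Ki$, hence the strict transform of $\Ki$. Its fan is the image in $N_\rr/\langle e_{k+1},\ldots,e_n\rangle\cong\rr^k$ of the star of that cone in $\Sigma'$; a direct check identifies this image with the star subdivision of $\operatorname{cone}(e_1,\ldots,e_k)$ along the ray spanned by the image of $\nu'$, which by compatibility ($\nu'|_{\Ai}$ proportional to $\nu$) equals $\rr_{\ge0}\nu$ — the fan of $\Xnu$.

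For assertion (3) I would first note that $\gcd(\nu'_i:i\in I)=1$ combined with compatibility forces $\nu'_i=\nu_i$ for all $i\in I$. Pick $\beta_1\in\zz^I$ with $\sum_{i\in I}\nu_i(\beta_1)_i=1$ (possible since $\gcd(\nu_i:i\in I)=1$) and a $\zz$-basis $\beta_2,\ldots,\beta_k$ of $\{m\in\zz^I:\sum_i\nu_i m_i=0\}$, so that $\beta_1,\ldots,\beta_k$ is a $\zz$-basis of $\zz^I$. Set $z_j:=x^{\beta_j}$ for $1\le j\le k$ and $z_{i'}:=x_{i'}/z_1^{\nu'_{i'}}$ for $k<i'\le n$, and let $W:=\spec\kk[z_1,z_2^{\pm1},\ldots,z_k^{\pm1},z_{k+1},\ldots,z_n]$. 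The key linear-algebra computation is that $\beta_1,\ldots,\beta_k$ together with $\gamma_{i'}:=e_{i'}-\nu'_{i'}\beta_1$ ($k<i'\le n$) form a $\zz$-basis of the character lattice (reduce modulo $\zz^I$), with dual basis $\nu',v_2,\ldots,v_k,e_{k+1},\ldots,e_n$ for suitable $v_j$; hence $\tau:=\operatorname{cone}(\nu',e_{k+1},\ldots,e_n)$ is a smooth cone of $\Sigma'$ (a face of $\sigma'_1$), $W$ is its affine chart, and $W\cong(\kk^*)^{k-1}\times\kk^{n-k+1}$. Properties (a)--(c) are then read off from the construction, and (d)--(f) are the assertions $\Enuprime\cap W=V(z_1)$ (divisor of $\rr_{\ge0}\nu'$), $\Xnu\cap W=V(z_{k+1},\ldots,z_n)$ (orbit closure of the face $\operatorname{cone}(e_{k+1},\ldots,e_n)$ of $\tau$) and $\Enu\cap W=V(z_1,z_{k+1},\ldots,z_n)$ (orbit closure of $\tau$); one checks this last set equals the torus of $\Enu\cong\pp(\nu_i:i\in I)$, so $\Enustar\cap W=\Enu\cap W$ and $\Enustar\subseteq W$. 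For assertion (2): off the exceptional divisors both sides restrict to $\{f|_{\Ki}=0\}\setminus\{0\}$, so only a neighbourhood of $\Enu$ matters, and there I would work in the toric charts covering $\Enu$. In such a chart, with $z_1$ a local equation of $\Enuprime$, the pullback of $f$ is $z_1^{\nu'(f)}\tilde f$ with $\tilde f|_{z_1=0}$ the chart-coordinate expression of $\In_{\nu'}(f)$, so the strict transform of $\{f=0\}$ is $\{\tilde f=0\}$; restricting to $\Xnu$ (where the $x_{i'}$, $i'\notin I$, vanish, by (e)) turns $\tilde f$ into $f|_{\Ki}/z_1^{\nu'(f)}$, whose value on $\Enu$ is $\In_{\nu'}(f)=\In_\nu(f|_{\Ki})$ by hypothesis — the equation on $\Enu$ of the strict transform of $\{f|_{\Ki}=0\}$ in $\Xnu$. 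As $\Enu$ is irreducible and, in the only nontrivial case $f|_{\Ki}\not\equiv0$, $\In_\nu(f|_{\Ki})$ does not vanish identically on it, neither subset contains a component of $\Enu$; together with their agreement off $\Enu$ this forces equality.

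The part I expect to cost the most work is the bookkeeping in assertion (3): producing a single chart $W$ that simultaneously realizes all of (a)--(f), i.e.\ reconciling the intrinsic toric description of the affine chart $U_\tau$ with the explicit monomial formulas $z_j=x^{\beta_j}$ and $z_{i'}=x_{i'}/z_1^{\nu'_{i'}}$ prescribed by the statement, and making sure $W$ contains the whole interior $\Enustar$ rather than merely meeting it. The hypothesis $\gcd(\nu'_i:i\in I)=1$ is the hinge: it both forces $\nu'|_{\Ai}=\nu$ (so the exceptional coordinates of the two blow-ups are compatible) and makes $\tau$ a smooth cone (so $W$ has the asserted product form). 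A lesser point needing care is ruling out, in assertion (2), a spurious component of the intersection of the strict transform of $\{f=0\}$ with $\Xnu$ lying inside $\Enu$, which is exactly where irreducibility of $\Enu$ and the non-vanishing of $\In_\nu(f|_{\Ki})$ are used.
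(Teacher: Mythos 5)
Your proof is correct and is precisely the ``standard facts about toric varieties'' argument that the paper invokes without writing out: the paper gives no proof of this lemma, and your star-subdivision description of the two blow-ups, the smooth cone $\tau=\operatorname{cone}(\nu',e_{k+1},\ldots,e_n)$ with its adapted dual basis yielding the chart $W$ and properties (a)--(f), and the agreement of initial forms on the exceptional divisors for assertion (2) supply exactly the omitted details. The one step worth making fully explicit is the end of assertion (2): a putative extra component of the intersection of the strict transform of $\{f=0\}$ with $\Xnu$ that lies inside $\Enu$ must have dimension $k-1$ (Krull's principal ideal theorem, using $f|_{\Ki}\not\equiv 0$) and hence, $\Enu$ being irreducible of dimension $k-1$, would have to be all of $\Enu$ --- which is what your non-vanishing of $\In_\nu(f|_{\Ki})$ on $\Enustar$ excludes.
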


Our next result (\cref{reduction-lemma}) plays a crucial role in this article; it is the key to \cref{weak-lemma,weak-bkkriterion} which state that when testing for our non-degeneracy conditions, we may omit the coordinate subspaces which are problematic for the reason described in \cref{pathologies}. 

\begin{lemma} \label{reduction-lemma}
Let $I$ be a non-empty subset of $[n]$ and $f_1, \ldots, f_m \in \kk[x_1, \ldots, x_n][x_i^{-1}: i \in I]$. Let $\TI:= \{j \in [m]: f_j|_{\Kstari} \equiv 0\}$. Assume 
\begin{enumerate}
\item \label{less-than-assumtion} $|\TI| <  n - |I|$.
\item \label{initial-assumption} there exists $\nu \in \Vi$ such that $\In_\nu(f_1|_{\Kstari}), \ldots, \In_\nu(f_n|_{\Kstari})$ have a common zero $u \in \nktorus$. 
\end{enumerate}
Then there exists $\tilde I \supsetneqq I$ and $\tilde \nu \in \Vii{\tilde I}$ such that 
\begin{enumerate}
\setcounter{enumi}{2}
\item \label{compatible-reduction} $\tilde \nu$ is compatible with $\nu$. 
\item \label{positive-reduction} $\tilde \nu(x_j) > 0$ for each $j \in \tilde I\setminus I$.
\item \label{zero-reduction} $\In_{\tilde \nu}(f_1|_{\Kstarii{\tilde I}}), \ldots, \In_{\tilde \nu}(f_n|_{\Kstarii{\tilde I}})$ have a common zero in $\tilde u \in \nktorus$ such that $\pi_I(\tilde u) = \pi_I(u)$. 
\end{enumerate}

\end{lemma}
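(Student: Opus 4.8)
The plan is to prove the statement by induction on $n - |I|$, the base case $n - |I| = 0$ being vacuous since then assumption \eqref{less-than-assumtion} forces $|\TI| < 0$. For the inductive step, the idea is to pick one coordinate $x_{i_0}$ with $i_0 \notin I$ which can be "absorbed" into $I$, and to show that the hypotheses of the lemma persist for the enlarged set $I_1 := I \cup \{i_0\}$, whereupon induction finishes the job. First I would examine how each $f_j|_{\Ki \cup \{x_{i_0}\}}$ (i.e., the restriction of $f_j$ to the coordinate subspace $\Kii{I \cup \{i_0\}}$, written in the obvious Laurent-polynomial coordinates) relates to $f_j|_{\Kstari}$: writing $f_j = f_j|_{\Kstari} + x_{i_0} g_j + (\text{terms of higher order in the variables outside } I)$ along $\Kii{I\cup\{i_0\}}$, and observe that for an appropriate choice of weight $\nu_1(x_{i_0})$ the initial form $\In_{\nu_1}(f_j|_{\Kstarii{I\cup\{i_0\}}})$ either equals $\In_\nu(f_j|_{\Kstari})$ (when the weight of $x_{i_0}$ is chosen large) or picks up a genuine contribution from $g_j$.

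The key step is the choice of $i_0$ and of the weight $\nu_1(x_{i_0}) > 0$. Since $|\TI| < n - |I|$, there is at least one index $j_0 \notin \TI$, i.e., $f_{j_0}|_{\Kstari} \not\equiv 0$; more importantly, among the $f_j$'s with $j \notin \TI$, at most $|I|$ of their initial forms $\In_\nu(f_j|_{\Kstari})$ can be "independent" in a suitable sense, and a dimension count (of the kind underlying \cref{herrero} and the proof of \cref{milnor-thm}\eqref{milnor-existence}) shows that there must be a coordinate direction $i_0 \notin I$ and a positive weight $\nu_1(x_{i_0})$ such that the common zero $u \in \nktorus$ of $\In_\nu(f_j|_{\Kstari})$ lifts to a common zero $u_1 \in \nktorus$ of $\In_{\nu_1}(f_j|_{\Kstarii{I\cup\{i_0\}}})$, $1 \le j \le n$, with $\pi_I(u_1) = \pi_I(u)$. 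Concretely: if for {\em every} $i_0 \notin I$ and {\em every} positive weight the lift failed, then the $f_j|_{\Kstarii{I\cup\{i_0\}}}$ would impose "too many independent conditions" in a neighborhood of the torus point, contradicting $|\TI| < n - |I|$ via the fact that at most $|I|$ of the original initial forms cut out a variety of the expected codimension through $u$. Here the weighted blow-up picture of \cref{blow-up-coordinates} is the natural bookkeeping device: passing from $\Ki$ to $\Kii{I\cup\{i_0\}}$ corresponds to moving one step "down" the exceptional stratification, and the compatibility and positivity requirements \eqref{compatible-reduction}, \eqref{positive-reduction} are built into this construction.

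Once $i_0$ and $\nu_1$ are found, set $I_1 := I \cup \{i_0\}$ and note $\nu_1$ is compatible with $\nu$, $\nu_1(x_{i_0}) > 0$, and $\In_{\nu_1}(f_j|_{\Kstarii{I_1}})$, $1 \le j \le n$, have the common zero $u_1 \in \nktorus$ with $\pi_I(u_1) = \pi_I(u)$. If $I_1 = [n]$ we are done (with $\tilde I = I_1$, $\tilde\nu = \nu_1$, $\tilde u = u_1$). Otherwise we must verify that the hypotheses of the lemma hold for $I_1$ in place of $I$ so that induction applies: assumption \eqref{initial-assumption} is exactly what we just arranged, and for assumption \eqref{less-than-assumtion} we need $|\scrN^{I_1}| < n - |I_1| = n - |I| - 1$, where $\scrN^{I_1} = \{j : f_j|_{\Kstarii{I_1}} \equiv 0\}$. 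Since $|I_1| = |I| + 1$ and since $\scrN^{I_1} \supseteq \TI$ in general, this is the one inequality that is {\em not} automatic, and it is the main obstacle: we have to choose $i_0$ not merely so that the torus zero lifts, but so that {\em few} new $f_j$'s vanish identically on the smaller subspace $\Kstarii{I_1}$. I expect this to be handled by choosing $i_0$ greedily — among all admissible directions, pick one minimizing the number of $f_j$ that become identically zero — together with the observation that if $f_j|_{\Kstari} \not\equiv 0$ then $f_j|_{\Kstarii{I_1}} \equiv 0$ can happen for at most the "right" number of indices, again because otherwise the common-zero count near $u$ would be violated. After establishing the inequality, the inductive hypothesis applied to $(I_1, \nu_1, u_1)$ produces $\tilde I \supsetneq I_1 \supsetneq I$, $\tilde\nu \in \Vii{\tilde I}$ compatible with $\nu_1$ (hence with $\nu$), with $\tilde\nu(x_j) > 0$ for $j \in \tilde I \setminus I_1$ and — combined with $\nu_1(x_{i_0}) > 0$ — for all $j \in \tilde I \setminus I$, and a common zero $\tilde u \in \nktorus$ of $\In_{\tilde\nu}(f_j|_{\Kstarii{\tilde I}})$ with $\pi_{I_1}(\tilde u) = \pi_{I_1}(u_1)$, hence $\pi_I(\tilde u) = \pi_I(u)$, which is exactly \eqref{zero-reduction}.
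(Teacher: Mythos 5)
Your strategy of enlarging $I$ one coordinate at a time does not work: the ``key step'' you leave to a heuristic dimension count (lifting the common torus zero from $\Kstari$ to $\Kstarii{I\cup\{i_0\}}$ for a single well-chosen $i_0$) is in fact false. Take $n=m=3$, $I=\{1\}$, $\nu$ the trivial valuation, and $f_1 = x_2-x_3$, $f_2 = x_1-1+x_2x_3$, $f_3 = x_1^2-1+x_2x_3$. Then $\TI=\{1\}$, so $|\TI|=1<n-|I|=2$, and hypothesis \eqref{initial-assumption} holds with $\pi_I(u)=(1,0,0)$. But $f_1|_{\Kii{\{1,2\}}}=x_2$ and $f_1|_{\Kii{\{1,3\}}}=-x_3$, whose initial forms for \emph{every} admissible weight are monomials with no zero on the torus; so neither coordinate can be adjoined, for any choice of positive weight. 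The conclusion of the lemma nevertheless holds, with $\tilde I=\{1,2,3\}$ and $\tilde\nu=(0,1,1)$ (common zero $(1,c,c)$, $c\in\kk^*$). Thus the witness $\tilde I$ may have to grow by more than one element at a time, and no greedy single-coordinate scheme can produce it. This is precisely why the paper's proof is not an induction: it moves the zero to a point $a\in\Kstari$, performs a weighted blow-up at $a$, uses $|\TI|<n-|I|$ to produce an irreducible component $V$ of $V(f_j: j\in\TI)$ \emph{properly} containing $\Kstari$, and takes a branch $B$ of a curve in the strict transform of $V$ through a suitable exceptional point; the branch data $I_B$, $\nuBii{\tilde I}$, $\In(B)$ then deliver $\tilde I$, $\tilde\nu$, $\tilde u$ in one step via \cref{branch-lemma,non-empty-lemma}.

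Two secondary points. First, even if your lifting step were valid, the induction would be superfluous: the conclusion only asks for \emph{some} $\tilde I\supsetneqq I$ satisfying \eqref{compatible-reduction}--\eqref{zero-reduction}, and does not require the lemma's hypotheses to hold for $\tilde I$, so $\tilde I=I_1$ would already finish the proof. Second, your verification of hypothesis \eqref{less-than-assumtion} for $I_1$ is left at ``I expect this to be handled by choosing $i_0$ greedily,'' which is not an argument; but this gap is moot given the failure of the lifting step itself.
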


\begin{proof}
We may assume without any loss of generality that $I = \{1, \ldots, k\}$ and $\TI = \{1, \ldots, l\}$, $1 \leq k < n$, $l < n-k$. Let $a := \pi_I(u)$. Then $a =(a_1, \ldots, a_k, 0, \ldots, 0)$ for some $a_1, \ldots, a_k  \in \kk^*$. At first consider the case that $\nu$ is the trivial valuation. Assumption \eqref{initial-assumption} then says that $a$ is a common zero of $f_1, \ldots, f_m$ in $\Kstari$. Let $(z_1, \ldots, z_n)$ be the translation of $(x_1, \ldots, x_n)$ by $a$, i.e.\
\begin{align*}
z_j &:=
			\begin{cases}
			x_j - a_j &\text{if}\ 1 \leq j \leq k\\
			x_j	&\text{if}\ k < j \leq n.
			\end{cases}
\end{align*}
Let $X_{\nu^*}$ be the weighted blow up of $\kk^n$ at $a$ in $(z_1, \ldots, z_n)$ coordinates corresponding to a monomial valuation $\nu^*$ such that
\begin{itemize}
\item $\nu^*(z_j) > 0$, $1 \leq j \leq k-1$,
\item $\nu^*(z_k) = 1$,
\item  $\nu^*(z_j) \gg 1$ for $k < j \leq n$. 
\end{itemize}
Let $E_{\nu^*}$ be the exceptional divisor of the blow up $X_{\nu^*} \to \kk^n$. Pick a point $a'$ on the intersection of $E_{\nu^*}$ with the strict transform $Y'$ of $\Kstari$. Since $l < n-k$, there is an irreducible component $V$ of $V(f_1, \ldots, f_l)$ properly containing $\Kstari$. Then the strict transform $V'$ of $V$ properly contains $Y'$. Choose a curve $C' \subseteq V'$ such that $a' \in C' \not\subseteq Y' \cup E_{\nu^*}$. Let $B = (Z',z')$ be a branch of $C'$ such that $z'$ is in the pre-image (in the desingularization of $C'$) of $a'$. Let $\tilde I := \{i : 1 \leq i \leq n, x_i|_B \not\equiv 0\}$, $\tilde \nu := \nuBii{\tilde I}$ and $\In(B) := (\In_B(x_1), \ldots, \In_B(x_n)) \in \Kstarii{\tilde I}$ (\cref{branch-remark}). Then
\begin{itemize}
\item For $1 \leq j \leq k$, $\nuB(z_j) > 0$,  so that $\nuB(x_j) = 0$. In particular $\In_B(x_j) = a_j$, $1 \leq j \leq k$, and $I \subseteq \tilde I$. 
\item Since $C' \not\subseteq Y'$, it follows that $I \subsetneqq \tilde I$.
\item $\nuB(x_{i'}) \gg 1$ for each $i' \in \tilde I\setminus I$. It follows that $\In_{\tilde \nu}(f_j) = \In_\nu(f_j)$ for each $j$, $l+1 \leq j \leq m$. (\cref{non-empty-lemma}). This implies that $\In(B)$ is a common zero of $\In_{\tilde \nu}(f_j|_{\Kii{\tilde I}})$, $l+1 \leq j \leq m$.
\item Since $C \subset V(f_j)$ for each $j =1, \ldots, l$, it follows that $\In(B)$ is a common zero of $\In_{\tilde \nu}(f_j|_{\Kii{\tilde I}})$, $1 \leq j \leq l$ (\cref{branch-lemma}).
\end{itemize}
This proves the lemma for the case that $\nu$ is the trivial valuation. \\

Now assume $\nu$ is not the trivial valuation. Note that the hypotheses and conclusions do not change if we multiply any of the $f_j$'s by a monomials in $(x_i: i \in I)$. It follows that after a monomial change of coordinates on $\Kstari$ we may assume that 
\begin{prooflist}
\item $f_j \in \kk[x_1, \ldots, x_n]$, $ 1 \leq j \leq m$,
\item \label{nu-standard}  $\nu(x_j) = 0 $, $1 \leq j < k$, and $\nu(x_k) = 1$.  
\end{prooflist}
Assumption \eqref{initial-assumption} then implies that $f_1, \ldots, f_m$ have a common zero $a = (a_1, \ldots, a_{k-1}, 0, \ldots, 0)$ with $a_1, \ldots, a_{k-1} \in \kk^*$. Now consider the translation $(z_1, \ldots, z_n)$ of $(x_1, \ldots, x_n)$ by $a$, and define $X_{\nu^*}$, $E_{\nu^*}$ and $Y'$ as in the proof of the trivial case. Then there is an affine open subset $U$ of $X_{\nu^*}$ with coordinates $(z_1/(z_k)^{\lambda_1}, \ldots, z_n/(z_k)^{\lambda_n})$, where $\lambda_j = \nu^*(z_j)$, $1 \leq j \leq n$, such that $E_{\nu^*} \cap Y' \cap U \neq \emptyset$. Pick $a' \in E_{\nu^*} \cap Y' \cap U$. Now the same arguments as in the proof of the trivial case complete the proof.
\end{proof}

Let $\mscrG:= (\Gamma_1, \ldots, \Gamma_n)$ be a collection of $n$ diagrams in $\rr^n$. Define $\nG$ and $\NiG$ as in \eqref{NG} and \eqref{NiG}. 

\begin{cor}[\Cref{weak-lemma}] \label{weak-lemma2}
 Let $f_1, \ldots, f_n$ be $\mscrG$-admissible polynomials in $(x_1, \ldots, x_n)$. Assume $\multzeroGamma < \infty$. Then the following are equivalent: 
\begin{enumerate}
\item \label{all-non-degnerate} $f_1|_{\Ki}, \ldots, f_n|_{\Ki}$ are BKK non-degenerate at the origin for every $I \subseteq [n]$.
\item \label{IG-non-degenerate} $f_1|_{\Ki}, \ldots, f_n|_{\Ki}$ are BKK non-degenerate at the origin for every $I \in \nG$.
\end{enumerate} 
\end{cor}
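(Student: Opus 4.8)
The plan is to prove the non-trivial implication, that $(\ref{IG-non-degenerate})$ implies $(\ref{all-non-degnerate})$; the reverse implication holds trivially since every element of $\nG$ is in particular a non-empty subset of $[n]$. So suppose $(\ref{IG-non-degenerate})$ holds but $(\ref{all-non-degnerate})$ fails, and among all non-empty $I \subseteq [n]$ for which $f_1|_{\Ki}, \ldots, f_n|_{\Ki}$ fail to be BKK non-degenerate at the origin, fix one with $|I|$ maximal. By definition there are $\nu \in \Vzeroi$ and a point $u$ in the torus $\Kstari$ which is a common zero of $\In_\nu(f_1|_{\Ki}), \ldots, \In_\nu(f_n|_{\Ki})$. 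Since $\multzeroGamma < \infty$, \cref{mult-support-solution} tells us that $\mscrG$ is $I$-isolated at the origin, i.e. $|\NiG| \geq |I|$; if equality held then $I \in \nG$, contradicting $(\ref{IG-non-degenerate})$, so in fact $|\NiG| > |I|$.

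Next I would feed this data into \cref{reduction-lemma} with $m = n$. Observe that $f_j|_{\Kstari} \equiv 0$ precisely when $\Gamma^I_j = \emptyset$, that is, precisely when $j \notin \NiG$; hence the set $\TI$ appearing in \cref{reduction-lemma} equals $[n] \setminus \NiG$, so that $|\TI| = n - |\NiG| < n - |I|$, which is hypothesis \ref{less-than-assumtion}. Hypothesis \ref{initial-assumption} also holds, because $\In_\nu(f_j|_{\Kstari})$ and $\In_\nu(f_j|_{\Ki})$ are the same (Laurent) polynomial for each $j$ and these share the common zero $u$. Thus \cref{reduction-lemma} yields $\tilde I \supsetneqq I$ and $\tilde\nu \in \Vii{\tilde I}$ compatible with $\nu$, with $\tilde\nu(x_j) > 0$ for all $j \in \tilde I \setminus I$, and such that $\In_{\tilde\nu}(f_1|_{\Kstarii{\tilde I}}), \ldots, \In_{\tilde\nu}(f_n|_{\Kstarii{\tilde I}})$ have a common zero in the torus $\Kstarii{\tilde I}$. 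Because $\nu$ is centered at the origin and $\tilde\nu$ is compatible with it, $\tilde\nu|_{\Aii{I}}$ is a positive multiple of $\nu$ (the positivity of the proportionality constant is visible from the construction of $\tilde\nu$ as $\nuBii{\tilde I}$ in the proof of \cref{reduction-lemma}), so $\tilde\nu(x_i) > 0$ for every $i \in I$ as well; combined with $\tilde\nu(x_j) > 0$ for $j \in \tilde I \setminus I$ this gives $\tilde\nu \in \Vzeroii{\tilde I}$. Identifying $\In_{\tilde\nu}(f_j|_{\Kstarii{\tilde I}})$ with $\In_{\tilde\nu}(f_j|_{\Kii{\tilde I}})$ as before, we conclude that $f_1|_{\Kii{\tilde I}}, \ldots, f_n|_{\Kii{\tilde I}}$ fail to be BKK non-degenerate at the origin. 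Since $|\tilde I| > |I|$, this contradicts the maximality of $|I|$, and the proof is complete.

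The only delicate point is this last one: one must make sure that the valuation $\tilde\nu$ produced by \cref{reduction-lemma} is again centered at the origin when $\nu$ is, which comes down to the proportionality constant hidden in the notion of compatibility (\cref{compatible-defn}) being positive. Everything else is a straightforward dictionary between the combinatorics of $\NiG$ and $\nG$ on one side and the hypotheses of \cref{reduction-lemma} on the other; note also that the maximal counterexample $I$ automatically satisfies $I \neq [n]$ (otherwise $\NiiG{[n]} = [n]$ and $[n] \in \nG$), so applying \cref{reduction-lemma} to it raises no issue.
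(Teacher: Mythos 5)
Your proof is correct and takes essentially the same route as the paper's: the paper runs an induction on $n-|I|$ where you take a counterexample $I$ with $|I|$ maximal, but both arguments hinge on the identical application of \cref{reduction-lemma} after observing that $|\NiG|>|I|$ whenever a degenerate $I$ lies outside $\nG$, and you are in fact more explicit than the paper about why the resulting $\tilde\nu$ is again centered at the origin. The only (harmless) omission is that your appeal to \cref{mult-support-solution} formally requires $\multzeroGamma\neq 0$; when $\multzeroGamma=0$ some $\Gamma_i$ contains the origin and both assertions hold vacuously, which is how the paper disposes of that case at the outset.
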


\begin{proof}
We may assume $\multzeroGamma \neq 0$ (since otherwise both assertions are always true). We only have to prove \eqref{IG-non-degenerate} $\im$ \eqref{all-non-degnerate}. It suffices to prove the following claim: ``if there is $I \subseteq [n]$ such that $f_1|_{\Ki}, \ldots, f_m|_{\Ki}$ are BKK degenerate at the origin, then there is $\tilde I \in \nG$ such that $f_1|_{\Kii{\tilde I}}, \ldots, f_m|_{\Kii{\tilde I}}$ are BKK degenerate at the origin.''\\

We prove the claim by induction on $n-|I|$. If $|I| = n$, then $I \in \nG$, so there is nothing to prove. Now assume that the claim is true whenever $|I| > k$, $1 \leq k \leq n$. Pick $I \subseteq [n]$ such that $|I| = k$ and $f_1|_{\Ki}, \ldots, f_m|_{\Ki}$ are BKK degenerate at the origin. Since $0 < \multzeroGamma < \infty$, \cref{mult-support-solution} implies that $|\NiG| \geq k$. If $|\NiG| = k$, then $I \in \nG$, so assume that $|\NiG| > k$. Then \cref{reduction-lemma} implies that there exists $\tilde I \supsetneqq I$ such that $f_1|_{\Kii{\tilde I}}, \ldots, f_n|_{\Kii{\tilde I}}$ are BKK degenerate at the origin. Since $|\tilde I| > k$, we are done by induction. 
\end{proof}

\subsection{Proof of \cref{generic-thm}} \label{generic-proof} In view of \cref{mult-support-solution} it suffices to treat the case that $0 < \multzeroGamma < \infty$. At first we establish that $\mscrG$-non-degeneracy is sufficient for the intersection multiplicity to be generic.

\begin{convention}
Below sometimes we work with $\kk^{n+1}$ with coordinates $(x_1, \ldots, x_n, t)$. In those cases we usually denote the coordinates of elements of $\kk^{n+1}$ as pairs, with the last component of the pair denoting the $t$-coordinate. In particular, the origin of $\kk^{n+1}$ is denoted as $(0,0)$.
\end{convention}

\begin{claim}  \label{sufficient-lemma}
Let $f_1, \ldots, f_n$ and $g_1, \ldots, g_n$ be two systems of $\mscrG$-admissible polynomials such that $\multzero{g_1}{g_n} < \multzerof$. Then $f_i$'s are $\mscrG$-degenerate. 
\end{claim}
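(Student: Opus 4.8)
The plan is to carry out the polynomial-homotopy argument sketched in \cref{idea-section}. First I would dispose of the case $\multzerof = \infty$: then the origin is not isolated in $V(f_1, \ldots, f_n) \subseteq \kk^n$, so this set contains a curve $D$ through the origin; choosing a branch $B$ of $D$ at the origin, putting $I := I_B$ (non-empty, since $D$ is one-dimensional) and $\nu := \nuBi$, one has $\nu \in \Vzeroi$ because every coordinate function vanishes at the origin, and \cref{branch-lemma} gives $\In(B) \in V\big(\In_\nu(f_1|_{\Ki}), \ldots, \In_\nu(f_n|_{\Ki})\big) \cap \Kstari$, so $f_1|_{\Ki}, \ldots, f_n|_{\Ki}$ fail to be BKK non-degenerate at the origin and $f_1, \ldots, f_n$ are $\mscrG$-degenerate. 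So from now on assume $\multzerof < \infty$.

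Next I would form the linear homotopy $F_j := (1-t)f_j + t g_j \in \kk[x_1, \ldots, x_n, t]$ for $1 \le j \le n$, so that $F_j|_{t=0} = f_j$ and $F_j|_{t=1} = g_j$; here the last coordinate of $\kk^{n+1}$ is $t$. Let $\mu_0$ be the intersection multiplicity at the origin of $F_1|_{t=c}, \ldots, F_n|_{t=c}$ for generic $c \in \kk$. By upper semicontinuity of the intersection multiplicity at the origin in a family, $\mu_0 \le \multzero{g_1}{g_n} < \multzerof$, so this multiplicity jumps strictly as $t$ specializes to $0$. The geometric crux, and the step I expect to be the main obstacle, is that — following Bernstein \cite{bern} — this strict jump produces an \emph{escaping curve}: an irreducible curve $C \subseteq V(F_1, \ldots, F_n) \subseteq \kk^{n+1}$ with $(0,0) \in \overline C$, with $C \not\subseteq \{t = 0\}$, and with $C \not\subseteq \{x_1 = \cdots = x_n = 0\}$. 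This is a conservation-of-number statement: since the origin is isolated in $V(f_1, \ldots, f_n)$, for $c$ near $0$ the $\mu_0$ points of the generic fibre lying near the origin cannot escape to infinity and so must limit to $(0,0)$; and since $\mu_0 < \multzerof$, the fibre over $t = 0$ carries strictly more intersection at $(0,0)$ than is accounted for by the $t$-axis (which lies in $V(F_1, \ldots, F_n)$ because the constant terms $f_j(0) = g_j(0)$ vanish, $\multzeroGamma \neq 0$ forcing $0 \notin \Gamma_j$), so at least one horizontal component through $(0,0)$ is not contained in $\{x = 0\}$. I would make this precise using properness of a compactification of the family over the $t$-line together with semicontinuity of fibre length, or by Bernstein's valuation-theoretic argument over $\kk((t))$.

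Given $C$, I would take a branch $B$ of its closure at $(0,0)$ and set $\tilde I := I_B \subseteq [n+1]$. As the generic point of $C$ has $t \ne 0$, we get $n+1 \in \tilde I$; as $C \not\subseteq \{x = 0\}$, we get $I := \tilde I \cap [n] \ne \emptyset$. Put $\tilde\nu := \nuBii{\tilde I}$, and let $\nu$ be the primitive monomial valuation on $\kk[x_i : i \in I]$ whose weights are proportional to $(\nuB(x_i))_{i \in I}$. Since $B$ is a branch at the origin of $\kk^{n+1}$, all weights of $\tilde\nu$ are positive; in particular $\tilde\nu(t) > 0$, and the restriction of $\tilde\nu$ to the variables $x_i$ ($i \in I$) is a positive multiple of $\nu$, so $\nu \in \Vzeroi$ and $\In_{\tilde\nu}(h) = \In_\nu(h)$ for every polynomial $h$ in the $x_i$, $i \in I$.

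The computational heart is to identify $\In_{\tilde\nu}(F_j|_{\Kii{\tilde I}})$. We have $F_j|_{\Kii{\tilde I}} = f_j|_{\Ki} + t\,(g_j|_{\Ki} - f_j|_{\Ki})$. Because $\nd(f_j) = \nd(g_j) = \Gamma_j$ and $0 \notin \Gamma_j$ (by \cref{mult-support-solution}, as $\multzeroGamma \ne 0$), the vertices of the face $\Gamma^I_j$ lie in both $\supp(f_j|_{\Ki})$ and $\supp(g_j|_{\Ki})$, and no point of either support lies below $\Gamma_j$; hence $\nu(f_j|_{\Ki}) = \nu(g_j|_{\Ki}) = \nu(\Gamma^I_j)$ and $\nu(g_j|_{\Ki} - f_j|_{\Ki}) \ge \nu(f_j|_{\Ki})$ for every $\nu \in \Vzeroi$. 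With $\tilde\nu(t) > 0$ this gives $\tilde\nu\big(t\,(g_j|_{\Ki} - f_j|_{\Ki})\big) > \tilde\nu(f_j|_{\Ki})$, so $\In_{\tilde\nu}(F_j|_{\Kii{\tilde I}}) = \In_{\tilde\nu}(f_j|_{\Ki}) = \In_\nu(f_j|_{\Ki})$ (both sides being $0$ when $\Gamma^I_j = \emptyset$). Now \cref{branch-lemma}, applied to $F_1, \ldots, F_n \in \kk[x_1, \ldots, x_n, t]$ and the branch $B$, yields $\In(B) \in V\big(\In_\nu(f_1|_{\Ki}), \ldots, \In_\nu(f_n|_{\Ki})\big) \cap \Kstarii{\tilde I}$; dropping the coordinate indexed by $n+1$ produces a point of the torus $\Kstari$ of $\Ki$ that is a common zero of $\In_\nu(f_1|_{\Ki}), \ldots, \In_\nu(f_n|_{\Ki})$. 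Thus $f_1|_{\Ki}, \ldots, f_n|_{\Ki}$ are not BKK non-degenerate at the origin for this non-empty $I$, so $f_1, \ldots, f_n$ are $\mscrG$-degenerate, as claimed. As indicated, the only non-routine ingredient is the conservation-of-number step producing the escaping curve; the rest is bookkeeping with branches together with the equality of Newton diagrams $\nd(f_j) = \nd(g_j)$.
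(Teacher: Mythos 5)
Your argument is correct and follows essentially the same route as the paper: the same linear homotopy $tg_i+(1-t)f_i$, the same extraction of a second component of the zero scheme through $(0,0)$ besides the $t$-axis, and the same branch-plus-initial-form bookkeeping via \cref{branch-lemma} using positivity of $\tilde\nu(t)$ and the equality of Newton diagrams. The one step you flag as the main obstacle --- producing the escaping curve --- is handled in the paper exactly along the lines you sketch: \cref{isolated-family} furnishes a neighbourhood in which the zero scheme is a one-dimensional complete intersection $\sum m_i\tilde Z_i$, and comparing $H_1\cdot\tilde Z=\multzero{g_1}{g_n}$ (which pins down the multiplicity $m_1$ of the $t$-axis) with $H_0\cdot\tilde Z=\multzerof>m_1$ forces a second component through the origin.
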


\begin{proof}
If $\multzerof = \infty$, \cref{branch-lemma} implies that $f_i$'s are $\mscrG$-degenerate. So we may assume that $\multzerof < \infty$. Let $Z$ be the subscheme of $\kk^{n+1}$ defined by polynomials $h_i := tg_i + (1-t)f_i \in \kk[x_1, \ldots, x_n,t]$, $1 \leq i \leq n$. \Cref{isolated-family} implies that there exists an open subset $\tilde U$ of $\kk^{n+1}$ containing $(0,0)$ and $(0,1)$ such that 
\begin{prooflist}[series=sufficient-list]
\item \label{U-tilde-1} $\tilde Z := Z \cap \tilde U$ has dimension $1$.
\item \label{U-tilde-1.5} No irreducible component of $\tilde Z$ is `vertical', i.e.\ contained in a hyperplane of the form $\{t = \epsilon\}$ for some $\epsilon \in \kk$. 
\item \label{U-tilde-2} For generic $\epsilon \in \kk$, the points on $\tilde Z \cap \{t = \epsilon\}$ are isolated zeroes of $h_i|_{t = \epsilon}$, $1 \leq i \leq n$. 
\end{prooflist}
Since $\tilde Z$ is a complete intersection, we can write $\tilde Z = \sum m_i \tilde Z_i$, where $\tilde Z_i$'s are irreducible compoents of $\tilde Z$ and $m_i$'s are intersection multiplicities of $h_1, \ldots, h_n$ along $\tilde Z_i$'s. We may assume that $\tilde Z_1$ is the line $\{0\} \times \kk$. For each $\epsilon \in \kk$, let $H_\epsilon := V(t - \epsilon) \subseteq \kk^{n+1}$.  Then
\begin{align*}
m_1 
	&= H_1 \cdot m_1\tilde Z_1 = H_1 \cdot \tilde Z
	= \multzero{h_1|_{t=1}}{h_n|_{t=1}} = \multzero{g_1}{g_n}
\end{align*}
It follows that 
\begin{align*}
H_0 \cdot \tilde Z = \multzero{h_1|_{t=0}}{h_n|_{t=0}} = \multzerof > m_1
\end{align*}
Since $H_0$ intersects $\tilde Z$ properly, it follows that there is another irreducible component $\tilde Z_2$ of $\tilde Z$ such that $(0, 0) \in \tilde Z_2$. Pick a branch $B$ of $\tilde Z_2$ at $(0,0)$. Let $\tilde I := I_B \subseteq [n+1]$ and $\tilde \nu := \nuBii{\tilde I}$ (\cref{branch-remark}). Note that $\{n+1\} \subsetneqq \tilde I$. Let $I := \tilde I \cap [n]$ and $\nu := \tilde \nu|_{\Ai}$. Since $g_j$ and $f_j$ have the same Newton diagram for each $j$, it follows that $\tilde \nu(f_j) = \tilde \nu(g_j) = \nu(f_j)$. Since $\tilde \nu(t) > 0$, it follows that $\In_{\tilde \nu}(h_j|_{\Kii{\tilde I}}) = \In_\nu(f_j|_{\Ki})$, $1 \leq j \leq n$. \Cref{branch-lemma} then implies that $\In_\nu(f_1|_{\Ki}), \ldots, \In_\nu(f_n|_{\Ki})$ have a common zero in $\Kstari$. Since $\nu$ is a (positive multiple of some) valuation in $\Vzeroi$, it follows that $f_i$'s are $\scrG$- degenerate, as required.
\end{proof}

It remains to prove that $\mscrG$-non-degeneracy is necessary for the intersection multiplicity to be generic. Take $f_1, \ldots, f_n$ which are not $\mscrG$-non-degenerate. We have to show that $\multzerof  > \multzeroGamma$. Since $\multzeroGamma < \infty$, it suffices to consider the case that $0 < \multzerof< \infty$. \\

Let $\nG$ and $\NiG$ be as in \eqref{NG} and \eqref{NiG}. \Cref{weak-lemma2} implies that there is $I \in \nG$ and $\nu \in \Vzeroi$ such that $\In_\nu(f_j|_{\Ki})$, $ 1 \leq j \leq n$, have a common zero $z \in \Kstari$. We may assume that $I = \NiG = \{1, \ldots, k\}$. Choose $y = (y_1, \ldots, y_k, 0, \ldots, 0) \in \Kstari$ such that $f_j(y) \neq 0$ for all $j$, $1 \leq j \leq k$. Let $C$ be the rational curve on $\Ki$ parametrized by $c(t):= (c_1(t), \ldots, c_n(t)) :\kk \to \Ki$ given by
\begin{align}
c_j(t) := 
			\begin{cases}
			z_jt^{\nu_j} + (y_j - z_j)t^{\nu'_j} &\text{if}\ 1 \leq j \leq k, \\
			0 &\text{if}\ k < j \leq n.
			\end{cases} \label{c_j}
\end{align}
where $\nu_j := \nu(x_j)$ and $\nu'_j$ is an integer greater than $\nu_j$, $1 \leq j \leq k$. Note that $c(0) = 0$ and $c(1) = y$. Let $g_1, \ldots, g_k$ be $(\Gamma_1, \ldots, \Gamma_k)$-admissible polynomials such that $\In_\nu(g_j|_{\Ki})(z) \neq 0$ for each $j$, $1 \leq j \leq k$. Then for each $j$, $1 \leq j \leq k$,
\begin{align}
\ord_t(g_j|_C) = \nu(\Gamma^I_j) < \ord_t(f_j|_C) \label{g_j<f_j}
\end{align}
Let $\mu_j := \nu(\Gamma^I_j)$, $1 \leq j \leq k$. Then for each $j \leq k$, $t^{-\mu_j}(f_j(c(t))$ and $t^{-\mu_j}(g_j(c(t))$ are polynomials in $t$. Let $U$ be a neighborhood of the origin on $\kk^n$ such that the origin is the only point in $V(f_1, \ldots, f_n) \cap U$. Then $T := c^{-1}(U) \subseteq \kk$ is an open neighborhood of $0$ in $\kk$. Define 
\begin{align}
h_j &:=
	\begin{cases}
	(t^{-\mu_j}f_j(c(t)))g_j - (t^{-\mu_j}g_j(c(t)))f_j & \text{if}\ 1 \leq j \leq k,\\
	f_j & \text{if}\ k < j \leq n. 
	\end{cases} \label{h_j}
\end{align}
Identity \eqref{g_j<f_j} implies that 
\begin{prooflist}
\setcounter{prooflisti}{3}
\item \label{h_j-at-0} $h_j(x,0)$ is a non-zero constant times $f_j$ for each $j$, $1 \leq j \leq k$.
\end{prooflist}
Let $Z$ be the subscheme of $U \times T$ defined by $h_j(x,t)$, $1 \leq j \leq n$. Let $\tilde Z$ be the union of the irreducible components of $Z$ containing $(0,0)$. Since $0$ is an isolated point on $V(f_1|_{\Ki}, \ldots, f_k|_{\Ki})$, it follows that $(0,0)$ is an isolated point of $Z \cap \{t=0\}$. This implies that $\tilde Z $ has pure dimension one. In particular, 
\begin{prooflist}[resume]
\item the `$t$-axis' $Z_t := \{0\} \times T$ and $\tilde C :=\{ (c(t),t): t \in T\}$ are irreducible components of $\tilde Z$. 
\end{prooflist}
Choose an open subset $U^*$ of $U \times T$ such that $Z \cap U^* = Z_t \cap U^*$. Note that $Z_t \cap U^*= \{0\} \times T^*$ for some open subset $T^*$ of $T$. Then
\begin{prooflist}[resume]
\item for each $\epsilon \in T^*$, the origin is an isolated zero of $h_1|_{t= \epsilon}, \ldots, h_n|_{t = \epsilon}$. 
\end{prooflist} 
Since $\tilde Z$ is a complete intersection, we can write $\tilde Z = \sum m_i \tilde Z_i$, where $\tilde Z_i$'s are irreducible components of $\tilde Z$ and $m_i$'s are intersection multiplicities of $h_1, \ldots, h_n$ along $\tilde Z_i$'s. We may assume that $\tilde Z_1 = Z_t$ and $\tilde Z_2 = \tilde C$. For each $\epsilon \in T$, let $H_\epsilon$ be the hypersurface $U \times \{\epsilon\}$ in $U \times T$. Then for all $\epsilon \in T^*$, 
\begin{align*}
m_1 
	&= H_\epsilon \cdot m_1Z_t = H_\epsilon \cdot (Z \cap U^*)
	= \multzero{h_1|_{t=\epsilon}}{h_n|_{t=\epsilon}}
\end{align*}
where $\multzero{h_1|_{t=\epsilon}}{h_n|_{t=\epsilon}}$ is the intersection multiplicity at the origin of $h_1|_{t=\epsilon}, \ldots, h_n|_{t=\epsilon}$. Now choose a neighborhood $\tilde U$ of $(0,0)$ in $U \times T$ such that $Z \cap \tilde U \cap H_0$ consists of only $\{(0,0)\}$. Since $H_0$ intersects each $\tilde Z_i$ properly, it follows by \ref{h_j-at-0} and definition of intersection multiplicity that
\begin{align*}
\multzerof
	&= (Z \cap \tilde U) \cdot H_0 
	= (m_1 Z_t + m_2 \tilde C + \cdots) \cdot H_0
	> m_1 Z_t \cdot H_0
	= m_1
\end{align*}
Since $h_1|_{t=\epsilon}, \ldots, h_n|_{t=\epsilon}$ are $\mscrG$-admissible for generic $\epsilon \in T^*$, this implies that $\multzerof  > \multzeroGamma$, as required. \qed

\section{Proof of non-degeneracy conditions for the extended BKK bound} \label{generic-bkk-section}
In this section we prove \cref{generic-bkk-thm} following the approach outlined in \cref{idea-section}. In \cref{pre-bkk-section} we study properties of exotrivial coordinate subspaces and prove \cref{weak-bkkriterion}. The main technical result of \cref{pre-bkk-section} is \cref{trivial-lemma} which is the basis for property \ref{NTSP-property} of $\mscrP$- non-degeneracy. In \cref{generic-bkk-proof} we use these results to establish \cref{generic-bkk-thm}

\subsection{Some properties of exotrivial coordinate subspaces} \label{pre-bkk-section}
Throughout this section $\mscrP:= (\scrP_1, \ldots, \scrP_n)$ is a collection of $n$ convex integral polytopes in $(\rr_{\geq 0})^n$, $f_1, \ldots, f_n$ are $\mscrP$-admissible polynomials in $(x_1, \ldots, x_n)$, and $\mscrS$ is a collection of subsets of $[n]$. In the proof of the result below we use the notion of {\em complete BKK non-degeneracy} (\cref{bkk-defn}). 


\begin{lemma}[\Cref{absolute-prop}]\label{absolute-cor}
Let $I \subseteq [n]$. If $\mscrP$ is $\Kstari$-exotrivial, then the zero set of $f_1, \ldots, f_n$ on $\kk^n$ has no isolated point on $\Kstari$.
\end{lemma}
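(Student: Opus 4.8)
## Proof proposal

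The plan is to prove \cref{absolute-cor} by induction on $n - |I|$, reducing the ``exotrivial'' hypothesis to one of two base mechanisms: the set $\SP$ (where the number of non-vanishing restrictions is strictly too small) and the ``trivial but with an equidimensional enlargement'' case \ref{absolute-equal} of \cref{absolute-defn}. In the first case (condition \ref{absolute--less}, i.e.\ $I \in \SP$), the conclusion is already contained in \cref{Pisolated-cor}\eqref{non-isolated-<-0}: every point of $V(f_1, \ldots, f_n) \cap \Kii{S}$ is non-isolated in $V(f_1, \ldots, f_n) \subseteq \kk^n$ for every $S \in \SP$, and in particular for $S = I$ this gives $\multisof_{\Kstari} = 0$. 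So the content is entirely in case \ref{absolute-equal}.

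For case \ref{absolute-equal}, I would fix $\tilde I \supseteq I$ with the three listed properties: $\mscrP$ is $\Kstarii{\tilde I}$-trivial, $|\NiiP{\tilde I}| = |\tilde I|$, and $|\NiiP{I^*}| > |I^*|$ for all $I^*$ with $I \subseteq I^* \subsetneqq \tilde I$. The first move is to reduce to the case $\tilde I = [n]$ by restricting everything to $\Kii{\tilde I}$: since $\Kstari \subseteq \Kii{\tilde I}$, a point of $\Kstari$ isolated in $V(f_1, \ldots, f_n) \cap \kk^n$ is a fortiori isolated in $V(f_1|_{\Kii{\tilde I}}, \ldots, f_n|_{\Kii{\tilde I}}) \cap \Kii{\tilde I}$, so it suffices to prove the statement with $[n]$ replaced by $\tilde I$; note $I$ is still exotrivial relative to this smaller ambient space, with the enlargement now being all of $\tilde I$. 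So assume $\tilde I = [n]$: thus $\mscrP$ is $\nktorus$-trivial, $|\NiiP{[n]}| = n$ (all $f_j$'s genuinely depend on the ambient variables, i.e.\ no $\scrP_j$ is a single point at $0$ — well, more precisely $\scrP_j \neq \{0\}$), and $|\NiP[I^*]| > |I^*|$ for all $I^*$ with $I \subseteq I^* \subsetneqq [n]$.

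Now suppose for contradiction that some $\mscrP$-admissible system $f = (f_1, \ldots, f_n)$ has an isolated zero $z \in \Kstari$. Since $\mscrP$ is $\nktorus$-trivial, by \cref{herrero}(1) there is a subset $J \subseteq [n]$ with $|\{j \in J : \np(f_j) \neq \emptyset \text{ as a polytope, i.e.\ } \scrP_j^{[n]} \neq \emptyset\}|$ — more usefully, by \cref{herrero}(3), $V(f_1, \ldots, f_n) \cap \nktorus$ is empty and every component of $V(f_1, \ldots, f_n) \cap \kk^n$ meeting $\nktorus$ is positive-dimensional; but $z \notin \nktorus$, so this does not immediately help. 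The key structural input I expect to need is \cref{reduction-lemma}: the condition $|\NiP[I^*]| > |I^*|$ for all intermediate $I^*$ says precisely that along any coordinate subspace strictly between $\Ki$ and $\kk^n$ there are strictly more non-vanishing $f_j$'s than the dimension, which is the hypothesis (modulo reindexing) that lets one run the branch-construction argument of \cref{reduction-lemma} starting from $I$. Concretely: the isolated zero $z$ on $\Kstari$, together with the fact that there must be at least $|I|+1$ of the $f_j$'s vanishing ``nontrivially'' on a neighborhood of $\Kstari$ pushing into larger subspaces, should force the existence of a branch of a curve through $z$ lying in some $\Kstarii{\hat I}$ with $I \subsetneqq \hat I$, contradicting isolatedness — unless that branch escapes into $\nktorus$, where $\nktorus$-triviality of $\mscrP$ again gives a contradiction via \cref{branch-lemma}. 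I would make this precise by invoking the notion of \emph{complete BKK non-degeneracy} (\cref{bkk-defn}, referenced in the paragraph preceding the lemma) to deform $f$ while controlling supports, then track a curve component through $z$.

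The main obstacle I anticipate is the bookkeeping in case \ref{absolute-equal}: showing that the positive-dimensionality forced by $\nktorus$-triviality of $\mscrP$ (and of $\mscrP$ restricted to intermediate subspaces, which follows from the intermediate-dimension inequalities via \cref{herrero}) actually \emph{passes through} the specific point $z \in \Kstari$, rather than living on some other part of the zero set. This is exactly the ``pathology'' flagged in \cref{pathologies}, and the resolution should parallel \cref{reduction-lemma}: one does not argue directly on $\Ki$ but blows up at $z$, takes a curve in an appropriate component of $V(f_{j_1}, \ldots, f_{j_l})$ (for the sub-collection of $f_j$'s that vanish identically on $\Kstari$) through a point of the exceptional divisor lying over $z$ on the strict transform, and reads off from its branch a larger coordinate subspace $\hat I \supsetneqq I$ on which $z$'s branch lives — directly contradicting that $z$ is isolated in $V(f_1, \ldots, f_n) \cap \kk^n$. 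The inductive hypothesis handles the descent cleanly because $|\hat I| > |I|$ and $\hat I$ inherits exotriviality.
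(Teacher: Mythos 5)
Your handling of case \ref{absolute--less} (via \cref{Pisolated-cor}) and your reduction of case \ref{absolute-equal} to $\tilde I = [n]$ both match the paper. The gap is in the core of case \ref{absolute-equal}: you name the right ingredients (deformation to a completely BKK non-degenerate system, branches, \cref{reduction-lemma}) but never assemble an actual contradiction, and the one concrete mechanism you do propose is a non sequitur. A curve through (a point over) $z$ lying in a component of $V(f_{j_1},\ldots,f_{j_l})$, where $j_1,\ldots,j_l$ index only the $f_j$'s vanishing identically on $\Kstari$, is not a curve in $V(f_1,\ldots,f_n)$ --- the remaining $f_j$'s will generally cut it down to a point --- so its existence does not contradict isolatedness of $z$ in $V(f_1,\ldots,f_n)$. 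More fundamentally, the lemma makes no degeneracy hypothesis on $f$, so there is no initial-form common zero available to feed into a \cref{reduction-lemma}/\cref{branch-lemma}-type construction starting from $z$ itself; nothing in your sketch explains where the positive-dimensional locus through $z$ (or any other contradiction) is supposed to come from.

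The paper argues by contradiction quite differently: assuming $z \in \Kstari$ is an isolated zero of $f$, it forms the homotopy $h_i = t g_i + (1-t) f_i$ to a completely BKK non-degenerate $\mscrP$-admissible system $g$ (\cref{bkk-existence}), uses \cref{isolated-family} to get a one-dimensional $\tilde Z = V(h_1,\ldots,h_n)\cap\tilde U$ near $(z,0)$, and passes to closures in $\pp^n\times\pp^1$: since $\bar H_0$ and $\bar H_1$ are linearly equivalent, the component $Z^*$ through $(z,0)$ must also meet $\{t=1\}$. A branch of $Z^*$ at a point of $Z^*\cap \bar H_1$ then yields, via \cref{branch-lemma}, a common zero in the torus of the initial forms of $g_1|_{\Kii{I^*}},\ldots,g_n|_{\Kii{I^*}}$ for $I^* := I_B \cap [n]$; by \cref{bkk-lemma} this means $\mscrP$ is \emph{not} $\Kstarii{I^*}$-trivial, which the triviality hypotheses of case \ref{absolute-equal} rule out whenever $I \subseteq I^*$. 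Since $Z^* \subseteq \Kii{I^*}\times\kk$ forces $z \in \Kii{I^*}$, one concludes $z \notin \Kstari$. This use of properness at $t=1$ against the non-degeneracy of $g$ is the step your proposal is missing; without it (or a worked-out substitute such as a direct appeal to \cref{trivial-lemma}, which however requires an initial-form degeneracy you have not produced), the proof does not go through.
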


\begin{proof}
If there exists $\tilde I \supseteq I$ such that $|\NiiP{\tilde I}| < |\tilde I|$, then the statement is clear; so assume $\mscrP$ is $\Kstari$-trivial and there is $\tilde I \supseteq I$ as in property \ref{absolute-equal} of \cref{absolute-defn}. By restricting all $f_j$'s to $\Kii{\tilde I}$, we may assume that $\tilde I = [n]$. Let $V_f$ be the set of isolated zeroes of $f_1, \ldots, f_n$ in $\kk^n$. It suffices to consider the case that $V_f \neq \emptyset$.\\

Let $g_1, \ldots, g_n$ be $\mscrP$-admissible polynomials which are {\em completely BKK non-degenerate} on $\kk^n$ (\cref{bkk-existence}). Let $h_i := tg_i + (1-t)f_i \in \kk[x_1, \ldots, x_n,t]$, $1 \leq i \leq n$. \Cref{isolated-family} implies that there exists an open subset $\tilde U$ of $\kk^{n+1}$ containing $V_f \times \{0\}$ which satisfies properties \ref{U-tilde-1}--\ref{U-tilde-2} from the proof of \cref{sufficient-lemma}. Let $\tilde Z := V(h_1, \ldots, h_n) \cap \tilde U$ and $H_\epsilon := V(t-\epsilon) \subseteq \kk^{n+1}$, $\epsilon \in \kk$. Let $\bar Z, \bar H_\epsilon$ be the closures respectively of $\tilde Z, H_\epsilon$ in $\pp^n \times \pp^1$. Pick $z \in V_f$ and an irreducible component $Z^*$ of $\bar Z$ containing $(z,0)$. Since $Z^*$ intersects $\bar H_0$ properly, and since $\bar H_0$ is linearly equivalent to $\bar H_1$, it follows that $Z^*$ intersects $\bar H_1$ as well. Pick a branch $B$ of $Z^*$ at $(z^*,1) \in Z^* \cap \bar H_1$. Let $\tilde I := I_B \subseteq [n+1]$,  $\tilde \nu := \nuBii{\tilde I}$ (\cref{branch-remark}), $I^* := \tilde I \cap [n]$ and $\nu^* := \tilde \nu|_{\Aii{I^*}}$. Since $g_j$ and $f_j$ have the same Newton polytope for each $j$, it follows that $\tilde \nu(f_j) = \tilde \nu(g_j) = \nu^*(g_j)$. Since $\tilde \nu(t-1) > 0$, it follows that $\tilde \nu(t) = 0$ and $\In_{\tilde \nu}(h_j|_{\Kii{\tilde I}}) = t\In_{\nu^*}(g_j|_{\Kii{I^*}})$, $1 \leq j \leq n$. \Cref{branch-lemma} then implies that $\In_{\nu^*}(g_1|_{\Kii{I^*}}), \ldots, \In_{\nu^*}(g_n|_{\Kii{I^*}})$ have a common zero in $\Kstarii{I^*}$. But then $\mscrP$ is {\em not} $\Kstarii{I^*}$-trivial (\cref{bkk-lemma}). By our assumption on $I$ it follows that $I^* \subsetneqq I$. Since $z$ is in the closure of $\Kstarii{I^*}$ in $\kk^n$, it follows that $z \not\in \Kstari$, as required. 
\end{proof}


\begin{lemma}[\Cref{weak-bkkriterion}] \label{weak-bkk-lemma}
Let $f_1, \ldots, f_n \in \kk[x_1, \ldots, x_n]$ and the notations be as in \cref{weak-bkkriterion}. Then
\begin{enumerate}
\item \label{NSP-non-degenerate} property \ref{TSPrime-property} of $\mscrP$-non-degeneracy holds iff it holds with $\TSPrime$ replaced by $\NSP$,
\item \label{TSPstar-non-degenerate} property \ref{NTSP-property} of $\mscrP$-non-degeneracy holds iff it holds with $\TSP$ replaced by $\TSPstar$.
\end{enumerate} 
\end{lemma}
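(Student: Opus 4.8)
The plan is to prove the two assertions in parallel, contrapositively, by a single downward induction on $n - |I|$ driven by \cref{reduction-lemma}. The two ``only if'' directions are immediate, since $\NSP \subseteq \TSPrime$ and $\TSPstar \subseteq \TSP$. For the ``if'' directions I would argue as follows: assuming property \ref{TSPrime-property} fails at some $I \in \TSPrime$ (resp.\ property \ref{NTSP-property} fails at some $I \in \TSP$), I produce an index $I' \in \NSP$ (resp.\ $I' \in \TSPstar$) at which the corresponding condition still fails. Concretely, if $f_1|_{\Ki}, \dots, f_n|_{\Ki}$ are BKK degenerate at infinity, or at some coordinate torus $\Kstarii{S}$ with $S$ of the relevant flavour, I record the offending weight as an element $\nu \in \Vi$ --- for a degeneracy at infinity one takes $\nu = -\omega$ with $\omega \in \wtsi$, for a degeneracy at $\Kstarii{S}$ one takes $\nu$ centered at $\Kstarii{S}$ --- together with the common zero $u \in \nktorus$ of the corresponding initial forms, and then I feed $f_1|_{\Ki}, \dots, f_n|_{\Ki}$ and $\nu$ into \cref{reduction-lemma}.

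For this to be legitimate I need hypothesis \ref{less-than-assumtion} of \cref{reduction-lemma}, i.e.\ $|\{j : f_j|_{\Kstari} \equiv 0\}| < n - |I|$, which for $\mscrP$-admissible $f$ is equivalent to $|\NiP| > |I|$. This is where the combinatorics enters, and the two key observations are: (i) if $I \in \TSPrime \setminus \NSP$ then $\mscrP$ is $\Kstari$-trivial (because $I \in \TSPrime$ while $I \notin \NSP$), and if moreover $|\NiP| = |I|$ then $\mscrP$ would be $\Kstari$-exotrivial by clause \ref{absolute-equal} of \cref{absolute-defn} applied with $\tilde I = I$, contradicting $I \in \TSPrime$; hence $|\NiP| > |I|$; and (ii) if $I \in \TSP \setminus \TSPstar$ then $|\NiP| \neq |I|$ by definition of $\TSPstar$, while $|\NiP| \geq |I|$ always holds when $I \notin \SP$, so again $|\NiP| > |I|$. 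The output of \cref{reduction-lemma} is then $\tilde I \supsetneq I$, a compatible $\tilde\nu \in \Vii{\tilde I}$ with $\tilde\nu(x_j) > 0$ for $j \in \tilde I \setminus I$, and a common zero $\tilde u$ of the relevant initial forms of the $f_j|_{\Kstarii{\tilde I}}$ with $\pi_I(\tilde u) = \pi_I(u)$. Compatibility guarantees that $\tilde\nu$ keeps the same flavour: it stays centered at infinity when $\nu$ was (since $\tilde\nu|_{\Ai}$ is a positive multiple of $\nu$, which has a negative weight), and stays centered at the same $\Kstarii{S}$ otherwise. So $f_1|_{\Kii{\tilde I}}, \dots, f_n|_{\Kii{\tilde I}}$ again violate the relevant condition. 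Since $\overbar\mscrS$ is downward closed and $\SP$ is ``upward'' ($J \supseteq I,\ I \notin \SP \Rightarrow J \notin \SP$), one checks $\tilde I \notin \overbar\mscrS \cup \SP$, so $\tilde I \in \TSP \sqcup \TSPrime$; and because $|\tilde I| > |I|$ the process terminates, the terminal index $I'$ satisfying $|\NiiP{I'}| = |I'|$, which by the dichotomy in (i),(ii) forces $I' \in \NSP \cup \TSPstar$.

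The delicate point --- which I expect to be the main obstacle --- is matching the flavour of the violation with the stratum the terminal index lands in: a violation ``at infinity or at a torus in $\overbar\mscrS \cup \TSP$'' should terminate at an index of $\NSP$, and a violation ``at a torus in $\TSPrime$'' should terminate at an index of $\TSPstar$, but the reduction above does not automatically respect this (a ``\ref{TSPrime-property}-type'' violation can migrate to a $\tilde I$ which turns out $\Kstari$-exotrivial, hence in $\TSP$; a ``\ref{NTSP-property}-type'' violation can migrate to a $\tilde I \in \NSP \subseteq \TSPrime$). To resolve this I would invoke \cref{trivial-lemma} together with the precise shape of clause \ref{absolute-equal} of \cref{absolute-defn}: when $\tilde I$ is $\Kstari$-exotrivial via some $\hat I \supseteq \tilde I$ with $\mscrP$ being $\Kstarii{\hat I}$-trivial and $|\NiiP{\hat I}| = |\hat I|$, \cref{trivial-lemma} converts the degeneracy of $f_1|_{\Kii{\tilde I}}, \dots, f_n|_{\Kii{\tilde I}}$ into a degeneracy of $f_1|_{\Kii{\hat I}}, \dots, f_n|_{\Kii{\hat I}}$ at a torus in $\TSPrime$, i.e.\ into a genuine violation of property \ref{NTSP-property} at the $\TSPstar$-index $\hat I$. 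This ``flavour conversion'' is what makes the statement non-trivial, and is the phenomenon behind the Warning remexample, which shows $\TSPrime$ genuinely cannot be shrunk to $\NSP$ inside property \ref{NTSP-property}.

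Once the two contrapositives are in place, assertion \ref{NSP-non-degenerate} and assertion \ref{TSPstar-non-degenerate} are read off: property \ref{TSPrime-property} on $\NSP$ upgrades to property \ref{TSPrime-property} on all of $\TSPrime$, and symmetrically for \ref{NTSP-property}. Because the flavour-conversion step of the previous paragraph feeds a \ref{TSPrime-property}-type failure into a \ref{NTSP-property}-type failure (and conversely), I would run both reductions inside one combined induction, so that the mutual dependence between the $\NSP$-statement and the $\TSPstar$-statement is handled at once rather than circularly.
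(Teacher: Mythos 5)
Your toolkit is the right one: the one-sided implications are trivial, your observations (i) and (ii) that $|\NiP|>|I|$ on $\TSPrime\setminus\NSP$ and on $\TSP\setminus\TSPstar$ are exactly what is needed to invoke \cref{reduction-lemma}, and the climb preserving the center/infinity flavour of the offending valuation is also the engine of the paper's proof. The gap is in how you handle the fact that a blind climb need not land in the stratum you want. You correctly flag this as the delicate point, but your resolution does not work. First, \cref{trivial-lemma} outputs a geometric statement (a certain point of $V(f_1,\ldots,f_n)$ is non-isolated); it does not by itself convert a BKK degeneracy on $\Kii{\tilde I}$ into a BKK degeneracy of $f_1|_{\Kii{\hat I}},\ldots,f_n|_{\Kii{\hat I}}$ at a stratum of $\TSPrimeII{\hat I}$ --- passing from non-isolatedness back to a valuation-level degeneracy requires additional work (this is essentially what \cref{nu-claim} does inside the proof of \cref{generic-bkk-thm}, via \cref{trivial-cor} and a further application of \cref{reduction-lemma}), and it is not available here. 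Second, and more fundamentally, even if that conversion were carried out, it would only show that a failure of property \ref{TSPrime-property} at some $I\in\TSPrime$ forces a failure of property \ref{TSPrime-property} at an $\NSP$-index \emph{or} a failure of property \ref{NTSP-property} at a $\TSPstar$-index. That suffices for the combined equivalence in \cref{weak-bkkriterion}, but it does not prove assertion \eqref{NSP-non-degenerate}, which is an iff about property \ref{TSPrime-property} alone (and symmetrically for \eqref{TSPstar-non-degenerate}). Your appeal to the Warning example is also off target: that example concerns replacing $\TSPrime$ by $\NSP$ in the \emph{inner} index set $\TSPrimeI$ of property \ref{NTSP-property}, not the outer index sets this lemma is about.

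The repair --- and the paper's actual argument --- is to reverse the order of operations: locate the destination first, then climb inside it. For $I\in\TSPrime\setminus\NSP$, let $\tilde I$ be the minimal superset of $I$ with $|\NiiP{\tilde I}|=|\tilde I|$; since $I\notin\SP$ and $\tilde I$ is minimal, every $I^*$ with $I\subseteq I^*\subsetneqq\tilde I$ satisfies $|\NiiP{I^*}|>|I^*|$, and then non-exotriviality of $I$ forces $\mscrP$ to be $\Kstarii{\tilde I}$-non-trivial, i.e.\ $\tilde I\in\NSP$. Restrict the whole system to $\Kii{\tilde I}$ (exactly $|\tilde I|$ non-vanishing polynomials in $|\tilde I|$ variables) and only then iterate \cref{reduction-lemma}: every index the climb can visit is a subset of $\tilde I$, every proper one admits a further step, so the climb terminates at $\tilde I$ itself, carrying a violation of the same clause of property \ref{TSPrime-property}. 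Assertion \eqref{TSPstar-non-degenerate} is handled symmetrically: for $I\in\TSP\setminus\TSPstar$ the witness $\hat I$ of clause \ref{absolute-equal} of exotriviality is precisely the minimal superset with $|\NiiP{\hat I}|=|\hat I|$, it lies in $\TSPstar$, and the restricted climb lands there. This keeps each assertion self-contained and removes the flavour-conversion problem entirely.
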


\begin{proof}
Since $\NSP \subseteq \TSP'$, for assertion \eqref{NSP-non-degenerate} we only have to show the $(\Leftarrow)$ implication. It follows by \cref{absolute-defn} that for every $I \in \TSP' \setminus \NSP$, there exists $\tilde I \in \NSP$ such that $I \subsetneqq \tilde I$ and $|\NiiP{I^*}| > |I^*|$ for each $I^*$ such that $I \subseteqq I^* \subsetneqq \tilde I$. \Cref{herrero} implies that $|\NiiP{\tilde I}| = |\tilde I|$. Since restricting all $f_j$'s to $\Kii{\tilde I}$ yields a system with the same number of non-zero polynomials as the number of variables, it suffices to prove the following claim: ``if there is $I \subseteq [n]$ such that $|\Ni| > |I|$ and $f_1|_{\Ki}, \ldots, f_n|_{\Ki}$ are BKK degenerate at infinity (resp.\ at $(\overbar \mscrS  \cup \TSP)^I$), then there is $\tilde I \supsetneqq I$ such that $f_1|_{\Kii{\tilde I}}, \ldots, f_n|_{\Kii{\tilde I}}$ are BKK degenerate at infinity (resp.\ at $(\overbar \mscrS  \cup \TSP)^{\tilde I}$).'' This follows by an immediate application of \cref{reduction-lemma}, completing the proof of assertion \eqref{NSP-non-degenerate}. Assertion \eqref{TSPstar-non-degenerate} follows from a similar application of \cref{reduction-lemma}. 
\end{proof}

\Cref{trivial-lemma,trivial-cor} below are key to property \ref{NTSP-property} of $\mscrP$-non-degeneracy. 

\begin{lemma} \label{trivial-lemma}
Assume $\mscrP$ is $\nktorus$-trivial. Let $\nu$ be a monomial valuation in $\V$ centered at $\Kstari$, $I \subseteq [n]$. Assume $\In_\nu(f_1),  \ldots,\In_\nu(f_n)$ have a common zero $a \in \nktorus$. Then $\pi_{I}(a) \in \Kstari$ is a non-isolated point of the zero-set $V(f_1, \ldots, f_n)$ of $f_1, \ldots, f_n$ on $\kk^n$. 
\end{lemma}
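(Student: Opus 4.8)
After relabelling the coordinates I may assume $I=\{1,\dots,k\}$. If $k=n$ then $\nu$ is the trivial valuation, $\pi_I(a)=a$, and the hypothesis says $a\in V(f_1,\dots,f_n)\cap\nktorus$; since $\mscrP$ is $\nktorus$-trivial, $\mv(\scrP_1,\dots,\scrP_n)=0$ (Bernstein's theorem, or \cref{herrero}), so by Bernstein's theorem $V(f_1,\dots,f_n)$ has no isolated point on $\nktorus$, and we are done. So assume $1\le k<n$, write $\nu_i:=\nu(x_i)$ (so $\nu_i=0$ for $i\le k$ and $\nu_i>0$ for $i>k$), and set $b:=\pi_I(a)=(a_1,\dots,a_k,0,\dots,0)\in\Kstari$. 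I first record that $b\in V(f_1,\dots,f_n)$: when $f_j|_{\Ki}\not\equiv0$ one has $\nu(\scrP_j)=0$ and $\In_\nu(f_j)=f_j|_{\Ki}$, hence $f_j(b)=\In_\nu(f_j)(a)=0$; when $f_j|_{\Ki}\equiv0$, $f_j(b)=0$ because $b\in\Ki$.

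\textbf{The $\nu$-degeneration.}
The idea is to use the one-parameter degeneration (the deformation of $V(f_1,\dots,f_n)$ to the cone $V(\In_\nu(f_1),\dots,\In_\nu(f_n))$) rather than the homotopy $f\rightsquigarrow g$ used in the proof of \cref{absolute-cor}, because it is the construction that directly feeds on the datum $(\nu,a)$. Put
\[
\tilde f_j(x,t):=t^{-\nu(\scrP_j)}\,f_j\bigl(x_1,\dots,x_k,\ t^{\nu_{k+1}}x_{k+1},\dots,t^{\nu_n}x_n\bigr)\ \in\ \kk[x_1,\dots,x_n,t],
\]
which is indeed a polynomial, with $\tilde f_j(x,0)=\In_\nu(f_j)$, and for $t_0\in\kk^*$ one has $\tilde f_j(x,t_0)=t_0^{-\nu(\scrP_j)}\,f_j(\Psi_{t_0}(x))$ where $\Psi_{t_0}\colon x\mapsto(x_1,\dots,x_k,t_0^{\nu_{k+1}}x_{k+1},\dots,t_0^{\nu_n}x_n)$ is an automorphism of $\kk^n$; thus $V(\tilde f_1(\cdot,t_0),\dots,\tilde f_n(\cdot,t_0))=\Psi_{t_0}^{-1}\bigl(V(f_1,\dots,f_n)\bigr)$. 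Let $\hat Z:=V(\tilde f_1,\dots,\tilde f_n)\subseteq\kk^n\times\kk$; being cut out by $n$ equations, every component of $\hat Z$ has dimension $\ge1$, and $(a,0)\in\hat Z$ since $\In_\nu(f_j)(a)=0$ for all $j$.

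\textbf{The favourable case.}
Suppose some component of $\hat Z$ passing through $(a,0)$ is \emph{not} contained in $\{t=0\}$. Pick a branch $B=(Z',z')$ of it at $(a,0)$ with $t|_{Z'}\not\equiv0$. Since all the $x$-coordinates of $(a,0)$ are nonzero, each $x_i|_{Z'}$ tends to $a_i\ne0$, so $B$ is not contained in a fibre of the morphism $\Psi\colon(x,t)\mapsto(x_1,\dots,x_k,t^{\nu_{k+1}}x_{k+1},\dots,t^{\nu_n}x_n)$; hence $\Psi(B)$ is (the germ of) a curve. It lies in $V(f_1,\dots,f_n)$, because $\Psi$ maps $\hat Z\setminus\{t=0\}$ into $V(f_1,\dots,f_n)$ and hence so does its closure, and it passes through $\Psi(a,0)=(a_1,\dots,a_k,0,\dots,0)=b$ (using $\nu_i>0$ for $i>k$). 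So $b$ is non-isolated in $V(f_1,\dots,f_n)$, as wanted.

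\textbf{The hard case, and the main obstacle.}
It remains to treat the case where $\hat Z$ is, near $(a,0)$, contained in $\{t=0\}$, so that $\hat Z\cap\{t=0\}=V(\In_\nu(f_1),\dots,\In_\nu(f_n))$ has positive dimension at $a$. (This situation is in fact unavoidable: the polytopes $\In_\nu(\scrP_j)$ lie in the parallel affine hyperplanes $\{\nu\cdot x=\nu(\scrP_j)\}$ with common normal $\nu\ne0$, so $\sum_j\In_\nu(\scrP_j)$ has dimension $<n$, whence $\mv(\In_\nu(\scrP_1),\dots,\In_\nu(\scrP_n))=0$ and, by Bernstein's theorem as in \cref{herrero}, $V(\In_\nu(f_1),\dots,\In_\nu(f_n))\cap\nktorus$ has no isolated point.) Writing $\lambda_\nu(\tau):=(\tau^{\nu_1},\dots,\tau^{\nu_n})$, the $\nu$-homogeneity of the $\In_\nu(f_j)$ shows that the orbit-closure $C_0:=\overline{\{\lambda_\nu(\tau)\cdot a:\tau\in\kk^*\}}$ is a curve contained in $V(\In_\nu(f_1),\dots,\In_\nu(f_n))$, joining $a$ (at $\tau=1$) to $b=\lim_{\tau\to0}\lambda_\nu(\tau)\cdot a$ (at $\tau=0$). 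The crux is to upgrade $C_0$ to a curve lying inside $V(f_1,\dots,f_n)$ and still passing through $b$; this is precisely where the hypothesis that $\mscrP$ is $\nktorus$-trivial does essential work (by \cref{herrero}, equivalently, some subcollection $\{\scrP_j:j\in J\}$ satisfies $|J|>\dim(\sum_{j\in J}\scrP_j)$, forcing in particular $\mv(\scrP_1,\dots,\scrP_n)=0$). I would carry this out by induction — on $n$, or on $\dim(\scrP_1+\dots+\scrP_n)$ — after a monomial change of coordinates adapted to $\nu$ (so that the $\In_\nu(f_j)$ become polynomials in $n-1$ of the torus coordinates times a monomial), using \cref{reduction-lemma} to pass to an appropriate coordinate subspace and \cref{branch-lemma} to extract at each stage the common zero of initial forms that feeds the inductive hypothesis, the base case being exactly the favourable case above. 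The delicate point — and the step I expect to be the real obstacle — is to track through that monomial change of coordinates (an automorphism of $\nktorus$ but not of $\kk^n$, so not respecting $\Ki$) exactly which coordinate subspace the inductive point lies on, so that the conclusion of the induction remains the assertion that $b\in\Kstari$ is non-isolated.
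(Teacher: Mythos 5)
Your argument is incomplete exactly where the lemma's content lies. The ``favourable case'' (a component of $\hat Z$ through $(a,0)$ not contained in $\{t=0\}$) is handled correctly, but as you yourself observe, the generic situation is the ``hard case'': since $\sum_j\In_\nu(\scrP_j)$ has dimension $<n$, the central fibre $V(\In_\nu(f_1),\dots,\In_\nu(f_n))$ is positive-dimensional at $a$, and the orbit curve $C_0$ you produce lies in the zero set of the \emph{initial forms}, not of the $f_j$ themselves. Upgrading $C_0$ to a curve inside $V(f_1,\dots,f_n)$ through $\pi_I(a)$ is precisely the assertion to be proved, and your proposed induction is only a plan: you do not specify the inductive statement, you do not show where the $\nktorus$-triviality of $\mscrP$ enters the induction, and you explicitly flag the coordinate-tracking step as an unresolved obstacle. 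So the proof has a genuine gap.

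For comparison, the paper's proof does not use a $t$-degeneration at all. It extracts from $\nktorus$-triviality (via \cref{herrero}) a subset $J\subseteq[n]$ with $p:=\dim(\sum_{j\in J}\scrP_j)<|J|$, lets $\Pi$ be the $p$-dimensional direction space of $\sum_{j\in J}\scrP_j$ and $\Pi_0:=\Pi\cap\nu^\perp$ of dimension $r$, and then constructs an explicit translated toric subvariety: when $p=r$, the binomial variety $Y=\overline{V(x^{\alpha_i}-a^{\alpha_i}:1\le i\le r)}$ of codimension $r<|J|$ lies in $V(f_j:j\in J)$ and contains $\pi_I(a)$ (as the limit of the one-parameter curve $x_i=a_it^{\nu_i}$), so Krull's height bound forces a positive-dimensional component of $V(f_1,\dots,f_n)$ through $\pi_I(a)$; when $p=r+1$, a torus-invariant divisor $Z$ of $Y$ of dimension $n-r-1\ge1$ is shown directly to lie in $V(f_1,\dots,f_n)$ and to contain $\pi_I(a)$. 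That dimension count on the subcollection indexed by $J$ is the mechanism by which the triviality hypothesis ``does essential work,'' and it is absent from your argument.
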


\begin{proof}
It is straightforward to check that $\pi_{I}(a)$ is in $V(f_1, \ldots, f_n)$; we only have to show that it is non-isolated in there. \Cref{herrero} implies that there is $J \subseteq [n]$ such that  $p:= \dim(\sum_{j \in J} \scrP_j) <|J|$. Let $\Pi$ be the (unique) $p$-dimensional linear subspace of $\rr^n$ such that $\sum_{j \in J} \scrP_j$ is contained in a translate of $\Pi$. Let $\Pi_0 := \{\alpha \in \Pi: \nu \cdot \alpha = 0\}$ and $r := \dim(\Pi_0)$. Then either $p=r$, or $p = r+1$. We consider these cases separately:\\

{\bf Case 1: $\*{p = r}$.} Let $\alpha_1, \ldots, \alpha_r$ be a basis of $\Pi_0$. In this case $\Pi = \Pi_0$, so that for each $j \in J$, $f_j$ is $\nu$-homogeneous and is a linear combination of monomials in $ x^{\alpha_1}, \ldots, x^{\alpha_r}$. For each $i$, $1 \leq i \leq n$, let $c_i := a^{\alpha_i}$. Let $\tilde Y$ be the sub-variety of $\nktorus$ determined by $x^{\alpha_i} - c_i$, $1 \leq i \leq r$ and $Y$ be the closure of $\tilde Y$ in $\kk^n$. Then 
\begin{prooflist}
\item $Y$ is irreducible of codimension $r$ in $\kk^n$. 
\item $Y \subseteq V(f_j :j \in J)$, since $\tilde Y \subseteq V(f_j :j \in J)$. 
\item \label{C} $\pi_I(a) \in Y$. Indeed, it is clear if $\nu$ is the trivial valuation. Otherwise let $C$ be the curve parametrized by $x_i := a_it^{\nu_i}$, $t \in \kk$. Then $C \cap \nktorus \subseteq \tilde Y$, so that $C \subseteq Y$. Now note that $\pi_I(a) \in C$. 
\end{prooflist}
It follows that one of the irreducible components of $V(f_j :j \in J)$ containing $\pi_I(a)$ has codimension smaller than $|J|$. This implies the lemma in this case.\\

{\bf Case 2: $\*{p = r+1}$.} Choose a basis $\alpha_1, \ldots, \alpha_n$ of $\zz^n$ such that 
\begin{prooflist}[resume]
\item $\alpha_1, \ldots, \alpha_r$ is a basis of $\Pi_0$. 
\item $\alpha_1, \ldots, \alpha_{n-1}$ is a basis of $\nu^\perp := \{\alpha \in \zz^n: \nu \cdot \alpha = 0\}$. 
\item $\alpha_1, \ldots, \alpha_{r}, \alpha_n$ is a basis of $\Pi$. 
\item $\nu \cdot \alpha_n = 1$. 
\end{prooflist}
Let $\tilde Y$ and $Y$ be as in the proof of case 1. Let $y_i := x^{\alpha_i}$, $1 \leq i \leq n$. Then $y_i$'s form a system of coordinates on $\nktorus$ and the projection onto $(y_{r+1}, \ldots, y_n)$ restricts to an isomorphism $\tilde Y \cong (\kk^*)^{n-r}$. Let $\beta_1, \ldots, \beta_n \in \zz^n$ be such that $x_j = \prod_{i=1}^n y_i^{\beta_{j,i}}$, $1 \leq j \leq n$. Then $Y$ is the closure of the image of the map 
\begin{align*}
&(\kk^*)^{n-r} \ni (y_{r+1}, \ldots, y_n) 
	 \mapsto (\tilde c_1 \prod_{i=r+1}^n y_i^{\beta_{1,i}}, \ldots, 
			\tilde c_n \prod_{i=r+1}^n y_i^{\beta_{n,i}}) \in \kk^n, \\
& \text{where}\ \tilde c_j
	= \prod_{i=1}^r c_i^{\beta_{j,i}},\ 1 \leq j \leq n.										
\end{align*}
Let $Y'$ be the closure of the image of the map 
\begin{align*}
&(\kk^*)^{n-r} \ni (y_{r+1}, \ldots, y_n) 
	 \mapsto (\prod_{i=r+1}^n y_i^{\beta_{1,i}}, \ldots, 
			 \prod_{i=r+1}^n y_i^{\beta_{n,i}}) \in \kk^n								
\end{align*}
Then $Y$ is isomorphic to $Y'$ via the map $(x_1, \ldots, x_n) \mapsto (\tilde c_1^{-1}x_1, \ldots, \tilde c_n^{-1}x_n)$. Note that $Y'$ is an affine toric variety corresponding to the semigroup generated by $\beta'_j := (\beta_{j,r+1}, \ldots, \beta_{j,n})$, $1 \leq j \leq n$. For each $j$, $1 \leq j \leq n$,
\begin{align}
\nu(x_j) = \sum_{i=1}^n \beta_{j,i}\nu(y_i) = \beta_{j,n} \label{beta-j-n}
\end{align}
Let $\Theta$ be the coordinate hyperplane of $\zz^{n-r}$ generated by the first $n-r-1$ axes. \eqref{beta-j-n} implies that $\beta'_j$'s lie on $\Theta$ for $j \in I$ and `above' $\Theta$ otherwise. Let $\Sigma$ be the cone in $\rr^{n-r}$ generated by $\beta'_1, \ldots, \beta'_n$. Note that $(0, \ldots, 0, 1)$ is in the dual $\check{\Sigma}$ of $\Sigma$. Choose an edge $E$ of $\check{\Sigma}$ such that 
\begin{prooflist}[resume]
\item \label{E-1} $E$ is an edge of the face of $\check \Sigma$ whose relative interior contains $(0, \ldots, 0, 1)$. 
\item the last coordinate of each non-zero element on $E$ is positive.
\end{prooflist}
Let $Z'$ be the torus invariant divisor of $Y'$ corresponding to $E$ and $Z$ be the isomorphic image of $Z'$ in $Y$. Let $\eta = (\eta_{r+1}, \ldots, \eta_n)$ be the smallest non-zero element on $E$ with integer coordinates. For each $b := (b_{r+1}, \ldots, b_n) \in (\kk^*)^{n-r}$, consider the curve $\gamma_b$ parametrized by $\kk^* \ni t \mapsto (b_{r+1}t^{\eta_{r+1}}, \ldots, b_nt^{\eta_n}) \in (\kk^*)^{n-r}$. Then
\begin{prooflist}[resume]
\item \label{limit-points}  the `limit at zero' of (the image of) $\gamma_b$ is a point on $Z$; we denote it by $\bar \gamma_b(0)$. (The closure of the image of) $\gamma_b$ is transversal to $Z$ at $\bar \gamma_b(0)$. The set of all $\bar \gamma_b(0)$ as $b$ varies over $(\kk^*)^{n-r}$ is open in $Z$. 
\item  It follows that $\ord_Z(y_n|_Y) = \ord_t(y_n|_{\gamma_b}) = \ord_t(b_nt^{\eta_n}) = \eta_n > 0$. 
\end{prooflist}
Fix $i$, $1 \leq i \leq s$. Write $f_i$ as $f_i = f_{i,0} + f_{i,1} + \cdots $, where $f_{i,j}$'s are $\nu$-homogeneous with $\nu(f_{i,0}) < \nu(f_{i,1}) < \cdots$. Let $\alpha \in \supp(f_{i,0})$, then for each $j$, $f_{i,j}/x^\alpha = \sum_\beta \lambda_{i,j,\beta} y_1^{\beta_1} \cdots y_r^{\beta_r}y_n^{\nu(f_{i,j})}$ with $\lambda_{i,j,\beta} \in \kk$. Pick $b \in (\kk^*)^{n-r}$. Since $f_{i,0}(a) = 0$, it follows that 
\begin{align*}
(f_i/x^\alpha)|_{\gamma_b} 
	= \sum_{j,\beta} \lambda_{i,j,\beta} c_1^{\beta_1} \cdots c_r^{\beta_r}(b_nt^{\eta_n})^{\nu(f_{i,j})}
	= \sum_{j > 0} \sum_\beta \lambda_{i,j,\beta} c_1^{\beta_1} \cdots c_r^{\beta_r}b_n^{\nu(f_{i,j})} t^{\nu(f_{i,j})\eta_n}
\end{align*}
Since $\bar \gamma_b(0) \in Z \subseteq \kk^n$, it follows that $\ord_t(x^\alpha) \geq 0$, so that $\ord_t(f_i|_{\gamma_b}) > 0$. Observation \ref{limit-points} then implies that $Z \subseteq V(f_i)$. It follows that $Z \subseteq V(f_1, \ldots, f_n)$. On the other hand, \ref{C} and \ref{E-1} imply that $\pi_I(a) \in Z$. Since $\dim(Z) = n- r - 1 > n-|J|$, this proves the lemma. 
\end{proof}

\begin{cor}  \label{trivial-cor}
Let $I \subseteq J \subseteq [n]$ and $\nu \in \Vii{J}$ with center in $\Kstari$. Assume $\mscrP$ is $\Kstarii{J}$-exotrivial and $\In_\nu(f_1|_{J})  \ldots,\In_\nu(f_n|_{J})$ have a common zero $a \in \nktorus$. Then $\pi_{I}(a) \in \Kstari$ is a non-isolated point of the zero-set $V(f_1, \ldots, f_n)$ of $f_1, \ldots, f_n$ on $\kk^n$. 
\end{cor}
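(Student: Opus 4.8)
The plan is to deduce \cref{trivial-cor} from \cref{trivial-lemma} by a restriction-and-unfolding argument. First I would restrict all the $f_j$'s to $\Kii{J}$, which reduces the situation to the case $J = [n]$: indeed, under this restriction, $f_j|_J$ becomes a polynomial in the variables $(x_i : i \in J)$, the valuation $\nu$ lives on $\Aii{J}$, and $\Kstari$ sits inside $\Kii{J}$ as a coordinate subspace. The hypothesis that $\mscrP$ is $\Kstarii{J}$-exotrivial unwinds via \cref{absolute-defn}: either there is $\tilde J \supseteq J$ with $|\NiiP{\tilde J}| < |\tilde J|$, in which case restricting further to $\Kii{\tilde J}$ makes the system have fewer non-vanishing polynomials than variables and the conclusion is immediate (every point of $V(f_1, \ldots, f_n) \cap \Kii{\tilde J}$ is non-isolated, and $\pi_I(a)$ lies in the closure of $\Kstarii{J} \subseteq \Kii{\tilde J}$); or else $\mscrP$ is $\Kstarii{J}$-trivial and we are in a position to invoke \cref{trivial-lemma} directly with $\nktorus$ replaced by the torus of $\Kii{J}$.

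Second, once we are reduced to the case $J = [n]$ with $\mscrP$ being $\nktorus$-trivial, the hypotheses of \cref{trivial-lemma} are exactly met: $\nu \in \V$ has center $\Kstari$ and $\In_\nu(f_1), \ldots, \In_\nu(f_n)$ share a common zero $a \in \nktorus$. \cref{trivial-lemma} then yields that $\pi_I(a) \in \Kstari$ is a non-isolated point of $V(f_1, \ldots, f_n)$ inside the ambient affine space $\Kii{J} = \kk^{|J|}$. The only genuine care needed is to transport this conclusion back to the original $\kk^n$: a point which is non-isolated in $V(f_1|_J, \ldots, f_n|_J) \subseteq \Kii{J}$ is automatically non-isolated in $V(f_1, \ldots, f_n) \subseteq \kk^n$, since $V(f_1, \ldots, f_n) \cap \Kii{J} = V(f_1|_J, \ldots, f_n|_J)$ and $\Kii{J}$ is closed. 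Thus the positive-dimensional component through $\pi_I(a)$ inside $\Kii{J}$ is also a positive-dimensional subset of $V(f_1, \ldots, f_n)$.

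The main obstacle I anticipate is making the reduction in the exotrivial-but-not-trivial branch fully rigorous, i.e.\ handling case \ref{absolute--less} of \cref{absolute-defn}: one must check that after restricting to the appropriate $\tilde J \supseteq J$, the point $\pi_I(a)$ (or rather the relevant limit of it) indeed lies in the closure of $\Kstarii{J}$ and that the "too few equations" argument applies on $\Kii{\tilde J}$ with the correct Newton polytopes. This is the same mechanism used in the proof of \cref{absolute-cor}, so I would model the argument on that proof. Everything else is bookkeeping: tracking which coordinate subspace plays the role of the ambient space at each stage, and observing that "non-isolated in a closed subvariety of $\kk^n$" is preserved under the relevant inclusions. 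No new geometric input beyond \cref{trivial-lemma}, \cref{herrero}, and \cref{absolute-defn} should be needed.
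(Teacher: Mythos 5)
There is a genuine gap in your second step, and it is precisely the point the corollary's hypotheses are designed to handle. In case \ref{absolute-equal} of \cref{absolute-defn} you restrict to $\Kii{J}$ and ``invoke \cref{trivial-lemma} directly with $\nktorus$ replaced by the torus of $\Kii{J}$.'' But \cref{trivial-lemma} is a statement about $n$ polynomials in $n$ variables, whereas after restriction to $\Kii{J}$ you have $|\NiiP{J}|$ non-identically-vanishing polynomials in only $|J|$ variables; and whenever the witness $\tilde I$ of exotriviality strictly contains $J$, the third condition of \cref{absolute-defn}\ref{absolute-equal} (applied with $I^* = J$) forces $|\NiiP{J}| > |J|$, so the restricted system is overdetermined. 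This is not a cosmetic mismatch: the overdetermined analogue of \cref{trivial-lemma} is false. Take $g_1 = x-y$, $g_2 = (x-y)+x^2$, $g_3 = (x-y)+y^2$ in $\kk[x,y]$: the collection of their Newton polytopes is $(\kk^*)^2$-trivial, the initial forms for $\nu = (1,1)$ (centered at the origin) have the common zero $(1,1) \in (\kk^*)^2$, yet $V(g_1,g_2,g_3) = \{0\}$ is an isolated point. Both Case 1 of the proof of \cref{trivial-lemma} and the final count $\dim(Z) = n-r-1 > n-|J|$ use that there are exactly as many equations as variables.

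The paper's proof instead restricts to $\Kii{\tilde I}$, where $|\NiiP{\tilde I}| = |\tilde I|$ and $\mscrP$ is $\Kstarii{\tilde I}$-trivial, so that \cref{trivial-lemma} genuinely applies; the price is that the hypothesis (a valuation centered at $\Kstari$ whose initial forms have a common zero in the torus) must be transported from level $J$ up to level $\tilde I$. That is done by iterating \cref{reduction-lemma}, whose hypothesis \eqref{less-than-assumtion} is exactly what the condition $|\NiiP{I^*}| > |I^*|$ for $J \subseteq I^* \subsetneqq \tilde I$ supplies: at each step one gets a compatible valuation with strictly positive new weights (hence still centered at $\Kstari$) and a common zero of the new initial forms projecting to the same point of $\Kstari$. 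Your proposal never invokes \cref{reduction-lemma} and discards the data of $\tilde I$ when unwinding \cref{absolute-defn}; it also misplaces the difficulty, since case \ref{absolute--less} (your ``too few equations'' branch) is the one-line case, while the real work is case \ref{absolute-equal} with $\tilde I \supsetneqq J$. The remaining points of your sketch (that $\pi_I(a)$ lies in $V(f_1,\ldots,f_n)$, and that non-isolatedness in $V(f_1,\ldots,f_n) \cap \Kii{\tilde I}$ implies non-isolatedness in $V(f_1,\ldots,f_n)$) are fine.
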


\begin{proof}
Note that $\pi_{I}(a) \in V(f_1, \ldots, f_n)$. It suffices to consider the case that $J$ satisfies property \ref{absolute-equal} of \cref{absolute-defn} (since in the other case the statement is clear). Then there is $\tilde I \supseteq J$ such that $\mscrP$ is $\Kstarii{\tilde I}$-trivial, $|\NiiP{\tilde I}| = |\tilde I|$, and
\begin{itemize}
\item either $\tilde I = J$, 
\item or $|\NiiP{I^*}| > |I^*|$ for each $I^*$ such that $J \subseteq I^* \subsetneqq \tilde I$. 
\end{itemize} 
Replacing $f_j$'s by $f_j|_{\Kii{\tilde I}}$ and then applying \cref{reduction-lemma} we see that the hypothesis of the corollary remains true if we replace $J$ by $\tilde I$ and therefore the corollary reduces to the case that $J = [n]$. Then it follows from \cref{trivial-lemma}. 
\end{proof}

\subsection{Proof of \cref{generic-bkk-thm}} \label{generic-bkk-proof}
Assertion \eqref{generic-assertion-0} follows from a straightforward application of \cref{absolute-prop}. Assertion \eqref{isolated-assertion} follows from \cref{absolute-prop} and \cref{curve-claim}. 

\begin{claim}\label{curve-claim}
Let $\mscrS' := \overbar\mscrS \cup \TSP$. Assume the (Krull) dimension of $V(f_1, \ldots, f_n) \cap \KnSprime$ is non-zero. Then $f_1, \ldots, f_n$ are $\mscrP$-degenerate on $\KnbarS$.
\end{claim}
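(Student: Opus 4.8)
The plan is to argue by contradiction using the polynomial homotopy / deformation technique from \cref{idea-section}, in exactly the spirit of the necessity argument for \cref{generic-thm} in \cref{generic-proof}. Assume $\dim(V(f_1, \ldots, f_n) \cap \KnSprime) \geq 1$, so there is an irreducible curve $C \subseteq V(f_1, \ldots, f_n) \cap \KnSprime$ (or a higher-dimensional component, from which we may cut down by generic hyperplane sections to a curve still meeting $\KnSprime$). Let $I$ be the smallest subset of $[n]$ with $C \cap \Kstari \neq \emptyset$, i.e.\ $C$ is not contained in any proper coordinate subspace of $\Ki$; since $C \subseteq \KnSprime = \KnSS{\overbar\mscrS \cup \TSP}$, we have $I \notin \overbar\mscrS$ and $I \notin \TSP$ (because points of $\Kstari$ with $I \in \TSP$ would lie outside $\KnSprime$... wait: $\KnSS{\overbar\mscrS\cup\TSP}$ removes the tori $\Kstarii{S}$ for $S \in \overbar\mscrS\cup\TSP$, so $I \notin \overbar\mscrS\cup\TSP$). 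Hence $I \in \TSPrime$, or $I$ could be handled via \cref{reduction-lemma}. The curve $C$ is compact? No — $C$ is affine; I would pass to a point $z^* \in \bar C$ in the boundary of $C$ inside $\Ki$ (either at infinity or at a smaller coordinate subspace), take a branch $B$ of $\bar C$ at $z^*$, and read off the associated monomial valuation $\nu := \nuBii{\tilde I}$ where $\tilde I = I_B$.

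The key step: $B$ gives, via \cref{branch-lemma}, a common zero $\In(B) \in \Kstarii{\tilde I}$ of $\In_\nu(f_1|_{\Kii{\tilde I}}), \ldots, \In_\nu(f_n|_{\Kii{\tilde I}})$. I must then identify $\nu$ as a valuation relevant to one of the non-degeneracy conditions \ref{TSPrime-property} or \ref{NTSP-property} of $\mscrP$-non-degeneracy. There are two cases according to where $z^*$ sits. If $z^*$ is "at infinity" (some $\nu(x_j) < 0$), then restricting to the coordinate subspace $\Kii{\tilde I \cap [n]}$ we get degeneracy at infinity for that subspace, and — after using \cref{reduction-lemma} to pass to a subspace $I'$ with $|\NiiP{I'}| = |I'|$ if necessary — we conclude that \ref{TSPrime-property} fails for $I' \in \TSPrime$ (handling the exotrivial case separately via \cref{absolute-cor}, which would already give a contradiction with the hypothesis that such a component meets $\KnSprime$). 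If $z^*$ lies on a torus $\Kstarii{S}$ with $S \subsetneq \tilde I \cap [n]$, then $\nu$ is centered at $\Kstarii{S}$; here $S$ must be in $\overbar\mscrS \cup \TSP$ (otherwise $z^* \in \KnSprime$ and $C$ would not have been "escaping"), so $\nu \in \VSSi{\overbar\mscrS\cup\TSP}{(\tilde I\cap[n])}$, and this is precisely a valuation at which condition \ref{TSPrime-property} (BKK non-degeneracy at $(\overbar\mscrS\cup\TSP)^{I}$) or condition \ref{NTSP-property} is being tested. In either subcase, $\mscrP$-non-degeneracy on $\KnbarS$ fails, which is the claim.

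The main obstacle I anticipate is the bookkeeping around \emph{which} coordinate subspace and \emph{which} of the two conditions \ref{TSPrime-property}/\ref{NTSP-property} is violated — in particular making sure that when $\tilde I \cap [n] \in \TSP$ (the subspace reached by the branch is exotrivial), we are still in business: here we cannot directly use the deformation trick, but \cref{absolute-prop}/\cref{absolute-cor} tells us generic systems have no isolated zeroes on $\Kstarii{\tilde I \cap [n]}$ anyway, and the dichotomy built into the definitions of $\TSP$ and $\TSPrime$ (together with \cref{reduction-lemma} to jump to a larger subspace with the right cardinality count) should force the degeneracy to land in a condition that is actually imposed. A secondary technical point is justifying the reduction from a possibly higher-dimensional component to a curve and ensuring the generic hyperplane sections keep the component inside $\KnSprime$ — this is routine but should be stated. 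Once the correct $I$ and $\nu$ are pinned down, the conclusion is immediate from \cref{branch-lemma} and the definition of $\mscrP$-non-degeneracy.
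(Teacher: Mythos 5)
Your core argument --- take an irreducible curve $C \subseteq V(f_1,\ldots,f_n)\cap\KnSprime$, let $\Ki$ be the smallest coordinate subspace containing it, pick a branch $B$ of $C$, and feed $\nuBi$ into \cref{branch-lemma} to contradict property \ref{TSPrime-property} --- is exactly the paper's, which disposes of the claim in one line by choosing $B$ to be a branch of $C$ \emph{at infinity}; such a branch always exists because $C$ is affine, and it directly violates BKK non-degeneracy at infinity for $f_1|_{\Ki},\ldots,f_n|_{\Ki}$. This makes your entire second case (limit point on a smaller torus $\Kstarii{S}$) unnecessary, which is just as well, since your justification there is not sound: a finite point of $\bar C\setminus C$ can perfectly well lie in $\KnSprime$ (nothing forces the curve to ``escape'' through a removed torus), so $S\in\overbar\mscrS\cup\TSP$ is not guaranteed and that branch need not witness any tested condition. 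Commit to the branch at infinity, note that $I\in\TSPrime$ (so that non-degeneracy at infinity is indeed among the conditions imposed by \ref{TSPrime-property}), and you have the paper's proof; the appeals to \cref{reduction-lemma}, \cref{absolute-cor} and condition \ref{NTSP-property} are not needed here.
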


\begin{proof}
Take an irreducible curve $C \subseteq V(f_1, \ldots, f_n) \cap \KnSprime$ and let $\Ki$ be the smallest coordinate subspace of $\kk^n$ which contains $C$. Since $I \in \TSPrime$, applying \cref{branch-lemma} with a branch $B$ of $C$ at infinity shows that property \ref{TSPrime-property} of $\mscrP$-non-degeneracy fails.
\end{proof}

It remains to prove the second assertion. The $(\Rightarrow)$ implication follows from \cref{sufficiently-bkk} below.  

\begin{claim} \label{sufficiently-bkk}
If $f_1, \ldots, f_n$ and $g_1, \ldots, g_n$ are two systems of $\mscrP$-admissible polynomials such that $\multisof_{\KnSprime} < \multiso{g_1}{g_n}_{\KnSprime}$, then $f_i$'s are $\mscrP$-degenerate on $\KnbarS$. 
\end{claim}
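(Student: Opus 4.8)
The plan is to run Bernstein's polynomial--homotopy argument exactly as in the proof of \cref{sufficient-lemma}, with the torus replaced by $\KnSprime = \kk^n \setminus \bigcup_{S \in \mscrS'}\Kstarii{S}$ (where $\mscrS' = \overbar\mscrS \cup \TSP$) and with ``the origin'' replaced by the locus where isolated affine roots on $\KnSprime$ can be lost: the boundary at infinity of a toric compactification, the deleted subspaces $\Kstarii{S}$ with $S \in \mscrS'$, and the positive--dimensional components of $V(f_1,\dots,f_n)$. First I would set $h_i := t g_i + (1-t)f_i$, note that $h_1|_{t=\epsilon},\dots,h_n|_{t=\epsilon}$ is $\mscrP$-admissible for generic $\epsilon$, and fix a smooth complete toric variety $X \supseteq \kk^n$ whose fan refines both the fan of $\kk^n$ and the normal fan of $\scrP_1 + \cdots + \scrP_n$, so that the behaviour at the boundary of $V(f_i)$ along a one--parameter curve is governed by the leading forms $\ld_\omega(f_i)$, $\omega \in \wts$. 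Using \cref{isolated-family} on the affine family and then taking the closure $\bar Z$ in $X \times \pp^1$ of $V(h_1,\dots,h_n) \cap (\KnSprime \times \kk)$, I get (on a suitable open set) a curve with no component lying in a fibre over $\pp^1$ whose generic fibre over $\pp^1$ consists of isolated points; writing $\bar Z = \sum m_i \bar Z_i$ into components, each $\bar Z_i$ dominates $\pp^1$, so $\bar Z \cdot (X\times\{0\}) = \bar Z \cdot (X\times\{1\})$. The fibre over $t=1$ contributes at least $\multiso{g_1}{g_n}_{\KnSprime}$, while at $t=0$ the scheme $\bar Z$ is supported on $V(f_1,\dots,f_n)$, so its points lying on $\KnSprime$ and isolated in $V(f_1,\dots,f_n)\cap\kk^n$ contribute at most $\multisof_{\KnSprime}$. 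As $\multisof_{\KnSprime} < \multiso{g_1}{g_n}_{\KnSprime}$, some component $Z^*$ of $\bar Z$ has a limit point $(z^*,0)$ at $t=0$ that is \emph{defective}: either $z^*\notin\kk^n$, or $z^*\in\bigcup_{S\in\mscrS'}\Kstarii{S}$, or $z^*$ is non-isolated in $V(f_1,\dots,f_n)\cap\kk^n$.

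Next I would pass to a branch $B$ of $Z^*$ at $(z^*,0)$, viewing $Z^*$ near this point as a branch of a curve in $V(h_1,\dots,h_n)$; since no component is vertical, $\nuB(t)>0$. Let $I := \{i\in[n]: x_i|_{Z^*}\not\equiv 0\}$, so $\Kstari$ is the smallest coordinate subspace containing the generic point of $Z^*$, and let $\nu := \nuBi$, the monomial valuation on $\Ki$ with weights $\nuB(x_i)$ (up to gcd). For generic $\epsilon$ the point of $Z^*$ over $t=\epsilon$ is an isolated zero of the $\mscrP$-admissible system $h_1|_{t=\epsilon},\dots,h_n|_{t=\epsilon}$, lying on $\Kstari \cap \KnSprime$. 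Hence $\mscrP$ is not $\Kstari$-exotrivial (else \cref{absolute-prop} gives no isolated zeroes on $\Kstari$), $I\notin\SP$ (else \cref{Pisolated-cor} makes that point non-isolated in $\kk^n$), and $I\notin\overbar\mscrS$ and $I\notin\TSP$ (else $\Kstari\cap\KnSprime=\emptyset$); since $\TSP$ and $\TSPrime$ exhaust the subsets outside $\overbar\mscrS\cup\SP$, this forces $I\in\TSPrime$.

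Then I would split into cases on $z^*$ and in each case extract a failure of property \ref{TSPrime-property} of $\mscrP$-non-degeneracy, or reduce to \cref{curve-claim}. If $z^*\notin\kk^n$, some $x_i$ has a pole along $B$, so the weighted degree $\omega$ with $\omega(x_i)=-\nuB(x_i)$ on $I$ (and $0$ off $I$) lies in $\wtsi$; applying \cref{branch-lemma} to the branch $B$ at infinity (as in the proof of \cref{curve-claim}), and using $\nuB(t)>0$ to identify $\ld_\omega$ of $h_j$ with that of $f_j$, we get a common zero in $\nktorus$ of $\ld_\omega(f_1|_{\Ki}),\dots,\ld_\omega(f_n|_{\Ki})$, so $f_1|_{\Ki},\dots,f_n|_{\Ki}$ are BKK degenerate at infinity. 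If $z^*\in\kk^n$ and $z^*$ is isolated in $V(f_1,\dots,f_n)\cap\kk^n$, then defectiveness forces $z^*\notin\KnSprime$, hence $z^*\in\Kstarii{S}$ for some proper $S\subsetneq I$ with $S\in\mscrS'=\overbar\mscrS\cup\TSP$; then $\nu=\nuBi$ has center $\Kstarii{S}$, i.e.\ $\nu\in\VSSi{\overbar\mscrS\cup\TSP}$, and \cref{branch-lemma} (exactly as in \cref{sufficient-lemma}, using $\nuB(t)>0$ to replace $h_j$ by $f_j$) yields a common zero in $\nktorus$ of $\In_\nu(f_1|_{\Ki}),\dots,\In_\nu(f_n|_{\Ki})$, so $f_1|_{\Ki},\dots,f_n|_{\Ki}$ are BKK degenerate at $(\overbar\mscrS\cup\TSP)^I$. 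In either case property \ref{TSPrime-property} fails, so $f_1,\dots,f_n$ are $\mscrP$-degenerate on $\KnbarS$. Finally, if $z^*$ is non-isolated in $V(f_1,\dots,f_n)\cap\kk^n$, the positive--dimensional component through $z^*$ carries the affine tails of the nearby fibres of $Z^*$ and hence meets $\KnSprime$, so $\dim\bigl(V(f_1,\dots,f_n)\cap\KnSprime\bigr)\ge 1$ and \cref{curve-claim} already gives the conclusion.

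I expect the main obstacle to be the compactification bookkeeping in the first two paragraphs: choosing $X$ (and the analogue of \cref{isolated-family} over $X\times\pp^1$) so that escape to infinity is genuinely read off by leading forms, together with the verification that the chosen defective limit point lands in exactly one of the three cases and cannot secretly be a harmless isolated affine root on $\KnSprime$ — this is precisely where \cref{absolute-prop}, \cref{Pisolated-cor}, and \cref{curve-claim} must be combined to keep $I\in\TSPrime$ and to pin the center of $\nu$ inside $(\overbar\mscrS\cup\TSP)^I$. By contrast, the handling of intersection multiplicities in the degree--conservation step is identical in form to \cref{sufficient-lemma} and should be routine, and (unlike \cref{weak-bkk-lemma}) no appeal to \cref{reduction-lemma} seems necessary here, since the branch $B$ witnesses the required degeneracy on $\Ki$ itself.
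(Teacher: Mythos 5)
Your proposal is correct and follows essentially the same route as the paper's proof: the homotopy $h_i = tg_i + (1-t)f_i$, conservation of the intersection number with the fibres $t=0,1$ in a compactification, the trichotomy of defective limit points (at infinity, in a deleted stratum $\Kstarii{S}$ with $S\in\overbar\mscrS\cup\TSP$, or non-isolated), the branch/valuation analysis via \cref{branch-lemma} with $\nuB(t)>0$, the argument that $I\in\TSPrime$ via \cref{absolute-prop} and the openness of $Z^*\cap(\Kstarii{I}\times\kk)$, and the final three-case reduction to failure of property \ref{TSPrime-property} or to \cref{curve-claim}. The only cosmetic difference is your choice of a smooth toric compactification refining the normal fan of $\sum_j\scrP_j$; the paper simply takes closures in $\pp^n\times\pp^1$, which suffices since the identification of leading forms at infinity is done by the branch machinery rather than by the compactification.
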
 

\begin{proof}
Let $h_i := tg_i + (1-t)f_i \in \kk[x_1, \ldots, x_n,t]$, $1 \leq i \leq n$. Let $V_f$ (resp.\ $V_g$) be the set of isolated zeroes of $f_1, \ldots, f_n$ (resp.\ $g_1, \ldots, g_n$) on $\KnSprime $. \Cref{isolated-family} implies that there exists an open subset $\tilde U$ of $\KnSS{\mscrS'} \times \kk$ which contains $V_f \times \{0\}$ and $V_g \times \{1\}$, and satisfies properties \ref{U-tilde-1} --\ref{U-tilde-2} from the proof of \cref{sufficient-lemma}. Without any loss of generality we may also assume that  
\begin{prooflist}[resume=sufficient-list]
\item \label{U-tilde-4} $\tilde U \cap \{t = 1\}$ (resp.\  $\tilde U \cap \{t = 0\}$) does not intersect any non-isolated zeroes of $g_1, \ldots, g_n$ (resp.\ $f_1, \ldots,f_n$) in $\KnSS{\mscrS'}$ . 
\end{prooflist}
Define $\tilde Z, \bar Z, H_\epsilon, \bar H_\epsilon$ as in the proof of \cref{absolute-cor}. Property \ref{U-tilde-4} implies that 
$$H_1 \cdot \tilde Z = \multiso{g_1}{g_n}_{\KnSS{\mscrS'}}  = \multiso{g_1}{g_n}_{\KnS} > \multisof_{\KnS} = H_0 \cdot \tilde Z$$
Since $\bar H_1 \cdot \bar Z = \bar H_0 \cdot \bar Z$, it follows that there exists $(z,0) \in \bar Z$ such that one of the following holds:
\begin{enumerate}[label= (\alph{enumi})]
\item \label{z-S} $z \in \Kstarii{S}$ for some $S \in \mscrS'$.
\item \label{z-infty} $z \in \pp^n \setminus \kk^n$.
\item \label{z-non-isolated} $z$ is a non-isolated point of $V(f_1, \ldots, f_n) \cap \KnSprime$.  
\end{enumerate}
Pick $z$ satisfying one of the preceding conditions. Let $Z^*$ be an irreducible component of $\tilde Z$ containing  $(z,0)$ and let $B$ be a branch of $Z^*$ at $(z,0)$. Define $\tilde I, \tilde \nu, I^* $ and $\nu^*$ as in the proof of \cref{absolute-cor}. Note that 
\begin{prooflist}[resume]
\item \label{I^*-1} $I^* \not\in  \mscrS'$ (since $Z^* \cap (\Kstarii{I^*} \times \kk)$ non-empty and open in $Z^*$). 
\end{prooflist}
Property \ref{U-tilde-2} from the proof of \cref{sufficient-lemma} implies that there are $\mscrP$-admissible systems of polynomials which have isolated zero(es) on $\Kstarii{I^*}$. \Cref{absolute-cor} then implies that $\mscrP$ is not $\Kstari$-exotrivial. Combining this with observation \ref{I^*-1}, we have
\begin{prooflist}[resume]
\item \label{I^*-2} $I^* \in \TSPrime$. 
\end{prooflist}
Since $g_j$ and $f_j$ have the same Newton polytope for each $j$, it follows that $\tilde \nu(f_j) = \tilde \nu(g_j) = \nu^*(f_j)$. Since $\tilde \nu(t) > 0$, it follows that $\In_{\tilde \nu}(h_j|_{\Kii{\tilde I}}) = \In_{\nu^*}(f_j|_{\Kii{I^*}})$, $1 \leq j \leq n$. \Cref{branch-lemma} then implies that 
\begin{prooflist}[resume]
\item \label{common-observation} $\In_{\nu^*}(f_1|_{\Kii{I^*}}), \ldots, \In_{\nu^*}(f_n|_{\Kii{I^*}})$ have a common zero in $\Kstarii{I^*}$. 
\end{prooflist}
Now note that
\begin{prooflist}[resume]
\item \label{KnS} If \ref{z-S} holds, then $\nu^* \in \VSii{I^*}$, so that \ref{common-observation} implies that $f_1|_{\Kii{I^*}}, \ldots, f_n|_{\Kii{I^*}}$ are BKK degenerate at $\mscrS'^I$.
\item \label{infinity}  If \ref{z-infty} holds, then $-\nu^* \in \wtsii{I^*}$, so that \ref{common-observation} implies that $f_1|_{\Kii{I^*}}, \ldots, f_n|_{\Kii{I^*}}$ are BKK degenerate at infinity. 
\item \label{z-non-isolated-1} If \ref{z-non-isolated} holds, then \cref{curve-claim} implies that $f_1, \ldots, f_n$ are $\mscrP$-degenerate on $\KnbarS$.
\end{prooflist}
Observations \ref{KnS}--\ref{z-non-isolated-1} imply that in every scenario $f_1, \ldots, f_n$ are $\mscrP$-degenerate on $\KnbarS$.
\end{proof}

It remains to prove that $\mscrP$-non-degeneracy on $\KnbarS$ is necessary for the number (counted with multiplicity) of isolated solutions in $\KnSprime$ to be maximal. So take $\mscrP$-admissible $f_1, \ldots, f_n$ which are not $\mscrP$-non-degenerate on $\KnbarS$. We will show that $\multisof_{\KnSprime}  < \multP_{\KnSprime}$.  

\begin{claim} \label{nu-claim}
There exists $I \in \NSP$ and $\nu \in \Vi$ such that $\In_\nu(f_1|_{\Ki}), \ldots, \In_\nu(f_n|_{\Ki})$ have a common zero $z$ in $\Kstari$, and one of the following holds:
\begin{enumerate}[label= (\alph{enumi}$'$)]
\item \label{nu-S} $\nu \in \VSprimei$, 
\item\label{nu-infinity} $-\nu \in \wtsi$, or
\item \label{nu-non-isolated} $\nu$ is centered at $\Kstarii{I'}$ for some $I' \in \TSPrime$ and $\pi_{I'}(z)$ is non-isolated in $V(f_1, \ldots, f_n)$. 
\end{enumerate}
\end{claim}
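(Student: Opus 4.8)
The plan is to convert a failure of $\mscrP$-non-degeneracy on $\KnbarS$ into one of the three alternatives \ref{nu-S}--\ref{nu-non-isolated}, using \cref{weak-bkk-lemma} to keep the ambient coordinate subspace inside $\NSP$ and \cref{trivial-cor} to supply the non-isolated point demanded by \ref{nu-non-isolated}. Write $\mscrS' := \overbar{\mscrS}\cup\TSP$ as in the proof above. Since $f_1,\dots,f_n$ are $\mscrP$-admissible but not $\mscrP$-non-degenerate on $\KnbarS$, at least one of property \ref{TSPrime-property} and property \ref{NTSP-property} of \cref{generic-bkk-defn} fails; by \cref{weak-bkk-lemma} I may assume either that \ref{TSPrime-property} fails after replacing $\TSPrime$ by $\NSP$, or that \ref{NTSP-property} fails after replacing $\TSP$ by $\TSPstar$. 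This splits the argument into two cases.

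\emph{Case 1: \ref{TSPrime-property} fails over $\NSP$.} There is $I\in\NSP$ for which $f_1|_{\Ki},\dots,f_n|_{\Ki}$ are BKK degenerate either at infinity or at $(\overbar{\mscrS}\cup\TSP)^I$. If they are degenerate at infinity, choose $\omega\in\wtsi$ such that $\ld_\omega(f_1|_{\Ki}),\dots,\ld_\omega(f_n|_{\Ki})$ have a common zero $z\in\Kstari$, and set $\nu:=-\omega$; then $\nu\in\Vi$, $\In_\nu(f_j|_{\Ki})=\ld_\omega(f_j|_{\Ki})$ for every $j$, and $-\nu=\omega\in\wtsi$, so \ref{nu-infinity} holds. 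If instead they are degenerate at $(\overbar{\mscrS}\cup\TSP)^I$, the monomial valuation $\nu\in\VSprimei$ witnessing this, together with the corresponding common zero, gives \ref{nu-S} directly. In both subcases the index $I$ already lies in $\NSP$, as required.

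\emph{Case 2: \ref{NTSP-property} fails over $\TSPstar$.} There is $I_0\in\TSPstar$ and a monomial valuation $\nu_0$ on $\Kii{I_0}$ centered at $\Kstarii{I'}$ for some $I'\in\TSPrime$ with $I'\subseteq I_0$, such that $\In_{\nu_0}(f_1|_{\Kii{I_0}}),\dots,\In_{\nu_0}(f_n|_{\Kii{I_0}})$ have a common zero $z_0\in\nktorus$. Because $I_0\in\TSP$, $\mscrP$ is $\Kstarii{I_0}$-exotrivial, so \cref{trivial-cor} (with $J:=I_0$ and the smaller subset $I'$) shows that $a:=\pi_{I'}(z_0)\in\Kstarii{I'}$ is a non-isolated point of $V(f_1,\dots,f_n)$. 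It remains to exhibit $a$ in the form required by \ref{nu-non-isolated} with an index in $\NSP$. First note that $I'\in\TSPrime$ forces $I'\notin\SP$, hence $|\NiiP{I'}|\ge|I'|$; and if $\mscrP$ were $\Kstarii{I'}$-trivial with $|\NiiP{I'}|=|I'|$ then $\tilde I:=I'$ would exhibit $\Kstarii{I'}$ as exotrivial through property \ref{absolute-equal} of \cref{absolute-defn}, contradicting $I'\in\TSPrime$. Consequently, if $\mscrP$ is $\Kstarii{I'}$-non-trivial then $I'\in\NSP$, and taking $I:=I'$, $\nu:=$ the trivial valuation on $\Kii{I'}$ (which is centered at $\Kstarii{I'}$) and $z:=a$ works, since $\In_\nu(f_j|_{\Kii{I'}})=f_j|_{\Kii{I'}}$ vanishes at $a$. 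Otherwise $\mscrP$ is $\Kstarii{I'}$-trivial and $|\NiiP{I'}|>|I'|$, and here I invoke \cref{reduction-lemma} with $I=I'$, the trivial valuation, and the common zero $a$: it produces $\tilde I\supsetneqq I'$ and $\tilde\nu\in\Vii{\tilde I}$, still centered at $\Kstarii{I'}$, with $\In_{\tilde\nu}(f_1|_{\Kstarii{\tilde I}}),\dots,\In_{\tilde\nu}(f_n|_{\Kstarii{\tilde I}})$ sharing a common zero whose $\pi_{I'}$-projection is $a$; iterating and tracking how the cardinalities $|\NiiP{J}|$ behave for $J\supseteq I'$ (via \cref{herrero} and \cref{absolute-defn}) one reaches an index in $\NSP$, and then \ref{nu-non-isolated} holds.

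The step I expect to be the genuine obstacle is the final reduction of Case 2: one must ensure that the iteration of \cref{reduction-lemma} stops at a subset lying in $\NSP$ — not in $\SP$, not in $\overbar{\mscrS}$, and not exotrivial — while still carrying the projected common zero onto the fixed non-isolated point $a$. This is precisely the combinatorial reason why property \ref{NTSP-property} is stated in terms of the partition $\TSP\sqcup\TSPrime$, and it is controlled by the characterizations of $\Kstari$-triviality and $\Kstari$-exotriviality in \cref{herrero} and \cref{absolute-defn} together with the non-isolatedness conclusion of \cref{trivial-cor}.
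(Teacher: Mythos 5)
Your overall route is the paper's: split on which of properties \ref{TSPrime-property} and \ref{NTSP-property} fails, use \cref{weak-bkk-lemma} to place the first case directly in $\NSP$ (yielding \ref{nu-S} or \ref{nu-infinity}), and in the second case use \cref{trivial-cor} to produce the non-isolated point $a=\pi_{I'}(z_0)$ and \cref{reduction-lemma} to transport the degeneracy to a subspace in $\NSP$. Case 1 and the first subcase of Case 2 (where $\mscrP$ is $\Kstarii{I'}$-non-trivial, so $I'$ itself lies in $\NSP$ and the trivial valuation works) are correct.

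The gap is exactly the step you flag. In the remaining subcase, iterating \cref{reduction-lemma} in the ambient $\kk^n$ is not guaranteed to terminate in $\NSP$: the iteration stops at the first index $\hat I\supseteq I'$ \emph{in its own chain} with $|\NiiP{\hat I}|=|\hat I|$, but nothing forces $\mscrP$ to be $\Kstarii{\hat I}$-non-trivial. If it is trivial, then $\tilde I:=\hat I$ witnesses property \ref{absolute-equal} of \cref{absolute-defn} for $\hat I$ itself (the intermediate condition being vacuous), so $\hat I$ is exotrivial, i.e.\ $\hat I\in\TSP$ and $\hat I\notin\NSP$; non-exotriviality of $I'$ does not prevent this, because the chain produced by \cref{reduction-lemma} need not pass through the subsets that would make $\hat I$ a witness against $I'$. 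The repair is to choose the target subspace \emph{before} iterating: let $I$ be the smallest superset of $I'$ with $|\NiP|=|I|$. Minimality gives $|\NiiP{I^*}|>|I^*|$ for every $I'\subseteq I^*\subsetneqq I$ (strict inequality in the other direction being excluded by $I'\notin\SP$), so if $\mscrP$ were $\Kstari$-trivial then $I$ would witness exotriviality of $I'$, contradicting $I'\in\TSPrime$; together with $I\notin\overbar{\mscrS}\cup\SP$ (both inherited from $I'$) this puts $I\in\NSP$. Now run the \cref{reduction-lemma} iteration after restricting all the $f_j$ to $\Kii{I}$: the hypothesis $|\NiiP{I^*}|>|I^*|$ holds at every proper intermediate stage, so the iteration can only terminate at $I$ itself, delivering $\nu\in\Vi$ centered at $\Kstarii{I'}$ and a common zero $z$ with $\pi_{I'}(z)=a$, which is \ref{nu-non-isolated}.
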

\begin{proof}
If property \ref{TSPrime-property} of $\mscrP$-non-degeneracy fails, then \cref{weak-bkk-lemma} implies that the claim is true and either \ref{nu-S} or \ref{nu-infinity} holds. On the other hand, if property \ref{NTSP-property} of $\mscrP$-non-degeneracy fails, then there exist $I' \subseteq J \subseteq [n]$ such that $J \in \TSP$, $I ' \in \TSPrime$, and $\In_{\eta}(f_1|_{J})  \ldots,\In_{\eta}(f_n|_{J})$ have a common zero $a \in \nktorus$ for some $\eta \in \Vii{J}$ with center in $\Kstarii{J'}$. \Cref{trivial-cor} implies that $\pi_{I'}(a)$ is a non-isolated point of $V(f_1, \ldots, f_n)$. Now pick the smallest $I \supseteq I'$ such that $|\NiP| = |I|$ (where $\NiP$ is as in \eqref{NiP}). Since $J$ is not exotrivial, it follows that $I \in \NSP$. An application of \cref{reduction-lemma} (with $I = I'$ and $\nu = $ the trivial valuation) implies that there exists $\nu \in \Vi$ centered at $\Kstarii{I'}$ and a common zero $z$ of $\In_\nu(f_1|_{\Ki}), \ldots, \In_\nu(f_n|_{\Ki})$ in $\nktorus$ such that $\pi_{I'}(z) = \pi_{I'}(a)$. Therefore \ref{nu-non-isolated} holds, and it completes the proof of the claim. 
\end{proof}

Let $I,\nu, z$ be as in \cref{nu-claim}. Since $I \in \NSP$, it follows that $|I| = |\NiP|$ . We may assume that $I = \NiP = \{1, \ldots, k\}$. Choose $(\scrP_1, \ldots, \scrP_k)$-admissible polynomials $g_1, \ldots, g_k$ such that
\begin{defnlist}[resume]
\item $g_1|_{\Ki}, \ldots, g_k|_{\Ki}$ are completely BKK non-degenerate (\cref{bkk-defn}).
\item  $\In_\nu(g_j|_{\Ki})(z) \neq 0$ for each $j$, $1 \leq j \leq k$. 
\end{defnlist}

\begin{claim} \label{isolated-claim}
For each $y \in \Kstari$, let $h_{y,i}(x) := g_i(y)f_i(x) - f_i(y)g_i(x)$, $1 \leq i \leq k$. 
\begin{enumerate}
\item \label{dominant-claim} The rational map $\phi: (\kk^*)^k \dashrightarrow (\kk^*)^k$ induced by $ (g_1/f_1, \ldots, g_k/f_k)$ is dominant.
\item There is a non-empty open subset $W$ of $\Kstari$ such that every $y \in W$ is an isolated zero of $h_{y,1}, \ldots, h_{y,k}$. 
\end{enumerate}
\end{claim}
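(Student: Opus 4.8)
The plan is to prove the two assertions of \cref{isolated-claim} in order, since the second will depend on the first.

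\textbf{Assertion \eqref{dominant-claim}.} First I would unwind what dominance of $\phi := (g_1/f_1, \ldots, g_k/f_k)$ means. Since $g_1|_{\Ki}, \ldots, g_k|_{\Ki}$ are completely BKK non-degenerate (in particular BKK non-degenerate on the torus $\Kstari$), and $\mscrP$ is $\Kstari$-non-trivial with $|\NiP| = k = |I|$, Bernstein's theorem applies to the system $g_1|_{\Ki}, \ldots, g_k|_{\Ki}$ on $\Kstari \cong (\kk^*)^k$: it has exactly $\mv(\scrP^I_1, \ldots, \scrP^I_k) > 0$ isolated solutions. The key point is a standard consequence of Bernstein/Kushnirenko: a generic fiber $\phi^{-1}(c)$ for $c = (c_1, \ldots, c_k) \in (\kk^*)^k$ is precisely the common zero set on $\Kstari$ of $g_i|_{\Ki} - c_i f_i|_{\Ki}$, $1 \le i \le k$. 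These polynomials still have Newton polytopes $\scrP^I_i$ (here one uses that $\scrP_i$ has full support contribution so the generic combination does not shrink the polytope — this needs a small argument, or one restricts $c$ to an open set), and for generic $c$ the system $g_i|_{\Ki} - c_if_i|_{\Ki}$ remains BKK non-degenerate on $\Kstari$, hence has $\mv(\scrP^I_1, \ldots, \scrP^I_k) > 0$ solutions. Since the generic fiber of $\phi$ is non-empty and finite, $\phi$ is dominant. I expect this to be the main obstacle: one must be careful that taking the generic linear combination $g_i - c_i f_i$ neither enlarges nor (crucially) shrinks the relevant Newton polytope, and that BKK non-degeneracy is preserved on a Zariski-open set of $c$ — this is where the hypothesis of \emph{complete} BKK non-degeneracy of the $g_i|_{\Ki}$ is presumably needed, as it should guarantee genericity is inherited. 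I would cite \cref{herrero} and Bernstein's theorem for the non-vanishing of the mixed volume and the count.

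\textbf{Assertion (2).} For the second assertion, note that for $y \in \Kstari$ the polynomial $h_{y,i}(x) = g_i(y)f_i(x) - f_i(y)g_i(x)$ vanishes at $x = y$ by construction, so $y \in V(h_{y,1}, \ldots, h_{y,k}) \cap \Kstari$. It remains to show $y$ is isolated there for $y$ in a non-empty open subset $W$ of $\Kstari$. The zero set $V(h_{y,1}, \ldots, h_{y,k}) \cap \Kstari$ is exactly $\phi^{-1}(\phi(y))$ (the fiber of $\phi$ through $y$), at least over the open locus where $f_i(y) \ne 0$, which on $\Kstari$ holds outside a proper closed subset. By assertion \eqref{dominant-claim}, $\phi$ is dominant, hence its generic fiber is finite (of dimension $k - k = 0$); so there is a non-empty open $V_0 \subseteq (\kk^*)^k$ over which all fibers of $\phi$ are finite. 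Taking $W := \phi^{-1}(V_0) \cap \{y : \prod_i f_i(y) \ne 0\}$, which is non-empty and open in $\Kstari$ since $\phi$ is dominant and $f_i|_{\Ki} \not\equiv 0$, every $y \in W$ lies in a finite — hence isolated — fiber, and therefore $y$ is an isolated zero of $h_{y,1}, \ldots, h_{y,k}$. This gives the claim. The only subtlety is keeping track of the locus where $f_i(y) = 0$ or $g_i(y) = 0$ so that $h_{y,i}$ genuinely cuts out the fiber $\phi^{-1}(\phi(y))$, but this is a routine open-set bookkeeping matter rather than a real obstacle.
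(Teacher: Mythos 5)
Your proposal is correct and follows essentially the same route as the paper: dominance is obtained by showing the generic fiber $\phi^{-1}(c) = V(g_i - c_i f_i : 1 \le i \le k) \cap \Kstari$ is finite and non-empty (the paper packages the preservation of non-degeneracy under the deformation $g_i - c_if_i$ as \cref{bkk-deformation} and the non-emptiness via \cref{sufficiently-bkk}, whereas you invoke Bernstein's count directly, but these are the same argument), and the second assertion is then the same fiber-finiteness bookkeeping on the locus $\prod_i f_i \neq 0$. The two points you flag as needing care (preservation of the Newton polytope and of non-degeneracy for generic $c$) are exactly what the paper's appendix lemma supplies, so there is no gap.
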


\begin{proof}
\Cref{bkk-deformation} implies that for generic $c := (c_1, \ldots, c_k) \in (\kk^*)^k$, $g_1 - c_1f_1, \ldots, g_k - c_kf_k$ are BKK-non-degenerate, so that $\phi^{-1}(c)$ is finite (\cref{bkk-lemma}). On the other hand, since $\mscrP$ is $\Kstari$-non-trivial, it follows that $\multiso{\scrP_1}{\scrP_k}_{\Kstari} > 0$, so that \cref{sufficiently-bkk,bkk-lemma} imply
that $\phi^{-1}(c)$ is non-empty for generic $c \in(\kk^*)^k$. This implies the first assertion. For the second assertion, take $W$ to be an open subset of $\Kstari \setminus V(f_1\cdots f_k)$ such that $\phi^{-1}(\phi(w))$ is finite for all $w \in U$. 
\end{proof}

Let $W$ be as in \cref{isolated-claim}. Choose $y = (y_1, \ldots, y_k, 0, \ldots, 0) \in W\setminus V(f_1\cdots f_k)$. Let $C$ be the rational curve on $\Ki$ parametrized by $c(t) = (c_1(t), \ldots, c_n(t)) :\kk^* \to \Ki$ defined as in \eqref{c_j}. Then $c(1) = y$. For $c(0)$ there are three possible options: 

\begin{defnlist}[resume]
\item \label{nu-S-1} If \ref{nu-S} holds, then $c(0) \in \Kstarii{S}$ for some $S \in \mscrS'$.
\item \label{nu-infinity-1} If \ref{nu-infinity} holds, then at least one $c_j(t)$ has a pole at $0$; in other words, $c(0)$ is a point at infinity.
\item \label{nu-non-isolated-1} If \ref{nu-non-isolated} holds, then $c(0)$ is a non-isolated zero of $f_1, \ldots, f_n$ in $\kk^n$.
\end{defnlist}
%
Let $V_f$ be the set of isolated zeroes of $f_1, \ldots, f_n$ in $\KnSprime$. Define $h_1, \ldots, h_n$ as in \eqref{h_j}. \Cref{isolated-family} implies that there is an open subset $U^*$ of $\kk^{n+1}$ containing $V_f \times \{0\}$ such that $Z^* := V(h_1, \ldots, h_n) \cap U^*$ has only one dimensional `non-vertical' components. Let $\tilde Z$ be the union of irreducible components of $Z^*$ containing points from $V_f \times \{0\}$. For each $\epsilon \in \kk$, let $H_\epsilon$ be the hyperplane $\{t = \epsilon\}$ in $\kk^{n+1}$. Then for generic $\epsilon \in \kk$, 
\begin{align*}
\multiso{h_1|_{t=\epsilon}}{h_n|_{t=\epsilon}}_{\KnSprime} 
	\geq H_\epsilon \cdot \tilde Z 
	= H_0 \cdot \tilde Z
	= \multisof_{\KnSprime}
\end{align*}
On the other hand, observations \ref{nu-S-1}--\ref{nu-non-isolated-1} imply that $\{(t,c(t)): t \in \kk^*\}$ does not belong to any component of $\tilde Z$. Since for generic $\epsilon \in \kk^*$, $c(\epsilon)$ is an isolated zero of $h_1|_{t=\epsilon}, \ldots, h_n|_{t=\epsilon}$ (\cref{isolated-claim}), it follows that $\multiso{h_1|_{t=\epsilon}}{h_n|_{t=\epsilon}}_{\KnSprime} >  \multisof_{\KnSprime}$, as required. \qed


\subsection{Proof of \cref{non-isolated-cor}}\label{non-isolated-section}
\Cref{b-example}\ref{b-example-generic-1} validates the first assertion, and the second assertion follows from \cref{curve-claim}. \qed

\section{Proof of the intersection multiplicity formula} \label{multiplicity-section}
In this section we prove \cref{multiplicity-thm} using the approach outlined in \cref{stable-remark}. The computation of intersection multiplicity becomes easier if a generic system satisfies a property which is stronger than \eqref{bkk-bar-0}; \cref{pre-multiplicity-section} is devoted to the proof of existence of such systems. The proof of \cref{multiplicity-thm} is then given in \cref{multiplicity-proof-section}. 

\subsection{Strongly $\mscrG$-non-degenerate systems} \label{pre-multiplicity-section}
\begin{defn} \label{strongly-generic-defn}
Let $\mscrG:= (\Gamma_1, \ldots, \Gamma_n)$ be a collection of $n$ diagrams in $\rr^n$ and $f_1, \ldots, f_n$ be polynomials in $(x_1, \ldots, x_n)$. We say that $f_i$'s are {\em strongly $\mscrG$-non-degenerate} iff
\begin{defnlist}
\item \label{admissible}  $f_i$'s are $\mscrG$-admissible.
\item \label{strongly-non-degenerate} For all non-empty $J \subseteq [n]$, $\{f_j: j \in J\}$ are {\em completely BKK non-degenerate} (\cref{bkk-defn}). 
\item \label{intersectionally-non-degenerate} For all non-empty $I, J, J' \subseteq [n]$ such that $|J| = n - |I|$, $\Ki$ is an irreducible component of $V(f_j: j \in J)$, $J' \subseteq [n]\setminus J$ and $|J'| = k-1$, the following holds: 
\begin{align}
\parbox{.57\textwidth}{If $V$ is any irreducible component of $V(f_j: j \in J)$ distinct from $\Ki$, then $V \cap \Ki \cap V(f_{j'}: j' \in J')$ is finite.} \label{I-J-J'}
\end{align}

\item \label{recursively-non-degenerate} For all $I\subsetneqq [n]$ and $J \subseteq [m]$ such that $|I| = |J|$, define 
\begin{align*}
& f_{I,j} := f_j|_{x_{i'_1} = 1, \ldots, x_{i'_{k'}} =1},\ \text{for all $j \in J$,}
\end{align*}
where $i'_1, \ldots, i'_{k'}$ are elements of $[n]\setminus I$. Then $(f_{I,j}: j \in J)$ are completely BKK non-degenerate.
\end{defnlist}
\end{defn}

We now show the existence of strongly non-degenerate polynomials. Let $\mscrG:= (\Gamma_1, \ldots, \Gamma_n)$ be a collection of $n$ diagrams in $\rr^n$. Fix polytopes $\scrP_j$ such that $\Gamma_j$ is the Newton diagram of a polynomial with Newton polytope $\scrP_j$, $1 \leq j \leq n$. Let $\scrA_j$ be the space of polynomials with support in $\scrP_j$, $1 \leq j \leq n$, and let $\scrA := \prod_{j=1}^n \scrA_j$. 

\begin{lemma} \label{strong-existence}
Assume $\multzeroGamma < \infty$. Then the set of strongly $\mscrG$-non-degenerate systems of polynomials contains a non-empty Zariski open subset of $\scrA$. 
\end{lemma}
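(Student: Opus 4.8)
The plan is to prove that strong $\mscrG$-non-degeneracy is a generic condition by exhibiting each of the four defining properties \ref{admissible}--\ref{recursively-non-degenerate} as the intersection of a non-empty Zariski open subset of $\scrA$ with the (non-empty, open) locus where no point of $\supp(f_j)$ lies below $\Gamma_j$ and every vertex of $\Gamma_j$ is actually attained; since a finite intersection of non-empty opens in the irreducible variety $\scrA$ is again a non-empty open, this suffices. Property \ref{admissible} is precisely the condition that $\nd(f_j) = \Gamma_j$, which (given that $\supp(f_j) \subseteq \scrP_j$) holds on a non-empty open subset of $\scrA$. Property \ref{strongly-non-degenerate} is handled by \cref{bkk-existence}: for each non-empty $J \subseteq [n]$ the completely BKK non-degenerate systems among $(f_j : j \in J)$ form a non-empty open subset of $\prod_{j \in J} \scrA_j$, and its preimage in $\scrA$ under the coordinate projection is non-empty open; intersect over all $J$. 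Property \ref{recursively-non-degenerate} is of the same nature: the substitution $f_j \mapsto f_{I,j}$ is a linear surjection of $\scrA_j$ onto the space of polynomials supported at $\pi_I(\scrP_j)$ after the dehomogenizing substitution, so complete BKK non-degeneracy of $(f_{I,j} : j \in J)$ pulls back to a non-empty open condition on $\scrA$ (again via \cref{bkk-existence}); intersect over the finitely many pairs $(I,J)$.

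The only property requiring genuine work is \ref{intersectionally-non-degenerate}, the transversality-type condition \eqref{I-J-J'}. Fix $I$, $J$ with $|J| = n - |I|$, $J' \subseteq [n] \setminus J$ with $|J'| = |I| - 1$; we want: for generic $f$, if $\Ki$ is an irreducible component of $V(f_j : j \in J)$ and $V$ is any other irreducible component of $V(f_j : j \in J)$, then $V \cap \Ki \cap V(f_{j'} : j' \in J')$ is finite. First I would note that, generically, $\Ki$ is a component of $V(f_j : j \in J)$ exactly when each $f_j$, $j \in J$, vanishes identically on $\Ki$ (i.e. $\Gamma_j^I = \emptyset$) -- this is the hypothesis, and for the other $j$'s the condition is vacuous, so without loss of generality $f_j|_{\Ki} \equiv 0$ for all $j \in J$. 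Write $f_j = \sum_{i \notin I} x_i f_{j,i} + (\text{higher order in the } x_i, i \notin I)$. On $\Ki$, the scheme $V(f_j : j \in J)$ away from $\Ki$ is governed, to first order in the normal directions, by the $|J| = |I'|$ linear forms $\sum_{i \notin I} x_i f_{j,i}|_{\Ki}$ in the $|I'|$ normal coordinates, whose coefficient matrix $(f_{j,i}|_{\Ki})_{j \in J, i \notin I}$ is a $|I'| \times |I'|$ matrix of polynomials on $\Ki$. The residual components $V$ meeting $\Ki$ are contained in the vanishing locus of the determinant of this matrix together with the vanishing of $f_{j'}|_{\Ki}$, $j' \in J'$; that is $|J'| + 1 = |I|$ equations on $\Ki \cong \kk^{|I|}$, so for generic coefficients this is a finite set by a standard Bertini/dimension count (e.g. \cite[Theorem 3.1]{jelonek-bertini} applied repeatedly, or directly: the incidence variety parametrizing $(f, z)$ with $z \in \Ki$ satisfying these equations has the expected dimension, hence its generic fiber over $\scrA$ is finite). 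The main subtlety is that a residual component $V$ could be tangent to $\Ki$ along a positive-dimensional locus, so that the first-order analysis above does not immediately cut out $V \cap \Ki$; I would address this by passing to the weighted blow-up of $\kk^n$ along $\Ki$ (as in \cref{blow-up-coordinates}) and applying \cref{blow-up-intersection}, reducing the tangency case to a lower-codimension intersection on the exceptional divisor, which is again generically transverse by the same Bertini argument and induction on $|I'|$.

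I expect the genericity for \ref{intersectionally-non-degenerate} to be the main obstacle, precisely because it is not a "no common zero on the torus" statement but a statement about components of an intersection being in the expected dimension, and one must be careful that the relevant incidence varieties dominate $\scrA$ with finite generic fibers even in positive characteristic (which is why I invoke the characteristic-free Bertini theorem \cite[Theorem 3.1]{jelonek-bertini} rather than the classical one). Once all four properties are seen to be non-empty open, the lemma follows:
\begin{align*}
\{f \in \scrA : f \text{ is strongly } \mscrG\text{-non-degenerate}\}
\end{align*}
is a finite intersection of non-empty Zariski open subsets of the irreducible variety $\scrA$, hence is itself non-empty and Zariski open, as claimed. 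I would close by remarking that the hypothesis $\multzeroGamma < \infty$ is used via \cref{mult-support-solution} to guarantee $|\NiG| \geq |I|$ for every $I$, which is what makes the matrix $(f_{j,i}|_{\Ki})$ square (rather than having more rows than columns, in which case $\Ki$ would fail to be a component and \ref{intersectionally-non-degenerate} would be vacuous for that $I$, $J$).
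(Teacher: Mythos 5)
Your treatment of properties \ref{admissible}, \ref{strongly-non-degenerate} and \ref{recursively-non-degenerate} via \cref{bkk-existence} and intersection of finitely many non-empty opens is exactly what the paper does. The divergence is in property \ref{intersectionally-non-degenerate}, and there your argument has a genuine gap. You bound the locus where residual components of $V(f_j: j \in J)$ meet $\Ki$ by the vanishing of $\det\bigl((f_{j,i}|_{\Ki})_{j \in J,\, i \notin I}\bigr)$. The containment itself is fine (where the determinant is nonzero the Jacobian of $(f_j)_{j\in J}$ has rank $|J|$, so the scheme is smooth of dimension $|I|$ there and $\Ki$ is the unique local component — so the tangency worry you raise is actually a non-issue). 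The real problem is that this determinant can be \emph{identically zero} on $\Ki$ for every admissible system: nothing in the hypothesis $\multzeroGamma < \infty$ prevents some $f_j$, $j \in J$, from vanishing to order $\geq 2$ along $\Ki$ (e.g.\ $n=2$, $I=\{1\}$, $J=\{2\}$, $f_2 = x_2^2 g(x_1) + x_2^3(\cdots)$ with suitable $\Gamma_1$), in which case an entire row of your matrix restricts to zero on $\Ki$. Then your ``$|J'|+1 = |I|$ equations on $\kk^{|I|}$'' count collapses to $|J'| = |I|-1$ honest equations and gives a positive-dimensional bound, even though the conclusion \eqref{I-J-J'} is still true in such examples. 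Your proposed repair (weighted blow-up along $\Ki$ plus induction) is the right instinct but is not an argument as stated; one would have to set up and control the higher-order normal data, which is precisely the hard part.

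The paper avoids this entirely by working at the generic point of $\Ki$ over the generic coefficients: it forms $R = K_J(x_1,\ldots,x_k)[x_{k+1},\ldots,x_n]$ localized at the ideal of $\Ki$, observes that $\langle f_j : j \in J\rangle$ has finite colength there (because $\Ki$ is generically a component of a complete intersection of codimension $|J|$), computes a standard basis, and lets $h \in A_J[x_1,\ldots,x_k]$ be the product of all coefficients appearing in the basis elements. Outside $V(h)$, $\Ki$ is the unique component through the point, so the bad locus has dimension $\leq |I|-1$ over generic $f_J$ regardless of the order of vanishing of the $f_j$ along $\Ki$; the final cut by the $|J'| = |I|-1$ generic hypersurfaces $f_{j'}$ then matches your last step. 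You should replace the Jacobian determinant by such an ``elimination'' polynomial $h$ (or otherwise prove that the bad locus is a proper closed subset of $\Ki$ for generic $f_J$) to close the gap.
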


\begin{proof}
\Cref{bkk-existence} implies that the set of systems of polynomials which satisfy properties \ref{admissible}, \ref{strongly-non-degenerate} and \ref{recursively-non-degenerate} contains a non-empty Zariski open subset of $\scrA$. We now show that the same is true for polynomials satisfying property \ref{intersectionally-non-degenerate}. \\

Fix $I, J, J' \subseteq [n]$ as in condition \ref{intersectionally-non-degenerate} of \cref{strongly-generic-defn}. It suffices to show that there is a non-empty Zariski open set $U_{I,J,J'}$ of $\scrA$ such that every $(f_1, \ldots, f_n) \in U_{I,J,J'}$ satisfies \eqref{I-J-J'}. We may assume that $I = \{1, \ldots, k\}$. 
Let $A_J$ be the coordinate ring of $\scrA_J := \prod_{j \in J} \scrA_j$ and $K_J$ be the field of fractions of $A_J $. Let $R := K_J(x_1, \ldots, x_k)[x_{k+1}, \ldots, x_n]$ and $\mmm$ be the maximal ideal of $R$ generated by $x_{k+1}, \ldots, x_n$. Since $V(f_j: j \in J)$ is a complete intersection near generic points of $\Ki$, it follows that the ideal $\qqq_J$ generated by $f_j$, $j \in J$ has finite codimension in $R_{\mmm}$. As in the proof of \cref{generic-length}, compute a standard basis of $\qqq_J$ in $R_{\mmm}$ and consider the product $h \in A_J[x_1, \ldots, x_k]$ of all the coefficients of all the monomials in $(x_{k+1}, \ldots, x_n)$ that appear in the basis elements. Let $W$ be the zero set of $h$ in $\Ki \times \scrA_J$. If $(x, (f_j: j \in J)) \in (\Ki \times \scrA_J) \setminus W$, then $\Ki$ is the only irreducible component of $V(f_j: j \in J)$ containing $x$. \\

Let $\pi_J: W \to \scrA_J$ be the natural projection. If $\pi_J$ is not dominant, then we are done. So assume there is an irreducible component $W'$ of $W$ which projects dominantly to $\scrA_J$. Let
\begin{align*}
Y := \{(x, (f_j: j \in J), (f_{j'}: j' \in J')): x \in V(f_{j'}: j' \in J')\} \cap (W' \times \scrA_{J'})
\end{align*}
where $\scrA_{J'} := \prod_{j' \in J'} \scrA_{j'}$. Let $\pi: Y \to \scrA_J \times \scrA_{J'}$ be the natural projection. It suffices to show that $\pi$ is generically finite-to-one. Fix a generic $f_J := (f_j :j \in J) \in \scrA_J$ such that $h|_{\Ki \times \{f_J\}} \not\equiv 0$. Then $Z := \dim(\pi_J^{-1}(f_J)) = k-1$. Since $|J'| = k-1$, it is straightforward to see that $Z \cap V(f_{j'} : j' \in J')$ is finite for generic $(f_{j'}: j' \in J')$, as required. 
\end{proof}

\subsection{Proof of \cref{multiplicity-thm}} \label{multiplicity-proof-section}
Due to \cref{generic-thm,strong-existence,bkk-lemma}, it suffices to prove the following:

\begin{thm}\label{multiplicity-thm'}
Let $\mscrG := (\Gamma_1, \ldots, \Gamma_n)$ be a collection of $n$ diagrams in $\rr^n$. Let $f_1, \ldots, f_n$ be strongly $\mscrG$-non-degenerate. Then  
\begin{align}
\multzerof
	&:= 	\sum_{I \in \scrI_{\mscrG,1}}
				\multzero{\pi_{I'}(\Gamma_{j_1})}{\pi_{I'}(\Gamma_{j_{n-k}})} \times
				\multstar{\Gamma^{I}_1, \Gamma^{I}_{j'_1}}{\Gamma^{I}_{j'_{k-1}}}  
	\label{mult-formula'}
\end{align}
where the right hand side of \eqref{mult-formula'} is as in \cref{multiplicity-thm}.  
\end{thm}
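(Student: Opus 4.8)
The plan is to prove \cref{multiplicity-thm'} by induction on $n$, following the decomposition scheme sketched in \cref{formula-idea-section}: compute $\multzerof$ by intersecting $V(f_2,\ldots,f_n)$ with $\{f_1=0\}$ branch by branch, and organize the contributions according to the smallest coordinate subspace containing each branch.

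\textbf{Setup and the curve $C:=V(f_2,\ldots,f_n)$.} Since $f_1,\ldots,f_n$ are strongly $\mscrG$-non-degenerate and $\multzeroGamma<\infty$, property \ref{strongly-non-degenerate} gives in particular that $f_2,\ldots,f_n$ are completely BKK non-degenerate, and property \ref{intersectionally-non-degenerate} controls how the components of $V(f_2,\ldots,f_n)$ meet the coordinate subspaces. First I would show that $C:=V(f_2,\ldots,f_n)$ is, locally near the origin, a reduced curve whose irreducible components (branches) fall into two classes: those contained in some coordinate subspace $\Ki$ with $|I|<n$, and those not contained in any proper coordinate subspace. By \cref{mult-support-solution} (applied to $\mscrG$) the hypothesis $\multzeroGamma<\infty$ forces $|\NiiG{I}|\ge|I|$ for all $I$; combined with \ref{strongly-non-degenerate} this pins down exactly which $\Ki$ can contain a component of $C$, namely those with $n-|I|$ of the diagrams $\Gamma_2,\ldots,\Gamma_n$ (equivalently $f_2,\ldots,f_n$) vanishing identically on $\Ki$ — these are indexed by sets $J=[n]\setminus\NiiG{I}$ with $1\notin J$ (since we intersect with $f_1$ last), which is precisely the indexing appearing in \cref{multiplicity-thm}. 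Then $\multzerof=\sum_{\text{branches }B\text{ of }C} \nu_B(f_1)$ by the standard "intersection multiplicity = sum over branches of order of vanishing" formula for complete intersections (this is where \cref{technical-section}, referenced in the excerpt, and \cref{isolated-family}-type results enter).

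\textbf{Branches inside a fixed $\Ki$.} Fix $I$ with the property that $\NiiG{I}=\{1,j'_1,\ldots,j'_{k-1}\}$ has size $k=|I|$ (so $1\in\NiiG{I}$, i.e.\ $I\in\scrI_{\mscrG,1}$) and $I':=[n]\setminus I$. Along any branch $B\subseteq\Ki$, the functions $f_j$ with $j\in J=[n]\setminus\NiiG{I}$ vanish identically; the branch is cut out near the origin on $\Ki$ by $f_{j'_1}|_{\Ki},\ldots,f_{j'_{k-1}}|_{\Ki}$, which by property \ref{intersectionally-non-degenerate} form a curve on $\Kstari$ (the component $\Ki$ of $V(f_j:j\in J)$ being split off). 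I would then invoke \cref{branch-lemma} together with the weighted-blow-up picture of \cref{blow-up-intersection,blow-up-coordinates}: each branch $B$ at the origin on $\Kstari$ has an associated $\nu_B^I\in\Vzeroi$, and summing $\nu_B(f_1)$ over all such branches of the curve $V(f_{j'_1}|_{\Ki},\ldots,f_{j'_{k-1}}|_{\Ki})$ yields exactly $\multstar{\Gamma^I_1,\Gamma^I_{j'_1}}{\Gamma^I_{j'_{k-1}}}$ — this is the content of \cref{multiplicity-explanation}(c), and it reduces to the Bernstein-type identity \eqref{mixed-expression} together with the fact that $\ord$ of $f_1$ along a branch with weight $\nu$ equals $\nu(\Gamma_1)$ (using complete BKK non-degeneracy so that the initial form of $f_1$ does not vanish at the branch's initial point). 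Crucially, each branch $B\subseteq\Ki$ contributes to $\multzerof$ with the multiplicity of the component of $V(f_{j_1},\ldots,f_{j_{n-k}})$ along $\Ki$; that multiplicity is precisely $\multzero{\pi_{I'}(\Gamma_{j_1})}{\pi_{I'}(\Gamma_{j_{n-k}})}$ by the induction hypothesis applied in dimension $|I'|=n-k$ to the restricted/dehomogenized system $f_{I,j}$ (property \ref{recursively-non-degenerate} guarantees these dehomogenized systems are strongly non-degenerate, so the induction applies) — this is \cref{multiplicity-explanation}(b). Multiplying and summing over $I\in\scrI_{\mscrG,1}$ gives the right-hand side of \eqref{mult-formula'}; the $I=[n]$ term contributes $\multstar{\Gamma_1}{\Gamma_n}$ with the empty product interpreted as $1$ per \cref{formula-convention}, handling the branches not contained in any proper coordinate subspace.

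\textbf{Main obstacle.} The delicate point — and the step I expect to require the most care — is the bookkeeping that shows each branch is counted exactly once and with the correct multiplicity when we split $V(f_2,\ldots,f_n)$ into the "toric" part on each $\Kstari$ and the "coordinate-subspace" multiplicities $\multzero{\pi_{I'}(\Gamma_{j_1})}{\pi_{I'}(\Gamma_{j_{n-k}})}$. Concretely: a component $V$ of $V(f_j:j\in J)$ other than $\Ki$ could, a priori, meet $\Ki\cap V(f_{j'}:j'\in J')$ in positive dimension, which would ruin the finiteness needed to extract a well-defined branch count; property \ref{intersectionally-non-degenerate} is exactly designed to exclude this, and I would need to use it carefully — together with \cref{reduction-lemma} to dispose of coordinate subspaces $K$ with $\dim K<N_K$, as in the proof of \cref{generic-thm} — to make the branch-by-branch accounting rigorous and to verify that the "limit points" of the curves $C$ on the various $\Ki$ are recorded by the correct terms. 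A secondary technical nuisance is keeping track of the order of vanishing of $f_1$ along branches that approach the origin tangentially to several coordinate subspaces at once; here one falls back on the weighted-blow-up charts of \cref{blow-up-coordinates}(c) to reduce to a transverse situation, and then \cref{technical-section}'s lemma on the algebraic definition of intersection multiplicity finishes the identification $\nu_B(f_1)=\sum \nu(\Gamma_1)\cdot(\text{local contribution})$. Once these two points are settled, the formula \eqref{mult-formula'} assembles formally, and the equivalence with \eqref{multiplicity-formula'} (hence with \eqref{multiplicity-formula}) is the combinatorial reorganization already noted in \cref{mult-monotone}.
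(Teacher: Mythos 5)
Your proposal follows essentially the same route as the paper's proof: induction on $n$, decomposing the one-dimensional scheme $V(f_2,\ldots,f_n)$ near the origin into branches grouped by the smallest coordinate subspace $\Ki$ containing them, factoring each branch's contribution as (multiplicity of $V(f_{j_1},\ldots,f_{j_{n-k}})$ along $\Ki$, computed by the inductive hypothesis via property \ref{recursively-non-degenerate} and \cref{generic-length}) times (the toric count on $\Kstari$, computed via weighted blow-ups and Bernstein's theorem), with property \ref{intersectionally-non-degenerate} justifying the factorization — exactly as in the paper. The only cosmetic slips are the initial display omitting the factor $\ord_B(f_2,\ldots,f_n)$ (which you restore in the next paragraph) and the unnecessary claim that the curve is reduced; neither affects the argument.
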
 

\begin{proof}
We prove \cref{multiplicity-thm'} by induction on $n$. It suffices to treat the case that $\multzeroGamma < \infty$. The theorem is true for $n = 1$ (see \cref{formula-convention}), so assume it is true for all dimensions smaller than $n$. It is straightforward to check that if $0 \in \Gamma_j$ for some  $j$, $1 \leq j \leq n$, then both sides of \eqref{mult-formula'} are zero. So assume $0 \not\in \Gamma_j$ for all $j$, $1 \leq j \leq n$.\\

Let $\scrZ_0$ be the union of irreducible components of $V(f_2, \ldots, f_n)$ that contain the origin. Since $0 < \multzerof < \infty$ (combine \cref{generic-thm,bkk-lemma}) it follows that $\dim(\scrZ_0) = 1$. For each $I \subseteq [n]$, let $\Zzeroi := \overline{\scrZ_0 \cap \Kstari}$, and let $\Bzeroi$ be the (possibly empty) set of branches of $\Zzeroi$. For each $\nu \in \Vzeroi$, let $\Bzeroinu$ be the set of all branches $B \in \Bzeroi$ such that $\nuBi = \nu$. Then
\begin{align}
\multzerof
	&= \sum_{I \subseteq [n]} \sum_{\nu \in \Vzeroi} \sum_{B \in \Bzeroinu} \ord_B(f_2, \ldots, f_n)\nu_B(f_1) \label{all-Bzeroinu-sum}
\end{align}
where $\ord_B(f_2, \ldots, f_n)$ is the intersection multiplicity of $f_2, \ldots, f_n$ along $B$. We now compute the right hand side of \eqref{all-Bzeroinu-sum}.

\begin{lemma}
Let $I$ be a non-empty subset of $[n]$ and  $\nu \in \Vzeroi$. 
\begin{enumerate}
\item \label{Bzeroinu-non-empty} If $\Bzeroinu \neq \emptyset$ then $I \in \scrI_{\mscrG,1}$. 
\item \label{I_1} Assume $I \in \scrI_{\mscrG,1}$. Then
\begin{align}
&\nu_B(f_1) = \nu(f_1|_{\Ki})\ \text{for each $B \in \Bzeroinu$, and} \label{nu_B-f_1} \\
&\sum_{B \in \Vzeroi} \ord_B(f_{j'_1}|_{\Ki}, \ldots, f_{j'_{k-1}}|_{\Ki}) 
	=  \mv(\In_\nu(\Gamma^I_{j'_1}), \ldots, \In_\nu(\Gamma^I_{j'_{k-1}}) ) \label{Bzeroinu-sum}
\end{align} 
where $j'_1, \ldots, j'_{k-1}$ are defined as in \cref{multiplicity-thm}. 
\end{enumerate}
\end{lemma}

\begin{proof}
At first we prove assertion \ref{Bzeroinu-non-empty}. Let 
\begin{align*}
J  &:= \{j: 2 \leq j \leq n,\  f_j|_{\Ki} \equiv 0\} = \{j: 2 \leq j \leq n,\  \Gamma^I_j = \emptyset \} \\
J' &:= \{2, \ldots, n\} \setminus J
\end{align*}
Pick $B \in \Bzeroinu$. Since $\In(B) \in \Kstari \cap V(\In_\nu(f_2|_{\Ki}), \ldots, \In_\nu(f_n|_{\Ki}))$ (\cref{branch-lemma}), property \ref{strongly-non-degenerate} of \cref{strongly-generic-defn} implies that
\begin{align*}
|I| - 1 \geq \dim(\In_\nu(\sum_{j \in J'} \np(f_j|_{\Ki})))  \geq |J'| = n - 1 - |J|,
\end{align*}
so that $|J| \geq n-|I|$. On the other hand since $\mscrG$ is $I$-isolated at the origin (\cref{mult-support-solution}), it follows that $|J|\leq n- |I|$. Therefor $|J| = n - |I|$. The $I$-isolation at the origin then also implies that $f_1|_{\Ki} \not\equiv 0$, so that $I \in \scrI_{\mscrG,1}$, as required.\\

Now we prove assertion \ref{I_1}. We may assume $J' = \{2, \ldots, k\}$. Let $\Xnu$ be the $\nu$-weighted blow up $\sigma_\nu: \Xnu \to \Ki$ at the origin and let $U := \Kstari \cup \Enustar \subseteq \Xnu$ (\cref{blow-up-defn}). Then $U$ is open in $\Xnu$. For each $j$,  $1 \leq j \leq k$, let $D_j$ be the divisor on $\Ki$ determined by $f_j$, and $D'_j$ be the strict transform of $D_j$ on $\Xnu$. The complete BKK non-degeneracy of $f_2, \ldots, f_n$ implies that there are only finitely many points in $\Enu$ common to $D'_2, \ldots, D'_k$, and all these points are in  $\Enustar$; moreover, these are precisely the points on the strict transforms of the branches in $\Bzeroinu$. Finally, by the BKK non-degeneracy at the origin, none of the points in $\Enustar \cap D'_2 \cdots \cap D'_k$ belong to $D'_1$; in particular this implies identity \eqref{nu_B-f_1}. It follows that
\begin{align*}
&\sum_{B \in \Vzeroi} \nu_B(f_1)\ord_B(f_{j'_1}|_{\Ki}, \ldots, f_{j'_{k-1}}|_{\Ki}) \\
&\qquad\qquad	
	= (\sigma_{\nu}^*(D_1)|_U, D'_2|_U, \ldots, D'_n|_U) 
	= \nu(f_1|_{\Ki})(\Enustar + D'_1|_U, D'_2|_U, \ldots, D'_n|_U) \\
&\qquad\qquad	
	= \nu(f_1|_{\Ki})(\Enustar, D'_2|_U, \ldots, D'_n|_U) 
	= \nu(f_1|_{\Ki})( D'_2|_{\Enustar}, \ldots, D'_n|_{\Enustar}) \\
&\qquad\qquad	
	= \nu(f_1|_{\Ki}) \mult{\In_\nu(f_2|_{\Ki})}{\In_\nu(f_k|_{\Ki})}_{\Kstari}
	=  \nu(f_1|_{\Ki})\mv(\In_\nu(\Gamma^I_2, \ldots, \In_\nu(\Gamma^I_k) ),
\end{align*}
where the last equality follows from Bernstein's theorem. This proves identity \eqref{Bzeroinu-sum}.
\end{proof}

Now let $B \in \Bzeroinu$, where $I \in \scrI_{\mscrG,1}$ and $\nu \in \Vzeroi$. Let $j_1, \ldots, j_{n-k}$ (resp.\ $j'_1, \ldots, j'_{k-1}$) be the elements of $J_I$ (resp.\ $\{2, \ldots, n\}\setminus J_I$). Let $D_j$ be the Cartier divisor on $\kk^n$ defined by $f_j$, $2 \leq j \leq n$. Since $D_j$'s intersect properly at $B$, $\ord_B(f_2, \ldots, f_n)$ is the coefficient of (the closure of) $B$ in the (proper) intersection product $D_2 \cdot D_3 \cdots D_n$ = $(D_{j_1} \cdots D_{j_{n-k}})(D_{j'_1} \cdots D_{j'_{k-1}})$. Note that 
\begin{itemize}
\item $\Ki$ is an irreducible component of $D_{j_1} \cdots D_{j_{n-k}}$, and 
\item property \ref{intersectionally-non-degenerate} of strong non-degeneracy ensures that $B$ is not contained in any other irreducible component of $D_{j_1} \cdots D_{j_{n-k}}$.
\end{itemize}
It follows that 
\begin{align}
\ord_B(f_2, \ldots, f_n) = 
	\ord_{\Ki}(f_{j_1}, \ldots, f_{j_{n-k}}) \ord_B(f_{j'_1}|_{\Ki}, \ldots, f_{j'_{k-1}}|_{\Ki})
\end{align}
Let $i'_1, \ldots, i'_{n-k}$ be the elements of $I' = [n]\setminus I$. As in property \ref{recursively-non-degenerate} of strong non-degeneracy, let $f_{I',j_1}, \ldots, f_{I',j_{n-k}}$ be the polynomials in $(x_{i'}: i' \in I')$ formed by specializing $(x_i: i \in I)$ to $(1, \ldots, 1)$. Then $f_{I',j_1}, \ldots, f_{I',j_{n-k}}$ are completely BKK non-degenerate. \Cref{bkk-deformation} implies that complete BKK non-degeneracy is preserved by specialization of $(x_i: i \in I)$ in the expressions for $f_{j_1}, \ldots, f_{j_{n-k}}$ to generic $\xi \in (\kk^*)^k$. \Cref{generic-length} and the inductive hypothesis then imply that 
\begin{align}
\ord_{\Ki}(f_{j_1}, \ldots, f_{j_{n-k}}) = \multzero{\pi_{I'}(\Gamma_{j_1})}{\pi_{I'}(\Gamma_{j_{n-k}})} \label{ord-Ki}
\end{align}
\Cref{multiplicity-thm'} follows from combining identities \eqref{all-Bzeroinu-sum}--\eqref{ord-Ki}. 
\end{proof}

\section{Proof of the extended BKK bound} \label{bkk-proof-section}
In this section we prove \cref{bkk-thm} following the same approach of the proof of \cref{multiplicity-thm} in \cref{multiplicity-section}. Throughout this section $\mscrP:= (\scrP_1, \ldots, \scrP_n)$ is a collection of $n$ convex integral polytopes in $\rr^n$.

\begin{defn} \label{strongly-bkk-non-degenerate}
 Given $f_1, \ldots, f_n \in \kk[x_1, \ldots, x_n]$, we say that they are {\em strongly $\mscrP$-non-degenerate} iff they are $\mscrP$-admissible and satisfy properties \ref{strongly-non-degenerate}, \ref{intersectionally-non-degenerate} and \ref{recursively-non-degenerate} of \cref{strongly-generic-defn}. 
\end{defn}

\Cref{strong-existence} shows that generic systems of polynomials with support $\mscrP$ are strongly $\mscrP$-non-degenerate. We prove \cref{bkk-thm} via the same arguments as in the proof of \cref{multiplicity-thm}. In particular, we show the following: if $f_1, \ldots, f_n$ are strongly $\mscrP$-non-degenerate, then 
\begin{align}
\multisof_{\KnbarS} 
		&= 	\sum_{I \in \ISPone}
					\multzero{\pi_{I'}(\Gamma_{j_1})}{\pi_{I'}(\Gamma_{j_{n-k}})} \times
					\multstarinftySS{\scrP^{I}_1, \scrP^{I}_{j'_1}}{\scrP^{I}_{j'_{k-1}}}{(\overbar \mscrS \cup \SP)^I}
		\label{bkk-formula'}
\end{align}
where the right hand side of \eqref{bkk-formula'} is as in \cref{bkk-thm}.  \\

Let $\mscrS' := \overbar \mscrS \cup \SP$. Define
\begin{align*}
\ISP &:= \{I \subseteq [n]:  I \not\in \overbar \mscrS \cup \SP \cup \{\emptyset\},\ |\NiP| = |I|,\ \text{$\mscrP$ is $\Kstari$-non-trivial} \} 
\end{align*}
where $\NiP$ is as in \eqref{NiP}, i.e.\ $\ISPone$ is simply the subset of $\ISP$ consisting of subsets of $[n]$ containing $1$. \Cref{herrero}, \cref{Pisolated-cor} and assertion \ref{bkk-trivial} of \cref{bkk-lemma} imply that
\begin{prooflist}
\item \label{isolated-observation-1}  all roots of $f_1, \ldots, f_n$ in $X^{\ISP} := \bigcup_{I \in \ISP}\Kstari$ are isolated, 
\item \label{isolated-observation-2} and in turn each isolated root of $f_1, \ldots, f_n$ is contained in $X^{\ISP}$. 
\end{prooflist}
Let $\scrZ$ be the union of irreducible components of $V(f_2, \ldots, f_n)$ on $X^{\ISP}$. Pick an irreducible component $Z$ of $\scrZ$ and the the smallest $I \in \ISP$ such that $\Kstari$ contains $Z$.

\begin{claim}\label{dimension-claim}
$\dim(Z) = 1$ and $I \in \ISPone $.  
\end{claim}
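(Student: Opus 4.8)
\textbf{Proof proposal for Claim \ref{dimension-claim}.}

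The plan is to mimic the structure of the analogous step in the proof of \cref{multiplicity-thm'} (the sublemma there asserting $\Bzeroinu \neq \emptyset \Rightarrow I \in \scrI_{\mscrG,1}$), adapting it from the ``at the origin'' setting to the ``on $X^{\ISP}$'' setting. First I would show $\dim(Z) = 1$. Since $Z$ is an irreducible component of $V(f_2, \ldots, f_n)$ it has dimension $\geq 1$ (it is cut out by $n-1$ equations in $\kk^n$); and since $Z \subseteq X^{\ISP}$ and every point of $X^{\ISP}$ is an isolated zero of $f_1, \ldots, f_n$ by observation \ref{isolated-observation-1}, if $\dim(Z) \geq 2$ we could intersect $Z$ with a generic hypersurface through a point of $Z$ to produce a positive-dimensional subvariety of $V(f_1, \ldots, f_n)$ contained in $X^{\ISP}$, contradicting isolatedness; hence $\dim(Z) = 1$. (Alternatively, and more cleanly: $Z \subseteq \Kstari$ with $I \in \ISP$, so $|I| = |\NiP|$; the restrictions $f_j|_{\Ki}$ that are not identically zero number exactly $|I| - 1 = \dim(\Kstari) - 1$ among $j \in \{2, \ldots, n\}$ once one checks the count, so $Z$ is a component of a complete intersection of $\dim(\Kstari) - 1$ equations in $\Kstari$, giving $\dim Z = 1$ — but this count is really part of the second half, so I would fold it in there.)

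Next I would prove $I \in \ISPone$, i.e.\ that $1 \in \NiP$ and $|\NiP| = |I|$ and $\mscrP$ is $\Kstari$-non-trivial. The last two hold because $I \in \ISP$ by choice, so the content is $1 \in \NiP$, equivalently $f_1|_{\Ki} \not\equiv 0$. Set $J := \{j : 2 \leq j \leq n,\ f_j|_{\Ki} \equiv 0\} = \{j : 2 \leq j \leq n,\ \scrP^I_j = \emptyset\}$ and $J' := \{2, \ldots, n\} \setminus J$. Because $I$ was chosen minimal with $Z \subseteq \Kstari$, the curve $Z$ is not contained in any proper coordinate subspace of $\Ki$, so taking a branch $B$ of $Z$ (at a point of $\Kstari$, or at infinity, or in the boundary — any branch works for the dimension count) and invoking \cref{branch-lemma}, the ``initial point'' $\In(B) \in \Kstari$ is a common zero of $\In_\nu(f_j|_{\Ki})$ for all $j \in J'$, where $\nu = \nu_B^I$. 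Property \ref{strongly-non-degenerate} of strong $\mscrP$-non-degeneracy (complete BKK non-degeneracy of $\{f_j : j \in J'\}$, applied on $\Ki$) then forces
\begin{align*}
|I| - 1 \geq \dim\Bigl(\In_\nu\bigl(\textstyle\sum_{j \in J'}\np(f_j|_{\Ki})\bigr)\Bigr) \geq |J'| = n - 1 - |J|,
\end{align*}
hence $|J| \geq n - |I|$. On the other hand, since $I \in \ISP$ we have $|\NiP| = |I|$, i.e.\ exactly $|I|$ of the polynomials $f_1, \ldots, f_n$ do not vanish identically on $\Ki$; so at most $n - |I|$ of $f_2, \ldots, f_n$ vanish identically there, giving $|J| \leq n - |I|$. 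Therefore $|J| = n - |I|$, which forces $f_1|_{\Ki} \not\equiv 0$ (otherwise $f_1$ would be among the $n - |I|$ vanishing ones, leaving only $n - 1 - |J| = |I| - 1 < |I|$ nonvanishing polynomials, contradicting $|\NiP| = |I|$). Thus $1 \in \NiP$ and $I \in \ISPone$. The same count shows $Z$ is a component of the complete intersection $V(f_j|_{\Ki} : j \in J')$ of $|J'| = |I| - 1$ equations in the $|I|$-dimensional $\Kstari$, so $\dim Z = 1$, completing the dimension claim too.

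The main obstacle I anticipate is bookkeeping the interaction between ``$I$ minimal with $Z \subseteq \Kstari$'' and the three possible locations of the limiting branch point — on the torus of a smaller coordinate subspace (excluded by minimality), at infinity, or genuinely on $\Kstari$. One must be careful that \cref{branch-lemma} is applied with the correct $I_B$ and that the non-triviality of $\mscrP$ on $\Kstari$ (inherited from $I \in \ISP$) is genuinely needed to rule out the degenerate case $J' = \emptyset$, $|I| = n$. Beyond that the argument is a direct transcription of the origin-case sublemma, with $\Vzeroi$ replaced by the appropriate space of monomial valuations/weighted degrees centered at the torus or at infinity, so I do not expect further surprises.
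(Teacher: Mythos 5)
Your overall route is essentially the paper's: the paper also gets $|\NiP| = |I|$ for free from $I \in \ISP$, deduces $1 \in \NiP$ from complete BKK non-degeneracy, and then obtains $\dim Z = 1$ from the resulting count $|\NiP \setminus \{1\}| = |I| - 1$. The only structural difference is that for $1 \in \NiP$ the paper argues by contradiction via \cref{bkk-lemma}\eqref{bkk-finite}: if $1 \notin \NiP$ then the $|I|$ surviving polynomials among $f_2|_{\Ki}, \ldots, f_n|_{\Ki}$ are completely BKK non-degenerate on the $|I|$-dimensional torus $\Kstari$, so their common zero set there is finite, contradicting $\dim(Z) \geq 1$; you instead redo the branch-plus-initial-face dimension count from the sublemma in the proof of \cref{multiplicity-thm'}. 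That is fine in spirit (it is how \cref{bkk-lemma}\eqref{bkk-finite} is itself proved), but two of your steps are wrong as written. First, your parenthetical remark that ``any branch works for the dimension count'' is false: a branch of $Z$ centered at a point of $\Kstari$ has $\nuBi$ trivial, in which case $\In_\nu$ of the Minkowski sum is the whole polytope and the bound is $|I|$, not $|I|-1$; with that you only get $|J| \geq n-1-|I|$, which does not exclude $1 \notin \NiP$. You must take a branch escaping $\Kstari$ (at infinity or at the torus boundary), which exists precisely because a positive-dimensional quasi-affine $Z \cap \Kstari$ is not complete.

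Second, your first argument for $\dim(Z)=1$ does not work: slicing $Z$ by a generic hypersurface produces a positive-dimensional subvariety of $V(f_2,\ldots,f_n)$, not of $V(f_1,\ldots,f_n)$, so observation \ref{isolated-observation-1} (which concerns roots of the full system $f_1,\ldots,f_n$) is not contradicted. You correctly sense this and fall back on the complete-intersection count, but note that ``$Z$ is a component of a complete intersection of $|I|-1$ equations in $\Kstari$'' only gives $\dim(Z) \geq 1$ for free; the upper bound $\dim(Z) \leq 1$ is exactly the claim that the intersection is proper, and this is where complete BKK non-degeneracy must be invoked again (the paper delegates this to ``the same arguments as in the proof of \cref{bkk-dimension}''). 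So the proposal is reparable and follows the paper's strategy, but both halves need the non-degeneracy input made explicit rather than asserted.
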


\begin{proof}
It follows from definition of $\scrI$ that $|\NiP| = |I|$. If $\NiP \subseteq \{2, \ldots, n\}$, then \cref{bkk-lemma} implies that $V(f_2, \ldots, f_n) \cap \Kstari$ is finite. Since $\dim(Z) \geq 1$, this contradicts the construction of $I$. It follows that $1 \in \NiP$, so that $I \in \ISPone $. This implies that $|\NiP\setminus \{1\}| = |I| - 1$. The same arguments as in the proof of \cref{bkk-dimension} then show that $\dim(V(f_j: 2 \leq j \leq n) \cap \Kstari) = 1$. 
\end{proof}
For each $I \in \ISPone$, let $\Zi := \overline{\scrZ \cap \Kstari}$, and let $\Bi$ be the set of branches of $\Zi$. For each $\nu \in \VSprimei$ (resp.\ $\omega \in \wtsi$), let $\Binu$ (resp.\ $\Biomega$) be the set of all branches $B \in \Bi$ such that $\nuBi = \nu$ (resp.\ $\nuBi = -\omega$). Observations \ref{isolated-observation-1}, \ref{isolated-observation-2} and \cref{dimension-claim} imply that
\begin{align*}
\multisof_{\KnbarS}
	&= -\sum_{I \in \ISPone} \left(
	\sum_{\omega \in \wtsi} \sum_{B \in \Biomega} \ord_B(f_2, \ldots, f_n)
	+\sum_{\nu \in \VSprimei} \sum_{B \in \Binu} \ord_B(f_2, \ldots, f_n)
	\right) \nu_B(f_1)
\end{align*}
Identity \eqref{bkk-formula'} now follows exactly in the same way as the proof of \eqref{mult-formula'} from \eqref{all-Bzeroinu-sum}. \qed

\appendix
 
\section{Existence and deformations of non-degenerate systems} \label{bkk-section}
In this section we establish existence of non-degenerate (with respect to a given collection of diagrams or polytopes) systems of polynomials and show that generic systems are non-degenerate. Throughout this section $f_1, \ldots, f_m$, $m \geq 1$, denote polynomials in $(x_1, \ldots, x_n)$ and $\Gamma_i$ (resp.\ $\scrP_i$) denote the Newton diagram (resp.\ Newton polytope) of $f_i$, $i = 1, \ldots, n$.

\begin{defn} \label{bkk-defn}
 \mbox{}
\begin{itemize}
\item We say that $f_1, \ldots, f_m$ are {\em completely BKK non-degenerate} iff for all $J \subseteq [m]$, $\In_\nu(f_j)$, $j \in J$, have no common zero in $\nktorus$ for all $\nu \in \V$ such that $\dim(\In_\nu(\sum_{j \in J} \np(f_j))) < |J|$. 
\item Let $\mscrS$ be a collection of subsets of $[n]$. We say that $f_1, \ldots, f_m$ are {\em completely BKK-non-degenerate on $\kk^n$} iff $\{f_j|_{\Ki}: f_j|_{\Ki} \not\equiv 0\}$ are completely BKK non-degenerate for every non-empty subset $I$ of $[n]$. 
\end{itemize}
\end{defn}


\begin{lemma} \label{bkk-lemma}
Let $\mscrG := (\Gamma_1, \ldots, \Gamma_n)$ and $\mscrP := (\scrP_1 \ldots, \scrP_n)$. 
\begin{enumerate}
\item \label{bkk-to-G} Assume $\multzero{\Gamma_1}{\Gamma_n} < \infty$. If $f_1, \ldots, f_n$ are completely BKK non-degenerate on $\kk^n$, then they are $\mscrG$-non-degenerate.
\item 
\begin{enumerate}
\item \label{bkk-finite} If $f_1, \ldots, f_n$ are completely BKK non-degenerate, then $V(f_1, \ldots, f_n) \cap \nktorus$ is finite. 
\item \label{bkk-to-P} Let $\mscrS$ be a collection of subsets of $[n]$. If $f_1, \ldots, f_n$ are completely BKK non-degenerate on $\kk^n$, then they are $\mscrP$-non-degenerate on $\KnbarS$. 
\end{enumerate}
\item \label{bkk-trivial} Assume $f_1, \ldots, f_n$ are completely BKK non-degenerate on $\kk^n$. Let $I \subseteq [n]$ be such that $\mscrP$ is $\Kstari$-trivial. Then $V(\In_{\nu}(f_1|_{\Ki}), \ldots, \In_{\nu}(f_n|_{\Ki})) \cap \nktorus = \emptyset$ for all $\nu \in \Vi$. 
\end{enumerate}
\end{lemma}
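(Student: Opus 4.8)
\textbf{Proof plan for \cref{bkk-lemma}.}

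The plan is to prove the three assertions essentially independently, each by combining the combinatorial characterization of $\Kstari$-triviality from \cref{herrero} with the definition of (complete) BKK non-degeneracy. For assertion \eqref{bkk-finite}, first note that complete BKK non-degeneracy in particular requires (take $J = [n]$, $\nu$ trivial, noting $\dim(\np(\sum_j f_j)) \le n$; and for non-trivial $\nu$ the hypothesis already rules out common zeroes of $\In_\nu(f_j)$ when the leading face has dimension $< n$) that the system be BKK non-degenerate in the torus in the usual sense of \eqref{bkk}. Then finiteness of $V(f_1,\ldots,f_n) \cap \nktorus$ is Bernstein's theorem: a positive-dimensional component would, by picking a branch at infinity and applying \cref{branch-lemma}, produce a common zero of the initial forms $\In_\nu(f_j)$ for some non-trivial $\nu$, contradicting non-degeneracy. (Alternatively this is literally Kushnirenko's/Bernstein's finiteness statement and can simply be cited.)

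For assertion \eqref{bkk-to-G}: given $\multzero{\Gamma_1}{\Gamma_n} < \infty$ and complete BKK non-degeneracy on $\kk^n$, I must show $f_1|_{\Ki},\ldots,f_n|_{\Ki}$ are BKK non-degenerate at the origin for every non-empty $I \subseteq [n]$, i.e.\ for every $\nu \in \Vzeroi$ the initial forms $\In_\nu(f_j|_{\Ki})$ have no common zero in $\nktorus$. Fix such $I$ and $\nu$, and let $J := \{j : f_j|_{\Ki} \not\equiv 0\} = \NiG$ (using \cref{mult-support-solution}, $|J| \ge |I|$). Since $\multzeroGamma < \infty$, \cref{mult-support-solution} also forces $\mscrG$ to be $I$-isolated, and in fact a common zero in $\nktorus$ of $\In_\nu(f_j|_{\Ki})$, $j \in J$, would force $\dim(\In_\nu(\sum_{j\in J}\np(f_j|_{\Ki}))) \ge |I|$ only if... here I should argue that complete BKK non-degeneracy of the restricted system $\{f_j|_{\Ki}\}_{j \in J}$ (which holds by definition of "completely BKK non-degenerate on $\kk^n$") directly excludes such a common zero when $|J| = |I|$, because then the relevant leading face has dimension $\le |I| - 1 < |J|$; and the case $|J| > |I|$ is exactly the situation handled by \cref{reduction-lemma}, which lets us pass to a larger coordinate subspace where the count of non-vanishing polynomials equals the dimension, reducing to the previous case. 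This is essentially the argument already run in the proof of \cref{weak-lemma2}, so I would cite that structure.

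For assertion \eqref{bkk-trivial}: assume complete BKK non-degeneracy on $\kk^n$, fix $I$ with $\mscrP$ being $\Kstari$-trivial, and fix $\nu \in \Vi$ (not necessarily centered anywhere). By \cref{herrero}, $\Kstari$-triviality gives a subset $J \subseteq [n]$ with $|\{j \in J : \scrP^I_j \neq \emptyset\}| > \dim(\sum_{j \in J}\scrP^I_j)$; let $J_0 := \{j \in J : f_j|_{\Ki} \not\equiv 0\}$, so $|J_0| > \dim(\sum_{j \in J_0}\np(f_j|_{\Ki})) \ge \dim(\In_\nu(\sum_{j \in J_0}\np(f_j|_{\Ki})))$. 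Complete BKK non-degeneracy of $\{f_j|_{\Ki}\}_{j \in J_0}$ then says precisely that $\In_\nu(f_j|_{\Ki})$, $j \in J_0$, have no common zero in $\nktorus$; a fortiori the full system $\In_\nu(f_1|_{\Ki}),\ldots,\In_\nu(f_n|_{\Ki})$ has no common zero there. Finally assertion \eqref{bkk-to-P} follows from the definitions of $\mscrP$-non-degeneracy on $\KnbarS$ (\cref{generic-bkk-defn}) and \eqref{bkk-trivial}: conditions \ref{TSPrime-property} and \ref{NTSP-property} each ask for absence of common zeroes of initial/leading forms of restrictions $f_j|_{\Ki}$ for various $\nu$, and each such requirement is either a direct instance of complete BKK non-degeneracy of the restricted subsystem, or — for the valuations arising from $\TSP$ and from $\overbar\mscrS$ on exotrivial subspaces — reduces to \eqref{bkk-trivial} after using \cref{absolute-defn} and \cref{reduction-lemma} to pass to a subspace where the relevant polytope sum is genuinely deficient. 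The main obstacle I anticipate is bookkeeping in \eqref{bkk-to-P}: correctly matching each clause of \cref{generic-bkk-defn} to the right deficiency condition, and invoking \cref{reduction-lemma} to handle the coordinate subspaces where $\dim(\Ki)$ exceeds the number of non-vanishing $f_j$'s, exactly the pathology flagged in \cref{pathologies}.
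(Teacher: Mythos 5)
Your proposal is correct, and for assertions \eqref{bkk-finite} and \eqref{bkk-trivial} it coincides with the paper's argument (a branch at infinity plus \cref{branch-lemma} for the former; \cref{herrero} combined with \cref{bkk-defn} for the latter). The one place you genuinely diverge is in \eqref{bkk-to-G} (and the analogous bookkeeping in \eqref{bkk-to-P}), where you route the case $|J| > |I|$ through \cref{reduction-lemma}: this works, but it is an unnecessary detour. For any non-trivial $\nu$ on $\Ki$ the initial face $\In_\nu(\sum_{j \in J}\np(f_j|_{\Ki}))$ has dimension at most $|I|-1$, so the deficiency condition $\dim < |J|$ in \cref{bkk-defn} is met for $J = \NiG$ as soon as $|\NiG| \ge |I|$ --- which \cref{mult-support-solution} guarantees when $\multzeroGamma < \infty$ --- regardless of whether $|\NiG|$ equals $|I|$ or exceeds it; the same observation (with $\NiP$ in place of $\NiG$, using $I \notin \SP$) disposes of every valuation occurring in conditions \ref{TSPrime-property} and \ref{NTSP-property}, since each such valuation is non-trivial on $\Ki$. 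This is why the paper can dismiss \eqref{bkk-to-G} and \eqref{bkk-to-P} as immediate from the definitions, whereas your version proves the same thing with strictly more machinery. (One small inaccuracy: your parenthetical in \eqref{bkk-finite} about taking $J=[n]$ with $\nu$ trivial yields nothing, since $\dim(\np(\sum_j f_j)) \le n$ is not $< n$; this is harmless, because the valuation attached to a branch at infinity is automatically non-trivial.)
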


\begin{proof}
Assertions \eqref{bkk-to-G} and \eqref{bkk-to-P} follow immediately from definitions. Assertion \eqref{bkk-finite} is a consequence of the following observation: if $V(f_1, \ldots, f_n) \cap \nktorus $ contains a curve $C$ and $\nu$ is the monomial valuation corresponding to a branch $B$ of $C$ at infinity, then $V(\In_\nu(f_1), \ldots, \In_\nu(f_n)) \cap \nktorus \neq \emptyset $ (\cref{branch-lemma}). Assertion \eqref{bkk-trivial} follows from \cref{bkk-defn,herrero}. 
\end{proof}

Let $A_1, \ldots, A_m$ be finite subsets of $\zz^n$. For each $j$, $1 \leq j \leq m$, we denote by $\scrA_j$ the space of polynomials $f$ such that $\supp(f) \subseteq A_j$. Similarly, given $J \subseteq [m]$, we denote by $\scrA_J$ the space of $|J|$-tuples $(f_j: j \in J)$ of polynomials such that $\supp(f_j) \subseteq A_j$ for each $j \in J$. Note that $\scrA_J \cong \prod_{j \in J} \scrA_j$. Define $\scrP_j := \conv(A_j)$, $j \in J$. Let $I \subseteq [n]$ be such that $\scrP^I_j \neq \emptyset$ for all $j \in J$. Define $\scrP^I_J := \sum_{j \in J} \scrP^I_j$. 

\begin{lemma} \label{non-degenerate-lemma}
Let $\nu \in \Vi$ be such that $d := \dim(\In_\nu(\scrP^I_J)) < |J|$. Then there is a non-empty Zariski open subset $U$ of $\scrA_J$ such that $V(\In_\nu(f_j|_{\Ki}): j \in J) \cap \nktorus = \emptyset$ for all $(f_j: j \in J) \in U$. 
\end{lemma}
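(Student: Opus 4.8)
\textbf{Proof plan for Lemma~\ref{non-degenerate-lemma}.}
The plan is to reduce to a count of generic solutions to a system in a torus of the right dimension, and then invoke a standard Bernstein/Kushnirenko-type emptiness criterion (the sum of the supports spans too small a subspace). First I would fix notation: write $J = \{j_1,\dots,j_s\}$, so $|J| = s$, and let $\In_\nu(\scrP^I_j)$ denote the initial face of $\scrP^I_j$ with respect to $\nu$ (restricted to $\Ki$). The quantity $\In_\nu(f_j|_{\Ki})$ is supported on $\In_\nu(\scrP^I_j \cap \supp(f_j))$, and as $(f_j : j \in J)$ ranges over $\scrA_J$, the coefficients of $\In_\nu(f_j|_{\Ki})$ on the lattice points of $\In_\nu(\scrP^I_j)$ are independent parameters. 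So it suffices to show: for a system of $s$ Laurent polynomials $g_1,\dots,g_s$ on $\nktorus$ (or on $\Kstari \cong (\kk^*)^{|I|}$, since only the variables $x_i$, $i\in I$, occur) with prescribed supports $B_j := \In_\nu(\scrP^I_j) \cap \zz^n$, generic choices of coefficients yield no common zero, provided $\dim(\conv(B_1 + \cdots + B_s)) < s$.

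The key step is then the following convex-geometric fact, which is exactly the content of Bernstein's theorem in the degenerate-dimension case (and which underlies \cref{herrero} as well): if $d := \dim(\conv(\sum_j B_j)) < s$, then after a monomial change of coordinates on $\Kstari$ we may assume all the $B_j$ lie in a common affine subspace of dimension $d$ parallel to a coordinate subspace $\kk^{d}$ of $\Kstari$; pulling out monomial factors, the $g_j|_{\Kstari}$ become polynomials in only $d < s$ of the torus coordinates. A generic such system of $s$ polynomials in $d < s$ variables has empty zero locus in $(\kk^*)^d$ — indeed one can argue by induction on $d$: the first $d$ of the $g_j$ generically cut out finitely many points in $(\kk^*)^d$ (Bernstein, with the mixed volume possibly zero, in which case the locus is already empty), and then the $(d{+}1)$-st polynomial, being generic and hence not vanishing at any of those finitely many points, kills the intersection. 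The set $U$ of coefficient tuples for which this happens is the complement of the vanishing locus of finitely many resultants/polynomials in the coefficients, hence Zariski open; and it is non-empty because we have just exhibited (generically) a tuple in it. Here I would lean on the independence noted above to transfer from "generic $g_j$'s with support $B_j$" to "non-empty Zariski open subset of $\scrA_J$".

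I expect the main obstacle to be purely bookkeeping: making the monomial change of coordinates on $\Kstari$ rigorous and checking it is compatible with the map $\scrA_J \to \{(\In_\nu(f_j|_{\Ki}))\}$, i.e.\ that genericity is preserved. The lattice-theoretic claim — that a lattice polytope sum of dimension $d$ can be moved, by a unimodular monomial substitution, into a $d$-dimensional coordinate subspace after translation — is standard (choose a basis of the lattice adapted to the affine span), but one must be careful that $\nu$ and the restriction to $I$ interact correctly; concretely, after the reduction the variable along which $\nu$ is "used up" may or may not survive, and I would handle the two cases ($\nu$ trivial on $\Ki$, $\nu$ nontrivial on $\Ki$) as in the proof of \cref{reduction-lemma}. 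Everything else is a routine application of Bernstein's theorem and the fact that a finite set of points in a torus can be avoided by a generic hypersurface. The emptiness of the generic zero locus, and hence non-emptiness of $U$, is the real point, and it is immediate from the dimension count.
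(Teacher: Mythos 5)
Your proposal is correct, but it reaches the key finiteness statement by a different mechanism than the paper. You reduce the number of effective variables first: since each $\In_\nu(\scrP^I_j)$ lies in a translate of the common $d$-dimensional rational subspace spanned by the Minkowski sum, a unimodular monomial change of coordinates on $\Kstari$ turns the initial forms into (monomials times) Laurent polynomials in $d < |J|$ torus variables, after which the first $d$ of them generically cut out a finite set and one more generic polynomial avoids it. The paper instead inducts on $|J|$ directly, with no change of variables: it establishes finiteness of $V(\In_\nu(f_j): j \in \tilde J) \cap \nktorus$ for a suitable $(|J|-1)$-element subset $\tilde J$ by compactifying in the toric variety $X_\nu$ attached to $\In_\nu(\scrP^I_{\tilde J})$ and using the inductive hypothesis on all proper faces $\In_\eta(\In_\nu(\scrP^I_{\tilde J}))$ to show the closure of the zero set misses the boundary, hence is complete and finite; the last polynomial then avoids these points, and openness comes from constructibility of the good locus. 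Your route is more elementary and self-contained at the cost of the lattice bookkeeping you flag; the paper's route avoids any coordinate change and recycles the same face-induction pattern it uses elsewhere. Two small points to tighten in your write-up: (i) Bernstein's theorem by itself bounds only the \emph{isolated} solutions, so "generically finite" for the first $d$ polynomials needs the Bertini-type induction you sketch (a generic member of each linear system cuts every component of the previous intersection properly, since monomials are units on the torus) rather than a bare citation of Bernstein; (ii) for the openness of $U$ it is cleaner to argue, as the paper does, that the locus of tuples with empty common zero set is constructible and contains a dense subset, rather than invoking resultants for an overdetermined sparse system.
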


\begin{proof}
We proceed by induction on $|J|$. It is clear for $|J| = 1$. Now assume $|J| \geq 2$. Let  Pick $\tilde J \subset J$ such that 
\begin{itemize}
\item $|\tilde J| = |J|-1$,
\item $\dim(\In_\nu(\scrP^I_{\tilde J})) = d$.
\end{itemize}
Pick $\eta \in \Vi$ such that $\In_\eta(\scrP^I_{\tilde J})$ is a proper face of $\In_\nu(\scrP^I_{\tilde J})$. Let $\tilde d := \dim(\In_\eta(\scrP^I_{\tilde J}))$. Since $\tilde d <  |\tilde J|$, it follows by induction that there is an open set $U_\eta$ of $\scrA_{\tilde J}$ such that $V(\In_\eta(f_j|_{\Ki}): j \in \tilde J) \cap \nktorus = \emptyset$ for all $(f_j: j \in \tilde J) \in U_\eta$. Let $\tilde U$ be the intersection of all such $U_\eta$. 

\begin{proclaim} \label{bkk-dimension}
$V(\In_\nu(f_j|_{\Ki}): j \in \tilde J) \cap \nktorus$ is a finite set. 
\end{proclaim} 

\begin{proof}
Indeed, it is straightforward to see that the $V(\In_\nu(f_j|_{\Ki}): j \in \tilde J) \cong \tilde V \times (\kk^*)^s$ for some $s \geq 0$, where $\tilde V$ is the zero set of $(f_j: j \in \tilde J)$ on the torus $T_\nu$ of the toric variety $X_\nu$ determined by $\In_\nu(\scrP^I_{\tilde J})$. Since each proper face of $\In_\nu(\scrP^I_{\tilde J})$ corresponds to an irreducible component of $X_\nu \setminus T_\nu$, it follows that the closure of $\tilde V$ in $X_\nu$ is contained in $X_\nu \setminus T_\nu$, and therefore must be finite. 
\end{proof}
Let $W:= \{(f_j: j \in J) \in \scrA_J: (\In_\nu(f_j|_{\Ki}): j \in J) \cap \nktorus = \emptyset\}$. Let $j$ be the unique element of $J \setminus \tilde J$. It follows from the claim that for each $(f_j: j \in \tilde J) \in \tilde U$, there is an open subset $U_j$ of $\scrA_j$ such that $\tilde U \times U_j \subseteq W$. Since $W$ is constructible, it follows that $W$ must contain an open subset of $\scrA_J$, as required. 
\end{proof}


\begin{cor} \label{bkk-existence}
There is a non-empty open subset $U$ of $\scrA_{[m]}$ such that all $(f_1, \ldots, f_m) \in U$ are completely BKK non-degenerate on $\kk^n$. In particular, if $m = n$, and $\mscrG$ and $\mscrP$ are as in \cref{bkk-lemma}, then
\begin{enumerate}
\item all $(f_1, \ldots, f_n) \in U$ are $\mscrP$-non-degenerate on $\kk^n$,
\item if $0 < \multzeroGamma < \infty$, then all $(f_1, \ldots, f_n) \in U$ are $\mscrG$-non-degenerate.  \qed
\end{enumerate}
\end{cor}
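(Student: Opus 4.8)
The plan is to deduce the statement from \cref{non-degenerate-lemma} together with the elementary fact that a finite intersection of non-empty Zariski-open subsets of the irreducible affine space $\scrA_{[m]}\cong\prod_{j=1}^m\scrA_j$ is again non-empty and open; the only real work is reducing the (a priori infinite) list of conditions defining complete $\kk^n$-non-degeneracy to a finite one, using the finiteness of normal fans.

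First I would restrict to the non-empty open subset $\scrA^\circ\subseteq\scrA_{[m]}$ cut out by the conditions $\np(f_j)=\scrP_j$ (i.e.\ every vertex of $\scrP_j$ occurs in $\supp(f_j)$), $1\le j\le m$; this is a complement of finitely many coordinate hyperplanes, and on it one also has $\nd(f_j)=\Gamma_j$. On $\scrA^\circ$, for each non-empty $I\subseteq[n]$ the index set $J_I:=\{j:f_j|_{\Ki}\not\equiv0\}=\{j:\scrP^I_j\neq\emptyset\}$ is independent of the chosen point, and $\np(f_j|_{\Ki})=\scrP^I_j$ for $j\in J_I$. Fixing a non-empty $I\subseteq[n]$ and a subset $J\subseteq J_I$ and writing $\scrP^I_J:=\sum_{j\in J}\scrP^I_j$, the key observation is that, as $\nu$ ranges over $\Vi$, the tuple of faces $\bigl(\In_\nu(\scrP^I_j)\bigr)_{j\in J}$ is determined by the single face $F:=\In_\nu(\scrP^I_J)$ — this is the uniqueness of the Minkowski decomposition of a face of a sum into faces of the summands — and hence, for $(f_j)\in\scrA^\circ$, so is the tuple of initial forms $\bigl(\In_\nu(f_j|_{\Ki})\bigr)_{j\in J}$ and thus the set $V(\In_\nu(f_j|_{\Ki}):j\in J)\cap\nktorus$. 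Consequently, for each of the finitely many faces $F$ of $\scrP^I_J$ with $\dim F<|J|$ I would choose one representative $\nu_{I,J,F}\in\Vi$ with $\In_{\nu_{I,J,F}}(\scrP^I_J)=F$ and invoke \cref{non-degenerate-lemma}, which produces a non-empty Zariski-open $U_{I,J,F}\subseteq\scrA_J$ over which $\In_{\nu}(f_j|_{\Ki})$, $j\in J$, have no common zero in $\nktorus$ for every $\nu\in\Vi$ with $\In_\nu(\scrP^I_J)=F$.

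Next I would pull each $U_{I,J,F}$ back along the coordinate projection $\scrA_{[m]}=\scrA_J\times\scrA_{[m]\setminus J}\to\scrA_J$ — the preimage $U_{I,J,F}\times\scrA_{[m]\setminus J}$ is still non-empty and open — and set
\[
U:=\scrA^\circ\cap\bigcap_{I,J,F}\bigl(U_{I,J,F}\times\scrA_{[m]\setminus J}\bigr),
\]
a finite intersection of non-empty open subsets of the irreducible variety $\scrA_{[m]}$, hence non-empty and Zariski-open. Unwinding \cref{bkk-defn}, every $(f_1,\dots,f_m)\in U$ is completely BKK non-degenerate on $\kk^n$: for all non-empty $I\subseteq[n]$, all $J\subseteq J_I$, and all $\nu\in\Vi$ with $\dim\In_\nu\bigl(\sum_{j\in J}\np(f_j|_{\Ki})\bigr)<|J|$, the initial forms $\In_\nu(f_j|_{\Ki})$ ($j\in J$) share no zero in $\nktorus$. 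For the case $m=n$, complete $\kk^n$-non-degeneracy of any $(f_1,\dots,f_n)\in U$ gives $\mscrP$-non-degeneracy on $\kk^n$ (the case $\mscrS=\emptyset$ of assertion \eqref{bkk-to-P} of \cref{bkk-lemma}), and, when $0<\multzeroGamma<\infty$, it gives $\mscrG$-non-degeneracy by assertion \eqref{bkk-to-G} of \cref{bkk-lemma}.

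The step I expect to be the main obstacle is the finiteness reduction in the second paragraph: one must verify carefully that the face $F=\In_\nu(\scrP^I_J)$ of the Minkowski sum really does determine all the initial forms $\In_\nu(f_j|_{\Ki})$, so that a single application of \cref{non-degenerate-lemma} at the chosen $\nu_{I,J,F}$ genuinely covers the entire sub-cone of valuations realizing $F$. Once this bookkeeping is in place, the remainder is the routine fact that a finite intersection of dense Zariski-open subsets of an irreducible variety is dense open.
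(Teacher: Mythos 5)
Your proof is correct and follows exactly the route the paper intends: the corollary is left as an immediate consequence of \cref{non-degenerate-lemma} and \cref{bkk-lemma}, and your finiteness reduction (one representative valuation per face of each Minkowski sum $\scrP^I_J$, using that a face of the sum uniquely determines the faces of the summands and hence the initial forms) is precisely the bookkeeping needed to make that deduction rigorous. Nothing is missing.
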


The following lemma studies deformations of completely BKK non-degenerate systems. Pick polynomials $g_j$ such that $\np(g_j) = \np(f_j)$, $1 \leq j \leq m$. Let $h_j := f_j + \phi_j(t_j)g_j \in \kk[x_1, \ldots, x_n, t_j]$, $1 \leq j \leq m$, where $t_j$'s are indeterminates and each $\phi_j$ is a rational function in $t_j$ such that $\phi_j(t_j) = 0$. Write $h_{\epsilon,j}$ for $h_j|_{t_j = \epsilon}$ for each $j \in [m]$ and $\epsilon \in \kk$. 

\begin{lemma} \label{bkk-deformation}
Assume $f_1, \ldots, f_m$ are completely BKK non-degenerate on $\kk^n$. Then there is a non-empty Zariski open subset $U$ of $\kk^m$ such that $h_{\epsilon_1,1}, \ldots, h_{ \epsilon_m, m}$ are completely BKK non-degenerate on $\kk^n$ for all $(\epsilon_1, \ldots, \epsilon_m) \in U$.    
\end{lemma}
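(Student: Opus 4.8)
The statement to prove is \cref{bkk-deformation}: given completely BKK non-degenerate $f_1, \ldots, f_m$ on $\kk^n$ and deformations $h_j := f_j + \phi_j(t_j)g_j$ with $\phi_j(0) = 0$ and $\np(g_j) = \np(f_j)$, there is a non-empty Zariski open $U \subseteq \kk^m$ over which the specialized systems are again completely BKK non-degenerate on $\kk^n$. The plan is to reduce to a single statement about Zariski-openness of the locus of completely BKK non-degenerate tuples, which is essentially \cref{bkk-existence} combined with a semicontinuity argument. First I would unwind the definition (\cref{bkk-defn}): being completely BKK non-degenerate on $\kk^n$ is a conjunction, over all non-empty $I \subseteq [n]$, all $J \subseteq [m]$ with $f_j|_{\Ki} \not\equiv 0$ for $j \in J$, and all $\nu \in \Vi$ with $\dim(\In_\nu(\sum_{j \in J}\np(f_j|_{\Ki}))) < |J|$, of the condition that $\In_\nu(f_j|_{\Ki})$, $j \in J$, have no common zero on $\nktorus$. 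Note that since $\np(g_j) = \np(f_j)$ and $\phi_j(0)=0$, for all $\epsilon$ in a neighborhood of $0$ (indeed for generic $\epsilon$) we have $\np(h_{\epsilon,j}) = \np(f_j)$, so the indexing sets $I, J, \nu$ in the definition are the \emph{same} for the deformed system as for $f$; this is the key simplification.

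Next, for a fixed triple $(I, J, \nu)$ as above, I would consider the incidence-type condition: the polynomials $\In_\nu(h_{\epsilon,j}|_{\Ki})$, $j \in J$, have a common zero on $\nktorus$. Observe that $\In_\nu(h_{\epsilon,j}|_{\Ki})$ is $\In_\nu(f_j|_{\Ki}) + \phi_j(\epsilon)\In_\nu(g_j|_{\Ki})$ (using $\np(g_j)=\np(f_j)$ so that $\In_\nu$ of the sum is the sum of the $\In_\nu$'s when both are nonzero, and the $f$-part is nonzero by hypothesis). Thus the family of systems $(\In_\nu(h_{\epsilon,j}|_{\Ki}))_{j \in J}$ is an algebraic family parametrized by $\epsilon = (\epsilon_1, \ldots, \epsilon_m) \in \kk^m$ (only the coordinates $\epsilon_j$ with $j \in J$ matter). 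The locus of $\epsilon$ for which this system has a common zero on $\nktorus$ is the image, under the projection $\kk^m \times \nktorus \to \kk^m$, of the closed-in-$\kk^m\times\nktorus$ subvariety $\{(\epsilon, z) : \In_\nu(h_{\epsilon,j}|_{\Ki})(z) = 0,\ j \in J\}$; hence it is a constructible subset $Z_{I,J,\nu}$ of $\kk^m$. By hypothesis $f$ itself (i.e. $\epsilon = 0$) is completely BKK non-degenerate, so $0 \notin \overline{Z_{I,J,\nu}}$ — wait, more carefully: $0 \notin Z_{I,J,\nu}$, but I want $Z_{I,J,\nu}$ to be contained in a proper closed subset. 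Here is where I would invoke the genericity already proven: by \cref{bkk-existence} (or rather its proof via \cref{non-degenerate-lemma}), for \emph{fixed supports} the completely-BKK-non-degenerate locus in $\scrA_{[m]}$ contains a non-empty Zariski open set, equivalently its complement $\bigcup_{I,J,\nu} (\text{degenerate locus})$ is contained in a proper closed subset of $\scrA_{[m]}$ — but actually the cleanest route is: for each fixed $(I,J,\nu)$, the bad locus $\{(f_j)_{j\in J} \in \scrA_J^{\nu} : \In_\nu(f_j|_{\Ki}) \text{ have a common zero on }\nktorus\}$ is a proper constructible (in fact, contained in a proper closed) subset of the relevant affine space of coefficient tuples, because \cref{non-degenerate-lemma} exhibits a non-empty open complement. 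Pulling this back along the (algebraic) map $\epsilon \mapsto (\coeff(\In_\nu(h_{\epsilon,j}|_{\Ki})))_{j\in J}$, which sends $0$ to a point in that open complement, shows $Z_{I,J,\nu}$ is contained in a proper closed subset $Z'_{I,J,\nu} \subsetneq \kk^m$.

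Finally I would take $U := \kk^m \setminus \bigcup_{I,J,\nu} Z'_{I,J,\nu}$. The union is over finitely many triples $(I,J,\nu)$: there are finitely many $I \subseteq [n]$ and $J \subseteq [m]$, and for each such pair only finitely many $\nu \in \Vi$ (up to the relevant equivalence) arise, since $\nu$ only matters through the face $\In_\nu(\sum_{j\in J}\np(f_j|_{\Ki}))$ it selects, and a polytope has finitely many faces. Hence $U$ is a non-empty Zariski open subset of $\kk^m$, and by construction every $(\epsilon_1, \ldots, \epsilon_m) \in U$ gives $h_{\epsilon_1,1}, \ldots, h_{\epsilon_m,m}$ with $\np(h_{\epsilon_j,j}) = \np(f_j)$ (possibly after shrinking $U$ further to ensure the Newton polytopes are preserved — this is again an open condition, namely non-vanishing of the vertex coefficients, which holds at $\epsilon = 0$) and with no common zeros of the relevant initial forms on $\nktorus$, i.e. completely BKK non-degenerate on $\kk^n$. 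The main obstacle I anticipate is purely bookkeeping: making sure that the deformation does not change the Newton polytopes (handled by $\phi_j(0)=0$ plus openness of the vertex-coefficient conditions) and that $\In_\nu$ interacts correctly with the sum $f_j + \phi_j(\epsilon)g_j$ when $\phi_j(\epsilon) \neq 0$ — for a face $\nu$ where $\In_\nu(f_j|_{\Ki}) \neq 0$ this is automatic, and the case $\In_\nu(f_j|_{\Ki}) = 0$ (i.e. $f_j|_{\Ki} \equiv 0$ or $j \notin J$) does not occur in the conditions we need to check. No genuinely new ideas beyond \cref{non-degenerate-lemma}/\cref{bkk-existence} and elementary constructibility are required.
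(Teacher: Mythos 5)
The reduction to finitely many triples $(I,J,\nu)$, and the remark that the Newton polytopes of the $h_{\epsilon_j,j}$ agree with those of the $f_j$ on a dense open set of parameters, are both fine and match the paper's setup. The gap is in the step where you pass from ``the bad locus in $\scrA_J$ is contained in a proper closed subset'' to ``$Z_{I,J,\nu}$ is contained in a proper closed subset of $\kk^m$.'' The bad locus $B \subseteq \scrA_J$ (tuples whose $\nu$-initial forms have a common zero on $\nktorus$) is only \emph{constructible}: it is the image of a closed incidence variety under the non-proper projection from $\scrA_J \times \nktorus$, and it is genuinely not closed in general. \Cref{non-degenerate-lemma} gives you only that $B$ misses some non-empty open $V$, i.e.\ $B \subseteq F := \scrA_J \setminus V$. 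Your map $\psi\colon \epsilon \mapsto (\coeff(\In_\nu(h_{\epsilon,j}|_{\Ki})))_{j\in J}$ is very far from dominant, so $\psi^{-1}(F)$ may be all of $\kk^m$; the hypothesis gives $\psi(0) \notin B$, not $\psi(0) \in V$ and not $\psi(0) \notin \overline{B}$. The abstract principle you are invoking --- ``$B$ constructible and non-dense, $\psi(0)\notin B$, hence $\psi^{-1}(B)$ non-dense'' --- is false: take $B = \{x=0,\ y \neq 0\} \subseteq \kk^2$ and $\psi(t) = (0,t)$; then $\psi(0)\notin B$ but $\psi^{-1}(B) = \kk\setminus\{0\}$ is dense. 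So knowing that $\epsilon = 0$ is good and that a generic point of the \emph{full} coefficient space is good does not by itself show that a generic $\epsilon$ is good.

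What closes the gap --- and what the paper does --- is to rerun the induction of \cref{non-degenerate-lemma} inside the parameter space $\kk^m$ itself, exploiting the product structure of the deformation (each $h_j$ depends only on its own parameter $t_j$). By induction on $|J|$, applied to the proper faces $\eta$ of $\In_\nu(\scrP^I_{\tilde J})$ for a suitable $\tilde J \subset J$ with $|\tilde J| = |J|-1$, one gets that for $(\epsilon_j)_{j\in\tilde J}$ in a dense open set the deformed initial forms indexed by $\tilde J$ have only finitely many common zeros on the torus (the analogue of \cref{bkk-dimension}); one then varies the single remaining parameter $\epsilon_{j_0}$ to make the last initial form avoid this finite set, and concludes with the fiberwise-density argument for constructible sets used at the end of \cref{non-degenerate-lemma}. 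This one-parameter-at-a-time inductive structure is the actual content of the proof and is absent from your proposal.
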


\begin{proof}
Pick $J \subseteq [m]$, $I \subseteq [n]$ and $\nu \in \Vi$ such that $\scrP^I_j \neq \emptyset$ for all $j \in J$, and $d := \dim(\In_\nu(\scrP^I_J)) < |J|$. It suffices to show that there is a non-empty open subset $U$ of $\kk^m$ such that $V(\In_\nu(h_{\epsilon_j,j}|_{\Ki}): j \in J) \cap \nktorus = \emptyset$ for all $(\epsilon_1, \ldots, \epsilon_m) \in U$. This follows by induction on $|J|$ exactly as in the proof of \cref{non-degenerate-lemma}. 
\end{proof}

\begin{defn} \label{essentially-complete-defn}
Let $f_1, \ldots, f_m$ be rational functions on a variety $X$ and $U$ be a non-empty open subset of $\kk^n$. We say that $f_1, \ldots, f_m$ satisfy the {\em essentially complete intersection property on $U$} iff each $f_j$ is regular on $U$, and for each $J \subseteq [m]$, $V(f_j: j \in J) \cap U$ has codimension $|J|$ in $U$ (see \cref{coconvention}).
\end{defn}

\begin{lemma}
Let $f_1, \ldots, f_m \in \kk[x_1, \ldots, x_n]$ and $U$ be an open subset of $\kk^n$ such that $f_j$'s satisfy the essentially complete intersection property on $U$. Let $N$ be a positive integer, $t$ be an indeterminate, and for each $i,j$, $1 \leq i \leq N$, $1 \leq j \leq m$,  let $f_{i,j} \in \kk[x_1, \ldots, x_n]$ and $\phi_{i,j} \in \kk(t)$ be such that $\phi_{i,j}$ is regular at $0$ with $\phi_{i,j}(0) = 0$. Define $g_j(x,t) := f_j + \sum_{i=1}^N \phi_{i,j}(t)f_{i,j}$, $1 \leq j \leq N$. Then there exists an open neighborhood $\tilde U$ of $U \times \{0\}$ in $U \times \kk$ such that $g_1, \ldots, g_n, t-\epsilon$ satisfy the essentially complete intersection property on $\tilde U$ for all $\epsilon \in \kk$. 
\end{lemma}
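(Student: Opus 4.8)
The plan is to run a semicontinuity‑of‑fibre‑dimension argument, exploiting that the deformation is trivial on the special fibre $\{t=0\}$. Write $\pi\colon U\times\kk\to\kk$ for the projection to the $t$‑coordinate and, for $J\subseteq[m]$, set $Z_J:=V(g_j:j\in J)\subseteq U\times\kk$. Since every $\phi_{i,j}$ vanishes at $0$, we have $g_j|_{t=0}=f_j$, so
\[
Z_J\cap\pi^{-1}(0)=\bigl(V(f_j:j\in J)\cap U\bigr)\times\{0\},
\]
which by hypothesis has pure codimension $|J|$ in $U\times\{0\}$ (by the convention of \cref{coconvention}, this means it is empty as soon as $|J|>\dim U$). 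This is the only place the essentially complete intersection property of $f_1,\dots,f_m$ (\cref{essentially-complete-defn}) enters.

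First I would isolate the bad locus. By Chevalley's theorem on fibre dimension the function $z\mapsto\dim_z\bigl((\pi|_{Z_J})^{-1}(\pi(z))\bigr)$ is upper semicontinuous on $Z_J$, so
\[
B_J:=\bigl\{z\in Z_J:\dim_z\bigl(Z_J\cap\pi^{-1}(\pi(z))\bigr)>n-|J|\bigr\}
\]
is closed in $Z_J$, hence in $U\times\kk$; put $B:=\bigcup_{J\subseteq[m]}B_J$, a closed set. The crucial observation is that $B$ is disjoint from $U\times\{0\}$: if $z\in Z_J\cap(U\times\{0\})$ then $z$ lies in the fibre $Z_J\cap\pi^{-1}(0)=V(f_j:j\in J)\cap U$, which is pure of dimension $n-|J|$ near $z$, so $\dim_z\bigl(Z_J\cap\pi^{-1}(0)\bigr)=n-|J|$ and $z\notin B_J$. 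Hence $\tilde U:=(U\times\kk)\setminus B$ is an open neighbourhood of $U\times\{0\}$ in $U\times\kk$, and it is pure of dimension $n+1$.

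Next I would verify that $g_1,\dots,g_m,t-\epsilon$ satisfy the essentially complete intersection property on $\tilde U$ for every $\epsilon\in\kk$; regularity of the $g_j$ and of $t-\epsilon$ on $\tilde U$ is clear, so only the dimension count is at issue. For $J\subseteq[m]$: by Krull's height theorem every component of $Z_J\cap\tilde U$ has dimension $\geq n+1-|J|$, while any component $Y$ of $Z_J$ of dimension $>n+1-|J|$ has $z\in B_J$ for general $z\in Y$ (then $\dim_z\bigl(Z_J\cap\pi^{-1}(\pi(z))\bigr)\geq\dim Y-1>n-|J|$), so $Y\subseteq B_J$ is removed; thus $Z_J\cap\tilde U$ has pure codimension $|J|$. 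For $J\subseteq[m]$ and $\epsilon\in\kk$: any component $C$ of $Z_J\cap\pi^{-1}(\epsilon)$ with $\dim C>n-|J|$ has $C\subseteq B_J$ (take general points of $C$), hence is removed, while Krull forces the surviving components of $Z_J\cap\pi^{-1}(\epsilon)\cap\tilde U$ to have dimension $\geq n-|J|$; hence $V(g_j:j\in J,\,t-\epsilon)\cap\tilde U$ has pure codimension $|J|+1$ in $\tilde U$. When $|J|$ exceeds the relevant ambient dimension, the locus in question is already contained in $B_J$, so empty, in accordance with \cref{coconvention}.

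The main obstacle is the bookkeeping in the last two paragraphs; conceptually everything hinges on the disjointness $B\cap(U\times\{0\})=\emptyset$, which uses both $\phi_{i,j}(0)=0$ (so the central fibre is exactly $V(f_1,\dots,f_m)$) and the hypothesis that the $f_j$ form an essentially complete intersection (so the central fibre is pure of the expected dimension). A useful auxiliary fact, also a consequence of Krull's height theorem and worth spelling out, is that no irreducible component of any $Z_J$ lies inside $U\times\{0\}$: at a general point of such a component the local dimension of $Z_J$ would be $n-|J|$, contradicting the bound $n+1-|J|$ from its being cut out by $|J|$ equations in the $(n+1)$‑dimensional $U\times\kk$. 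This guarantees that the components of $Z_J$ meeting the central fibre dominate $\kk$, which is what makes the fibrewise dimension count legitimate near $t=0$.
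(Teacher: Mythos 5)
Your argument is correct, and it takes a genuinely different route from the paper's. The paper proves the lemma by induction on the number of equations that have been deformed: at each step it observes that the irreducible components of $V(f_j : j \in J)$ in $U \times T$ are cylinders defined by polynomials in $x$ alone, deduces from $\phi_{i,1}(0)=0$ that $g_1$ does not vanish identically on any of them, and then shrinks the $t$-neighbourhood so that $V(g_1)$ contains no component of any slice $\{t=\epsilon\}$ of those cylinders. You replace this bookkeeping by a single application of Chevalley's upper semicontinuity of fibre dimension: the bad locus $B=\bigcup_J B_J$ is closed, it misses the central fibre precisely because the central fibre is the undeformed (essentially complete intersection) system, and its complement works uniformly for every $\epsilon$, with the degenerate cases $|J|>n$ absorbed into $B_J$ exactly as \cref{coconvention} requires. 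Your version is cleaner and avoids the induction; what the paper's approach buys is that it uses only Krull's height theorem and elementary ideal membership, and it exhibits explicitly which components survive specialization.

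One small point to patch: the $\phi_{i,j}$ are only assumed regular at $0$, so the $g_j$ need not be regular on all of $U\times\kk$, and $Z_J$ as a subvariety of $U\times\kk$ is not well defined. Define $Z_J$ and $B_J$ inside $U\times T$, where $T\subseteq\kk$ is the open neighbourhood of $0$ on which every $\phi_{i,j}$ is regular, and set $\tilde U:=(U\times T)\setminus B$; this is still open in $U\times\kk$, contains $U\times\{0\}$, and for $\epsilon\notin T$ the conditions involving $t-\epsilon$ hold vacuously. The paper makes the same restriction. With that adjustment everything in your argument goes through.
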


\begin{proof}
We prove by induction on $k$ that for each $k$, $1 \leq k \leq m$, there is an open neighborhood $\tilde U_k$ of $(P,0)$ in $U \times \kk$ such that $g_1, \ldots, g_k, f_{k+1}, \ldots, f_m, t-\epsilon$ satisfy the essentially complete intersection property on $\tilde U_k$ for generic $\epsilon \in \kk$. \\

Pick $J \subseteq \{2, \ldots, n\}$. Let $V_J := V(f_j: j \in J) \subset U \times T$, where $T$ is an open neighborhood of $0$ in $\kk$ on which each $\phi_{i,j}$ is regular. At first assume $V_J \neq \emptyset$. The ideal of each irreducible component of $V_J$ is generated by polynomials in $(x_1, \ldots, x_n)$. Since $f_1$ is not in any of these ideals, it follows that $g_1$ is not in any of these ideals as well (here we need to use that $t$ divides each $\phi_{i,1}(t)$). Therefore $V(g_1) \cap V_J$ is either empty or has pure dimension $n-|J|$ in $U \times T$. Since $\dim(V_J \cap \{t = \epsilon\}) = n-|J|$ for each $\epsilon$, it follows that there is an open neighborhood $T_J$ of $0$ in $T$ such that $V(g_1) \cap V_J$ does not contain any component of $V_J \cap \{t_1 = \epsilon\}$ for any $\epsilon \in T_J$. If $V_J = \emptyset$, then set $T_J := \kk$. Then the claim holds for $k = 1$ with $\tilde U_1 := U \times \bigcap_J T_J$. \\

Now assume the claim is true for $k$. It suffices to show that for every $l$, $k+1 \leq l \leq m$, there is a neighborhood $U^*_l$ of $U \times \{0\}$ such that $V(g_1, \ldots, g_{k+1},  f_{k+2}, \ldots, f_l, t- \epsilon) \cap U^*_l$ is either empty or has codimension $l$ in $\kk^n$ for every $\epsilon \in \kk$. By inductive hypothesis, there is a neighborhood $\tilde U_k$ of $U \times \{0\}$ such that $\tilde V_l := V(g_1, \ldots,  g_k, f_{k+2}, \ldots, f_l) \cap \tilde U_k$ is empty or has dimension $n-l+2$. We may assume $\tilde V_l \neq \emptyset$, since otherwise the claim is trivially true. Let $V'_l$ be the union of all components of $\tilde V_l$ that intersect $U \times \{0\}$. This means that there is an open neighborhood $U'_l$ of $U \times \{0\}$ in $\tilde U_k$ such that $U'_l\cap\tilde V_l = V'_l$. For each irreducible component $Y$ of $V'_l$, $\dim(Y \cap U \times \{0\}) \geq \dim(Y) - 1 = n -l + 1$. Since dimension of every component of $V_l := V(f_1, \ldots, f_k, f_{k+2}, \ldots, f_l) \subseteq \kk^n$ is $n-l+1$, it follows that every component of $V'_l$ contains a component of $V_l \times \{0\}$. Since $V(g_{k+1})$ does not contain any component of $V_l \times \{0\}$, it follows that $V(g_{k+1}) \cap V'_l$ is either empty or has pure dimension $n-l+1$. The claim then follows as in the proof of $k = 1$ case. 
\end{proof}

\begin{cor} \label{isolated-family}
Let $f_1, \ldots, f_n \in \kk[x_1, \ldots, x_n]$ and $P \in \kk^n$ be such that $P$ is an isolated zero of $V(f_1, \ldots, f_n)$. Let $N$ be a positive integer, $t$ be an indeterminate, and for each $i,j$, $1 \leq i \leq N$, $1 \leq j \leq m$,  let $f_{i,j} \in \kk[x_1, \ldots, x_n]$ and $\phi_{i,j} \in \kk(t)$ be such that $\phi_{i,j}$ is regular at $0$ with $\phi_{i,j}(0) = 0$. Define $g_j(x,t) := f_j + \sum_{i=1}^N \phi_{i,j}(t)f_{i,j}$, $1 \leq j \leq N$. Then there is an open neighborhood $\tilde U$ of $(P,0)$ on $\kk^n  \times \kk$ such that 
\begin{enumerate}
\item  $\tilde Z:= V(g_1, \ldots, g_n) \cap\tilde  U$ is a curve. 
\item  For generic $\epsilon \in \kk$, the points on $\tilde Z \cap \{t = \epsilon\}$ are isolated zeroes of $V(g_1|_{t= \epsilon}, \ldots, g_n|_{t= \epsilon}) $. \qed
\end{enumerate}
\end{cor}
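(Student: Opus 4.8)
\textbf{Proof plan for \Cref{isolated-family}.}

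The statement is a corollary of the preceding lemma on the ``essentially complete intersection property,'' so the plan is to reduce to that lemma by taking $U$ to be a sufficiently small open neighborhood of the isolated point $P$ on which $f_1, \ldots, f_n$ form a complete intersection with $P$ as its only zero. First I would observe that since $P$ is an isolated zero of $V(f_1, \ldots, f_n) \subseteq \kk^n$, there is an open neighborhood $U$ of $P$ in $\kk^n$ on which $V(f_1, \ldots, f_n) \cap U = \{P\}$; in particular $f_1, \ldots, f_n$ satisfy the essentially complete intersection property on $U$ (for each $J \subseteq [n]$, $V(f_j : j \in J) \cap U$ has codimension $|J|$: this holds for $J = [n]$ by choice of $U$, and for smaller $J$ it follows because, after shrinking $U$, any component of $V(f_j : j \in J) \cap U$ through $P$ has dimension $\geq n - |J|$ and cannot exceed that dimension without forcing $V(f_1, \ldots, f_n) \cap U$ to be positive-dimensional — alternatively one simply shrinks $U$ to remove all but the components through $P$).

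With such a $U$ fixed, I would apply the immediately preceding lemma to $f_1, \ldots, f_n$ and $U$ (here $m = n$): it produces an open neighborhood $\tilde U$ of $U \times \{0\}$ in $U \times \kk$ such that $g_1, \ldots, g_n, t - \epsilon$ satisfy the essentially complete intersection property on $\tilde U$ for all $\epsilon \in \kk$. Taking $J = [n]$ in that property applied to $g_1, \ldots, g_n$ (without the $t - \epsilon$) gives that $\tilde Z := V(g_1, \ldots, g_n) \cap \tilde U$ has codimension $n$ in $\tilde U$, hence is a curve (or empty, but it contains $(P,0)$ since $g_j(x,0) = f_j(x)$ and $f_j(P) = 0$, using $\phi_{i,j}(0) = 0$); this is assertion (1). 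For assertion (2), adding the extra function $t - \epsilon$: the essentially complete intersection property for the collection $g_1, \ldots, g_n, t-\epsilon$ says $V(g_1, \ldots, g_n, t-\epsilon) \cap \tilde U$ has codimension $n+1$ in $\tilde U$, i.e.\ is finite; since this set is exactly $(\tilde Z \cap \{t = \epsilon\})$, its points are zero-dimensional components of $V(g_1|_{t=\epsilon}, \ldots, g_n|_{t=\epsilon})$, hence isolated zeroes, for every $\epsilon$ — so in fact the ``generic $\epsilon$'' in the statement can be strengthened to ``every $\epsilon$,'' but generic suffices.

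The only real subtlety — and thus the main point to be careful about — is the reduction in the first paragraph: ensuring that $U$ can be chosen so that $f_1, \ldots, f_n$ genuinely satisfy the essentially complete intersection property there, not merely that $P$ is an isolated zero of the full system. This is handled by first shrinking $U$ so that $V(f_1, \ldots, f_n) \cap U = \{P\}$, and then, for each nonempty proper $J \subsetneq [n]$, further shrinking $U$ to discard every irreducible component of $V(f_j : j \in J)$ not passing through $P$ (finitely many such shrinkings); what remains through $P$ is pure of codimension $|J|$ by Krull's principal ideal theorem applied inductively (each $f_j$ with $j \in J$ cuts the dimension down by exactly one near $P$, since if it cut less then $P$ would fail to be isolated in $V(f_1,\dots,f_n)$). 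Once $U$ is fixed this way, everything else is a direct quotation of the preceding lemma, so no further estimates are needed.
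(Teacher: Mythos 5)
Your proposal is correct and follows exactly the route the paper intends: the corollary is stated with a \qed because it is meant to be an immediate consequence of the preceding lemma, and your reduction — shrinking $U$ so that $V(f_1,\ldots,f_n)\cap U=\{P\}$, discarding the finitely many components of each $V(f_j:j\in J)$ not through $P$, and invoking Krull's height theorem to get the essentially complete intersection property before quoting the lemma with $J=[n]$ and $J=[n+1]$ — is precisely the missing verification. Your observation that the conclusion in fact holds for every $\epsilon$, not just generic $\epsilon$, is also consistent with the lemma's ``for all $\epsilon\in\kk$''.
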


\section{A technical lemma on intersection multiplicity} \label{technical-section}
In this section we prove \cref{generic-length} which is used in the proofs of \cref{multiplicity-thm,bkk-thm}. For a commutative ring $R$ and a module $M$ over $R$, we write $l_R(M)$ for the {\em length} of $M$ as a module over $M$. Also, for $f_1, \ldots, f_k \in R$, we write $\langle f_1, \ldots, f_k \rangle$ to denote the ideal of $R$ generated by $f_1, \ldots, f_k$. 

\begin{lemma} \label{generic-length}
Let $l \in \zz$, $1 \leq l \leq n$, $m := n-l$, and $f_1, \ldots, f_l \in \kk[x_1, \ldots, x_m, y_1, \ldots, y_l]$ be such that $l_{A_\mmm}(A_\mmm/\langle f_1, \ldots, f_l \rangle)$ is finite, where $A := \kk(x_1, \ldots, x_m)[y_1, \ldots, y_l]$ and $\mmm$ be the maximal ideal of $A$ generated by $y_1, \ldots, y_l$. For $\xi := (\xi_1, \ldots, \xi_m) \in \kk^m$, we write 
\begin{align*}
f_{\xi,j} &:= f_j|_{(x_1, \ldots, x_m) = (\xi_1, \ldots, \xi_m)},\ 1 \leq j \leq l.
\end{align*} 
Then for generic $\xi \in \kk^m$, 
\begin{align*}
l_{B_\nnn}(B_\nnn/\langle f_{\xi,1}, \ldots, f_{\xi,l} \rangle) = l_{A_\mmm}(A_\mmm/\langle f_1 , \ldots, f_l \rangle) 
\end{align*} 
where $B := \kk[y_1, \ldots, y_l]$ and $\nnn$ is the maximal ideal of $B$ generated by $y_1, \ldots, y_l$. 
\end{lemma}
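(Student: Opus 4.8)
The statement is a semicontinuity-plus-genericity result for lengths of Artinian quotients: as $\xi$ ranges over $\kk^m$, the length $l_{B_\nnn}(B_\nnn/\langle f_{\xi,1},\ldots,f_{\xi,l}\rangle)$ is upper semicontinuous, and for generic $\xi$ it equals the length computed over the ``generic point'' $\kk(x_1,\ldots,x_m)$. The natural framework is a flatness/fiber-dimension argument. Let $R := \kk[x_1,\ldots,x_m,y_1,\ldots,y_l]$, let $\qqq := \langle f_1,\ldots,f_l\rangle R$, and consider the finitely generated $R$-module $M := (R/\qqq)$ localized appropriately so that it is supported (near the relevant locus) on the subvariety $\{y_1=\cdots=y_l=0\}$. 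The hypothesis that $l_{A_\mmm}(A_\mmm/\qqq A_\mmm) < \infty$ says precisely that $f_1,\ldots,f_l$ form a system of parameters at the generic point of $V := \{y=0\} \cong \kk^m$ — equivalently, that $V$ is an irreducible component of $V(f_1,\ldots,f_l) \subseteq \kk^n$ and that $f_1,\ldots,f_l$ cut out $V$ with multiplicity $e := l_{A_\mmm}(A_\mmm/\qqq A_\mmm)$ generically along $V$.

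\textbf{Key steps.} First I would set up the right localization: replace $R$ by the semilocal ring obtained by localizing at the prime $\ppp := \langle y_1,\ldots,y_l\rangle$ (so that $R_\ppp$ has residue field $\kk(x_1,\ldots,x_m) = $ the function field of $V$), and observe $R_\ppp/\qqq R_\ppp = A_\mmm/\qqq A_\mmm$ is Artinian of length $e$. Second, spread this out: there is a nonempty open $U \subseteq \kk^m$ (a principal open $D(h)$, where $h$ is a product of ``denominators'' appearing in a standard/Gröbner basis computation of $\qqq$ over $\kk(x)[y]$, exactly as invoked in the proof of \Cref{multiplicity-thm'} via the phrase ``compute a standard basis of $\qqq_J$ in $R_\mmm$'') such that over $U$ the module $R/\qqq$, localized along the $y$-directions, is a locally free $\scrO_U$-module of rank $e$. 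Concretely: the finitely many monomials $y^\beta$ not lying in the initial ideal of $\qqq$ over $\kk(x)[y]$ form a basis of $A_\mmm/\qqq A_\mmm$ over $\kk(x)$, there are exactly $e$ of them, and after inverting $h$ they form an $\scrO_U$-basis of the corresponding sheaf. Third, base change to a point $\xi \in U$: since the module is free of rank $e$ over $U$ in a neighborhood of $V$, its fiber at $\xi$ — which is $B_\nnn/\langle f_{\xi,1},\ldots,f_{\xi,l}\rangle B_\nnn$ — is a $\kk$-vector space of dimension exactly $e$, hence has length $e$ over $B_\nnn$. The one point needing care here is that specialization commutes with the standard basis: for $\xi \in D(h)$ the reduced standard basis of $\langle f_{\xi,j}\rangle$ in $\kk[y]_\nnn$ is obtained by specializing the one over $\kk(x)[y]$, with the same initial ideal — this is the standard ``Gröbner basis specialization'' lemma, valid off the vanishing locus of the leading coefficients.

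\textbf{Expected main obstacle.} The genuine content is the spreading-out/specialization step: justifying that for $\xi$ outside a proper closed subset, passing from the generic fiber over $\kk(x)$ to the special fiber over $\kk$ neither decreases nor increases the length. Decrease is ruled out because the $e$ standard-monomials remain $\kk$-linearly independent modulo $\langle f_{\xi,j}\rangle$ (they generate a rank-$e$ free quotient); increase is ruled out because no new standard monomials appear — the initial ideal is preserved off $D(h)$, so the quotient is still spanned by those same $e$ monomials. I would phrase this either (a) via the free-module-of-rank-$e$ argument sketched above, invoking generic flatness (EGA IV / Matsumura) for the finitely generated module $R/\qqq$ over $\kk[x_1,\ldots,x_m]$ restricted to the component $V$, or (b) more elementarily via the explicit standard-basis specialization, which is self-contained and matches the computational flavor of \Cref{strong-existence}'s proof. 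Either way the remaining verifications (that $h \not\equiv 0$, that $D(h)$ is the desired open set, that the fiber is the claimed quotient) are routine. No serious difficulty beyond bookkeeping is anticipated.
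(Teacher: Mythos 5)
Your proposal is correct and, in its option (b), takes essentially the same route as the paper: the paper's proof fixes a local order, computes a standard basis of $\langle f_1,\ldots,f_l\rangle$ in $A_\mmm$ via the Mora normal form algorithm, and observes that for $\xi$ avoiding the vanishing loci of the finitely many coefficients (polynomials in $x$) appearing in the computation, the standard basis and hence the standard monomials specialize, so the length is preserved. The only minor caution is that one must avoid the vanishing of \emph{all} coefficients arising during the computation, not only the leading ones, to guarantee the specialized Mora algorithm runs identically — but this is exactly the open condition the paper (and your $D(h)$) imposes.
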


\begin{proof}
We follow the theory of {\em standard bases} from \cite[Chapter 4]{littlesheacox-uag}. Fix a {\em local order} on monomials in $(y_1, \ldots, y_l)$. Then there are precisely $m := l_{A/\mmm}(A_\mmm/\langle f_1, \ldots, f_l \rangle)$ {\em standard monomials} in $(y_1, \ldots, y_l)$ for the ideal $\qqq := \langle f_1, \ldots, f_l \rangle$. We now study how these monomials are computed.\\

Starting from $f_1, \ldots, f_l$, a standard basis $f_1, \ldots, f_l, h_1, \ldots, h_s$ is computed. After computation of $h_1, \ldots, h_j$, the next step is as follows: take every pair of elements in the current basis, compute $S$-polynomial of the pair, and compute the remainder using {\em Mora normal form algorithm} \cite[Section 4.3]{littlesheacox-uag}. Note that computations of $S$-polynomials and remainders consists of (a succession of) {\em reductions}, i.e.\ multiplication by monomials in $(y_1, \ldots, y_l)$ followed by a subtraction. It follows that, once you choose $\xi$ such that {\em none} of the coefficients (which are polynomials in $(x_1, \ldots, x_m)$) of the monomial terms (in $(y_1, \ldots, y_l)$) appearing in any of the $f_i$'s or $h_j$'s vanish, then the standard basis produced by the Mora normal form algorithm for the ideal $\langle f_{\xi,1}, \ldots, f_{\xi,l}\rangle$ in $B_\nnn$ will be the specialization of $f_1, \ldots, f_l, h_1, \ldots, h_s$.  It follows that $\langle f_1, \ldots, f_l \rangle)$ and $\langle f_{\xi,1}, \ldots, f_{\xi,l}\rangle$ have the same standard monomials, and therefore the same length. 
\end{proof}

\section{Counter-examples to \cite[Theorem 1]{rojas-wang}  and \cite[Affine Point Theorem II]{rojas-toric}} %
\label{counter-section}
Let $\scrP_1, \ldots, \scrP_n$ be convex integral polytopes in $(\rr_{\geq 0})^n$. We denote by $\multP_{\kk^n}$ the number (counted with multiplicity) of zeroes on $\kk^n$ of generic $f_1, \ldots, f_n \in \kk[x_1, \ldots, x_n]$ such that the support of each $f_j$ is contained in $\scrP_j$. \cite[Theorem 1]{rojas-wang} states that if the intersection of each $\scrP_j$ with each of the $n$ coordinate hyperplanes is non-empty, then $\multP_{\kk^n}$ is either infinite or equals the mixed volume $\mv(\tilde P_1, \ldots, \tilde P_j)$ of $\tilde P_1, \ldots, \tilde P_n$, where each $\tilde \scrP_j$ is the convex hull of $\scrP_j \cup \{0\}$. Below we present several examples which show that this theorem is not true in general for dimenion $\geq 3$. This theorem is a special case of \cite[Affine Point Theorem II]{rojas-toric}, therefore the examples below also serve as counterexamples to the latter. It seems that the error in these theorems is structural: in \cite[proof of Theorem 7, Page 128]{rojas-wang}, a toric compactification of $\kk^n$ is constructed and the formula given by \cite[Theorem 1]{rojas-wang} is precisely the intersection number of the divisors of zeroes of the corresponding (generic) polynomials. However, in general these divisors intersect non-trivially at infinity (this is precisely what happens in the case of the examples below), so that their intersection number may be greater than the number of roots. \\

In each of the examples below, we define polynomials $f_1, f_2, f_3 \in \kk[x,y,z]$, and for each $j$, write $\scrP_j$ (resp.\ $\tilde \scrP_j$) for the Newton polytope of $f_j$ (resp.\ the convex hull of $\scrP_j \cup \{0\}$). 

\begin{example}
Let $f_1 := ax +by$, $f_2 := a'x + b'y$, $f_3 := pz^kx + q$, where $a,b,a',b',p,q$ are generic elements in $\kk$ and $k \geq 1$. Then $[\scrP_1, \scrP_2, \scrP_3]_{\kk^3} =  [f_1, f_2, f_3]_{\kk^3} = 0$. However, it is straightforward to compute directly that if $\tilde f_1, \tilde f_2, \tilde f_3$ are generic polynomials such that the Newton polytope of each $\tilde f_i$ is $\tilde \scrP_i$, then the number of solutions in $(\kk^*)^3$ of $\tilde f_1, \tilde f_2, \tilde f_3$  is $k$. It then follows from Bernstein's theorem \cite{bern} that $\mv(\tilde \scrP_1, \tilde \scrP_2, \tilde \scrP_3) = k$. 
\end{example}

\begin{example}
Let $f_1 = ax +by + cx^2$, $f_2 = a'x + b'y +c'x^2$, $f_3 = pz^kx + q$, where $a,b,c,a',b',c',p,q$ are generic elements in $\kk$ and $k \geq 1$. Then $[\scrP_1, \scrP_2, \scrP_3]_{\kk^3} =  k < \mv(\tilde \scrP_1, \tilde \scrP_2, \tilde \scrP_3) = 2k$. 
\end{example}

%


\bibliographystyle{alpha}
\bibliography{../../utilities/bibi}

\def\cprime{$'$} \def\polhk#1{\setbox0=\hbox{#1}{\ooalign{\hidewidth
  \lower1.5ex\hbox{`}\hidewidth\crcr\unhbox0}}}
\begin{thebibliography}{{Wal}99}

\bibitem[Ber75]{bern}
D.~N. Bernstein.
\newblock The number of roots of a system of equations.
\newblock {\em Funkcional. Anal. i Prilo\v zen.}, 9(3):1--4, 1975.

\bibitem[BGM12]{boubakri-greuel-markwig}
Yousra {Boubakri}, Gert-Martin {Greuel}, and Thomas {Markwig}.
\newblock {Invariants of hypersurface singularities in positive
  characteristic.}
\newblock {\em {Rev. Mat. Complut.}}, 25(1):61--85, 2012.

\bibitem[CLO98]{littlesheacox-uag}
David {Cox}, John {Little}, and Donal {O'Shea}.
\newblock {\em {Using algebraic geometry.}}
\newblock New York, NY: Springer, 1998.

\bibitem[GN12]{greuel-nguyen}
Gert-Martin {Greuel} and Hong~Duc {Nguyen}.
\newblock {Some remarks on the planar Kouchnirenko's theorem.}
\newblock {\em {Rev. Mat. Complut.}}, 25(2):557--579, 2012.

\bibitem[HJS13]{herrero}
Mar{\'\i}a~Isabel {Herrero}, Gabriela {Jeronimo}, and Juan {Sabia}.
\newblock {Affine solution sets of sparse polynomial systems.}
\newblock {\em {J. Symb. Comput.}}, 51:34--54, 2013.

\bibitem[HS97]{hurmfels-bern}
Birkett Huber and Bernd Sturmfels.
\newblock Bernstein's theorem in affine space.
\newblock {\em Discrete Comput. Geom.}, 17(2):137--141, 1997.

\bibitem[{Jel}13]{jelonek-bertini}
Zbigniew {Jelonek}.
\newblock {On the Bertini theorem in arbitrary characteristic.}
\newblock {\em {Monatsh. Math.}}, 171(3-4):351--355, 2013.

\bibitem[Kho78]{khovanus}
A.~G. Khovanskii.
\newblock Newton polyhedra, and the genus of complete intersections.
\newblock {\em Funktsional. Anal. i Prilo\v zhen.}, 12(1):51--61, 1978.

\bibitem[Kus76]{kush-poly-milnor}
A.~G. Kushnirenko.
\newblock Poly\`edres de {N}ewton et nombres de {M}ilnor.
\newblock {\em Invent. Math.}, 32(1):1--31, 1976.

\bibitem[Roj99]{rojas-toric}
J.~M. Rojas.
\newblock Toric intersection theory for affine root counting.
\newblock {\em J. Pure Appl. Algebra}, 136(1):67--100, 1999.

\bibitem[RW96]{rojas-wang}
J.~Maurice Rojas and Xiaoshen Wang.
\newblock Counting affine roots of polynomial systems via pointed {N}ewton
  polytopes.
\newblock {\em J. Complexity}, 12(2):116--133, 1996.

\bibitem[{Wal}99]{wall}
C.T.C. {Wall}.
\newblock {Newton polytopes and non-degeneracy.}
\newblock {\em {J. Reine Angew. Math.}}, 509:1--19, 1999.

\end{thebibliography}

\end{document}